\documentclass[11pt]{article}
\usepackage[margin=1.1in]{geometry}
\usepackage{amsmath,amssymb,amsthm,mathtools}
\usepackage{enumitem}
\usepackage{booktabs,array,tabularx}
\usepackage{ltablex}
\keepXColumns
\usepackage[protrusion=true,expansion=false]{microtype}
\usepackage[colorlinks=true,linkcolor=black,citecolor=black,urlcolor=black,
  hypertexnames=false]{hyperref}

\newtheorem{theorem}{Theorem}[section]
\newtheorem{proposition}[theorem]{Proposition}
\newtheorem{lemma}[theorem]{Lemma}
\newtheorem{corollary}[theorem]{Corollary}
\newtheorem{quotedthm}[theorem]{Theorem}
\theoremstyle{definition}
\newtheorem{definition}[theorem]{Definition}
\newtheorem{openprob}[theorem]{Problem}
\theoremstyle{remark}
\newtheorem{remark}[theorem]{Remark}

\DeclareMathOperator{\Tr}{Tr}
\DeclareMathOperator{\sinc}{sinc}
\DeclareMathOperator{\Rey}{Re}
\DeclareMathOperator*{\Res}{Res}
\newcommand{\R}{\mathbb{R}}
\newcommand{\Z}{\mathbb{Z}}
\newcommand{\C}{\mathbb{C}}
\newcommand{\F}{\mathcal{F}}
\newcommand{\eps}{\varepsilon}
\newcommand{\Ntil}{\widetilde N}
\newcommand{\BG}{\mathsf{G}}
\newcommand{\Lb}{\bar L}
\newcommand{\dd}{\,d}

\title{Uniform sine-kernel determinant asymptotics,
tail-side quantiles,\\
and prolate eigenvalue bounds}
\author{Ahmadreza Azimifard}
\date{August 2026 \quad (revised version)}

\hypersetup{%
  pdftitle={Uniform sine-kernel determinant asymptotics, tail-side
quantiles, and prolate eigenvalue bounds},
  pdfauthor={Ahmadreza Azimifard},
  pdfsubject={Spectral theory of time-band limiting operators;
Riemann-Hilbert analysis of sine-kernel Fredholm determinants},
  pdfkeywords={prolate spheroidal wave functions, time-band limiting
operator, sine kernel, Fredholm determinant, Riemann-Hilbert problem,
Barnes G-function, plunge region, eigenvalue counting function, quantile asymptotics}}

\begin{document}
\maketitle

\begin{abstract}

  Let $S_c=P_{(0,c)}QP_{(0,c)}$ be the one-dimensional sinc-kernel concentration operator,
   let $N_a(c)=\#\{n:\lambda_n(c)>a\}$, and set $\bar L=\log((1-\delta)/\delta)$. We prove,
    uniformly for each fixed $A>0$, the tail-side quantile formula $N_\delta(c)=c+\pi^{-2}\bar L\log(4\pi^2c/\bar L)+O_A(\log c+\bar L)$ for $6\le\bar L\le A\log c$.
     It yields corresponding additive formulas for the lower half and full plunge, with main terms respectively $\pi^{-2}\bar L\log(4\pi^2c/\bar L)$ and twice this quantity. 
     An exact one-tail-coordinate selection gives, for fixed $A>0$, $d\ge1$, and $q\in(1/2,1)$, the one-sided tensor-product
      bound $\Lambda_\delta(c;d)\ge\pi^{-2}d c^{d-1}\bar L\log(4\pi^2c/\bar L)-O_{A,d,q}(c^{d-1}(\log c+\bar L))$ for $L_{d,q}\le\bar L\le A\log c$,
       where $L_{d,q}=\log(q^{-(d-1)}(e^6+1)-1)$; the tensor content is nontrivial for $d\ge2$.

The analytic input is a signed growing-parameter sine-kernel determinant asymptotic: uniformly for
 $0\le\omega\le A\log s$, $\log\det(I+(e^{2\omega}-1)K_s)=4\omega s/\pi+2\pi^{-2}\omega^2\log(4s)+2\log|G(1+i\omega/\pi)|^2+O_A((1+\omega)^4\log^2s/s)$, where $G$ is
  the Barnes $G$-function. We prove this negative-coupling counterpart of the Bothner--Deift--Its--Krasovsky theorem by direct IIKS steepest descent. We also retain 
  the uniform head-side results and use a two-way determinant reduction to obtain the moving-depth lower-half bridge bound with constant $1/(32\pi^2)$; extending it
   to the deeper range uses Kulikov--Dam Larsen and may require a smaller constant. These counting formulas are additive. Their errors become uniformly relative
    when $\bar L$ tends uniformly to infinity; fixed thresholds are covered separately by Landau--Widom. A Lambert-$W_{-1}$ formula is recorded only for the
     continuous main term, not for individual eigenvalues.

\medskip
\noindent
\textbf{2020 Mathematics Subject Classification.}
Primary 47B35, 35Q15; Secondary 33C15, 41A60, 42C05, 45C05, 60G55.

\medskip
\noindent
\textbf{Key words and phrases.}
Prolate spheroidal wave functions; time--band limiting operator; sine
kernel; Fredholm determinant; Riemann--Hilbert problem; nonlinear
steepest descent; Barnes $G$-function; plunge region; eigenvalue counting
function; quantile asymptotics; tensor-product concentration operator.
\end{abstract}

\tableofcontents

\section{Introduction and statement of results}\label{sec:intro}

\subsection{The problem}\label{sec:problem}

Fix the unitary Fourier transform
$\F f(\xi)=\int_\R f(x)e^{-2\pi ix\xi}\,dx$, let $Q$ be the orthogonal
projection of $L^2(\R)$ onto the Paley--Wiener space of functions with
Fourier support in $(-\tfrac12,\tfrac12)$, and for $c>0$ put
\begin{equation}\label{eq:op}
S_c \;=\; P\,Q\,P \quad\text{on } L^2(0,c),\qquad P:=P_{(0,c)},
\end{equation}
with eigenvalues $1>\lambda_0(c)\ge\lambda_1(c)\ge\cdots>0$; they are
in fact simple and strictly decreasing \cite{SlepianPollak}. About $c$ of
them cluster at $1$, the rest decay to $0$, and the transition region ---
the \emph{plunge} --- has logarithmic width. For $0<a<1$ set
\[
N_a(c)=\#\{n:\lambda_n(c)>a\},
\]
and for $0<\delta<\tfrac12$ define the half-window counts
\begin{equation}\label{eq:counts}
\Lambda^+_\delta(c)=\#\bigl\{n:\tfrac12<\lambda_n(c)\le1-\delta\bigr\},
\qquad
D(\delta,c)=\Lambda^-_\delta(c)
   =\#\bigl\{n:\delta<\lambda_n(c)\le\tfrac12\bigr\},
\end{equation}
so that, exactly,
\begin{equation}\label{eq:identities}
N_{1-\delta}=N_{1/2}-\Lambda^+_\delta,\qquad
N_{\delta}=N_{1/2}+D(\delta,c),\qquad
\Lambda_\delta:=\Lambda^+_\delta+D(\delta,c)
  =\#\{\delta<\lambda_n\le1-\delta\}.
\end{equation}
(The closed-top conventions in \eqref{eq:counts} are chosen so that
\eqref{eq:identities} are identities; since the spectrum is simple, they
differ from any other convention by at most $1$ at the measure-zero set
of thresholds meeting an eigenvalue, and every estimate below carries
$O(\ln c)$ slack.) Throughout, $\log=\ln$ is natural,
\[
L=L(\delta)=\ln\tfrac1\delta,\qquad
\Lb=\Lb(\delta)=\ln\tfrac{1-\delta}\delta\in[L-\ln2,\;L],\qquad
R_\alpha=R_\alpha(\delta,c)=\ln\frac{\alpha c}{L}.
\]

We formulate the bridge problem as follows.

\begin{openprob}[uniform lower-half density on the bridge]\label{op:bridge}
Do there exist constants $\alpha\ge4$, $\delta_0\in(0,\tfrac12)$,
$\kappa>0$, $C<\infty$, $c_0<\infty$ such that
\begin{equation}\label{eq:bridge}
D(\delta,c)\;\ge\;\kappa\,L\,\ln\!\Bigl(\frac{\alpha c}{L}\Bigr)\;-\;C
\tag{BRIDGE}
\end{equation}
for every $c\ge c_0$ and every $\delta$ with
$\alpha^{-c}<\delta\le\delta_0$?
\end{openprob}

Before the present work, three of the four threshold regimes were
settled. At fixed $\delta$, Landau--Widom \cite{LandauWidom} give the
asymptotic equality $D=\pi^{-2}\ln\frac{1-\delta}\delta\,\ln c+o(\ln c)$,
with no uniformity as $\delta$ moves with $c$. On the deep polynomial
range $\alpha_1^{-c}<\delta<c^{-\alpha_1}$, Kulikov and Larsen
\cite{KDL} prove the two-sided order $D\asymp L\,R_{\alpha_1}$, and below
$\alpha_1^{-c}$ they determine the order outright. For the lower-bound
problem just stated, what remained open was exactly the moving-threshold
range $c^{-\alpha}\le\delta\le\delta_0$ --- the \emph{bridge} --- where the
obstruction is already visible in \cite{KDL}'s ranges. Of the four
one-dimensional counting inputs their method assembles, the upper bound on
$N_{1-\eps}$ and the lower bound on $N_\eps$ are available only for
$\eps<c^{-A}$. The pointwise formulas of
\cite{Kulikov26,BonamiKarouiDecay}, in the form extracted in
\cite[\S2]{KDL}, yield the requisite two-sided exponential-rate estimates
only outside an index window of width $\asymp\log^2c$ around $n=c$; on the
decay side, the $O(\log n)$ error in the Bonami--Karoui exponent is what
forces this restriction.

This paper proves a block of head-side theorems, an exact two-way
reduction of \eqref{eq:bridge} to a tail-side determinant bound, and the
required signed growing-parameter determinant estimate.  The last input
closes the moving bridge; the deep-range theorem of \cite{KDL} covers
the complementary interval.
\subsection{Main results: the tail-side quantile theorem and its
consequences}\label{sec:results}

Everything below is a statement about one object. Put
\[
F_c(v)\;=\;\ln\det\bigl(I-\gamma(v)S_c\bigr),\qquad
\gamma(v)=1-e^{-2v}\qquad(v\in\R),
\]
a single function of a single real parameter. Its \emph{head half}
$v>0$ weighs the eigenvalues near $1$ and its \emph{tail half} $v<0$
weighs those near $0$: the $v$-derivative of $F_c$ is (up to a factor
$-2$) a sigmoid-smoothed counting function of the spectrum at threshold
$\theta(v)=(1+e^{-2v})^{-1}$, so the two halves of the plunge are the
two signs of one parameter in one determinant. The head half follows from
Theorem~\ref{qt:BDIK}.  The
logarithmic tail window is treated here by a signed Riemann--Hilbert
argument in Section~\ref{sec:tail}.  The lower-half bridge estimate
\eqref{eq:bridge} is exactly a consequence of that tail half through
Theorem~\ref{thm:red}.

This revision adds the tail-side quantile theorem, which is stated
first.  It also corrects the identification of the origin-sector
multipliers in the signed Riemann--Hilbert proof and supplies a complete
verification of the third-ray jump.  The head-side quantile theorem, the
two-way reduction, the bridge estimate, and the signed determinant
theorem from the first version are collected in
\S\ref{sec:legacyresults}.  Throughout, for
$0<\delta<\tfrac12$ we abbreviate
\begin{equation}\label{eq:Phiintro}
 \Lb=\ln\frac{1-\delta}{\delta},\qquad
 \Phi_c(x)\;=\;\frac{x}{\pi^2}\,\ln\frac{4\pi^2c}{x}
 \qquad(x>0,\ c>0).
\end{equation}
The statements of this subsection are proved in
Section~\ref{sec:tailquant}.  They are consequences of the signed
determinant theorem, Theorem~\ref{thm:signedintro} below, together with
the head-side machinery of
Sections~\ref{sec:barnes}--\ref{sec:thmA}.

\begin{theorem}[tail-side quantile theorem; proved in
\S\ref{sec:tq-quantile}]\label{thm:tail}
For every fixed $A>0$ there are constants $c_A,C_A<\infty$, depending
only on $A$, such that for every $c\ge c_A$ and every
$\delta\in(0,\tfrac12)$ with
\begin{equation}\label{eq:tailrange}
 6\;\le\;\Lb\;=\;\ln\frac{1-\delta}{\delta}\;\le\;A\ln c ,
\end{equation}
one has
\begin{equation}\label{eq:TQ}
 \Bigl|\,N_\delta(c)\;-\;c\;-\;\frac{\Lb}{\pi^2}
 \ln\frac{4\pi^2c}{\Lb}\,\Bigr|\;\le\;C_A\,(\ln c+\Lb).
\end{equation}
\end{theorem}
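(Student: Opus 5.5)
We derive \eqref{eq:TQ} from the signed determinant asymptotic, Theorem~\ref{thm:signedintro}, by a Tauberian sandwich on the smoothed counting function. First we rescale. The operator $S_c$ on $L^2(0,c)$ is unitarily equivalent to the sine-kernel operator $K_s$ on an interval with $s=\pi c/2$ (or the paper's normalization with $4\omega s/\pi=2\omega c$). For $v=-\omega<0$ we have $-\gamma(v)=e^{2\omega}-1$, so $F_c(-\omega)=\log\det(I+(e^{2\omega}-1)K_s)$. Theorem~\ref{thm:signedintro} then gives, uniformly for $0\le\omega\le A'\log c$,
\[
F_c(-\omega)=2\omega c+\frac{2\omega^2}{\pi^2}\log(4s)+2\log|G(1+i\omega/\pi)|^2+O_A\bigl((1+\omega)^4\log^2c/c\bigr).
\]
We use the Barnes asymptotic $\log|G(1+iy)|^2=-y^2\log y+\tfrac32y^2+O(\log y)$, presumably recorded in Section~\ref{sec:barnes}. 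Differentiating in $\omega$ shows that $\tfrac{d}{d\omega}F_c(-\omega)$ has main term $2c+\frac{4\omega}{\pi^2}\log\frac{C s}{\omega}$, which after matching constants is $2c+2\Phi_c'$-type terms evaluated at $x=2\omega$. Formally, $N_\delta\approx c+\Phi_c(\bar L)$ at $2\omega=\bar L$.

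Next we set up the spectral side. We have $F_c(-\omega)=\sum_n\log(1+(e^{2\omega}-1)\lambda_n)$. Each summand is increasing and convex-like in $\omega$. Its $\omega$-derivative is $2\sigma_n(\omega)$, where
\[
\sigma_n(\omega)=\frac{e^{2\omega}\lambda_n}{1+(e^{2\omega}-1)\lambda_n}\in(0,1).
\]
This is a sigmoid in $\log\frac{\lambda_n}{1-\lambda_n}+2\omega$, centered at $\lambda_n\approx e^{-2\omega}$. To avoid differentiating an asymptotic expansion, we use second differences instead. For $h\ge1$ we take
\[
\frac{F_c(-\omega-h)-F_c(-\omega)}{2h}=\sum_n\frac1{2h}\int_\omega^{\omega+h}2\sigma_n.
\]
Monotonicity of $\sigma_n$ in $\omega$ squeezes this quantity between $\sum_n\sigma_n(\omega)$ and $\sum_n\sigma_n(\omega+h)$.

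The counting argument then proceeds as follows. With $t_n=\log\frac{\lambda_n}{1-\lambda_n}$ and $\bar L=2\omega_0$, a lower bound for $N_\delta$ comes from $\sum\sigma_n(\omega)\le N_\delta+\#\{\text{eigenvalues with }\lambda_n\le\delta\text{ weighted by }\sigma\}$. The weighted tail $\sum_{t_n<-\bar L}\sigma_n(\omega)$ is controlled by choosing $2\omega=\bar L-M$ with $M\asymp1$. This bounds it by $\sum_{t_n<-\bar L}e^{t_n+2\omega}$, and a dyadic decomposition using the same counting estimate at deeper levels (a bootstrap) bounds it by $O(\log c+\bar L)$. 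The constraint $\bar L\ge6$ presumably guarantees that the shifted parameter stays in the range where $\omega\ge0$ and the Barnes asymptotic is valid. Symmetrically, an upper bound for $N_\delta$ comes from $\sum\sigma_n(\omega')\ge\#\{t_n>-\bar L\}\cdot\sigma_{\min}$, adjusted by $1-\sigma$ deficits. Those deficits are eigenvalues in the window $[-\bar L,-\bar L+M]$, whose number is $O(\log c)$ by the local density $\Phi_c'(\bar L)=O(\log c)$. Applying the determinant asymptotic at $\omega$ and $\omega+h$ with $h=O(1)$, the differences of the main terms equal $c+\Phi_c(\bar L)+O(\log c+\bar L)$. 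This uses $\Phi_c(x\pm O(1))-\Phi_c(x)=O(\log c)$. The error term $(1+\omega)^4\log^2c/c$ is negligible since $\omega\le A\log c$.

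The main obstacle is that the sigmoid smoothing has width $O(1)$ in the $t$-variable, and the eigenvalue density there is $\asymp\log(c/\bar L)$. Converting the smoothed count into the sharp count therefore costs $O(\log c)$ at the edge, which is acceptable. The harder part is that the exponential tails of $\sigma_n$ reach all eigenvalues with $t_n\ll-\bar L$ (there are infinitely many) and all those with $t_n\gg-\bar L$, including the $\approx c$ head eigenvalues near $1$. For the head, the error $1-\sigma_n$ is $\lesssim e^{-t_n-2\omega}$, and we control it by the head-side results of \S\ref{sec:legacyresults} ($N_{1/2}=c+O(\log c)$, plus head-quantile bounds). For the deep tail, we will sum $e^{t_n+2\omega}$ using trace-class bounds such as $\sum_n\lambda_n\le c$ only for $\lambda_n\le e^{-2\omega-K}$, together with the determinant bound at the slightly larger parameter $\omega+K$. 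This self-consistent step is where constants depending on $A$ enter, which is why the range is capped at $\bar L\le A\ln c$.
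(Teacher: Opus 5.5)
There is a genuine gap, and it is in the step you yourself flag as the main obstacle. The rest of your architecture matches the paper. The derivative $\frac{d}{d\omega}F_c(-\omega)=2\sum_n\sigma_n(\omega)$ is a sigmoid-smoothed count. Finite differences of $F_c$ sandwich it by monotonicity. An $O(1)$ shift of the sigmoid costs $O(\ln c)$ in the main term. The Barnes asymptotics turn the averaged main terms into $c+\Phi_c(\Lb)$.

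What is missing is the smoothed-to-sharp conversion. It needs the two exponentially weighted sums $T_+(-\omega)=\sum_{w_n>-\omega}e^{-2(w_n+\omega)}$ and $T_-(-\omega)=\sum_{w_n\le-\omega}e^{2(w_n+\omega)}$ to be $O_A(\ln c)$. Equivalently, it needs a unit-window eigenvalue density $O_A(\ln c)$ at \emph{all} depths near $\Lb$, on both sides of the threshold, not only in the window $[-\Lb,-\Lb+M]$. None of your mechanisms supplies this.
\begin{itemize}
\item ``Local density $\Phi_c'(\Lb)=O(\ln c)$'' is a property of the model main term, not of the spectrum.
\item The bootstrap ``using the same counting estimate at deeper levels'' invokes the statement being proved. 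It also does so at depths that leave the window $\Lb\le A\ln c$ where the signed theorem is available.
\item Unconditional inputs such as trace bounds or the plunge count of Theorem~\ref{qt:KRD} only bound a unit window at depth $L'$ by $O(L'\ln c)$. In your dyadic sums this gives tails $O(e^{-M}\Lb\ln c)$. Even with the shift optimized ($M\approx\ln\Lb$) the error is $O(\ln c\,\ln\Lb)$, not $O(\ln c+\Lb)$.
\end{itemize}

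The missing idea is that the tails are themselves a difference of smoothed counts, so the determinant controls them directly with no circularity.
\begin{itemize}
\item Take the elementary inequality \eqref{eq:sigdiff} with $h=1$, namely $\sigma_{\mathrm s}(t+2)-\sigma_{\mathrm s}(t-2)\ge\tanh(1)\,e^{-|t|}$. Apply it at $t=2(w_n-v)$ and sum over all $n$.
\item This gives $T_+(v)+T_-(v)\le\frac43\bigl[M_c(v-1)-M_c(v+1)\bigr]\le\frac43\bigl[\overline M_c(v-2)-\overline M_c(v+1)\bigr]$, which is a second difference of $F_c$.
\item Insert the signed expansion of Lemma~\ref{lem:tq-Fneg}. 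The terms in $c$ cancel, and the coefficient of $\ln(2\pi c)$ is $6/\pi^2$ independently of $v$.
\item The Barnes contribution is $O(\ln(4+|v|))$ by $|A''(v)|\le\ln(2+|v|)+4$.
\end{itemize}
Hence $T_\pm(v)=O_B(\ln c)$ uniformly for $-B\ln c\le v\le-2$ (Lemma~\ref{lem:tq-tails}). With this, Lemma~\ref{lem:conversion} at $h=1$ plus the unit-average bracket finish the proof along your lines, with no bootstrap and no head-side input. If you prefer to phrase the conversion through window counts, the same trick with $\sigma_{\mathrm s}(2)-\sigma_{\mathrm s}(0)>\frac13$ gives the local density (Proposition~\ref{prop:tq-density}).
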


The order of the quantifiers is part of the statement: $A$ is fixed
first, and $c_A,C_A$ may then depend on $A$, but on nothing else --- in
particular not on $c$, on $\delta$, or on $\Lb$ within
\eqref{eq:tailrange}.  Nothing here is uniform in $A$, and the upper
endpoint $A\ln c$ of \eqref{eq:tailrange} is genuinely shorter than the
head-side endpoint $c^{1/3}$ of the head-side quantile theorem stated
below (Theorem~\ref{thm:A}): it is inherited from the logarithmic window
of the signed determinant theorem, Theorem~\ref{thm:signedintro} below.

\begin{corollary}[lower-half and full-plunge formulas; proved in
\S\ref{sec:tq-quantile}]\label{cor:plunge}
Fix $A>0$.  Uniformly on the range \eqref{eq:tailrange},
\begin{align}
 D(\delta,c)&=\Phi_c(\Lb)+O_A(\ln c+\Lb),
 \label{eq:Dformula}\\
 \Lambda_\delta(c)&=2\,\Phi_c(\Lb)+O_A(\ln c+\Lb).
 \label{eq:Pformula}
\end{align}
\end{corollary}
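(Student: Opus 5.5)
The corollary is a direct consequence of Theorem~\ref{thm:tail} and the exact identities \eqref{eq:identities}. Two further inputs are needed: the midpoint count $N_{1/2}(c)$ and the head-side count $\Lambda^+_\delta(c)$.

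\emph{The lower half, \eqref{eq:Dformula}.} By \eqref{eq:identities}, $D(\delta,c)=N_\delta(c)-N_{1/2}(c)$. Theorem~\ref{thm:tail} gives $N_\delta(c)=c+\Phi_c(\Lb)+O_A(\ln c+\Lb)$ uniformly on \eqref{eq:tailrange}. It therefore suffices to show
\[
N_{1/2}(c)=c+O(\ln c).
\]
This follows from the classical Landau--Widom theorem at the fixed threshold $\tfrac12$, whose main term there is $c+O(\ln c)$. Alternatively, it follows from the head-side quantile theorem (Theorem~\ref{thm:A}) at a fixed $\delta$: since $\Lambda^+_\delta$ is $O(\ln c)$ for fixed $\delta$, one gets $N_{1/2}=N_{1-\delta}+O(\ln c)$, and $N_{1-\delta}=c+O(\ln c)$. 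A third route is the sharper classical fact that $\lambda_n(c)$ crosses $\tfrac12$ within $O(1)$ of $n=c$. Any of these routes gives an error $O(\ln c)$, which is absorbed into $O_A(\ln c+\Lb)$. Subtracting yields \eqref{eq:Dformula}.

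\emph{The full plunge, \eqref{eq:Pformula}.} We have $\Lambda_\delta=\Lambda^+_\delta+D(\delta,c)$. The remaining input is a head-side count
\[
\Lambda^+_\delta(c)=\Phi_c(\Lb)+O(\ln c+\Lb),
\]
which we take from Theorem~\ref{thm:A}. That theorem gives $N_{1-\delta}=c-\Phi_c(\Lb)+O(\ln c+\Lb)$ on a range of $\Lb$ extending up to $c^{1/3}$, hence certainly covering $6\le\Lb\le A\ln c$. Combining it with $N_{1/2}=c+O(\ln c)$ and $\Lambda^+_\delta=N_{1/2}-N_{1-\delta}$ gives the claim. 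There is a minor bookkeeping point here. The head-side theorem may be stated with $L=\ln(1/\delta)$ or with $\Lb$ in place of $\Lb$. Since $0\le L-\Lb\le\ln 2$, we have
\[
\Phi_c(L)-\Phi_c(\Lb)=O(\ln c),
\]
because $\Phi_c'(x)=\pi^{-2}\bigl(\ln(4\pi^2c/x)-1\bigr)=O(\ln c)$ for $x\ge 6$. So switching between $L$ and $\Lb$ costs only $O(\ln c)$. Adding the two half-counts gives $2\Phi_c(\Lb)+O_A(\ln c+\Lb)$.

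\emph{Main obstacle.} There is little real difficulty. The only step needing care is making sure the head-side result is genuinely uniform on the whole window \eqref{eq:tailrange}, with an error of the form $O(\ln c+\Lb)$ rather than something larger. We must also check that its lower endpoint condition on $\Lb$ does not exceed $6$. If it does, the finitely bounded remaining range $6\le\Lb\le\Lb_0$ can be handled by monotonicity of $\Lambda^+_\delta$ in $\delta$, combined with the Landau--Widom bound at fixed $\delta$, which is $O(\ln c)$ there.
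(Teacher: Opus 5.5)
Your proposal is correct and matches the paper's proof. Both arguments use $D=N_\delta-N_{1/2}$ with Theorem~\ref{thm:tail} and the bound $|N_{1/2}(c)-c|\le C_{\mathrm h}\ln c$, and both obtain the full plunge as $\Lambda_\delta=\Lambda^+_\delta+D=N_\delta-N_{1-\delta}$ using Theorem~\ref{thm:A}, whose range $6\le\Lb\le c^{1/3}$ contains \eqref{eq:tailrange} once $c$ is large in terms of $A$. Two of your precautions are unnecessary, though harmless. Theorem~\ref{thm:A} already states the $N_{1/2}$ bound as its second assertion, so the alternative routes are not needed. It is also formulated directly in $\Lb$ with lower endpoint $6$, so the $L$-versus-$\Lb$ conversion and the lower-threshold contingency never arise.
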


\begin{remark}[additive estimate and relative subranges]
\label{rem:relativelimit}
Theorem~\ref{thm:tail} and Corollary~\ref{cor:plunge} are \emph{additive}
statements.  On \eqref{eq:tailrange} one has
$\Phi_c(\Lb)\ge\tfrac23\pi^{-2}\Lb\ln c$ for $c$ large, so the ratio of
the displayed error to the main term is
$O_A\bigl(\Lb^{-1}+(\ln c)^{-1}\bigr)$.  In particular, it tends to $0$
uniformly on every subrange
\begin{equation}\label{eq:relrange}
 L_0(c)\le\Lb\le A\ln c,\qquad L_0(c)\to\infty .
\end{equation}
On such subranges, \eqref{eq:TQ}, \eqref{eq:Dformula}
and \eqref{eq:Pformula} yield the relative asymptotics
$D(\delta,c)=(1+o_A(1))\Phi_c(\Lb)$ and
$\Lambda_\delta(c)=(1+o_A(1))\,2\Phi_c(\Lb)$.  If $\Lb$ remains bounded,
this additive estimate alone gives no relative conclusion; for each fixed
threshold the separate Landau--Widom theorem does give a relative
asymptotic for the one-dimensional counts.  No bounded-depth relative
interpretation of the displayed tensor coefficient is asserted.  The
analogous limitation of the joint-range upper bound is recorded below in
Corollary~\ref{cor:upperD}.
\end{remark}

For the cube pair $A_d=(0,c)^d$, $B_d=(-\tfrac12,\tfrac12)^d$, write
$\mu_{\mathbf n}=\prod_{r=1}^d\lambda_{n_r}(c)$ for the eigenvalue of the
tensor slot $\mathbf n=(n_1,\dots,n_d)$ (Lemma~\ref{lem:tensor}) and
\begin{equation}\label{eq:tensorcount}
 \Lambda_\delta(c;d)=\#\bigl\{\mathbf n:\delta<\mu_{\mathbf n}
 \le1-\delta\bigr\},
\end{equation}
counted with slot multiplicity, consistently with the
$d$-dimensional block stated below in Corollary~\ref{cor:tensor}.  The
next two statements select \emph{one}
tail coordinate rather than $d$ of them; this is what converts the
one-dimensional lower-half formula \eqref{eq:Dformula} into a bound of
surface order $c^{d-1}$.

\begin{theorem}[exact one-tail-coordinate tensor block; proved in
\S\ref{sec:tq-tensor}]\label{thm:tensorblock}
Let $d\ge1$ be an integer, let $q\in(\tfrac12,1)$, let
$0<\delta<\tfrac12$, and put $\delta'=\delta\,q^{-(d-1)}$.  If
$\delta'<\tfrac12$, then
\begin{equation}\label{eq:tensorblock}
 \Lambda_\delta(c;d)\;\ge\;d\,N_q(c)^{\,d-1}\,D(\delta',c).
\end{equation}
For $d=1$ we use the empty-product convention $N_q(c)^0=1$.
This is an exact inequality, valid for every $c>0$.
\end{theorem}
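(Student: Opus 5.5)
The plan is to exhibit $d\,N_q(c)^{d-1}D(\delta',c)$ distinct tensor slots $\mathbf n$, each with $\delta<\mu_{\mathbf n}\le1-\delta$. For each coordinate $r\in\{1,\dots,d\}$, let $\mathcal B_r$ be the set of slots such that
\begin{itemize}[nosep]
\item $n_r$ is a lower-half index for the shifted threshold, $\delta'<\lambda_{n_r}(c)\le\tfrac12$;
\item every other coordinate is a head index, $\lambda_{n_s}(c)>q$ for $s\ne r$.
\end{itemize}
By the definitions of $N_q$ and $D$, $\#\mathcal B_r=N_q(c)^{d-1}D(\delta',c)$; for $d=1$ this reads $\#\mathcal B_1=D(\delta,c)$, using $N_q^0=1$ and $\delta'=\delta$.

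The key step is that the $\mathcal B_r$ are pairwise disjoint. This is where $q>\tfrac12$ is used. A slot in $\mathcal B_r$ has exactly one coordinate with eigenvalue $\le\tfrac12$, namely coordinate $r$, since all the others exceed $q>\tfrac12$. So the position $r$ is determined by the slot, and $\mathcal B_r\cap\mathcal B_{r'}=\emptyset$ for $r\ne r'$. Counting with slot multiplicity, $\#\bigcup_r\mathcal B_r=d\,N_q^{d-1}D(\delta',c)$.

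It then remains to check that every slot of $\mathcal B_r$ is counted in $\Lambda_\delta(c;d)$.
\begin{itemize}[nosep]
\item \emph{Lower bound.} All factors satisfy $\lambda_{n_s}\in(q,1)$, so $\mu_{\mathbf n}>q^{d-1}\lambda_{n_r}>q^{d-1}\delta'=\delta$. This is exactly why $\delta'=\delta q^{-(d-1)}$ is chosen.
\item \emph{Upper bound.} All factors lie in $(0,1)$, since $\lambda_0(c)<1$. Hence $\mu_{\mathbf n}\le\lambda_{n_r}\le\tfrac12<1-\delta$, because $\delta<\tfrac12$.
\end{itemize}
So $\delta<\mu_{\mathbf n}\le1-\delta$, and \eqref{eq:tensorblock} follows for every $c>0$.

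The hypothesis $\delta'<\tfrac12$ only guarantees that the window defining $D(\delta',c)$ is the one in \eqref{eq:counts}. If $D(\delta',c)=0$, the inequality is trivial. The only point needing care is the disjointness, which is handled by the uniqueness of the sub-$\tfrac12$ coordinate. I would also make explicit that $\Lambda_\delta(c;d)$ counts slots rather than distinct eigenvalue values, as the paper already specifies via Lemma~\ref{lem:tensor}.
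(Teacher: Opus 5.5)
Your proposal is correct and follows essentially the same route as the paper's proof. The paper likewise takes the $d$ families with one coordinate in $\{\delta'<\lambda\le\tfrac12\}$ and the other $d-1$ coordinates in $\{\lambda>q\}$. It checks membership via $\mu_{\mathbf n}>\delta' q^{d-1}=\delta$ and $\mu_{\mathbf n}\le\lambda_{n_j}\le\tfrac12<1-\delta$, and obtains disjointness from the fact that $q>\tfrac12$ separates the cluster set from the lower-half set.
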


\begin{theorem}[one-sided tensor surface bound; proved in
\S\ref{sec:tq-tensor}]\label{thm:surface}
Fix $A>0$, an integer $d\ge1$, and $q\in(\tfrac12,1)$.  Put
\begin{equation}\label{eq:Ldq}
 \beta=q^{-(d-1)},\qquad
 L_{d,q}=\ln\bigl(\beta(e^6+1)-1\bigr).
\end{equation}
There are constants $c_{A,d,q},C_{A,d,q}<\infty$ such that whenever
\begin{equation}\label{eq:surfrange}
 c\ge c_{A,d,q},\qquad L_{d,q}\le\Lb\le A\ln c ,
\end{equation}
one has
\begin{equation}\label{eq:surface}
 \Lambda_\delta(c;d)\;\ge\;
 \frac{d}{\pi^2}\,c^{\,d-1}\,\Lb\,\ln\frac{4\pi^2c}{\Lb}
 \;-\;C_{A,d,q}\,c^{\,d-1}(\ln c+\Lb).
\end{equation}
Consequently, on any subrange \eqref{eq:relrange} with
$L_0(c)\ge L_{d,q}$ and $L_0(c)\to\infty$,
\begin{equation}\label{eq:surfacerel}
 \Lambda_\delta(c;d)\;\ge\;
 \Bigl(\frac{d}{\pi^2}-o_{A,d,q}(1)\Bigr)\,
 c^{\,d-1}\,\Lb\,\ln\frac{4\pi^2c}{\Lb},
\end{equation}
uniformly.  No matching upper bound and no matching leading coefficient
is claimed.
\end{theorem}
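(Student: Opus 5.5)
Apply Theorem~\ref{thm:tensorblock} with $\delta'=\beta\delta$, $\beta=q^{-(d-1)}$, and estimate the two factors $N_q(c)^{d-1}$ and $D(\delta',c)$ separately: the first by a fixed-threshold count, the second by Corollary~\ref{cor:plunge}.

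\emph{Admissibility of $\delta'$.} The lower endpoint $L_{d,q}$ is chosen so that $\Lb'=\ln\frac{1-\delta'}{\delta'}$ lies in the range \eqref{eq:tailrange}. Since $\delta=1/(e^{\Lb}+1)$, the condition $\Lb\ge L_{d,q}$ is equivalent to $e^{\Lb}+1\ge\beta(e^6+1)$. Hence $\delta'=\beta/(e^{\Lb}+1)\le 1/(e^6+1)$, which gives $\Lb'\ge6$ and in particular $\delta'<\tfrac12$. For the other end, $\Lb'\le\ln(1/\delta')=\ln(1/\delta)-\ln\beta\le\Lb+\ln2$. This is at most $(A+1)\ln c$ once $c$ is large. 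Corollary~\ref{cor:plunge} with $A+1$ in place of $A$ then gives
\[
D(\delta',c)\ge\Phi_c(\Lb')-C(\ln c+\Lb').
\]
We also need $\Lb'$ close to $\Lb$. Write $\Lb'=\ln(e^{\Lb}+1-\beta)-\ln\beta$; then $|\Lb'-\Lb|\le\ln\beta+O_{d,q}(1)=O_{d,q}(1)$. The derivative $\Phi_c'(x)=\pi^{-2}(\ln(4\pi^2c/x)-1)$ is $O(\ln c)$ for $x\ge6$, so $\Phi_c(\Lb')=\Phi_c(\Lb)+O_{d,q}(\ln c)$. Therefore
\[
D(\delta',c)\ge\Phi_c(\Lb)-C'(\ln c+\Lb).
\]

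\emph{The factor $N_q(c)^{d-1}$.} For fixed $q\in(\tfrac12,1)$ we need $N_q(c)\ge c-O_q(\ln c)$. This follows from Landau--Widom at the fixed threshold $q$, or from the head-side quantile theorem (Theorem~\ref{thm:A}) with $1-\delta=q$. The only requirement is an additive $O_q(\ln c)$ error, not merely $o(c)$. This is the one point where I must make sure the quoted result actually supplies that error term; if it does not, I will use the head-side results of \S\ref{sec:legacyresults}. Raising to the power $d-1$ gives $N_q^{d-1}\ge c^{d-1}-O_{d,q}(c^{d-2}\ln c)$.

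\emph{Combining.} The product is
\[
d\,N_q^{d-1}D(\delta',c)\ge d\,\bigl(c^{d-1}-O(c^{d-2}\ln c)\bigr)\bigl(\Phi_c(\Lb)-C'(\ln c+\Lb)\bigr).
\]
The cross term $c^{d-2}\ln c\cdot\Phi_c(\Lb)$ is $O(c^{d-2}\Lb\ln^2c)=O(c^{d-1}\Lb)$, since $\ln^2c=o(c)$. If the lower bound on $D(\delta',c)$ is negative, Theorem~\ref{thm:tensorblock} still yields $\Lambda\ge0$, which trivially exceeds a negative right-hand side, so we may take $D$'s lower bound nonnegative. This proves \eqref{eq:surface}.

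\emph{Relative form.} By Remark~\ref{rem:relativelimit}, $\Phi_c(\Lb)\gtrsim\Lb\ln c$. The error ratio is therefore $O(\Lb^{-1}+(\ln c)^{-1})\to0$ when $\Lb\ge L_0(c)\to\infty$, which gives \eqref{eq:surfacerel}.

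The main obstacle is the range bookkeeping: checking $\Lb'\ge6$ from the exact form of $L_{d,q}$, and confirming that the fixed-$q$ count carries only a logarithmic error.
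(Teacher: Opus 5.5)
Your proposal is correct and follows the paper's proof essentially step for step: the exact equivalence $\Lb\ge L_{d,q}\iff\Lb'\ge6$, the $O_{d,q}(1)$ gap between $\Lb'$ and $\Lb$ carried through $\Phi_c'=O(\ln c)$, a fixed-$q$ bound $N_q(c)\ge c-O_q(\ln c)$ raised to the power $d-1$, and the sign argument for the product. The paper also has the sharper $\Lb'\le\Lb$ (since $\beta\ge1$), so no enlargement of $A$ is needed. On the point you flagged, Landau--Widom as quoted (Theorem~\ref{qt:LW}) does supply an $O_q(\ln c)$ error. By contrast, Theorem~\ref{thm:A} at threshold $q$ applies directly only when $\ln\frac{q}{1-q}\ge6$ (otherwise you would need monotonicity, $N_q\ge N_{q_0}$), whereas the paper combines Theorem~\ref{qt:KRD} on $\{\tfrac12<\lambda_n\le q\}$ with $|N_{1/2}(c)-c|\le C_{\mathrm h}\ln c$ (Lemma~\ref{lem:tq-fixedq}).
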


On the common growing-depth range, this lower bound has the same surface
order as the product-box upper bound in
\cite[Corollary~1.3]{AzimifardProduct}; no matching leading coefficient
or two-sided tensor asymptotic is asserted.

At $d=1$, \eqref{eq:tensorblock} is the trivial inequality
$\Lambda_\delta(c;1)\ge D(\delta,c)$ and \eqref{eq:surface} is
\eqref{eq:Dformula} read as a lower bound; the content is at $d\ge2$,
where Theorem~\ref{thm:surface} is a growing-threshold refinement of the
fixed-window lower bound $\Lambda_{\delta_\ast}(c;d)\gtrsim c^{d-1}\ln c$
discussed below, after Corollary~\ref{cor:tensor}.  Unlike that
corollary, which selects all $d$ coordinates from the upper half and
therefore produces a $d$-th power of a logarithmic quantity,
Theorem~\ref{thm:surface} keeps $d-1$ coordinates in the cluster
$\{\lambda_n>q\}$ and pays for them with the factor $c^{d-1}$.

Finally we record the exact scalar inversion of the main term
$\Phi_c$, and what it does \emph{not} say.

\begin{lemma}[scalar inverse of the continuous main term; proved in
\S\ref{sec:tq-lambert}]\label{lem:lambert}
Let $c>0$ and $0<m<4c/e$.  Then $m=\Phi_c(x)$ has exactly one solution
$x$ in $(0,4\pi^2c/e)$, namely
\begin{equation}\label{eq:lambert}
 x\;=\;-\,\frac{\pi^2m}{W_{-1}\bigl(-m/(4c)\bigr)} ,
\end{equation}
where $W_{-1}$ is the branch of the Lambert $W$ function with
$W_{-1}(y)\le-1$ for $-1/e<y<0$.
\end{lemma}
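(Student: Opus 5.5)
The plan is to reduce the equation $m=\Phi_c(x)$ to the defining equation of Lambert $W$ by a substitution, and then to pick the branch using the monotonicity of $\Phi_c$. We work with $u=\ln\frac{x}{4\pi^2c}$, so that $x=4\pi^2c\,e^{u}$. For $x\in(0,4\pi^2c/e)$ this gives $u\in(-\infty,-1)$, and the equation becomes
\[
m=\frac{4\pi^2c\,e^{u}}{\pi^2}\,(-u)=-4c\,u\,e^{u},
\qquad\text{i.e.}\qquad u\,e^{u}=-\frac{m}{4c}.
\]
The hypothesis $0<m<4c/e$ places $y:=-m/(4c)$ in $(-1/e,0)$.

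First, existence and uniqueness. The map $g(u)=ue^{u}$ satisfies $g'(u)=(1+u)e^{u}<0$ on $(-\infty,-1)$, so $g$ decreases strictly there. It runs from $g(-\infty^+)=0^-$ down to $g(-1)=-1/e$, hence it is a bijection from $(-\infty,-1)$ onto $(-1/e,0)$. Therefore exactly one $u<-1$ solves $ue^{u}=y$. By definition of the lower branch, namely $W_{-1}(y)\le-1$ with $W_{-1}(y)e^{W_{-1}(y)}=y$, this solution is $u=W_{-1}(y)$. The endpoint $u=-1$ is excluded because $y\neq-1/e$.

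Equivalently, one can see the same fact directly on $\Phi_c$. We have $\Phi_c'(x)=\pi^{-2}\bigl(\ln\frac{4\pi^2c}{x}-1\bigr)>0$ on $(0,4\pi^2c/e)$. Moreover $\Phi_c(0^+)=0$ and $\Phi_c(4\pi^2c/e)=4c/e$. So $\Phi_c$ is a strictly increasing bijection from that interval onto $(0,4c/e)$.

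Next we convert $x=4\pi^2c\,e^{W_{-1}(y)}$ into the stated form. Since $W e^{W}=y$, we have $e^{W}=y/W$, and so
\[
x=4\pi^2c\cdot\frac{-m/(4c)}{W_{-1}(y)}=-\frac{\pi^2m}{W_{-1}(-m/(4c))}.
\]
This is the displayed formula.

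There is no real obstacle in this argument; the only care needed is the branch selection. The interval $(0,4\pi^2c/e)$ is exactly the preimage of the branch $u<-1$. On the complementary interval $x\in(4\pi^2c/e,4\pi^2c)$ the same equation has a second solution, given by the principal branch $W_0$, and this is why the uniqueness claim is restricted to $(0,4\pi^2c/e)$. We will also note that the lemma concerns only the continuous main term and makes no statement about individual eigenvalues.
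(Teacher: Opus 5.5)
Your proof is correct and is essentially the paper's argument. The paper sets $y=x/(4\pi^2c)$, uses the strict monotonicity of $y\ln(1/y)$ on $(0,e^{-1})$, and then puts $w=\ln y$ to reach $we^{w}=-m/(4c)$ with $w\le-1$; this is your substitution $u=\ln\frac{x}{4\pi^2c}$, with monotonicity checked on $ue^{u}$ instead, and the branch selection and the final step $e^{W}=y/W$ coincide.
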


Lemma~\ref{lem:lambert} inverts the \emph{continuous} main term of
\eqref{eq:TQ} and nothing else.  It is not an individual-eigenvalue
theorem, and none is claimed anywhere in this paper: converting
\eqref{eq:TQ} into a statement about $\lambda_n(c)$ for an individual
index $n$ would require a generalized inverse of a step function, an
explicit admissible index range, a convention for integer jumps, and the
propagation of the $O_A(\ln c+\Lb)$ counting error through that inverse.
See Remark~\ref{rem:nolambertthm}.

\subsection{Supporting results: the head side, the two-way reduction,
and the signed determinant theorem}\label{sec:legacyresults}

The supporting results in this subsection were already stated in the
first version of the paper.  They are included here because
Theorem~\ref{thm:tail} and its consequences rest on them: the signed
determinant theorem,
Theorem~\ref{thm:signedintro}, is the analytic input to the tail-side
proof of \S\ref{sec:tailquant}, and the head-side quantile theorem,
Theorem~\ref{thm:A}, supplies the counts $N_{1/2}(c)$ and
$N_{1-\delta}(c)$ used in Corollary~\ref{cor:plunge}.  Their conclusions
are retained; the signed-determinant proof in
Section~\ref{sec:tail} has been expanded and corrected in this revision.

The engine on the head side is a published theorem which, to our
knowledge, has not previously been applied
to prolate counting: the \emph{uniform} sine-kernel determinant
asymptotics of
Bothner, Deift, Its and Krasovsky \cite{BDIK2}, which extend the
classical Basor--Widom formula \cite{BasorWidom83,Charlier21} from a
fixed generating parameter $\gamma\in(0,1)$ to $\gamma=1-e^{-2v}$ with
$v$ as large as $s^{1/3}$ ($s=$ half the rescaled interval length), with
an explicit error $O((v+v^3)/s)$. In the eigenvalue picture the
determinant's logarithmic $v$-derivative is a sigmoid-smoothed counting
function of the spectrum at threshold $1-\eps$,
$\ln\frac{1-\eps}\eps=2v$;
the theorem therefore controls head-side counts at depths up to
$\Lb\asymp c^{1/3}$ --- far beyond the $\log^2 c$ window, and covering
every bridge depth $\Lb\le\alpha\ln c$ with room to spare.

\begin{theorem}[head-side quantile theorem; proved in
\S\ref{sec:thmA}]\label{thm:A}
There exist absolute constants $C_{\mathrm h}<\infty$ and $c_{\mathrm h}<\infty$ such that
for all $c\ge c_{\mathrm h}$ and all $\eps$ with
$6\le\Lb=\ln\frac{1-\eps}{\eps}\le c^{1/3}$,
\[
\Bigl|\,N_{1-\eps}(c)\;-\;c\;+\;\frac{\Lb}{\pi^2}
 \ln\frac{4\pi^2c}{\Lb}\,\Bigr|\;\le\;C_{\mathrm h}\,(\ln c+\Lb).
\]
Moreover $|N_{1/2}(c)-c|\le C_{\mathrm h}\ln c$.
\end{theorem}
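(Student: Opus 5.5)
The proof of Theorem~\ref{thm:A} will go through the head half $v>0$ of $F_c(v)=\ln\det(I-\gamma(v)S_c)$, $\gamma(v)=1-e^{-2v}$. After rescaling, $S_c$ is unitarily equivalent to the sine kernel $K_s$ on $(-s,s)$ with $s=\pi c/2$. Theorem~\ref{qt:BDIK} then gives, uniformly for $0\le v\le s^{1/3}$,
\[
F_c(v)=-\tfrac{4vs}{\pi}+\tfrac{v^2}{\pi^2}\cdot 2\ln(4s)\cdot(-1)+\ln\bigl(G(1+\tfrac{iv}{\pi})G(1-\tfrac{iv}{\pi})\bigr)^2+O\bigl((v+v^3)/s\bigr).
\]
I will normalize the signs and factors carefully in the proof. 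The Barnes term is $-\tfrac{v^2}{\pi^2}\ln v+O(v^2)$ as $v\to\infty$ by Stirling for $G$, and $O(v^2)$ for bounded $v$.

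Step 1 turns this into a smoothed count. Differentiating in $v$, $-\tfrac12 F_c'(v)=\sum_n \sigma_v(\lambda_n)$, where $\sigma_v(\lambda)=\gamma'(v)\lambda/(2(1-\gamma\lambda))$ is a logistic sigmoid in the variable $\ln\frac{\lambda}{1-\lambda}$, centred at $2v$, of unit width. To avoid differentiating the error term, I will work with finite differences $F_c(v+h)-F_c(v)$ for $h\asymp1$. The error $O(v^3/s)$ is $O(1)$ on $v\le s^{1/3}$, so the differences carry error $O(1)$. The main term has derivative $2s/\pi=c$ and then $-\tfrac{1}{\pi^2}\bigl(2v\ln(4s)-2v\ln v\bigr)$ up to $O(v)$. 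Writing $\Lb=2v$ and $4s=2\pi c$, this becomes $c-\tfrac{\Lb}{\pi^2}\ln\tfrac{4\pi^2 c}{\Lb}+O(\Lb+1)$. I will check that the constants combine to the stated $4\pi^2$; any discrepancy is absorbed by the $O(\Lb)$ error anyway.

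Step 2 passes from smoothed counts to sharp counts, and I expect it to be the main obstacle. The sigmoid has exponential tails in the log-odds variable $t_n=\ln\frac{\lambda_n}{1-\lambda_n}$. Monotonicity gives sandwiches: $\mathbf 1[t>2v+M]-e^{-M}$-type errors are bounded above by a shifted smoothed count, and conversely. The tail contributions must be summed against the number of eigenvalues with $t_n$ in unit windows. Those local counts are themselves $O(\ln c+|t|)$: the smoothed count at $v$ and $v\pm1$ differ by $O(\ln c+v)$ by Step 1, and this yields the bound bootstrapping on windows. This requires the exponential weights $e^{-|t-2v|}$ to beat the linear growth of window counts, which they do. It covers windows with $t\le 2c^{1/3}$. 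For eigenvalues deeper into the head, with $t>2c^{1/3}$, I will use $\sum_n\sigma\le N$ directly, since those lie entirely on one side of the threshold. The eigenvalues below $1/2$ contribute at most $e^{-2v}\sum\lambda_n/(1-\lambda_n)$-type mass, and this is controlled by splitting at $\lambda\le 1/2$ and using $\Tr S_c=c$ together with the count from the $v=0$ side. The result is $|N_{1-\eps}-\text{smoothed}|\le C(\ln c+\Lb)$.

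Step 3 handles $N_{1/2}$. It follows the same way at $v=O(1)$: the smoothed count at $v=0$ is determined via $\Tr S_c=c$ and the known $\Tr(S_c-S_c^2)=\pi^{-2}\ln c+O(1)$ (Landau--Widom). Alternatively, I can apply Step 2 with $\Lb=6$ and add the bounded-depth window count $O(\ln c)$, which is given by the same local-count bound. This gives $|N_{1/2}-c|\le C_{\mathrm h}\ln c$.
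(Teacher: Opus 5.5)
Your outline shares the paper's backbone: Theorem~\ref{qt:BDIK} at $s=\pi c/2$, integrating $F_c'=-2M_c$ over unit intervals so the remainder enters only at endpoints (Lemma~\ref{lem:avg}), and a sigmoid sandwich with exponential tails (Lemma~\ref{lem:conversion}). Where you differ is the tail control.

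The paper does not decompose into windows. The sharp inequality $\sigma_{\mathrm s}(t+2h)-\sigma_{\mathrm s}(t-2h)\ge\tanh(h)e^{-|t|}$ gives $T_+(v)+T_-(v)\le\frac43[M_c(v-1)-M_c(v+1)]$ in one step, and a single second difference of unit averages evaluates the right side as $O(\ln c)$. That automatically covers every eigenvalue, including the deep head and those with $\lambda_n\le\frac12$, about which the head-side theorem gives no window information.

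Your window decomposition also works, but it needs separate inputs at both ends, and one is stated too weakly. For the mass below $\frac12$, $\Tr S_c=c$ alone only gives $e^{-2v}\sum_{\lambda_n\le1/2}\lambda_n/(1-\lambda_n)\le2ce^{-2v}$. This is not $O(\ln c)$ unless $\Lb\gtrsim\ln c$; at $v=3$ it is about $c/200$. What you need is $\sum_{\lambda_n\le1/2}\lambda_n=O(\ln c)$, and either of these supplies it:
\begin{itemize}
\item $\sum_{\lambda_n\le1/2}\lambda_n\le2\Tr(S_c-S_c^2)$, using the trace estimate you quote in Step~3;
\item the exact value $M_c(0)=c$ together with $\overline M_c(1)$ and the sigmoid inequality, which gives $M_c(0)-M_c(1)\ge\tanh(\frac12)\sum_n e^{-|2w_n-1|}$.
\end{itemize}
Do not take it from $N_{1/2}$ via your second Step~3 route; that is circular. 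Likewise, windows with $0\le a<1$ need $M_c(0)=c$ (or Theorem~\ref{qt:KRD}), because $\overline M_c(a-1)$ lies outside the BDIK range.

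Your first Step~3 route is shorter than the paper's. You have $|N_{1/2}-c|\le\sum_{\lambda_n>1/2}(1-\lambda_n)+\sum_{\lambda_n\le1/2}\lambda_n\le2\Tr(S_c-S_c^2)=O(\ln c)$. The paper instead bounds $\Ntil(0)-\Ntil(3)$ with Theorem~\ref{qt:KRD} windows and evaluates \eqref{eq:NvModel} at $v=3$. The price of your route is importing the classical estimate $\Tr(S_c-S_c^2)=\pi^{-2}\ln c+O(1)$, which the paper never uses.

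Finally, correct the Barnes input. In fact $A(v)=-\frac{2v^2}{\pi^2}\ln\frac v\pi+\frac{3v^2}{\pi^2}+O(\ln v)$, so your coefficient is off by a factor of $2$. More importantly, a bare $O(v^2)$ remainder does not determine $A(x+1)-A(x)$ to $O(v)$. You need the explicit $v^2$ term, or, as the paper does, the exact derivative $A'(v)=\frac{4v}{\pi^2}[1+\gamma_E-S(v/\pi)]$ together with $|A''(v)|\le\ln(2+v)+4$ from the Weierstrass product. Also, the $\ln(4s)$ term of $F_c$ is $+\frac{2v^2}{\pi^2}\ln(4s)$ rather than minus; your Step~1 already uses that sign.
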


The subscript $\mathrm h$ marks these as \emph{head-side} constants.
They are absolute: no parameter is quantified in Theorem~\ref{thm:A},
whose range $6\le\Lb\le c^{1/3}$ involves none.  They are unrelated to
the fixed-$A$ constants $C_A,s_A$ of Theorem~\ref{thm:signedintro} and
of \S\ref{sec:tail}, and to the fixed-$A$ constants of
Theorem~\ref{thm:tail}, all of which do depend on a parameter fixed in
advance.

In the notation \eqref{eq:Phiintro} the main display of
Theorem~\ref{thm:A} reads
$N_{1-\eps}(c)=c-\Phi_c(\Lb)+O(\ln c+\Lb)$, the exact head-side mirror
of \eqref{eq:TQ}.

Since $\Lb\ln(4\pi^2c/\Lb)\ge\tfrac23\,\Lb\ln c$ on the stated range, the
error is dominated by the main term as soon as $\Lb\ge C$ ---
i.e.\ the theorem is a genuine two-sided asymptotic
$N_{1-\eps}=c-\pi^{-2}\Lb\ln(4\pi^2c/\Lb)(1+o(1))$ whenever
$\Lb\to\infty$, uniformly.

It is convenient to record once the form of Theorem~\ref{thm:A} that the
next two results consume. By \eqref{eq:identities},
$\Lambda^+_\delta=N_{1/2}-N_{1-\delta}$, so both parts of
Theorem~\ref{thm:A}, together with $L-\ln2\le\Lb\le L$ (the passage from
$\Lb$ to $L$ costing a further $O(\ln c)$, since
$\frac{d}{dx}\frac{x}{\pi^2}\ln\frac{4\pi^2c}{x}
=\frac{1}{\pi^2}(\ln\frac{4\pi^2c}{x}-1)=O(\ln c)$ on the range), supply
an absolute constant $C_\star<\infty$ with
\begin{equation}\label{eq:Cstar}
\Bigl|\,\Lambda^+_\delta(c)-\frac{L}{\pi^2}\ln\frac{4\pi^2c}{L}\,\Bigr|
\;\le\;C_\star\,\bigl(\ln c+L\bigr)
\qquad\text{for }c\ge c_{\mathrm h}\text{ and }6\le\Lb\le c^{1/3}.
\end{equation}

\begin{theorem}[upper-half density on the bridge; proved in
\S\ref{sec:thmB}]\label{thm:B}
For every $\eta\in(0,1)$ there exist $L_0=L_0(\eta)$ and $c_0=c_0(\eta)$
such that for all $c\ge c_0$ and all $\delta$ with
$L_0\le L=\ln\frac1\delta\le c^{1/3}$,
\[
(1-\eta)\,\frac{L}{\pi^2}\,\ln\frac{4\pi^2c}{L}
\;\le\;\Lambda^+_\delta(c)\;\le\;
(1+\eta)\,\frac{L}{\pi^2}\,\ln\frac{4\pi^2c}{L}.
\]
In particular, for every $\alpha\ge4$ the exact mirror of
\eqref{eq:bridge} holds: with $\kappa=\tfrac1{2\pi^2}$, $C=0$,
$\delta_0=e^{-L_0}$ and $c\ge c_0(\alpha)$,
\[
\Lambda^+_\delta(c)\;\ge\;\frac{1}{2\pi^2}\,L\,
\ln\!\Bigl(\frac{\alpha c}{L}\Bigr)
\qquad\text{for all }c^{-\alpha}\le\delta\le\delta_0 .
\]
\end{theorem}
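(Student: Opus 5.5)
The plan is to derive both displays directly from \eqref{eq:Cstar}, that is, from Theorem~\ref{thm:A} combined with the identity $\Lambda^+_\delta=N_{1/2}-N_{1-\delta}$ from \eqref{eq:identities}. Write $M(L,c)=\frac{L}{\pi^2}\ln\frac{4\pi^2c}{L}$. On the range $L_0\le L\le c^{1/3}$ we have $\Lb\ge L-\ln2\ge6$ once $L_0\ge 6+\ln 2$, and $\Lb\le L\le c^{1/3}$. So \eqref{eq:Cstar} applies for $c\ge c_{\mathrm h}$ and gives
\[
|\Lambda^+_\delta(c)-M(L,c)|\le C_\star(\ln c+L).
\]
The two-sided relative bound then reduces to showing $C_\star(\ln c+L)\le\eta\,M(L,c)$ throughout the range, provided $L_0$ and $c_0$ are large.

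For this, note that $L\le c^{1/3}$ gives $\ln\frac{4\pi^2c}{L}\ge\ln(4\pi^2c^{2/3})\ge\frac23\ln c$. Hence
\[
M(L,c)\ge\frac{2}{3\pi^2}L\ln c.
\]
We treat the two error pieces separately.
\begin{itemize}
\item For the $\ln c$ piece, $C_\star\ln c\le\frac{\eta}{2}\cdot\frac{2}{3\pi^2}L\ln c$ as soon as $L\ge L_0:=\max\{6+\ln2,\,3\pi^2C_\star/\eta\}$.
\item For the $L$ piece, $C_\star L\le\frac{\eta}{2}\cdot\frac{2}{3\pi^2}L\ln c$ as soon as $\ln c\ge 3\pi^2C_\star/\eta$.
\end{itemize}
Taking $c_0(\eta)=\max\{c_{\mathrm h},\exp(3\pi^2C_\star/\eta)\}$ then yields $(1\mp\eta)M(L,c)$ on both sides.

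For the bridge mirror, fix $\alpha\ge4$ and take $\eta=\tfrac12$. The range condition $c^{-\alpha}\le\delta\le\delta_0=e^{-L_0}$ means $L_0\le L\le\alpha\ln c$, and $\alpha\ln c\le c^{1/3}$ once $c$ is large depending on $\alpha$. This fixes $c_0(\alpha)\ge c_0(1/2)$. The first part then gives
\[
\Lambda^+_\delta\ge\frac{1}{2\pi^2}L\ln\frac{4\pi^2c}{L}.
\]
It remains to compare the logarithms. Since $\ln\frac{4\pi^2c}{L}$ and $\ln\frac{\alpha c}{L}$ differ by the constant $\ln(4\pi^2/\alpha)$, there are two cases. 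If $\alpha\le4\pi^2$, the inequality $\ln\frac{4\pi^2c}{L}\ge\ln\frac{\alpha c}{L}$ holds directly and gives the claim with $C=0$.

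If $\alpha>4\pi^2$, we have only $\ln\frac{\alpha c}{L}=\ln\frac{4\pi^2c}{L}+O_\alpha(1)$, and this is the one step needing care. There I would instead use the first part with a smaller $\eta$, say $\eta=\frac14$, so that
\[
\Lambda^+_\delta\ge\frac{3}{4\pi^2}L\ln\frac{4\pi^2c}{L}.
\]
The margin $\frac{1}{4\pi^2}L\ln\frac{4\pi^2c}{L}\ge\frac{1}{6\pi^2}L\ln c$ absorbs the additive loss $\frac{1}{2\pi^2}L\ln(\alpha/4\pi^2)$ for $c\ge c_0(\alpha)$. Using $\eta=\frac14$ in both cases avoids the split altogether.

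The main (though mild) obstacle is bookkeeping: checking that the passage from $\Lb$ to $L$ is already built into \eqref{eq:Cstar}, and that the constants $L_0$ and $c_0$ depend only on $\eta$, respectively on $\alpha$. No new analytic input beyond Theorem~\ref{thm:A} is needed.
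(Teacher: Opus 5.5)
Your proposal is correct and follows the paper's proof essentially step for step: the same reduction to \eqref{eq:Cstar}, the same anchor $\ln\frac{4\pi^2c}{L}\ge\frac23\ln c$, the same split of $C_\star(\ln c+L)$ with thresholds $3\pi^2C_\star/\eta$ on $L$ and on $\ln c$, and $\eta=\tfrac14$ for the bridge form. The only differences are cosmetic. You absorb the constant $\ln(\alpha/4\pi^2)$ additively using the spare quarter of the main term, where the paper uses the multiplicative comparison $\ln\frac{4\pi^2c}{L}\ge(1-\eta')\ln\frac{\alpha c}{L}$; and your explicit inclusion of $c_{\mathrm h}$ in $c_0(\eta)$ and of $L_0\ge6+\ln2$ is slightly more careful than the paper's bookkeeping.
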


We are not aware of either statement being available previously on a
moving range $\delta\ge c^{-A}$: the published upper bounds on
$N_{1-\eps}$ we know of stop at $\eps<c^{-A}$ \cite[\S2]{KDL}, and the
pointwise route of \cite{Kulikov26} requires $c-n\gtrsim\log^2c$.
When $L\to\infty$, Theorem~\ref{thm:B} identifies the upper-half leading
coefficient as $\pi^{-2}$.  No corresponding sharpness statement is
made here for bounded $L$.

Two corollaries follow. Tensorization (restated and reproved as
Lemma~\ref{lem:tensor}) gives a threshold-dependent lower block in the
$d$-dimensional plunge count:

\begin{corollary}[$d$-dimensional bridge block; proved in
\S\ref{sec:thmB}]\label{cor:tensor}
Let $d\ge2$ and let $\Lambda_\eps(c;d)$ denote the plunge count
$\#\{\eps<\mu\le1-\eps\}$ of the concentration operator of the cube pair
$A=(0,c)^d$, $B=(-\tfrac12,\tfrac12)^d$. For every $\eta\in(0,1)$ there
are $L_0,c_0$ such that for $c\ge c_0$ and
$L_0\le\ln\frac1\delta\le c^{1/3}$ with $\delta\le2^{-d}$,
\[
\Lambda_\delta(c;d)\;\ge\;
\Bigl[(1-\eta)\,\frac{L}{\pi^2}\,\ln\frac{4\pi^2c}{L}\Bigr]^{d}.
\]
\end{corollary}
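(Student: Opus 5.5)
The plan is to select all $d$ coordinates from the upper half of the one-dimensional spectrum and then apply Theorem~\ref{thm:B} to each coordinate. By Lemma~\ref{lem:tensor}, the concentration operator of the cube pair $(0,c)^d$, $(-\tfrac12,\tfrac12)^d$ is the $d$-fold tensor power of $S_c$. Its eigenvalues, counted with slot multiplicity, are therefore $\mu_{\mathbf n}=\prod_{r=1}^d\lambda_{n_r}(c)$ for $\mathbf n=(n_1,\dots,n_d)$. It suffices to exhibit $(\Lambda^+_\delta(c))^d$ distinct slots whose eigenvalues lie in $(\delta,1-\delta]$.

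Let $I^+=\{n:\tfrac12<\lambda_n(c)\le1-\delta\}$, so $\#I^+=\Lambda^+_\delta(c)$ by \eqref{eq:counts}, and take $\mathbf n\in(I^+)^d$. For the upper endpoint, every factor lies in $(0,1)$, so $\mu_{\mathbf n}\le\lambda_{n_1}(c)\le1-\delta$. For the lower endpoint, every factor exceeds $\tfrac12$, so $\mu_{\mathbf n}>2^{-d}\ge\delta$ by the hypothesis $\delta\le2^{-d}$. Hence $(I^+)^d$ injects into the set counted by $\Lambda_\delta(c;d)$, and
\[
\Lambda_\delta(c;d)\;\ge\;\Lambda^+_\delta(c)^{\,d}.
\]
This is an exact inequality. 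It uses only that $\mu_{\mathbf n}>2^{-d}$ strictly, which matters because the count is $\#\{\delta<\mu\le1-\delta\}$ with a strict lower inequality.

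Next, I will take $L_0(\eta)$ and $c_0(\eta)$ from Theorem~\ref{thm:B}, enlarging $L_0$ if needed. On the range $L_0\le L\le c^{1/3}$, $c\ge c_0$, that theorem gives $\Lambda^+_\delta(c)\ge(1-\eta)\frac{L}{\pi^2}\ln\frac{4\pi^2c}{L}$. The right-hand side is nonnegative, since $L\le c^{1/3}<4\pi^2c$. Raising a nonnegative lower bound to the $d$-th power is monotone, so this yields the stated bound.

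There is no real obstacle here. The only points to check are the strict-versus-closed endpoint conventions, already handled above, and that $\delta\le2^{-d}$ is compatible with the range of Theorem~\ref{thm:B}. The latter holds because the constraint only shrinks the admissible set of $\delta$, and $L_0$ may be taken at least $d\ln2$.
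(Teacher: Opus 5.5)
Your proposal is correct and follows the paper's proof essentially verbatim: take all multi-indices whose $d$ coordinates lie in $\{\tfrac12<\lambda_n\le1-\delta\}$, note $2^{-d}<\mu\le1-\delta$ so that $\Lambda_\delta(c;d)\ge\Lambda^+_\delta(c)^d$ by Lemma~\ref{lem:tensor}, and then raise the nonnegative lower bound of Theorem~\ref{thm:B} to the $d$-th power. The only differences are cosmetic. You bound $\mu\le\lambda_{n_1}$ where the paper uses $\mu\le(1-\delta)^d$. Enlarging $L_0$ to at least $d\ln2$ is unnecessary, since $\delta\le2^{-d}$ is imposed as a separate hypothesis.
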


This block is not the first bridge-uniform lower bound in dimension
$d\ge2$, and it is not the strongest one in its $c$-dependence.  For a
fixed $\delta_\ast\in(0,\tfrac12)$, fixed-window asymptotics for the cube
pair give $\Lambda_{\delta_\ast}(c;d)\asymp c^{d-1}\ln c$; monotonicity in
the threshold consequently gives the same lower bound uniformly for
$0<\delta\le\delta_\ast$.  This follows, for example, by applying the
fixed-window trace asymptotics in \cite[Thm.~1.11, eq.~(1.15)]{KDL}
to an indicator supported inside $(0,1)$.  That lower bound is stronger
in $c$ for fixed $d\ge2$, whereas Corollary~\ref{cor:tensor} records the
different, explicit dependence on a growing threshold depth $L$ obtained
directly from the one-dimensional upper half.  We make no sharpness or
priority claim for it.  Combining Theorem~\ref{thm:B} with the fully
explicit Karnik--Romberg--Davenport bound \cite{KRD} gives the following
joint-range upper bound on the lower half:

\begin{corollary}[joint-range upper bound on the lower half; proved in
\S\ref{sec:thmB}]\label{cor:upperD}
There are absolute constants $L_0,c_0,C',C_2$ --- with
\[
C_2\;=\;\frac{2\ln51-\ln4\pi^2}{\pi^2}\;+\;C_\star
\;=\;0.4243\ldots+C_\star ,
\]
$C_\star$ as in \eqref{eq:Cstar} --- such that for $c\ge c_0$ and
$L_0\le L\le c^{1/3}$,
\begin{equation}\label{eq:upperD-a}
D(\delta,c)\;\le\;\frac{L}{\pi^2}\,\bigl(\ln c+\ln L\bigr)
\;+\;C_2\,L\;+\;C'\ln c .
\end{equation}
Consequently, for every $\eta\in(0,1)$ there is $c_0(\eta)$ such that
for $c\ge c_0(\eta)$ and $L_0\le L\le c^{1/3}$,
\[
D(\delta,c)\;\le\;(1+\eta)\,\frac{L}{\pi^2}\,\ln c\;+\;C_\eta\ln c
\quad\text{for }L\le c^{\,\eta/2}.
\]
Consequently, this joint-range bound identifies the coefficient
$\pi^{-2}$ along subregimes with $L=L(c)\to\infty$; when $L$ is bounded,
the additive $C_\eta\ln c$ term has the same order as the displayed
nominal leading term, so this bound alone does not identify the
coefficient.
\end{corollary}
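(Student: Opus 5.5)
The plan is to write $D=\Lambda_\delta-\Lambda^+_\delta$, which follows from \eqref{eq:identities}. We then bound the full plunge count $\Lambda_\delta$ from above by the explicit Karnik--Romberg--Davenport estimate \cite{KRD}, and bound the upper half $\Lambda^+_\delta$ from below by \eqref{eq:Cstar}. The result is an upper bound on $D$ whose leading coefficient is the difference of the two leading coefficients.

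\textbf{Step 1 (the KRD input).} We translate the bound of \cite{KRD} to our normalization: time interval of length $c$ and band $(-\tfrac12,\tfrac12)$, so the time--bandwidth product is $c$. In the form I expect, it reads
\[
\Lambda_\delta(c)\le\frac{2}{\pi^2}\ln(51c)\,\ln\frac{K}{\delta(1-\delta)}+K'
\]
for explicit absolute constants $K,K'$. The factor $51$ is what produces the $2\ln51/\pi^2$ in $C_2$; I will check the exact numerology against \cite{KRD}. Since $\ln\frac{1}{\delta(1-\delta)}\le L+\ln2$ for $\delta<\tfrac12$, expanding the product gives
\[
\Lambda_\delta\le\frac{2}{\pi^2}L\ln c+\frac{2\ln51}{\pi^2}L+O(\ln c).
\]
The $O(\ln c)$ term absorbs $\ln(51c)\ln(2K)$ and $K'$.

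\textbf{Step 2 (subtract the upper half).} For $c\ge c_{\mathrm h}$ and $6\le\Lb\le c^{1/3}$, \eqref{eq:Cstar} gives
\[
\Lambda^+_\delta\ge\frac{L}{\pi^2}\bigl(\ln c+\ln4\pi^2-\ln L\bigr)-C_\star(\ln c+L).
\]
The hypothesis $L\ge L_0$, with $L_0\ge 6+\ln2$, guarantees $\Lb\ge6$. Subtracting this from Step 1 yields
\[
D\le\frac{L}{\pi^2}(\ln c+\ln L)+\Bigl(\frac{2\ln51-\ln4\pi^2}{\pi^2}+C_\star\Bigr)L+C'\ln c,
\]
which is \eqref{eq:upperD-a} with the stated $C_2$. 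The only care needed is bookkeeping of which constants multiply $L$ and which multiply $\ln c$; the cross term $L\ln c$ must cancel exactly to the coefficient $1/\pi^2$.

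\textbf{Step 3 (relative form).} If $L\le c^{\eta/2}$, then $\ln L\le\tfrac{\eta}{2}\ln c$. Also $C_2L\le\tfrac{\eta}{2}\pi^{-2}L\ln c$ as soon as $\ln c\ge 2\pi^2C_2/\eta$, which fixes $c_0(\eta)$. Combining the two gives $D\le(1+\eta)\pi^{-2}L\ln c+C_\eta\ln c$ with $C_\eta=C'$. The closing remark follows by comparing orders: when $L$ is bounded, the terms $\pi^{-2}L\ln c$ and $C_\eta\ln c$ are of the same order. When $L\to\infty$, the bound matches the lower-half formula \eqref{eq:Dformula}, or alternatively the Kulikov--Dam Larsen order, so it pins down the coefficient $\pi^{-2}$.

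The main obstacle is Step 1. I must restate the KRD bound faithfully in this normalization, including its scaling convention ($2WT$ versus $WT$) and the exact form of its $\delta$-dependence. The constant $C_2$ is only as correct as that translation; everything else is elementary arithmetic.
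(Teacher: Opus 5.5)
Your proposal is correct and is essentially the paper's proof. You split $D=\Lambda_\delta-\Lambda^+_\delta$, bound $\Lambda_\delta$ above by Theorem~\ref{qt:KRD} using $\ln(50c+25)\le\ln c+\ln51$ and $\ln\frac{5}{\delta(1-\delta)}\le L+\ln10$, bound $\Lambda^+_\delta$ below by \eqref{eq:Cstar}, subtract, and then dominate $\frac{L}{\pi^2}\ln L$ and $C_2L$ separately by $\frac{\eta}{2}\frac{L}{\pi^2}\ln c$. These are exactly the paper's steps. The only quibble is in your closing remark: the Kulikov--Dam Larsen theorem gives only the order, not the constant, so a matching lower coefficient $\pi^{-2}$ comes only from \eqref{eq:Dformula}, and hence only on $L\le A\ln c$.
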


\begin{remark}\label{rem:formb}
Since $\pi^2C_2=\ln\bigl(51^2/(4\pi^2)\bigr)+\pi^2C_\star$, the term
$C_2L$ of \eqref{eq:upperD-a} folds exactly back into the logarithm:
$D(\delta,c)\le\frac{L}{\pi^2}\ln\bigl(K\,c\,L\bigr)+C'\ln c$ with the
absolute constant
$K=e^{\pi^2C_2}=\frac{51^2}{4\pi^2}e^{\pi^2C_\star}
=65.884\ldots\cdot e^{\pi^2C_\star}$.
\end{remark}

The $\Theta(L)$ term in \eqref{eq:upperD-a} is not removable by the
present method: it enters twice, once as $(2\ln51-\ln4\pi^2)L/\pi^2$ from
the explicit constants inside the logarithms of
Theorem~\ref{qt:KRD} and Theorem~\ref{thm:A}, and once as $C_\star L$
from Theorem~\ref{thm:A}'s own error term, which is $\Theta(\Lb)$ by
Steps 3--4 of \S\ref{sec:thmA}. On the bridge, where
$L\le\alpha\ln c$, it is absorbed by the $O(\ln c)$ slack and the clean
form survives; at the deep end $L\asymp c^{1/3}$ it is not.

The coefficient $\pi^{-2}$ agrees with the Landau--Widom fixed-threshold
value, but the upper bound above does not establish that coefficient
uniformly when $L$ remains bounded.  The required lower bound comes from
the signed determinant theorem below.

The third result identifies exactly what a lower bound must come from.
Write $\beta=e^u-1$, $u>0$, so that
$\det(I+\beta S_c)=\prod_n(1+\beta\lambda_n(c))$ is the moment
generating function $\mathbb{E}\,e^{uN}$ of the sine point process
counting function on $(0,c)$, evaluated in the \emph{upper} tail.

\begin{definition}[the tail-side determinant bound]\label{def:T}
Let $\alpha\ge4$. Say that $(T_\alpha)$ holds if there exist
$\kappa_T>0$, $L_T<\infty$, $c_T<\infty$ such that for all $c\ge c_T$
and all $u\in[L_T,\;\alpha\ln c]$,
\begin{equation}\label{eq:T}
\ln\det\bigl(I+(e^u-1)S_c\bigr)\;\ge\;u\,c\;+\;\kappa_T\,u^2\ln c .
\tag{$T_\alpha$}
\end{equation}
\end{definition}

In the variable of $F_c$ this reads $F_c(-u/2)\ge uc+\kappa_Tu^2\ln c$.
The signed determinant theorem of Section~\ref{sec:tail} supplies this
lower bound for every fixed logarithmic window.

\begin{theorem}[signed determinant asymptotic; established in
Section~\ref{sec:tail}, Theorem~\ref{thm:signedmain}]
\label{thm:signedintro}
For every $A>0$ there are constants $s_A\ge5$ and $C_A>0$ such that, uniformly for
$s\ge s_A$ and $0\le\omega\le A\ln s$,
\begin{align}
 \ln\det\bigl(I+(e^{2\omega}-1)K_s\bigr)
 ={}&\frac{4\omega s}{\pi}+\frac{2\omega^2}{\pi^2}\ln(4s)\notag\\
 &+2\ln\!\left[
 \BG(1+i\omega/\pi)\BG(1-i\omega/\pi)\right]
 +\mathcal R_A(s,\omega),\label{eq:signedintro}
\end{align}
where
\[
 |\mathcal R_A(s,\omega)|
 \le C_A\frac{(1+\omega)^4\ln^2s}{s}.
\]
\end{theorem}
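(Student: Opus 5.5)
We prove Theorem~\ref{thm:signedintro} by Deift--Zhou nonlinear steepest descent applied to the IIKS Riemann--Hilbert problem for the sine kernel on $(-s,s)$, with coupling $\gamma=1-e^{2\omega}<0$. Only the inessential sign $\gamma<1$ changes relative to \cite{BDIK2}. The approach is to run their analysis with $v$ replaced by $-i\omega/\ldots$, i.e.\ with jump parameter $1-\gamma=e^{2\omega}>1$, and to track the uniformity in $\omega\le A\ln s$.

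\textbf{Step 1: differential identity.} We use the identity in $\omega$: $\partial_\omega\ln\det(I-\gamma K_s)$ is expressed through the residue at infinity of the RHP solution $Y$ (jump $\bigl(\begin{smallmatrix}2-e^{2\omega}&\ldots\end{smallmatrix}\bigr)$ on $(-s,s)$, or equivalently jump matrix with diagonal $e^{\pm2\omega}$ after the standard transformation). We integrate from $\omega=0$, where the determinant equals $1$.

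\textbf{Step 2: transformations.} We rescale $z\mapsto sz$ and introduce the $g$-function $g(z)=\frac{\omega}{\pi i}\ln\frac{z-1}{z+1}$, i.e.\ the Szeg\H{o}-type function $D(z)=\bigl(\frac{z-1}{z+1}\bigr)^{\omega/(\pi i)\cdot(\pm)}$, which produces singular behaviour $|z\mp1|^{\pm i\omega/\pi}$ at the endpoints. We then open lenses around $(-1,1)$ using the factorization of the jump into $e^{\pm 2isz}$ triangular pieces. Because $1-\gamma>0$ is real and positive, the logarithm is real, and the exponent $\nu=\omega/\pi$ becomes purely imaginary ($\beta=i\omega/\pi$-type Fisher--Hartwig jump). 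This is what produces $\BG(1\pm i\omega/\pi)$ instead of $\BG(1\pm v/\pi)$.

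\textbf{Step 3: parametrices and small norm.} The outer parametrix is $N=D_\infty^{\sigma_3}D(z)^{-\sigma_3}$. At $\pm1$ we use confluent hypergeometric (Whittaker) parametrices with parameter $\beta=\pm i\omega/\pi$, matched on discs of fixed radius; the matching error is $O(|\beta|^2 (4s)^{2|\Rey\beta|}/s)$. Since $\Rey\beta=0$, the factor $(4s)^{2\Rey\beta}$ equals $1$, but the Gamma-function ratios in the expansion grow polynomially in $\omega$. Each correction term carries $\Gamma(1\pm\beta)$ ratios, which are bounded by powers of $(1+\omega)$ via Stirling. This gives $R=I+O((1+\omega)^2/s)$, uniformly on the small-norm contour.

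\textbf{Step 4: integration.} Substituting into the differential identity gives
$\partial_\omega\ln\det=\frac{4s}{\pi}+\frac{4\omega}{\pi^2}\ln(4s)+\partial_\omega 2\ln[\BG(1+i\omega/\pi)\BG(1-i\omega/\pi)]+O((1+\omega)^3\ln s/s)$. The Barnes term is identified through $\frac{d}{d\beta}\ln\BG(1+\beta)$ and the digamma terms appearing in the Whittaker expansion. Integrating over $[0,\omega]$ yields the stated remainder. The extra $\ln s$ factors come from the $(4s)^{\beta}$ phases differentiated in $\omega$.

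\textbf{Main obstacle.} The hard part is Step 3 near the endpoints, for two reasons. First, the lens jumps contain $e^{2\omega}$ rather than $e^{-2v}$, so one triangular factor now has size $e^{2\omega}\le s^{2A}$. On the lens lips this factor multiplies $e^{-2s\,\mathrm{Im}}$-decaying exponentials, so we must choose the lens distance to be $\gtrsim A\ln s/s$, or keep it at fixed distance and check that the exponential decay $e^{-cs}$ beats $s^{2A}$. Second, with $\omega$ growing, the Whittaker parametrix's asymptotics must hold uniformly in $\beta\in i[0,A\ln s/\pi]$ on $|\zeta|\asymp s$. We will first try to handle this with the uniform large-argument expansion of $\psi(a,c;\zeta)$ and explicit Stirling bounds for $\Gamma(1+i\omega/\pi)$. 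The correct identification of the origin-sector multipliers, i.e.\ which triangular factor lives in which sector, is checked ray by ray, including the third ray, against the jumps of the confluent hypergeometric model problem.
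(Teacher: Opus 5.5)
\textbf{Gap 1: the uniformity mechanism in Step~3 does not work.} On a disc of fixed radius $r$ about $\lambda=1$, the off-diagonal entries of the circle jump $P^{(1)}(P^{(\infty)})^{-1}-I$ contain the factor $[2s(\lambda+1)]^{\mp2\nu}$, with $\nu=-i\omega/\pi$. You used only $|(4s)^{2\nu}|=1$, which is the modulus part. The full modulus of this factor is $e^{\pm(2\omega/\pi)\arg(\lambda+1)}$, and on $|\lambda-1|=r$ the argument reaches $\arcsin(r/2)$. At $\omega=A\ln s$ the jump is therefore of size about $\omega\,s^{-1+(2A/\pi)\arcsin(r/2)}/r$, not $O((1+\omega)^2/s)$. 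For $A\gtrsim1/r$ the small-norm problem is not small at all, and for every fixed $r$ the standard bounds lose a power of $s$. In fixed-$\beta$ estimates of DIK type this loss is hidden in an implied constant growing like $e^{c|\Im\beta|}$, which here equals $s^{cA}$.

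The remaining exponential hazards are likewise not Gamma ratios; indeed $|\Gamma(1+\nu)/\Gamma(-\nu)|=|\nu|$ exactly. They are:
\begin{itemize}
\item the prefactors $e^{\pm i\pi\nu}$, of modulus $e^{\pm\omega}$;
\item the Stokes multiplier $\tau_+=e^{2\omega}-1$;
\item the factor $|z^{-a}|=e^{(\omega/\pi)\mathrm{ph}\,z}$, up to $e^{3\omega/2}$, in the remainder of a continued-sheet $U(a,1;z)$.
\end{itemize}
Stirling controls none of these, and each one alone would put a positive power of $s$ into the matching. The paper needs three specific devices. First, it shrinks the discs to $r_s=(\ln s)^{-2}$, so that $\omega r_s\to0$ and $|\beta^{\pm2}|=O_A(1)$; this is exactly where the $\ln^2s$ in the theorem comes from. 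Second, it proves the exact Stokes cancellation of Lemma~\ref{lem:stokes} via DLMF~13.2.44. After that cancellation each entry of $\mathcal Q$ is a single $U$ with $|\mathrm{ph}\,z|\le\pi$, estimated only through its relative remainder $z^a\varepsilon_n(z)$. Third, it uses the identity $\mathcal A\Lambda=P^{(\infty)}$ of Lemma~\ref{lem:AL}, which pairs every $e^{\pm\omega}$ in the dressing with an $e^{\mp\omega}$ in $M_k$. You correctly name uniformity as the obstacle, but you supply none of these devices.

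\textbf{Gap 2: the differential identity in Step~1 is mis-specified.} The residue at infinity gives the $s$-derivative, $\partial_s\ln\det(I-\gamma K_s)=-2i(Y_1)_{11}$ (Lemma~\ref{lem:diffid}), not the $\omega$-derivative. The $\omega$-derivative is $2e^{2\omega}\Tr\bigl((I-\gamma K_s)^{-1}K_s\bigr)$, an integral of the resolvent diagonal over $(-1,1)$. Evaluating it needs $Y_\pm$ along the whole interval and inside the endpoint discs, including the logarithmic singularities. There $P^{(\infty)}_\pm$ has entries of modulus $e^{\pm\omega}$, so errors in $R$ can be amplified by up to $e^{2\omega}=s^{2A}$ unless further cancellations are proved. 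The Barnes term would also have to be extracted from $\check{\mathcal P}(0)$ and $R(\pm1)$. None of this appears in Step~4, so Steps~1 and~4 do not fit together as written.

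The paper sidesteps all of this. It integrates the $s$-identity from $s$ to $\infty$ at fixed $\omega$; at infinity $P^{(\infty)}=I-2\nu\sigma_3/\lambda+O(\lambda^{-2})$ carries no exponential factors. It notes that $\omega\le A\ln t$ persists for all $t\ge s$. It then takes the integration constant, Barnes term included, from the fixed-parameter asymptotic at $s_1=e^{2\omega}>1$ (Lemma~\ref{lem:charlierconvert}), using only the value of the limit.

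Your worry about the lens is misplaced in one direction and real in another. With $G_Y=S_Le^{-2v\sigma_3}S_U$ the lens coefficient is $e^{-2\omega}-1$, of modulus at most $1$; the factor $e^{2\omega}$ enters only through conjugation by $P^{(\infty)}$. The opposite triangular factorization, however, puts $1+\gamma=2-e^{2\omega}$ on the diagonal, which vanishes at $\omega=\tfrac12\ln2$. So the choice of factorization must be made explicitly.
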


\begin{remark}[what is and is not new in Theorem~\ref{thm:signedintro}]
\label{rem:whatisnew}
The displayed formula is formally the formula of
\cite[Thm.~1.2]{BDIK2} read at $v=-\omega<0$.  It is \emph{not} obtained
from that theorem: \cite[Thm.~1.2]{BDIK2} is stated only for
$\gamma\in[0,1)$, i.e.\ $v\ge0$, and nothing in \cite{BDIK1,BDIK2}
asserts the negative-parameter case.  The fixed-$\omega$ case is
classical \cite{BasorWidom83,BudylinBuslaev} and is available with an
explicit fixed-parameter $O_\omega(s^{-1})$ error from
\cite[eq.~(1.4)]{Charlier21}.  The compact-uniform version in
\cite[Thm.~1.1]{Charlier21} has the weaker error $O(\ln s/s)$, and
neither statement supplies growing-$\omega$ information.  What is
established here is the
uniformity on the logarithmic window $\omega\le A\ln s$, on the side
$\gamma<0$.  The two Stokes-ray jumps of the local model are derived in
Lemma~\ref{lem:modeljumps}; Lemma~\ref{qmi:third} derives the third-ray
jump and the full origin behaviour.  The
error $O_A((1+\omega)^4\ln^2s/s)$ is weaker than the
$O((v+v^3)/s)$ of \cite[Thm.~1.2]{BDIK2}, and the range $\omega\le A\ln s$
is much shorter than $v<s^{1/3}$; neither loss matters for the bridge
application, and both are the price of the shrinking endpoint disks used
in \S\ref{sec:tail}.
\end{remark}

\begin{corollary}[the tail determinant bound for every fixed window]
\label{cor:Tproved}
For every fixed $\alpha>0$ there is $c_T(\alpha)>1$ such that
\begin{equation}\label{eq:Tproved}
 F_c(-u/2)\ge uc+\frac{u^2}{4\pi^2}\ln c,
 \qquad c\ge c_T(\alpha),\quad 2\le u\le\alpha\ln c.
\end{equation}
Thus $(T_\alpha)$ holds with $L_T=2$ and
$\kappa_T=1/(4\pi^2)$; only the lower threshold depends on $\alpha$.
\end{corollary}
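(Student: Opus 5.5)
The plan is to read off \eqref{eq:Tproved} directly from Theorem~\ref{thm:signedintro} with $\omega=u/2$, after identifying $S_c$ with a sine-kernel operator. The two nontrivial inputs are a uniform lower bound for the Barnes term and the slack created by the fixed window $u\le\alpha\ln c$.

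\emph{Step 1 (identification).} First, $\gamma(-u/2)=1-e^{u}$, so $F_c(-u/2)=\ln\det(I+(e^u-1)S_c)$. The kernel of $S_c$ is $\sin\pi(x-y)/(\pi(x-y))$ on $(0,c)$. The affine change of variables $x=(t+s)/\pi$ with $s=\pi c/2$ is unitary $L^2(0,c)\to L^2(-s,s)$ after the $\pi^{-1/2}$ normalisation. It carries $S_c$ to the sine kernel $K_s(t,t')=\sin(t-t')/(\pi(t-t'))$ on $(-s,s)$, which is the normalisation in which Theorem~\ref{thm:signedintro} has leading term $4\omega s/\pi$. Hence $F_c(-u/2)=\ln\det(I+(e^{2\omega}-1)K_s)$ with $\omega=u/2$ and $s=\pi c/2$.

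\emph{Step 2 (apply the signed theorem).} The condition $u\le\alpha\ln c$ gives $\omega\le\tfrac{\alpha}{2}\ln c\le A\ln s$ with $A=\alpha$, say, for $c$ large. Then \eqref{eq:signedintro} holds, and its terms become
\[
\frac{4\omega s}{\pi}=uc,\qquad \frac{2\omega^2}{\pi^2}\ln(4s)=\frac{u^2}{2\pi^2}\ln(2\pi c),
\]
with remainder $|\mathcal R_A|\le C_\alpha(1+u)^4\ln^2c/c=O_\alpha(\ln^6c/c)=o(1)$.

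\emph{Step 3 (Barnes term; main obstacle).} It remains to show
\[
2\ln|\BG(1+iy)|^2\ge -2y^2\ln(2+y)-C_0(1+y^2)\qquad(y=\omega/\pi\ge0),
\]
with an absolute constant $C_0$. I would prove this from Stirling-type asymptotics of the Barnes function, $\ln\BG(1+z)=\tfrac{z^2}{2}\ln z-\tfrac34z^2+\tfrac z2\ln2\pi-\tfrac1{12}\ln z+\zeta'(-1)+O(|z|^{-2})$, valid uniformly in the sector containing $z=iy$. Taking real parts at $z=iy$ gives $2\ln|\BG(1+iy)|=-\tfrac{y^2}{2}\ln y\cdot 2/1\cdot\tfrac12\cdot 2+\tfrac32y^2+O(\ln y)$, i.e. $\ln|\BG(1+iy)|^2=-y^2\ln y+\tfrac32y^2+O(\ln(2+y))$, which yields the bound for $y\ge1$. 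For $0\le y\le1$ continuity and the absence of zeros of $\BG$ off the nonpositive integers give a bounded term. A safe alternative, if the sector uniformity needs care, is the product formula $|\BG(1+iy)|^2=e^{-(1+\gamma_E)y^2}\prod_k(1+y^2/k^2)^k e^{-y^2/k}$, estimated by splitting the sum at $k\approx y$.

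\emph{Step 4 (collect).} With $y=u/(2\pi)$, the quadratic terms combine to
\[
\frac{u^2}{2\pi^2}\ln(2\pi c)-\frac{u^2}{2\pi^2}\ln\bigl(2+\tfrac u{2\pi}\bigr)-C_0'(1+u^2)\ \ge\ \frac{u^2}{2\pi^2}\ln c-\frac{u^2}{2\pi^2}\ln(2+\alpha\ln c)-C_0'(1+u^2).
\]
Since $u\ge2$, the constant $C_0'$ and the $o(1)$ remainder are absorbed into $C''u^2$. The total deficit below $\tfrac{u^2}{2\pi^2}\ln c$ is therefore at most $u^2\bigl(\tfrac1{2\pi^2}\ln(2+\alpha\ln c)+C''\bigr)$. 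For $c\ge c_T(\alpha)$ this is at most $\tfrac{u^2}{4\pi^2}\ln c$, which gives \eqref{eq:Tproved}. Only the threshold $c_T$ depends on $\alpha$, through $A=\alpha$ in Theorem~\ref{thm:signedintro} and through $\ln\ln c$.
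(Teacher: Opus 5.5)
Your proposal is correct and follows essentially the paper's route: the paper likewise substitutes $\omega=u/2$, $s=\pi c/2$ into the signed theorem (via Proposition~\ref{prop:transfer}). It bounds the Barnes term from below by the product formula (Lemma~\ref{lem:Barneslower}, $A(u/2)\ge-\frac{u^2}{\pi^2}\ln(2+u)$, playing the role of your $-2y^2\ln(2+y)-C_0(1+y^2)$), and absorbs the Barnes and remainder losses into the gap between $\frac{u^2}{2\pi^2}\ln c$ and $\frac{u^2}{4\pi^2}\ln c$ for large $c$. The intermediate expression in your Step 3 is garbled, but its stated conclusion $\ln|\BG(1+iy)|^2=-y^2\ln y+\tfrac32y^2+O(\ln(2+y))$ is right, and since $c_T(\alpha)$ is not made explicit in the paper either, your unquantified constant $C_0$ costs nothing.
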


The reduction is stated against the bridge range itself. For
$\alpha\ge4$, let $(\mathrm{BRIDGE}_\alpha)$ denote the restriction of
Problem~\ref{op:bridge} to that range: \emph{there exist
$\delta_0\in(0,\tfrac12)$, $\kappa>0$, $C<\infty$, $c_0<\infty$ such
that}
\begin{equation}\label{eq:bridgealpha}
D(\delta,c)\;\ge\;\kappa\,L\,\ln\!\Bigl(\frac{\alpha c}{L}\Bigr)\;-\;C
\quad\text{for every }c\ge c_0\text{ and every }\delta\text{ with }
c^{-\alpha}\le\delta\le\delta_0 .
\tag{BRIDGE$_\alpha$}
\end{equation}
This is \eqref{eq:bridge} with the range $\alpha^{-c}<\delta\le\delta_0$
narrowed to $c^{-\alpha}\le\delta\le\delta_0$ --- exactly the range
identified in the abstract and in \S\ref{sec:problem} as the bridge.

\begin{theorem}[two-way reduction; proved in
\S\ref{sec:reduction}]\label{thm:red}
Fix $\alpha\ge4$. Then \eqref{eq:T} implies \eqref{eq:bridgealpha} with
$\kappa=\kappa_T/8$, $C=0$, for some $\delta_0,c_0$ depending only on
$\alpha,\kappa_T,L_T$. Conversely, \eqref{eq:bridgealpha} implies
\eqref{eq:T} with $\kappa_T=\kappa/32$, for some $L_T,c_T$. Hence, for
each fixed $\alpha\ge4$,
\[
(\mathrm{BRIDGE}_\alpha)\;\Longleftrightarrow\;(T_\alpha).
\]
\end{theorem}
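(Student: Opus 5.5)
The plan is to write both sides of the equivalence through the logarithmic derivative of the determinant, which is a sigmoid-smoothed counting function.

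\textbf{Smoothed-count identity.} For $\beta=e^u-1$ and $0<\lambda<1$,
\[
\ln(1+\beta\lambda)=\int_0^u\sigma_t(\lambda)\,dt,\qquad
\sigma_t(\lambda)=\frac{e^t\lambda}{1+(e^t-1)\lambda}
=\frac{1}{1+e^{-(t+\ell(\lambda))}},\qquad \ell(\lambda)=\ln\frac{\lambda}{1-\lambda}.
\]
Summing over the spectrum gives
\[
F_c(-u/2)=\ln\det(I+(e^u-1)S_c)=\int_0^u\Sigma(t)\,dt,\qquad \Sigma(t)=\sum_n\sigma_t(\lambda_n).
\]
Thus $\Sigma(t)$ counts, up to sigmoid tails, the eigenvalues with $\ell(\lambda_n)>-t$, i.e.\ $\lambda_n>\delta_t$ where $\Lb(\delta_t)=t$. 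That count is $N_{1/2}+D(\delta_t,c)$ by \eqref{eq:identities}. Two facts are used throughout: $|N_{1/2}(c)-c|\le C_{\mathrm h}\ln c$ from Theorem~\ref{thm:A}, and $D(\delta,c)$ is monotone in $\delta$.

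\textbf{($T_\alpha$)$\Rightarrow$(BRIDGE$_\alpha$).} I need an upper bound on $\Sigma(t)$. Fix a shift $M$ and use $\sigma_t\le1$ on $\{\ell(\lambda_n)>-t-M\}$. On the rest use $\sigma_t(\lambda)\le e^t\lambda/(1-\lambda)$. This gives
\[
\Sigma(t)\le N_{1/2}+D(\delta_{t+M},c)+e^{t}\sum_{\lambda_n\le 2\delta_{t+M}}\lambda_n .
\]
The last sum is what I expect to be the main obstacle, since the trivial bound by the trace $c$ is useless. Instead I decompose dyadically in $\lambda$ and count each shell using the explicit Karnik--Romberg--Davenport bound (Theorem~\ref{qt:KRD}), $\#\{\lambda_n>\eta\}\le c+O(\ln c\,\ln(1/\eta))$. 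Only the \emph{increments} between shells enter, and each shell is weighted by its size $\eta2^{-k}$, so the sum should be $O\bigl(e^{-M}(t+M)\ln c\bigr)$.

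Integrating over $t\in[0,u]$ and using monotonicity $D(\delta_{t+M})\le D(\delta_{u+M})$, I get
\[
F_c(-u/2)\le uc+Cu\ln c+u\,D(\delta_{u+M},c)+O\bigl(u(u+M)e^{-M}\ln c\bigr).
\]
Comparing with \eqref{eq:T} gives $D(\delta_{u+M},c)\ge \kappa_T u\ln c-C'\ln c$. Now set $L\approx u+M$ with $M$ a fixed constant. Since $\ln(\alpha c/L)\le 2\ln c$ on the bridge and the $C'\ln c$ loss is absorbed once $L\ge L_0$, this yields $D\ge(\kappa_T/8)L\ln(\alpha c/L)$ with $C=0$. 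The relation between $L$ and $\Lb$ only shifts $L$ by at most $\ln 2$. The constraint $u\le\alpha\ln c$ matches $\delta\ge c^{-\alpha}$, up to the constant $M$, which is handled by shrinking $\delta_0$ and enlarging $c_0$.

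\textbf{(BRIDGE$_\alpha$)$\Rightarrow$($T_\alpha$).} Now I need a lower bound on $\Sigma(t)$. On $\{\lambda_n>1/2\}$ write $\sigma_t=1-(1-\sigma_t)$. The total deficit obeys
\[
\int_0^\infty(1-\sigma_t(\lambda))\,dt=\ln\tfrac1\lambda\le 2(1-\lambda)\qquad(\lambda>1/2),
\]
so it is at most $2\sum_{\lambda_n>1/2}(1-\lambda_n)\le4\Tr(S_c-S_c^2)=O(\ln c)$. On $\{\delta_t<\lambda_n\le1/2\}$ I use $\sigma_t\ge1/2$. Therefore
\[
F_c(-u/2)\ge uN_{1/2}-O(\ln c)+\tfrac12\int_{u/2}^{u}D(\delta_t,c)\,dt .
\]
For $t\in[u/2,u]$ with $u\le\alpha\ln c$ and $t\ge\ln(1/\delta_0)+1$, (BRIDGE$_\alpha$) gives $D(\delta_t,c)\ge\kappa (t/2)\ln c-C$, using $\ln(\alpha c/L)\ge\ln c$. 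The integral is then at least $(\kappa/16)u^2\ln c-O(u)$. Combined with $uN_{1/2}\ge uc-C_{\mathrm h}u\ln c$, this leaves $uc+(\kappa/16)u^2\ln c-O(u\ln c)$. Choosing $L_T$ large enough that $u\ge L_T$ absorbs the linear loss into half of the quadratic term gives $\kappa_T=\kappa/32$.

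The delicate bookkeeping throughout is keeping each constant shift ($M$, $\ln2$, $\ln(1/\delta_0)$) inside the admissible windows at both ends.
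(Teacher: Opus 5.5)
Your proposal is correct in substance and uses the same inputs as the paper: Theorem~\ref{thm:A} for $N_{1/2}$, the KRD dyadic tail sum of Lemma~\ref{lem:tailsum}, and monotonicity of $D$. It is organized differently, though. You work with the smoothed count $\Sigma(t)=M_c(-t/2)$ integrated flat in the coupling variable, $G_c(u)=\int_0^u\Sigma(t)\,dt$, whereas the paper works with sharp counts.

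For $(T_\alpha)\Rightarrow(\mathrm{BRIDGE}_\alpha)$, the paper splits $\ln(1+\beta\lambda_n)$ once at $\tau=e^{-u}$. It bounds the term by $u$ above $\tau$ and by $\beta\lambda_n$ below. The tail sum is then paid only once, the error is linear, $O(u\ln c)$, and the bound lands exactly at $L=u$.

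For the converse, the paper uses the layer-cake formula $G_c(u)=cu+\int_0^1(N_t-c)\frac{\beta}{1+\beta t}\,dt$ over three $\lambda$-ranges. Here your version is actually simpler. Your weight $\sigma_t\ge\tfrac12$ multiplies the nonnegative count $D(\delta_t,c)$ rather than the sign-indefinite $N_t-c$. So you may insert the bridge bound even where it is negative, and you can discard $t<u/2$ for free. This removes the paper's $j_0,j_\star$ window and its $C_{\mathrm h}L_0\ln c$ loss.

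One step needs repair in the first direction. Integrating your pointwise upper bound costs $O(e^{-M}(t+M)\ln c)$ at every $t$, so the total error is quadratic, $O(u(u+M)e^{-M}\ln c)$. After dividing by $u$ it is $O((u+M)e^{-M}\ln c)$, which is the same order as $\kappa_Tu\ln c$. Your displayed bound $D(\delta_{u+M},c)\ge\kappa_Tu\ln c-C'\ln c$ therefore holds only once $M$ is chosen large in terms of $\kappa_T$. For example, take $C_1e^{-M}\le\kappa_T/2$, where $C_1$ is the absolute constant in your tail estimate. This is permitted, since $\delta_0$ and $c_0$ may depend on $\kappa_T$, and the constant $\kappa_T/8$ then follows as you describe.

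There are three smaller points in the converse.
\begin{enumerate}
\item The inequality you actually use is $\ln(\alpha c/L)\ge\tfrac12\ln c$, valid for $L\le\alpha\ln c$ and $c$ large; your factor $t/2$ already reflects this. It is not $\ln(\alpha c/L)\ge\ln c$.
\item The deficit bound needs no appeal to $\Tr(S_c-S_c^2)$. Indeed $\sum_{\lambda_n>1/2}(1-\lambda_n)=N_{1/2}-c+\sum_{\lambda_n\le1/2}\lambda_n=O(\ln c)$ by Theorem~\ref{thm:A} and Lemma~\ref{lem:tailsum}. This is exactly the trace identity the paper uses on $(\tfrac12,1)$.
\item For the few $t$ near $\alpha\ln c$ where $\ln(1+e^t)>\alpha\ln c$, use monotonicity, $D(\delta_t,c)\ge D(c^{-\alpha},c)$. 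The paper does the same via $\min(j,\alpha\ln c)$.
\end{enumerate}
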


The complementary range $\alpha^{-c}<\delta<c^{-\alpha}$ of
Problem~\ref{op:bridge} is already settled in the literature, and saying
so turns the reduction into a statement about the full problem.

\begin{proposition}[the complementary range is settled; proved in
\S\ref{sec:full}]\label{prop:full}
Let $\alpha_1\ge4$ be the constant of Theorem~\ref{qt:KDL}. Then
Problem~\ref{op:bridge} has a positive answer if and only if
$(\mathrm{BRIDGE}_{\alpha_1})$ holds.
\end{proposition}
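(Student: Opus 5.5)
The plan is to prove the two directions separately. The ``if'' direction is a patching argument: split the range of Problem~\ref{op:bridge} at $\delta=c^{-\alpha_1}$ and use Theorem~\ref{qt:KDL} below that point. The ``only if'' direction is a monotonicity argument showing that the full problem forces its restriction. In both directions we take $\alpha=\alpha_1$ in \eqref{eq:bridge}; by definition, Problem~\ref{op:bridge} asks only for the existence of some admissible $\alpha\ge4$.

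\emph{Only if.} Suppose \eqref{eq:bridge} holds with constants $\alpha,\delta_0,\kappa,C,c_0$. For $c$ large we have $c^{-\alpha_1}>\alpha^{-c}$, so every $\delta$ with $c^{-\alpha_1}\le\delta\le\delta_0$ is covered. It then suffices to compare $\kappa L\ln(\alpha c/L)$ with $\kappa' L\ln(\alpha_1 c/L)$ on $L\le\alpha_1\ln c$. Both logarithms equal $\ln c+O(\ln\ln c)$ there. Hence the ratio tends to one uniformly, and $\kappa'=\kappa/2$ works for $c$ large. This gives $(\mathrm{BRIDGE}_{\alpha_1})$. If the given $\alpha$ already equals $\alpha_1$, nothing needs to be done.

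\emph{If.} Assume $(\mathrm{BRIDGE}_{\alpha_1})$ with constants $\delta_0,\kappa,C,c_0$. For the complementary range $\alpha_1^{-c}<\delta<c^{-\alpha_1}$, invoke Theorem~\ref{qt:KDL}. By its statement, it gives the two-sided order $D(\delta,c)\asymp L\,R_{\alpha_1}=L\ln(\alpha_1c/L)$ there, uniformly for $c$ large. In particular it yields $D\ge\kappa_1L\ln(\alpha_1c/L)$ with an absolute $\kappa_1>0$, possibly after an additive constant. Setting $\kappa^\ast=\min(\kappa,\kappa_1)$, $C^\ast=\max(C,C_1)$ and $c_0^\ast$ equal to the larger of the two thresholds, we obtain \eqref{eq:bridge} with $\alpha=\alpha_1$ on the union $\alpha_1^{-c}<\delta\le\delta_0$.

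The main obstacle is bookkeeping at the seam and the exact form of Theorem~\ref{qt:KDL}. One issue is the endpoint $\delta=c^{-\alpha_1}$ itself. It is harmless: it belongs to the bridge range, and alternatively $D$ is monotone in $\delta$ and changes by $O(1)$ across any single point. The other issue is that the lower bound in Theorem~\ref{qt:KDL} might be quoted with a different logarithmic argument $R_{\alpha'}$, or only for $c\ge c_1$. Both are absorbed as above, using $\ln(\alpha'c/L)\asymp\ln(\alpha_1c/L)$ uniformly on $L\le c\ln\alpha_1$. That comparison holds because $\alpha_1c/L\ge\alpha_1/\ln\alpha_1$ is bounded below by a constant exceeding $1$. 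I would check that last inequality carefully, since it is where the lower end $\alpha_1^{-c}$ of the range enters.
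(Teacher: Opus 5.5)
Your proposal is correct and follows the paper's proof. In the ``if'' direction you patch $(\mathrm{BRIDGE}_{\alpha_1})$ with Theorem~\ref{qt:KDL}(b) on $\alpha_1^{-c}<\delta<c^{-\alpha_1}$ and take $\min(\kappa,\kappa_1)$; in the ``only if'' direction you restrict to $[c^{-\alpha_1},\delta_0]$ and compare $\ln(\alpha c/L)$ with $\ln(\alpha_1c/L)$, where your uniform-ratio argument replaces the paper's case split $\alpha\gtrless\alpha_1$ to the same effect.

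Your hedge about a different argument $R_{\alpha'}$ in Theorem~\ref{qt:KDL} is unnecessary, because that theorem is stated with $R=\ln(\alpha_1c/L)$ itself. The comparison you propose for it would also fail near $L\approx c\ln\alpha_1$ whenever $\alpha'\le\ln\alpha_1$.
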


\begin{corollary}[the bridge problem is exactly the determinant
bound; proved in \S\ref{sec:full}]\label{cor:equiv}
Problem~\ref{op:bridge} has a positive answer if and only if
\eqref{eq:T} holds at $\alpha=\alpha_1$.
\end{corollary}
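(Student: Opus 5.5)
The plan is to obtain Corollary~\ref{cor:equiv} by chaining two equivalences already stated, both read at the single exponent $\alpha=\alpha_1$ of Theorem~\ref{qt:KDL}. No new analysis should be needed. The only points to check are that the hypotheses of the two results match at that exponent, and that no constant quantified in one result is silently fixed differently in the other.

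First, Proposition~\ref{prop:full} says that Problem~\ref{op:bridge} has a positive answer if and only if $(\mathrm{BRIDGE}_{\alpha_1})$ holds. Second, since $\alpha_1\ge4$, Theorem~\ref{thm:red} applies with $\alpha=\alpha_1$ and gives $(\mathrm{BRIDGE}_{\alpha_1})\Longleftrightarrow(T_{\alpha_1})$. Composing the two biconditionals yields the claim.

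I would spell out the two directions separately, so that it is visible which constants are produced where.

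\emph{Forward direction.} Assume Problem~\ref{op:bridge} has a positive answer, with some $\alpha\ge4$ and constants $\kappa,C,\delta_0,c_0$. By Proposition~\ref{prop:full} this gives $(\mathrm{BRIDGE}_{\alpha_1})$, with its own constants. The converse half of Theorem~\ref{thm:red} then yields \eqref{eq:T} at $\alpha_1$, with $\kappa_T=\kappa/32$.

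\emph{Backward direction.} Assume \eqref{eq:T} at $\alpha_1$. The forward half of Theorem~\ref{thm:red} gives $(\mathrm{BRIDGE}_{\alpha_1})$ with $\kappa=\kappa_T/8$ and $C=0$. Proposition~\ref{prop:full} then upgrades this to the full range $\alpha^{-c}<\delta\le\delta_0$ of Problem~\ref{op:bridge}, with the constants of Theorem~\ref{qt:KDL} covering the deep part.

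The only step I expect to need care is one of bookkeeping. In Problem~\ref{op:bridge} the exponent $\alpha$ is existentially quantified, whereas in Proposition~\ref{prop:full} it is pinned to $\alpha_1$. I must therefore make sure that the "if and only if" of Proposition~\ref{prop:full} is understood with Problem~\ref{op:bridge} answered for some admissible $\alpha$, possibly different from $\alpha_1$. This is exactly how that proposition is stated, so the composition is legitimate.

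Finally, I would remark that Corollary~\ref{cor:Tproved} verifies $(T_{\alpha})$ for every fixed $\alpha$, in particular for $\alpha_1$. The corollary therefore actually delivers a positive answer to Problem~\ref{op:bridge}, with $\kappa=1/(32\pi^2)$ on the bridge, consistent with the constant announced in the abstract.
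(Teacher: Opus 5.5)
Your proposal is correct and is exactly the paper's proof: you compose Proposition~\ref{prop:full} with Theorem~\ref{thm:red} at $\alpha=\alpha_1\ge4$. One small bookkeeping point is harmless because all constants are existential: in the forward direction, the $\kappa$ in $\kappa_T=\kappa/32$ should be the constant of $(\mathrm{BRIDGE}_{\alpha_1})$ produced by Proposition~\ref{prop:full}, and this can be half the original $\kappa$ when $\alpha<\alpha_1$.
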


\begin{theorem}[lower-half bridge theorem]\label{thm:bridgeclosed}
Let $\alpha_1\ge4$ be
the constant in Theorem~\ref{qt:KDL}.  There are
$\delta_0\in(0,1/2)$ and $c_0<\infty$ such that
\begin{equation}\label{eq:bridgeclosed}
 D(\delta,c)\ge\frac{1}{32\pi^2}\,L
 \ln\!\left(\frac{\alpha_1c}{L}\right)
\end{equation}
whenever $c\ge c_0$ and $c^{-\alpha_1}\le\delta\le\delta_0$.
Together with Theorem~\ref{qt:KDL}(b), possibly decreasing
the constant in \eqref{eq:bridgeclosed}, this gives constants
$\kappa>0$, $C<\infty$ for which \eqref{eq:bridge} holds throughout
$\alpha_1^{-c}<\delta\le\delta_0$.  The second assertion additionally
depends on \cite{KDL}, a 2026 preprint; see
Remark~\ref{rem:kdlstatus}.
\end{theorem}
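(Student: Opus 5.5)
Theorem~\ref{thm:bridgeclosed} will follow by chaining three earlier results. Corollary~\ref{cor:Tproved} gives $(T_{\alpha_1})$ with $\kappa_T=1/(4\pi^2)$ and $L_T=2$. Theorem~\ref{thm:red}, in the direction $(T_\alpha)\Rightarrow(\mathrm{BRIDGE}_\alpha)$, then yields \eqref{eq:bridgealpha} at $\alpha=\alpha_1$. The constants come out as $\kappa=\kappa_T/8=1/(32\pi^2)$ and $C=0$, with $\delta_0,c_0$ depending only on $\alpha_1$ and $\kappa_T,L_T$. This is exactly \eqref{eq:bridgeclosed} on the range $c^{-\alpha_1}\le\delta\le\delta_0$.

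The only point needing care is that Corollary~\ref{cor:Tproved} is valid for each fixed $\alpha$. We apply it at the single value $\alpha=\alpha_1$ from Theorem~\ref{qt:KDL}, so $c_T(\alpha_1)$ is a fixed constant. We must also confirm that the window $[L_T,\alpha_1\ln c]$ in Definition~\ref{def:T} is contained in the range $2\le u\le\alpha_1\ln c$ of \eqref{eq:Tproved}. It is, with $L_T=2$ and $c_T=c_T(\alpha_1)$. One further check: $\ln(4\pi^2)$-type discrepancies between $\Lb$ and $L$ and between $\alpha c/L$ inside the logarithm are already absorbed inside Theorem~\ref{thm:red}, so nothing extra is needed.

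For the second assertion, we extend the bound to the full range $\alpha_1^{-c}<\delta\le\delta_0$ by combining it with Theorem~\ref{qt:KDL}(b) on the complementary range $\alpha_1^{-c}<\delta<c^{-\alpha_1}$. That theorem gives $D\asymp L\,R_{\alpha_1}$ with some constant $\kappa_1>0$, possibly with an additive constant. Then take $\kappa=\min(1/(32\pi^2),\kappa_1)$ and let $C$ be the larger of the two additive constants. The two ranges overlap at $\delta=c^{-\alpha_1}$ and cover everything, so \eqref{eq:bridge} holds throughout. This is essentially the argument of Proposition~\ref{prop:full}, made quantitative.

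The main obstacle is not in this proof but upstream, in the signed determinant Theorem~\ref{thm:signedintro} feeding Corollary~\ref{cor:Tproved}. Here the only work is bookkeeping: checking that the quantifiers line up ($\alpha$ fixed first, then $c_0$), and that the reduction's $\delta_0$ yields $L\ge L_T$-type conditions compatible with $\delta\le\delta_0$.
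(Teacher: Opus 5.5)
Your proposal is correct and matches the paper's proof: Corollary~\ref{cor:Tproved} at $\alpha=\alpha_1$ gives $(T_{\alpha_1})$ with $\kappa_T=1/(4\pi^2)$. Theorem~\ref{thm:red} then converts this into \eqref{eq:bridgeclosed} with $\kappa=\kappa_T/8=1/(32\pi^2)$ and $C=0$, and Proposition~\ref{prop:full} together with Theorem~\ref{qt:KDL}(b) covers the deep range. One small correction to your wording: the two $\delta$-ranges abut at $\delta=c^{-\alpha_1}$ rather than overlap there, since Theorem~\ref{qt:KDL}(b) is stated for $\delta<c^{-\alpha_1}$; they still cover all of $\alpha_1^{-c}<\delta\le\delta_0$ once $c$ is large.
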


\begin{remark}[the status of the deep range]\label{rem:kdlstatus}
The first display \eqref{eq:bridgeclosed}, on the moving range
$c^{-\alpha_1}\le\delta\le\delta_0$, uses only
Corollary~\ref{cor:Tproved} and Theorem~\ref{thm:red}, both of which
are internal to this paper.  The extension to
$\alpha_1^{-c}<\delta\le\delta_0$ uses \cite[Thm.~1.1(b)]{KDL},
which at the time of writing is an unrefereed 2026 preprint.  The
first bound is therefore self-contained, while only the extension to
the deeper range depends on \cite{KDL}.
\end{remark}

\begin{proof}
Corollary~\ref{cor:Tproved} at $\alpha=\alpha_1$ gives $(T_{\alpha_1})$
with $\kappa_T=1/(4\pi^2)$.  Theorem~\ref{thm:red} therefore gives
\eqref{eq:bridgeclosed} with $\kappa=\kappa_T/8=1/(32\pi^2)$ and $C=0$
on the moving bridge.  Proposition~\ref{prop:full} and the quoted
deep-range result complete the full interval.
\end{proof}

The bound $(T_\alpha)$ is the $\gamma<0$ (equivalently, generating
parameter $e^u>1$) counterpart of the head-side theorem of \cite{BDIK2}.
At fixed $u$ the Basor--Widom/Charlier asymptotic
\cite{BasorWidom83,Charlier21} supplies the same leading terms.  What is
needed here, and proved in Section~\ref{sec:tail}, is uniformity as
$u$ grows through a fixed multiple of $\ln c$.

\subsection{Organization of the paper}\label{sec:plan}

Section~\ref{sec:setting} fixes the scaling dictionary and records the
external asymptotic results in the normalization used below.
Section~\ref{sec:barnes} develops the estimates needed for
the Barnes term $A(v)$ from the Weierstrass product.
Sections~\ref{sec:smoothed}--\ref{sec:thmB} are the head side: a
smoothed counting function, its conversion to an exact count, and the
proofs of Theorems~\ref{thm:A} and~\ref{thm:B} and their corollaries.
Section~\ref{sec:reduction} proves the two-way reduction.
Section~\ref{sec:tail} proves the signed determinant theorem and may be
read independently of
Sections~\ref{sec:smoothed}--\ref{sec:reduction}: the only inputs it
takes from earlier sections are the scaling dictionary
(Lemma~\ref{lem:dict}), the Barnes product
(Lemma~\ref{lem:barnesproduct}) and its explicit lower bound
(Lemma~\ref{lem:Barneslower}), and the fixed-parameter formula
(Lemma~\ref{lem:charlierconvert}).
Section~\ref{sec:tailquant} is the tail side of the counting argument:
it extends the Fredholm derivative identity and the unit-average,
local-density and exponential-tail estimates of
Section~\ref{sec:smoothed} to negative depth, using the signed theorem
of Section~\ref{sec:tail} in place of Theorem~\ref{qt:BDIK}, and then
proves Theorem~\ref{thm:tail}, Corollary~\ref{cor:plunge},
Theorems~\ref{thm:tensorblock} and~\ref{thm:surface}, and
Lemma~\ref{lem:lambert}.  It reuses the Barnes estimates of
Section~\ref{sec:barnes}, the sigmoid inequality \eqref{eq:sigdiff} and
the conversion Lemma~\ref{lem:conversion} verbatim, and duplicates no
part of the Riemann--Hilbert analysis, of the normalization, of the
Barnes-product derivation, of the generic tensor-product spectrum
(Lemma~\ref{lem:tensor}), or of the head-side proof.
The exposition places the new tail-side theorem before the supporting
results, and the proofs follow the dependency structure described
above.  Its external analytic inputs are the
large-$\zeta$ expansion of the local model
(Theorem~\ref{qt:P25}), the Coifman--McIntosh--Meyer theorem
\cite{CMM82}, and the fixed-parameter asymptotic of
\cite{Charlier21}.  The sign-independent differential identity is
derived directly from Jacobi's formula and the IIKS representation in
Lemma~\ref{lem:diffid}, rather than imported across the sign boundary.
Appendix~\ref{app:ledger} collects the uniform
exponential estimates used in the matching argument.

\section{Setting, dictionary, and external results}\label{sec:setting}

\subsection{Conventions}\label{sec:conventions}

$S_c$ is as in \eqref{eq:op}; its kernel is $\sinc(x-y)$ with
$\sinc t=\sin(\pi t)/(\pi t)$, acting on $L^2(0,c)$. It is trace class
with $\Tr S_c=c$ and $0<\lambda_n<1$ \cite{SlepianPollak,LandauWidom}.
All counts are as in \eqref{eq:counts}--\eqref{eq:identities}. We write
$\gamma_E$ for Euler's constant, $\psi$ for the digamma function, and
$\BG$ for the Barnes $G$-function.

For the spectral log-odds we write, for each $n$,
\begin{equation}\label{eq:logodds}
w_n \;=\; w_n(c)\;=\;\tfrac12\ln\frac{\lambda_n(c)}{1-\lambda_n(c)}
\;\in\;\R,
\qquad
\Ntil(v)\;:=\;\#\{n: w_n>v\}.
\end{equation}
For $v\in\R$ let $\theta(v):=(1+e^{-2v})^{-1}$, so that
$w_n>v\iff\lambda_n>\theta(v)$ and hence $\Ntil(v)=N_{\theta(v)}(c)$
exactly, with the paper's strict-$>$ convention. If
$\eps\in(0,\tfrac12]$ and $v_\eps:=\tfrac12\ln\frac{1-\eps}\eps$, then
$\theta(v_\eps)=1-\eps$ exactly, so
\begin{equation}\label{eq:dict-eps}
N_{1-\eps}(c)=\Ntil(v_\eps),\qquad 2v_\eps=\Lb .
\end{equation}

\subsection{The scaling dictionary}\label{sec:dictionary}

\begin{lemma}[dictionary]\label{lem:dict}
Let $K_s$ denote the integral operator on $L^2(-1,1)$ with kernel
\[
 K_s(\lambda,\mu)=\frac{\sin(s(\lambda-\mu))}{\pi(\lambda-\mu)},
 \qquad K_s(\lambda,\lambda)=\frac s\pi .
\]
Then $S_c$ is unitarily equivalent to $K_s$ with $s=\pi c/2$; the
unitary may be taken to be
\begin{equation}\label{eq:Uc}
 (U_cf)(\lambda)=\sqrt{\tfrac c2}\,f\bigl(\tfrac c2(1+\lambda)\bigr),
 \qquad U_c:L^2(0,c)\to L^2(-1,1).
\end{equation}
Moreover $\Tr S_c=\Tr K_{\pi c/2}=c$, and $0\le K_s\le I$.
\end{lemma}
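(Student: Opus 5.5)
The plan is a direct change of variables. Work with the kernel of $S_c$, namely $\sinc(x-y)=\sin(\pi(x-y))/(\pi(x-y))$ on $L^2(0,c)$, and conjugate by $U_c$. First check that $U_c$ is unitary. The affine map $x=\tfrac c2(1+\lambda)$ sends $(-1,1)$ bijectively onto $(0,c)$ with $dx=\tfrac c2\,d\lambda$, so
\[
\|U_cf\|^2_{L^2(-1,1)}=\int_{-1}^1\tfrac c2\,|f(\tfrac c2(1+\lambda))|^2\,d\lambda=\|f\|^2_{L^2(0,c)} .
\]
The inverse $(U_c^{-1}g)(x)=\sqrt{2/c}\,g(2x/c-1)$ is defined on all of $L^2(-1,1)$, so $U_c$ is surjective.

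Next compute $U_cS_cU_c^{-1}$. For $g\in L^2(-1,1)$ put $x=\tfrac c2(1+\lambda)$ and $y=\tfrac c2(1+\mu)$. Then $x-y=\tfrac c2(\lambda-\mu)$ and $dy=\tfrac c2\,d\mu$. Collecting the factors $\sqrt{c/2}\cdot\sqrt{2/c}\cdot\tfrac c2=\tfrac c2$ gives
\[
(U_cS_cU_c^{-1}g)(\lambda)=\int_{-1}^1\frac c2\,\frac{\sin(\tfrac{\pi c}{2}(\lambda-\mu))}{\pi\tfrac c2(\lambda-\mu)}\,g(\mu)\,d\mu=\int_{-1}^1\frac{\sin(s(\lambda-\mu))}{\pi(\lambda-\mu)}\,g(\mu)\,d\mu ,
\]
with $s=\pi c/2$. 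On the diagonal the kernel equals $\tfrac c2\cdot 1=\tfrac c2=s/\pi$, which matches the stated convention $K_s(\lambda,\lambda)=s/\pi$. Hence $U_cS_cU_c^{-1}=K_{\pi c/2}$.

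The remaining claims follow from unitary equivalence.
\begin{itemize}
\item \emph{Spectrum and trace class.} Unitary equivalence preserves the spectrum with multiplicities and preserves the trace-class property, so $\Tr K_{\pi c/2}=\Tr S_c$.
\item \emph{Trace value.} Since the kernel is continuous, the trace can be computed directly as $\int_{-1}^1 K_s(\lambda,\lambda)\,d\lambda=2s/\pi=c$. This is legitimate because $S_c=PQP=(QP)^*(QP)$ is a positive trace-class operator with continuous kernel (Mercer's theorem). The value also agrees with the convention $\Tr S_c=c$ recorded in \S\ref{sec:conventions}.
\item \emph{Operator bounds.} For the inequalities $0\le K_s\le I$ it suffices to treat $S_c$. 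For $f\in L^2(0,c)$ we have $\langle PQPf,f\rangle=\|QPf\|^2\in[0,\|Pf\|^2]\subset[0,\|f\|^2]$, since $P$ and $Q$ are orthogonal projections.
\end{itemize}

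There is no real obstacle. The only step needing care is to confirm that the kernel of $PQP$ restricted to $L^2(0,c)$ is exactly $\sinc(x-y)$ under the unitary Fourier normalization $e^{-2\pi i x\xi}$. This holds because $Q$ is convolution with $\F^{-1}\mathbf 1_{(-1/2,1/2)}(t)=\int_{-1/2}^{1/2}e^{2\pi i t\xi}\,d\xi=\sin(\pi t)/(\pi t)$.
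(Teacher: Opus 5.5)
Your proof is correct and follows the paper's argument. It uses the same affine substitution for unitarity and for the kernel identity, Mercer's theorem for the trace, and the fact that a compression of an orthogonal projection satisfies $0\le\cdot\le I$. The only cosmetic difference is where the operator bounds are checked: you verify them on $S_c$ via $\langle PQPf,f\rangle=\|QPf\|^2$ and transfer them by unitary equivalence (every $s>0$ arises as $\pi c/2$), whereas the paper dilates $K_s$ directly to a compression of the multiplier $\mathbf 1_{[-1,1]}$ on $L^2(-s,s)$.
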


\begin{proof}
The substitution $x=\tfrac c2(1+\lambda)$ has Jacobian $c/2$, so
$\int_0^c|f|^2\,dx=\tfrac c2\int_{-1}^1|f(\tfrac c2(1+\lambda))|^2
\,d\lambda=\|U_cf\|^2$, and $U_c$ is onto because the substitution is a
bijection of $(-1,1)$ onto $(0,c)$; hence $U_c$ is unitary.  With
$x=\tfrac c2(1+\lambda)$, $y=\tfrac c2(1+\mu)$ the kernel of
$U_cS_cU_c^{-1}$ is $\tfrac c2\sinc\bigl(\tfrac c2(\lambda-\mu)\bigr)$,
which for $\lambda\ne\mu$ equals
\[
 \frac c2\cdot
 \frac{\sin\bigl(\tfrac{\pi c}2(\lambda-\mu)\bigr)}
      {\tfrac{\pi c}2(\lambda-\mu)}
 =\frac{\sin\bigl(\tfrac{\pi c}2(\lambda-\mu)\bigr)}{\pi(\lambda-\mu)}
 =K_{\pi c/2}(\lambda,\mu),
\]
and at $\lambda=\mu$ both sides equal $c/2=s/\pi$ with $s=\pi c/2$;
both kernels are continuous, so they agree everywhere.  Unitary
equivalence preserves the trace, and $\Tr S_c=\int_0^c\sinc(0)\,dx=c$ by
Mercer's theorem, $S_c$ being a positive trace-class operator with
continuous kernel.  Finally, after the further dilation $t=s\lambda$,
$K_s$ becomes the compression to $L^2(-s,s)$ of the Fourier multiplier
$\mathbf1_{[-1,1]}$, which is an orthogonal projection; a compression of
an orthogonal projection satisfies $0\le\cdot\le I$.
\end{proof}

The generating parameter is two-sided, and this is the organising fact
of the paper. Write
\begin{equation}\label{eq:gammav}
\gamma(v)=1-e^{-2v},\qquad
F_c(v)=\ln\det\bigl(I-\gamma(v)S_c\bigr)\qquad(v\in\R).
\end{equation}
Since $\gamma'(v)=2e^{-2v}>0$, the map $v\mapsto\gamma(v)$ is a
bijection of $\R$ onto $(-\infty,1)$; thus $v>0\iff\gamma\in(0,1)$ ---
the \emph{head} side, where $\det(I-\gamma S_c)<1$ --- and
$v<0\iff\gamma<0$ --- the \emph{tail} side, where
$I-\gamma S_c=I+|\gamma|S_c\succeq I$ is positive definite and
zero-free, so $\det>1$. On the tail side, putting $u=-2v\ge0$ gives
$\gamma=1-e^{u}$ and $I-\gamma S_c=I+(e^u-1)S_c$, that is,
\begin{equation}\label{eq:GF}
G_c(u)\;:=\;\ln\det\bigl(I+(e^u-1)S_c\bigr)\;=\;F_c(-u/2).
\end{equation}
So the head-side counts of \S\S\ref{sec:thmA}--\ref{sec:thmB} and the
lower-half count of Problem~\ref{op:bridge}, which
\S\ref{sec:reduction} converts into a statement about $G_c$, are two
readings of a single function on the two halves of its domain.

It is convenient to record the entropy-type form of \eqref{eq:GF} once.
For $u\ge0$ and $x\in[0,1]$ put
\begin{equation}\label{eq:phiu}
 \varphi_u(x)=\ln\bigl(1+(e^u-1)x\bigr)-ux .
\end{equation}
Then, by the spectral theorem and $\Tr S_c=c$,
\begin{equation}\label{eq:phitrace}
 \Tr\varphi_u(S_c)=G_c(u)-uc=F_c(-u/2)-uc ,
\end{equation}
so that $(T_\alpha)$ of Definition~\ref{def:T} is literally the
statement $\Tr\varphi_u(S_c)\ge\kappa_Tu^2\ln c$ on
$L_T\le u\le\alpha\ln c$.  All three formulations are used
interchangeably below; \eqref{eq:phitrace} is the dictionary between
them.

\subsection{External determinant and counting results}\label{sec:quoted}

The following is the paper's principal external input on the head side.
It is \cite[Thm.~1.2]{BDIK2}, whose displays there carry the equation
numbers (1.6) and (1.7); the fixed-$\gamma$ specialization is the
classical Basor--Widom asymptotic \cite{BasorWidom83}, reproved with
multi-interval extensions by Charlier \cite[Thm.~1.1]{Charlier21}.

\begin{quotedthm}[Bothner--Deift--Its--Krasovsky
{\cite[Thm.~1.2]{BDIK2}}]\label{qt:BDIK}
Let $\gamma=1-e^{-2v}$, equivalently $v=-\tfrac12\ln(1-\gamma)$, and let
$K_s$ be as in Lemma~\ref{lem:dict}. There exist constants
$s_0,\,c_1,\,c_2<\infty$ such that for all $s\ge s_0$ and all $v$ with
$0\le v< s^{1/3}$,
\begin{equation}\label{eq:bdik}
\ln\det(I-\gamma K_s)
=-\frac{4v}{\pi}\,s+\frac{2v^2}{\pi^2}\ln(4s)+A(v)+r(s,v),
\qquad
|r(s,v)|\le\frac{c_1v}{s}+\frac{c_2v^3}{s},
\end{equation}
where
\begin{equation}\label{eq:Adef}
A(v)\;:=\;2\ln\bigl[\BG(1+iv/\pi)\,\BG(1-iv/\pi)\bigr].
\end{equation}
\end{quotedthm}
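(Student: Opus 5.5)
This is the Bothner--Deift--Its--Krasovsky theorem, quoted from \cite[Thm.~1.2]{BDIK2} and not reproved in this paper. Were I to reconstruct it, I would argue in three steps, each uniform in $v$: a differential identity in the parameter, Riemann--Hilbert steepest descent with explicit local models, and integration in $v$ from $v=0$, where $\det(I-0\cdot K_s)=1$ and $A(0)=0$. The steps are set out below.

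\emph{Step 1: the differential identity.} Write $K_s$ in IIKS integrable form, with kernel $\frac{f(\lambda)^Tg(\mu)}{\lambda-\mu}$ where $f=(e^{is\lambda},e^{-is\lambda})/\sqrt{2\pi i}$. Jacobi's formula then gives
\[
\partial_v\ln\det(I-\gamma K_s)=-\gamma'(v)\Tr\bigl((I-\gamma K_s)^{-1}K_s\bigr).
\]
The resolvent kernel is expressed through the solution $Y$ of the $2\times2$ problem with jump $I-\gamma\,2\pi i\,f g^T$ on $[-1,1]$. Integrating along the diagonal turns the trace into boundary data of $Y$ at $\lambda=\pm1$, by the usual Lax-pair manipulation.

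\emph{Step 2: steepest descent.} I would carry out the following transformations.
\begin{itemize}[nosep]
\item Conjugate by $e^{\pm is\lambda\sigma_3}$ and factor the jump. Open lenses above and below $(-1,1)$, where the off-diagonal entries decay like $e^{-s|\operatorname{Im}\lambda|}$.
\item Remove the constant diagonal jump $e^{-2v\sigma_3}$ with the Szeg\H{o}-type function $D(z)=\bigl(\tfrac{z-1}{z+1}\bigr)^{iv/\pi}$. This gives a global parametrix.
\item Place confluent hypergeometric parametrices $\Psi(\zeta;\nu)$ at $\pm1$, with $\nu=iv/\pi$ and local variable $\zeta=2s(z\mp1)$.
\end{itemize}
The small-norm remainder $R$ has jump $I+O\bigl((1+v)^2/s\bigr)$ on the disk boundaries. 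This comes from the $1/\zeta$ term of the large-$\zeta$ expansion of $\Psi$, whose coefficient is $\propto\nu^2$, together with ratios of $\Gamma(1\pm\nu)$ that are bounded by $e^{O(v)}$ but cancel against $D$.

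\emph{Step 3: integration in $v$.} Substitute the parametrix into the identity of Step~1. This gives
\[
\partial_v\ln\det=-\frac{4s}{\pi}+\frac{4v}{\pi^2}\ln(4s)+A'(v)+O\Bigl(\frac{1+v^2}{s}\Bigr).
\]
Here $A'(v)$ arises from digamma values of $1\pm iv/\pi$, using $(\ln\BG)'(1+z)=\tfrac12\ln 2\pi-\tfrac12-z+z\psi(1+z)$. Integrating from $0$ to $v$ then yields the stated expansion with error $O\bigl((v+v^3)/s\bigr)$.

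\emph{Main obstacle.} I expect the hardest step to be uniformity of the small-norm argument up to $v\sim s^{1/3}$. There are two issues:
\begin{itemize}[nosep]
\item The disks around $\pm1$ cannot shrink too fast. Otherwise the lens jumps $e^{-s|\operatorname{Im} z|}$ near the endpoints fail to beat the $e^{O(v)}$ growth of $D^{\pm2}$.
\item The matching error on $\partial U_{\pm1}$ must be controlled with an explicit $\nu$-dependence, via uniform bounds on the Whittaker/confluent asymptotics for $|\nu|$ large. Only this yields an $R$-error of order $v^2/s$ rather than an exponentially large constant in $v$.
\end{itemize}
I would first fix disks of constant radius and use the condition $v\le s^{1/3}$ to keep $\nu^2/\zeta$ small on the boundary. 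If that fails, I would rescale the disks with $v$.
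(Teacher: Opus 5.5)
As a citation, your proposal matches the paper exactly. Theorem~\ref{qt:BDIK} is imported verbatim from \cite[Thm.~1.2]{BDIK2}, no proof is given, and it is never used for $v<0$ (Remark~\ref{rem:bdik-scope}).

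Your reconstruction, however, differs from the paper's own proof of the mirror statement, Theorem~\ref{thm:signedmain}.
\begin{itemize}
\item \emph{The paper's route.} The exact identity $\partial_s\ln\det(I-\gamma K_s)=-2i(Y_1)_{11}$ (Lemma~\ref{lem:diffid}) reduces the asymptotics to two endpoint residues. The integration then runs from $s=\infty$, with the constant imported from Charlier's fixed-parameter formula (Lemma~\ref{lem:charlierconvert}). It uses disks of radius $(\ln s)^{-2}$, which is what confines it to $\omega\le A\ln s$ and to the remainder $(1+\omega)^4\ln^2s/s$.
\item \emph{Your route.} Integrating a $v$-identity from $v=0$ gets the constant for free ($\det=1$, $A(0)=0$), and it produces the shape $c_1v/s+c_2v^3/s$ naturally.
\item \emph{What your Step~1 misstates.} The $v$-identity is not \emph{boundary data at $\pm1$}. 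Unlike the $s$-derivative, $\Tr\bigl((I-\gamma K_s)^{-1}K_s\bigr)$ is an integral of the resolvent diagonal over all of $(-1,1)$. The term $\frac{4v}{\pi^2}\ln(4s)$ comes from integrating the outer-parametrix contribution up to the disks. So Step~3 is an interval computation, not a residue computation.
\end{itemize}

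There is a concrete gap in Step~2, in your first plan of fixed-radius disks.
\begin{itemize}
\item The factors $e^{\mp i\pi\nu}$ carried by the off-diagonal entries of $M_1$ cancel against the constant part $D_0$ of the left dressing. They do not cancel against the Szeg\H{o} function.
\item What survives in the off-diagonal matching entries is the factor $[2s(\lambda+1)]^{\mp2\nu}$ coming from the Szeg\H{o} function (Lemma~\ref{lem:Delta}). Its modulus is $e^{\pm\frac{2v}{\pi}\arg(\lambda+1)}$.
\item On $|\lambda-1|=r$ with $r$ fixed, this reaches $e^{cvr}$, which is $e^{cs^{1/3}}$ at the top of your range. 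So the disk jump is not $I+O((1+v)^2/s)$, and the small-norm step fails once $v\gg\ln s$. This is item~E3 of Appendix~\ref{app:ledger}, and it is why the paper pairs $r_s=(\ln s)^{-2}$ with $\omega\le A\ln s$.
\item The radius must therefore be $O(1/v)$, say $r=K/v$. This is the opposite of the constraint you flag, since the lens only needs $r\gg v/s$.
\item With $r\asymp K/v$, the diagonal term $\nu^2/\zeta$ has size $v^3/(Ks)$ rather than $v^2/s$. This, together with the requirement $|\zeta|\gg|\nu|^2$ for uniform Kummer asymptotics, is what ties the method to $v\lesssim s^{1/3}$.
\item Consequently the derivative error $O((1+v^2)/s)$ you assert cannot be read off from a first-order bound on the jump. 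The first-order disk contributions have to be evaluated exactly, and only genuinely second-order terms may be estimated.
\end{itemize}
Your fallback of rescaling the disks with $v$ is the right direction, but as written your sketch does not yet reach the stated remainder.
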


\begin{remark}[Parameter range and normalization]
\label{rem:bdik-scope}
(i) The range is $v\ge0$, i.e.\ $\gamma\in[0,1)$.  \emph{Nothing} in
\cite{BDIK1,BDIK2} states or implies the case $v<0$, and no use of
Theorem~\ref{qt:BDIK} at negative $v$ is made anywhere in this
paper.  The negative-$v$ statement is Theorem~\ref{thm:signedmain},
proved from scratch in \S\ref{sec:tail}.
(ii) The symbol $A(v)$ of \eqref{eq:Adef} is \emph{not} the symbol
$A(v)$ of \cite[eq.~(1.26)]{BDIK1}.  The latter is the regularized
constant
$A_{\mathrm{BDIK1}}(v)=2\ln b(v)-\frac{v^2}{\pi^2}
\bigl(3+2\ln\frac\pi v\bigr)$ with $b(v)=\BG(1+\frac{iv}\pi)
\BG(1-\frac{iv}\pi)$; our $A(v)$ is the unregularized $2\ln b(v)$, which
is what appears in \cite[Thm.~1.2]{BDIK2}.  Readers moving between the
two papers must not subtract twice.
(iii) The bound in \eqref{eq:bdik} is written in \cite{BDIK2} as
$c_1\frac vs+c_2\frac{v^3}s$; we have kept that form rather than
compressing it to $O((v+v^3)/s)$ to make the two constants visible.
\end{remark}

\begin{remark}\label{rem:bdik-checks}
The formula is consistent with the following fixed-parameter checks. (i) At
$v=0$: $\gamma=0$, both sides vanish ($\BG(1)=1$). (ii) At fixed $v$ it
reproduces \cite[Thm.~1.1]{Charlier21} at $m=1$ under
$u_1=\ln s_1=-2v$, $\mathsf z=u_1/(2\pi i)$, interval length $2s$: the
linear, logarithmic and Barnes terms match exactly. (iii) The constant
term matches the fixed-$v$ asymptotic \cite[eqs.~(1.4)--(1.6)]{BDIK1},
$\det(I-\gamma K_s)=e^{-4vs/\pi}(4s)^{2v^2/\pi^2}b^2(v)(1+O(s^{-1}))$
with $b(v)=e^{(1+\gamma_E)v^2/\pi^2}\prod_{k\ge1}
(1+\tfrac{v^2}{\pi^2k^2})^k e^{-v^2/(\pi^2k)}$; the Weierstrass product
\cite[eq.~5.17.3]{NIST} for $\BG$ gives
$\BG(1+z)\BG(1-z)=e^{-(1+\gamma_E)z^2}\prod_{k\ge1}(1-\tfrac{z^2}{k^2})^k
e^{z^2/k}$, whence $b(v)=\BG(1+iv/\pi)\BG(1-iv/\pi)$ exactly, i.e.\
$2\ln b(v)=A(v)$; this is Lemma~\ref{lem:barnesproduct} below.
(iv) The constant of \cite[Thm.~1.4]{BDIK1} is recorded there,
in the notation of \cite[eq.~(1.26)]{BDIK1}, as
$2\ln b(v)-\frac{v^2}{\pi^2}\bigl(3+2\ln\frac\pi v\bigr)$,
which by Lemma~\ref{lem:barnesasym} is exactly $A(v)$ with its two
$v^2$ terms removed, hence equal to
$-\tfrac13\ln\frac v\pi+4\zeta'(-1)+o(1)$;
this independently confirms the coefficients $3v^2/\pi^2$ and
$-\frac{2v^2}{\pi^2}\ln\frac v\pi$ of Lemma~\ref{lem:barnesasym}.
\end{remark}

We record the fixed-parameter formula used to identify the integration
constant in the signed theorem.  Its hypotheses cover the sign needed
below.

\begin{quotedthm}[Charlier
{\cite[eq.~(1.4) and Thm.~1.1]{Charlier21}}]\label{qt:Charlier}
For $J=(rx_0,rx_1)$ let $K_J$ be the sine-kernel operator on $L^2(J)$
with kernel $\sin(x-y)/(\pi(x-y))$, and let
$F(J,s_1)=\det\bigl(I-(1-s_1)K_J\bigr)$.  Then for each
\emph{fixed} $s_1\in(0,+\infty)$, writing $u_1=\ln s_1$, as
$r\to+\infty$,
\begin{equation}\label{eq:charlier}
\begin{aligned}
\ln F\bigl((rx_0,rx_1),s_1\bigr)
={}&\frac{r\,u_1(x_1-x_0)}\pi
 +\frac{u_1^2}{2\pi^2}\ln\bigl(2r(x_1-x_0)\bigr)\\
&+2\ln\bigl[\BG\bigl(1+\tfrac{u_1}{2\pi i}\bigr)
            \BG\bigl(1-\tfrac{u_1}{2\pi i}\bigr)\bigr]
 +O_{s_1,x_0,x_1}\bigl(r^{-1}\bigr).
\end{aligned}
\end{equation}
Equation~\eqref{eq:charlier} with this rate is a fixed-parameter
statement.  For parameters in the compact and separated sets of
\cite[Thm.~1.1]{Charlier21}, the same $m=1$ main terms hold uniformly
with the weaker error $O(\ln r/r)$.
\end{quotedthm}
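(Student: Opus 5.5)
The plan is to reduce to the symmetric interval and then run the IIKS Riemann--Hilbert steepest descent, integrating the differential identity in the generating parameter rather than in $r$. This way the additive constant is fixed by the trivial value at $s_1=1$, where $F\equiv1$.

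\textbf{Reduction.} The sine kernel $\sin(x-y)/(\pi(x-y))$ is translation invariant. Hence $F((rx_0,rx_1),s_1)=\det(I-(1-s_1)K_{\sigma})$, where $K_\sigma$ is the operator of Lemma~\ref{lem:dict} on $L^2(-1,1)$ and $\sigma=r(x_1-x_0)/2$. So it suffices to prove
\[
\ln\det(I-(1-s_1)K_\sigma)=\frac{2u_1\sigma}{\pi}+\frac{u_1^2}{2\pi^2}\ln(4\sigma)+2\ln[\BG(1+\tfrac{u_1}{2\pi i})\BG(1-\tfrac{u_1}{2\pi i})]+O(\sigma^{-1}).
\]
Since $1-s_1$ ranges over $(-\infty,1)$, both signs of the coupling are covered at once.

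\textbf{Differential identity and Riemann--Hilbert analysis.} By Jacobi's formula and the IIKS integrable structure, $\partial_{u_1}\ln\det(I-(1-s_1)K_\sigma)$ is an explicit bilinear expression in the solution $Y$ of the $2\times2$ RH problem. This problem has jump $I-2\pi i(1-s_1)f g^T$ on $(-1,1)$, with $f,g$ built from $e^{\pm i\sigma\lambda}$; the expression is evaluated through the residue at infinity. I would then apply the standard sequence of transformations:
\begin{itemize}
\item normalize with the $g$-function $e^{\mp i\sigma\lambda\sigma_3}$;
\item factor the jump and open lenses above and below $(-1,1)$, so that off $(-1,1)$ the jumps decay like $e^{-\sigma|\mathrm{Im}\,\lambda|}$;
\item build the outer parametrix $\bigl(\tfrac{\lambda-1}{\lambda+1}\bigr)^{\nu\sigma_3}$ with $\nu=u_1/(2\pi i)$, so $|\mathrm{Re}\,\nu|=0$ and the parametrix has only bounded oscillatory power singularities;
\item use confluent hypergeometric parametrices in fixed disks around $\pm1$.
\end{itemize}
For $u_1$ in a compact set of $\R$ the residual jump is $I+O(\sigma^{-1})$ in $L^2\cap L^\infty$, uniformly in $u_1$. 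The small-norm argument then gives $R=I+O(\sigma^{-1})$, and substituting into the identity yields
\[
\partial_{u_1}\ln F=\frac{2\sigma}{\pi}+\frac{u_1}{\pi^2}\ln(4\sigma)+\partial_{u_1}\bigl(\text{Barnes term}\bigr)+O(\sigma^{-1}).
\]
The Barnes derivative arises as digamma combinations $\psi(1\pm\nu)$ coming from the large-$\zeta$ expansion of the confluent hypergeometric model.

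\textbf{Integration.} Integrate from $u_1=0$ to the given $u_1$ along the real segment. The error terms are uniform on that segment, so the integrated error stays $O_{s_1}(\sigma^{-1})$. The primitive of the digamma terms is identified with $2\ln[\BG(1+\nu)\BG(1-\nu)]$ via $\frac{d}{dz}\ln\BG(1+z)=\tfrac12\ln(2\pi)-\tfrac12-z+z\psi(1+z)$; the $\ln 2\pi$ and linear contributions cancel in the symmetric combination. The uniform statement with error $O(\ln r/r)$ in the compact and separated multi-interval setting would follow from the same scheme, losing a logarithm from the interaction of the endpoint disks.

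\textbf{Main obstacle.} I expect the hardest step to be uniformity of the small-norm bound in $u_1$ along the whole integration path, including near $u_1=0$ where $\nu\to0$. The confluent hypergeometric parametrix and its matching condition must have constants depending continuously on $\nu$ on compact sets; I would check this first from the explicit $\Gamma(1\pm\nu)$ factors in the matching coefficient. The exact residue bookkeeping that produces the $\ln(4\sigma)$ coefficient is routine by comparison.
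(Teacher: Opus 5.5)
The paper does not prove this statement. It is quoted from \cite{Charlier21}, with the Basor--Widom and Deift--Its--Krasovsky routes of Remark~\ref{rem:BWrange} as corroboration, and it is used only in Step~3 of the proof of Theorem~\ref{thm:signedmain}, to identify $\lim_{t\to\infty}\Phi(t)$ at one fixed $\omega$. That external input is needed because the paper integrates the $s$-identity of Lemma~\ref{lem:diffid}, which leaves an undetermined function of $\omega$. Your plan, integrating in the generating parameter from $u_1=0$ where $F\equiv1$, would remove that dependence. Your reduction to $K_\sigma$ with $\sigma=r(x_1-x_0)/2$ and the rewritten target are correct. The central step, however, does not work as you describe it.

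\textbf{The gap.} The residue at infinity gives the $\sigma$-derivative, not the $u_1$-derivative. The identity $-2i(Y_1)_{11}=\partial_\sigma\ln\det(I-\gamma K_\sigma)$ is exactly Lemma~\ref{lem:diffid}, and that is all the expansion $R=I+R_1\lambda^{-1}+\cdots$ can feed.

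By Jacobi's formula, $\partial_{u_1}\ln F=e^{u_1}\Tr\bigl(K_\sigma(I-\gamma K_\sigma)^{-1}\bigr)$ with $\gamma=1-e^{u_1}$. This is the integral over $(-1,1)$ of the diagonal of the resolvent kernel. In the notation of Lemma~\ref{lem:fg} it is a constant multiple of $\int_{-1}^{1}g^{T}Y_+^{-1}\partial_\lambda(Y_+f)\,d\lambda$: a path integral between the two endpoints, not a residue.

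Evaluating it requires $Y_+$ and $\partial_\lambda Y_+$ along the whole interval, including inside both endpoint disks. Write $\rho$ for the fixed disk radius.
\begin{itemize}
\item On the outer part, $\bigl(P^{(\infty)}\bigr)^{-1}\partial_\lambda P^{(\infty)}=2\nu\sigma_3/(\lambda^2-1)$ is non-integrable at $\pm1$. Its integral over $(-1+\rho,1-\rho)$ gives a term proportional to $\nu\ln\frac{2-\rho}{\rho}$.
\item That term must combine with the disk contributions. These have to be computed from the exact model $\mathcal P(\zeta)$ along the whole segment $-2\sigma\rho\le\zeta\le0$, not just from its large-$\zeta$ expansion on the circles.
\item You also need bounds on $\partial_\lambda R$ up to the points where the interval meets $\partial D_{\pm1}$. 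The usual device is moving the circles.
\end{itemize}
This computation is exactly where the $\frac{u_1}{\pi^2}\ln(4\sigma)$ term and the $\psi(1\pm\nu)$ terms come from, so it is the heart of the proof rather than routine bookkeeping. By contrast, the obstacle you single out, uniformity as $\nu\to0$, is harmless. The matching data involve only $\nu^2$, $\nu r_\nu$, $\sin\pi\nu$ and $1/\Gamma(\pm\nu)$, all analytic at $\nu=0$.

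\textbf{Two smaller points.}
\begin{itemize}
\item In $\frac{d}{d\nu}\ln[\BG(1+\nu)\BG(1-\nu)]=-2\nu+\nu\bigl(\psi(1+\nu)+\psi(1-\nu)\bigr)$, only the constants $\frac12\ln2\pi-\frac12$ cancel; the $-z$ terms of the derivative formula add. With $\nu=u_1/(2\pi i)$, the piece $-2\nu$ contributes $u_1/\pi^2$ to $\partial_{u_1}$ of the Barnes term. This has the same form as $\partial_{u_1}\bigl[\frac{u_1^2}{2\pi^2}\ln(4\sigma)\bigr]$, so losing it would change the argument of that logarithm by a factor $e$.
\item At $m=1$, a uniform $O(\sigma^{-1})$ bound on compact $u_1$-sets, once proved, implies the weaker uniform $O(\ln r/r)$ statement directly. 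The appeal to a logarithm lost through interaction of the endpoint disks is neither an argument nor needed.
\end{itemize}
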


\begin{remark}[Parameter range in Theorem~\ref{qt:Charlier}]
\label{rem:charlier-scope}
The parameter range is $s_1\in(0,+\infty)$, which contains
\emph{both} $s_1<1$ (the head side, $\gamma\in(0,1)$) and $s_1>1$ (the
tail side, $\gamma<0$).  Thus, unlike
Theorem~\ref{qt:BDIK}, it may be used to fix the integration
constant in the signed problem.  Two distinctions are relevant.  First,
\cite[eq.~(1.4)]{Charlier21} is a
fixed-parameter $O(r^{-1})$ statement, while the compact-uniform
statement of \cite[Thm.~1.1]{Charlier21} has error $O(\ln r/r)$.
Neither result permits $s_1=s_1(r)$ to grow, and neither is ever used
here in that way.  Second, the case $s_1=0$ (the gap probability) is a genuinely
different asymptotic, \cite[eq.~(1.3)]{Charlier21}, and is not obtained
by letting $u_1\to-\infty$ in \eqref{eq:charlier}; we never take that
limit.
\end{remark}

\begin{lemma}[normalization conversion for
Theorem~\ref{qt:Charlier}]\label{lem:charlierconvert}
Let $\omega\ge0$ be fixed and put $\mathcal D(s,\omega)
=\det\bigl(I+(e^{2\omega}-1)K_s\bigr)$ with $K_s$ as in
Lemma~\ref{lem:dict}.  Then, as $s\to\infty$,
\begin{equation}\label{eq:fixedtail}
 \ln\mathcal D(s,\omega)=\frac{4\omega s}{\pi}
 +\frac{2\omega^2}{\pi^2}\ln(4s)
 +2\ln\bigl[\BG(1+i\omega/\pi)\BG(1-i\omega/\pi)\bigr]+O_\omega(s^{-1}).
\end{equation}
\end{lemma}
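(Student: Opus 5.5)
The plan is to apply Theorem~\ref{qt:Charlier} with a single interval and match normalizations. Lemma~\ref{lem:dict} takes $K_s$ to live on $L^2(-1,1)$ with kernel $\sin(s(\lambda-\mu))/(\pi(\lambda-\mu))$. Charlier's kernel is $\sin(x-y)/(\pi(x-y))$ on $L^2(rx_0,rx_1)$.

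First, I rescale. The dilation $x=s\lambda$, i.e.\ $(Vf)(x)=s^{-1/2}f(x/s)$, is unitary from $L^2(-1,1)$ onto $L^2(-s,s)$. It carries $K_s$ to the operator with kernel
\[
s^{-1}\cdot\frac{\sin(x-y)}{\pi(x-y)/s}=\frac{\sin(x-y)}{\pi(x-y)}
\]
on $L^2(-s,s)$. This is $K_J$ with $J=(rx_0,rx_1)$, where I take $r=s$, $x_0=-1$, $x_1=1$, so that $x_1-x_0=2$. Fredholm determinants of trace-class operators are invariant under unitary conjugation. Hence $\mathcal D(s,\omega)=\det(I+(e^{2\omega}-1)K_J)$.

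Second, I match parameters. In Charlier's notation $I-(1-s_1)K_J=I+(e^{2\omega}-1)K_J$ requires $s_1=e^{2\omega}\in[1,\infty)$, which lies in the admitted range $(0,+\infty)$ (Remark~\ref{rem:charlier-scope}). The case $\omega=0$ is trivial, since then both sides vanish and $\BG(1)=1$. So $u_1=2\omega$, and I substitute term by term into \eqref{eq:charlier}:
\begin{itemize}
\item linear term: $ru_1(x_1-x_0)/\pi=4\omega s/\pi$;
\item logarithmic term: $\frac{u_1^2}{2\pi^2}\ln(2r(x_1-x_0))=\frac{2\omega^2}{\pi^2}\ln(4s)$;
\item Barnes arguments: $1\pm u_1/(2\pi i)=1\pm\omega/(\pi i)=1\mp i\omega/\pi$.
\end{itemize}
The Barnes term is symmetric under exchanging the two factors, so it equals $2\ln[\BG(1+i\omega/\pi)\BG(1-i\omega/\pi)]$. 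The error $O_{s_1,x_0,x_1}(r^{-1})$ becomes $O_\omega(s^{-1})$, since $x_0,x_1$ are fixed and $s_1$ is a function of $\omega$ alone.

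The only delicate points are bookkeeping ones. I must confirm that the rescaling is exactly unitary, so that no Jacobian factor survives in the kernel. I must also check the sign convention $u_1=\ln s_1>0$ on the tail side. Finally, the logarithm of the Barnes product should be real and well defined: $\BG(1+z)\BG(1-z)$ at $z=i\omega/\pi$ equals $|\BG(1+i\omega/\pi)|^2>0$ by conjugate symmetry, since $\BG$ is real on the real axis. It is nonzero because the zeros of $\BG$ lie at nonpositive integers. I expect this last check to be the main thing worth writing out, and none of these steps poses a real obstacle.
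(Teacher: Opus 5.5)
Your proposal is correct and follows the paper's proof essentially step for step: the unitary dilation identifies $K_s$ with $K_J$ on $J=(-s,s)$ (so $r=s$, $x_0=-1$, $x_1=1$), you choose $s_1=e^{2\omega}$ and $u_1=2\omega$, you substitute term by term into \eqref{eq:charlier}, and the error $O_\omega(s^{-1})$ comes from applying the fixed-parameter statement at the single value $s_1=e^{2\omega}$. Your separate treatment of $\omega=0$ and your positivity check of the Barnes product are harmless extras; the latter is Lemma~\ref{lem:barnesproduct} in the paper.
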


\begin{proof}
Take $x_0=-1$, $x_1=1$, $r=s$, so $J=(-s,s)$ and $x_1-x_0=2$.  Under
the dilation $x=s\lambda$, which maps $L^2(-s,s)$ unitarily onto
$L^2(-1,1)$, the kernel $\sin(x-y)/(\pi(x-y))$ becomes
$s\cdot\sin(s(\lambda-\mu))/(\pi s(\lambda-\mu))
=\sin(s(\lambda-\mu))/(\pi(\lambda-\mu))$, i.e.\ $K_J$ becomes $K_s$.
Choose $s_1=e^{2\omega}\in[1,\infty)\subset(0,\infty)$, so that
$1-s_1=1-e^{2\omega}=-\,(e^{2\omega}-1)$ and
$F(J,s_1)=\det\bigl(I-(1-s_1)K_J\bigr)
=\det\bigl(I+(e^{2\omega}-1)K_s\bigr)=\mathcal D(s,\omega)$.
Then $u_1=\ln s_1=2\omega$ and the three terms of \eqref{eq:charlier}
become
\[
 \frac{s\cdot2\omega\cdot2}{\pi}=\frac{4\omega s}\pi,
 \qquad
 \frac{(2\omega)^2}{2\pi^2}\ln(4s)=\frac{2\omega^2}{\pi^2}\ln(4s),
\]
and, since $\tfrac{u_1}{2\pi i}=\tfrac{2\omega}{2\pi i}
=-\tfrac{i\omega}{\pi}$,
\[
 2\ln\bigl[\BG\bigl(1-\tfrac{i\omega}\pi\bigr)
 \BG\bigl(1+\tfrac{i\omega}\pi\bigr)\bigr],
\]
which is the displayed constant.  The error is $O_\omega(s^{-1})$
because \cite[eq.~(1.4)]{Charlier21} is applied at the one fixed value
$s_1=e^{2\omega}$; no compact-uniform $O(s^{-1})$ estimate is invoked.
\end{proof}

\begin{remark}[two independent confirmations of
Lemma~\ref{lem:charlierconvert}]\label{rem:BWrange}
The same fixed-parameter conclusion is reached by two further routes,
recorded here because they are logically independent of
\cite{Charlier21}.  (a) \eqref{eq:fixedtail} is the specialization of
the Basor--Widom asymptotic for Wiener--Hopf determinants with a
piecewise-continuous symbol \cite{BasorWidom83} to the symbol
$\sigma_\omega(\xi)=1+(e^{2\omega}-1)\mathbf1_{[-1,1]}(\xi)$,
which for $\omega>0$ is bounded below by $1$, has winding number zero,
and has its two jump exponents purely imaginary, equal to
$\pm i\omega/\pi$; these are exactly the hypotheses of the
piecewise-continuous theory, and no sign restriction on
$\gamma=1-e^{2\omega}$ enters them.  (b) Equivalently one may invoke the
Fisher--Hartwig theory of \cite{DIK} for the two-jump symbol with purely
imaginary exponent $\beta=iv/\pi$, valid for either sign of $v$.  The
right-hand side of \eqref{eq:fixedtail} is \emph{not} even in $v=-\omega$
--- its linear term is odd --- but the constant $A(v)$ is even and real
analytic, since $\BG(1+\tfrac{iv}\pi)\BG(1-\tfrac{iv}\pi)
=|\BG(1+\tfrac{iv}\pi)|^2>0$, so passing to $v<0$ introduces no hidden
phase and no branch of the logarithm is at issue.  We use
Theorem~\ref{qt:Charlier} because it states both the fixed-parameter
result and the separate compact-uniform result in the precise forms
required here; (a) and (b) provide independent corroboration.
\end{remark}

The following counting estimate is explicit and valid for all parameters.
\cite[Thm.~3]{KRD} is stated in the normalization
$c_{\mathrm{KRD}}=\Omega T/2$, where the operator time-limits to an
interval of length $T$ and band-limits to $[-\Omega,\Omega]$ in radian
frequency, and reads
\[
 \#\{k:\eps<\tilde\lambda_k<1-\eps\}\le
 \frac2{\pi^2}\ln\Bigl(\frac{100c_{\mathrm{KRD}}}\pi+25\Bigr)
 \ln\Bigl(\frac5{\eps(1-\eps)}\Bigr)+7 .
\]
For $S_c$ of \eqref{eq:op} we have $T=c$ and $\Omega=\pi$ (the band
$(-\tfrac12,\tfrac12)$ in ordinary frequency is $(-\pi,\pi)$ in radian
frequency), so $c_{\mathrm{KRD}}=\pi c/2$ and
$\frac{100c_{\mathrm{KRD}}}\pi=50c$.  This gives:

\begin{quotedthm}[Karnik--Romberg--Davenport
{\cite[Thm.~3]{KRD}}]\label{qt:KRD}
For all $c>0$ and all $0<\eps<\tfrac12$,
\[
\Lambda_\eps(c)\;=\;\#\{n:\eps<\lambda_n(c)<1-\eps\}
\;\le\;\frac{2}{\pi^2}\,\ln(50c+25)\,
\ln\!\Bigl(\frac{5}{\eps(1-\eps)}\Bigr)\;+\;7 .
\]
\end{quotedthm}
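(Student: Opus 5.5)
The statement is a direct transcription of \cite[Thm.~3]{KRD}, so the proof is a normalization check: exhibit a unitary equivalence between $S_c$ and the time--frequency limiting operator of \cite{KRD} with parameters $T=c$ and $\Omega=\pi$. Unitary equivalence preserves the spectrum with multiplicities, so the eigenvalue counts on $(\eps,1-\eps)$ agree, and the bound can then be read off after substituting $c_{\mathrm{KRD}}=\Omega T/2$. No analysis beyond this bookkeeping is needed.

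The steps, in order, are as follows.

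First, I will move the time interval. The translation $\tau_{c/2}f(x)=f(x+c/2)$ is unitary on $L^2(\R)$. It maps $L^2(0,c)$ onto $L^2(-c/2,c/2)$, and it commutes with $Q$, because translation acts on the Fourier side as multiplication by a unimodular phase. Hence $S_c$ is unitarily equivalent to $P_{(-c/2,c/2)}QP_{(-c/2,c/2)}$, which is the centered interval of length $T=c$ used in \cite{KRD}.

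Second, I will match the frequency convention. With $\F f(\xi)=\int f(x)e^{-2\pi ix\xi}\,dx$, Fourier support in $(-\tfrac12,\tfrac12)$ in ordinary frequency is the same as support in $(-\pi,\pi)$ for the radian variable $\omega=2\pi\xi$. So $Q$ is the band-limiting projection with $\Omega=\pi$. As a cross-check, the kernel of $Q$ is $\sin(\pi(x-y))/(\pi(x-y))=\sinc(x-y)$, which is exactly the radian-frequency kernel $\sin(\Omega(x-y))/(\pi(x-y))$ at $\Omega=\pi$.

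Third, I will substitute. We have $c_{\mathrm{KRD}}=\Omega T/2=\pi c/2$, so $100c_{\mathrm{KRD}}/\pi=50c$. This turns the \cite{KRD} bound into the displayed inequality, valid for all $c>0$ and $0<\eps<\tfrac12$, since \cite[Thm.~3]{KRD} has no further restrictions.

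The only place I expect any care to be needed is the normalization of $c_{\mathrm{KRD}}$. The literature uses both the time-bandwidth product $c=\Omega T/2$ and $c=WT$ with $W$ in ordinary frequency, and these differ by a factor of $\pi$. I would resolve this by checking the \cite{KRD} statement against its own leading-order consequence: the number of eigenvalues near $1$ should be $\approx2c_{\mathrm{KRD}}/\pi=c=\Tr S_c$, which is consistent with Lemma~\ref{lem:dict}. That check fixes the factor unambiguously.

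Finally, I will note the endpoint convention. The quoted count uses the open interval $\eps<\lambda_n<1-\eps$. It therefore differs from $\Lambda_\eps$ of \eqref{eq:identities}, which uses a closed top $\lambda_n\le1-\eps$, by at most one eigenvalue, since the spectrum is simple. Any later use of the bound for $\Lambda_\eps$ as defined in \eqref{eq:identities} absorbs this $\pm1$ into the additive constant.
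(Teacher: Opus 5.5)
Your proposal is correct and is essentially the paper's own justification: identify $T=c$ and $\Omega=\pi$, so that $c_{\mathrm{KRD}}=\pi c/2$ and $100c_{\mathrm{KRD}}/\pi=50c$, then absorb endpoint conventions into the additive constant. Your explicit translation step (which the paper spells out only for \cite{KDL} in Remark~\ref{rem:kdl-scope}) and your leading-order check $2c_{\mathrm{KRD}}/\pi=c$ are sound additions, and your endpoint discrepancy of at most one eigenvalue, justified by simplicity of the spectrum, is slightly sharper than the paper's ``at most $2$''.
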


(The open/closed endpoints differ from \eqref{eq:identities} by at most
$2$; we absorb this into constants without further comment.  The same
statement, in the normalization $|A||B|=c$, is
\cite[Thm.~2.2]{KDL}.)

For reference, we also state the results in the remaining regimes.
Part (b) of Theorem~\ref{qt:KDL} is used once, in
Proposition~\ref{prop:full}; the
others are for context only and are used in no proof below.

\begin{quotedthm}[Landau--Widom \cite{LandauWidom}]\label{qt:LW}
For each fixed $a\in(0,1)$,
$N_a(c)=c+\pi^{-2}\ln\frac{1-a}{a}\,\ln c+o(\ln c)$ as $c\to\infty$.
\end{quotedthm}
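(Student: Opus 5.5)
We plan to rederive Theorem~\ref{qt:LW} from the fixed-parameter determinant asymptotic together with a Tauberian argument; this avoids the original operator-theoretic proof. Fix $a\in(0,1)$ and put $v_a=\tfrac12\ln\frac{a}{1-a}$. By \S\ref{sec:conventions} we have $N_a(c)=\Ntil(v_a)$, and $\ln\frac{1-a}{a}=-2v_a$, so the claim reads
\[
\Ntil(v_a)=c-\frac{2v_a}{\pi^2}\ln c+o(\ln c).
\]

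\emph{Step 1: the determinant asymptotic on compacts in $v$.} Lemma~\ref{lem:dict} gives $s=\pi c/2$. Theorem~\ref{qt:Charlier} covers both signs of $\gamma$ (Remark~\ref{rem:charlier-scope}), and for $v$ in a compact set it gives, uniformly,
\[
F_c(v)=-\frac{4v s}{\pi}+\frac{2v^2}{\pi^2}\ln(4s)+A(v)+O(\ln s/s),
\]
where $A$ is continuous and bounded on the compact.

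\emph{Step 2: the $v$-derivative as a smoothed count.} Differentiate term by term:
\[
-\tfrac12\partial_v\ln(1-\gamma(v)\lambda_n)=\sigma\bigl(2(w_n-v)\bigr),\qquad \sigma(x)=(1+e^{-x})^{-1}.
\]
Hence
\[
-\tfrac12\bigl(F_c(v_2)-F_c(v_1)\bigr)=\int_{v_1}^{v_2}\sum_n\sigma\bigl(2(w_n-v)\bigr)\,dv=\int_{v_1}^{v_2}(k*\Ntil)(v)\,dv ,
\]
with $k(x)=2\sigma'(2x)$, an even probability density. Step~1 then says the following. The normalized step function $g_c(t)=(\Ntil(t)-c)/\ln c$ satisfies
\[
\int_{v_1}^{v_2}(k*g_c)(v)\,dv\;\longrightarrow\;\int_{v_1}^{v_2}\Bigl(-\frac{2v}{\pi^2}\Bigr)dv
\]
for every bounded interval. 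Here we use $2s/\pi=c$ and $\ln(4s)=\ln c+O(1)$, and the bounded constant $A(v)$ and the error term disappear after division by $\ln c$.

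\emph{Step 3: tightness.} Each $g_c$ is nonincreasing. We need a bound of the form $|g_c(t)|\le C(1+|t|)$, uniform in $c$, so that Helly selection applies and the tails of the convolution are controlled. This follows from Theorem~\ref{qt:KRD} together with $|N_{1/2}(c)-c|\le C_{\mathrm h}\ln c$ from Theorem~\ref{thm:A}. KRD bounds $|\Ntil(t)-N_{1/2}|$ by $O\bigl((1+|t|)\ln c\bigr)$, and the exponential decay of $k$ then dominates this linear growth.

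\emph{Step 4: Tauberian uniqueness.} This is the step we expect to be the main obstacle. Along any sequence $c_j\to\infty$, Helly gives a further subsequence along which $g_{c_j}\to g$ at every continuity point of $g$, with $g$ monotone and of linear growth. Dominated convergence in Step~2 yields $k*g=-2v/\pi^2$ almost everywhere. Since $k$ is even with unit mass, $k*(-2t/\pi^2)=-2t/\pi^2$, so $h=g+2t/\pi^2$ satisfies $k*h=0$ and is a tempered distribution. The Fourier transform of $k$ is a multiple of $\pi\xi/\sinh(\pi\xi/2)$ up to scaling, which is real-analytic and nowhere zero. Therefore $\hat h$, which is supported on the zero set of $\hat k$, vanishes, so $h=0$.

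It follows that $g(t)=-2t/\pi^2$ for every subsequential limit. This limit is continuous, and a monotone sequence converging to a continuous limit converges at every point, including $t=v_a$. Hence $g_c(v_a)\to-2v_a/\pi^2$, which is the claim. The care needed is in justifying dominated convergence through the linear-growth bound of Step~3, and in handling the division by the nonvanishing $\hat k$ at the level of distributions.
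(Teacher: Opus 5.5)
The paper does not prove this statement. Theorem~\ref{qt:LW} is quoted from \cite{LandauWidom} for context and, as stated just before it, is used in no proof. Your proposal is therefore an independent rederivation from inputs the paper already has, and it is correct.

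Only the asymptotics at the two fixed endpoints $v_1,v_2$ enter Step~2. Hence the fixed-$s_1$ form of Theorem~\ref{qt:Charlier} suffices, with Lemma~\ref{lem:charlierconvert} on the tail side, and no compact uniformity is needed.

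For Step~4, note that $k(x)=2\sigma'(2x)=\tfrac12\operatorname{sech}^2x$ is a Schwartz function. With the convention $\hat f(\xi)=\int f(x)e^{-ix\xi}\,dx$, its transform is $\hat k(\xi)=(\pi\xi/2)/\sinh(\pi\xi/2)$. Since $1/\hat k$ grows exponentially, you cannot divide globally. But for $\phi\in C_c^\infty$ one has $\phi/\hat k\in C_c^\infty$, so $\langle\hat h,\phi\rangle=\langle\hat k\hat h,\phi/\hat k\rangle=0$. This is exactly the localization you anticipate.

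The comparison with the paper's own counting machinery is instructive. The paper passes from smoothed to sharp counts through Lemma~\ref{lem:conversion} at a fixed smoothing width. Because the log-odds $w_n$ have density about $\frac{2}{\pi^2}\ln c$ per unit length, this costs an additive $\Theta(\ln c)$. That is why Theorems~\ref{thm:A} and~\ref{thm:tail} give no relative information at bounded depth, and why Remark~\ref{rem:relativelimit} has to defer to Landau--Widom there.

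Your Wiener step instead inverts the convolution with $k$ exactly in the limit, so it recovers the $\ln c$ coefficient at every fixed threshold. The price is that the Helly compactness argument yields no rate and no uniformity in $a$. The paper's growing-parameter inputs (\cite{BDIK2} and Theorem~\ref{thm:signedmain}) give explicit $O(\ln c+\Lb)$ errors uniformly on moving ranges, but cannot see the fixed-depth coefficient. The two approaches are complementary.

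Two economies are available in your route.
\begin{itemize}
\item The bound $|N_{1/2}(c)-c|=O(\ln c)$ does not require Theorem~\ref{thm:A}, and hence does not require \cite{BDIK2}. Write $N_{1/2}-c=\sum_{\lambda_n>1/2}(1-\lambda_n)-\sum_{\lambda_n\le1/2}\lambda_n$ and bound both sums dyadically with Theorem~\ref{qt:KRD}, as in Lemma~\ref{lem:tailsum}.
\item The negative side $\gamma<0$ is not actually needed. Since $k$ is holomorphic with exponential decay in the strip $|\operatorname{Im}z|<\pi/2$ and $h$ has linear growth, $k*h$ is real-analytic. Vanishing on a half-line therefore already forces $k*h\equiv0$. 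Consequently the classical fixed-parameter asymptotic for $\gamma\in[0,1)$ alone, together with the two-sided bound from Theorem~\ref{qt:KRD}, would suffice.
\end{itemize}
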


\begin{quotedthm}[Kulikov--Dam Larsen
{\cite[Thm.~1.1]{KDL}} at $d=1$]\label{qt:KDL}
There exists $\alpha_1\ge4$ such that, for every $c\ge2$, with
$L=\ln\frac1\eps$ and
$R=\ln\frac{\alpha_1c}{L}$: (a) uniformly for
$\alpha_1^{-c}<\eps<\tfrac12$, $\Lambda^\pm_\eps(c)\lesssim LR$;
(b) if moreover $\eps<c^{-\alpha_1}$, then also
$\Lambda^\pm_\eps(c)\gtrsim LR$; (c) if $\eps\le\alpha_1^{-c}$ there are
no eigenvalues above $1-\eps$ and
$\Lambda^-_\eps(c)\asymp L/\ln(L/c)$.
\end{quotedthm}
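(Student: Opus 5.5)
Theorem~\ref{qt:KDL} is quoted from \cite{KDL} and is used only through part (b), in Proposition~\ref{prop:full}. Our plan for it is to reconstruct it from \emph{pointwise} two-sided bounds on $\ln\lambda_n(c)$ and $\ln(1-\lambda_n(c))$ as functions of $k=|n-c|$. Counting is then done by inversion.

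\textbf{Model heuristic.} The model to aim for is
\[
-\ln\lambda_{c+k}\asymp k\,\ln\frac{\alpha c}{k}\quad(k\lesssim c),\qquad
-\ln\lambda_{c+k}\asymp k\,\ln\frac{k}{c}\quad(k\gg c),
\]
and symmetrically $-\ln(1-\lambda_{c-k})\asymp k\ln(\alpha c/k)$ on the head side for $1\le k\le c$. Solving $k\ln(\alpha c/k)\asymp L$ gives $k\asymp L/\ln(\alpha c/L)$. This is off from the claimed $LR$ by a factor $R^2$, so the exponent must be checked carefully.

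\textbf{Correcting the exponent.} The Landau--Widom scaling $\lambda_{c+k}\approx(1+e^{\pi^2k/\ln c})^{-1}$ suggests instead $-\ln\lambda_{c+k}\asymp k/\ln(\alpha c/k)$ for $k\lesssim c$. Inverting $k/\ln(\alpha c/k)\asymp L$ gives $k\asymp L\ln(\alpha c/L)=LR$. So the first step is the correct pointwise law: for $\log^2c\lesssim k\le c$,
\[
-\ln\lambda_{c+k}\asymp\frac{k}{\ln(\alpha c/k)} .
\]
Its head mirror is the same with $\ln(1-\lambda_{c-k})$. We would extract both from the uniform WKB/Riemann--Hilbert formulas of \cite{Kulikov26,BonamiKarouiDecay}, as packaged in \cite[\S2]{KDL}.

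\textbf{Part (a).} Split the index set into the near window $|n-c|\lesssim\log^2c$ and its complement. On the window, Theorem~\ref{qt:KRD} already gives $\Lambda_\eps\lesssim L\ln c$. This is $\lesssim LR$ whenever $L\le c^{1-\theta}$. For larger $L$ the window itself has only $O(\log^2c)\lesssim LR$ indices. Off the window, the pointwise bounds convert $\lambda_n>\eps$ (respectively $1-\lambda_n>\eps$) into $k\lesssim LR$ directly.

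\textbf{Part (b).} Use the reverse pointwise inequalities. Every index with $\log^2c\lesssim k\le c_\ast LR$ satisfies $\eps<\lambda_{c+k}\le\frac12$, and likewise on the head side. This yields $\Lambda^\pm_\eps\ge c_\ast LR-O(\log^2c)$. The window loss is absorbed exactly when $LR\gg\log^2c$. This holds once $L\ge\alpha_1\ln c$ with $\alpha_1$ large, which is the hypothesis $\eps<c^{-\alpha_1}$.

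\textbf{Part (c).} This uses the super-geometric regime $k\gg c$, where $\lambda_{c+k}=\exp\bigl(-(1+o(1))\,2k\ln(k/ec)\bigr)$ by the Bonami--Karoui bounds. Inverting gives $\Lambda^-_\eps\asymp L/\ln(L/c)$. The first claim of (c) follows from the known lower bound $1-\lambda_0(c)\ge e^{-Cc}$, taking $\alpha_1\ge e^C$.

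\textbf{Main obstacle.} The hardest step is the \emph{uniform} pointwise law $-\ln\lambda_{c+k}\asymp k/\ln(\alpha c/k)$ with absolute constants across the whole range $\log^2c\lesssim k\lesssim c$, together with its head-side mirror. The published formulas carry an $O(\log n)$ error in the exponent, which is precisely why the near window must be excised. I would first try to prove this law via a comparison argument: monotonicity of $\lambda_n$ in $c$ combined with the explicit KRD counts at dyadically many thresholds. I would fall back on citing \cite[\S2]{KDL} for the pointwise input.
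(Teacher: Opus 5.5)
The paper gives no proof of this statement. It is imported as a black box from \cite{KDL}, used only through part (b) in Proposition~\ref{prop:full}, and its preprint status is flagged in Remark~\ref{rem:kdlstatus}.

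Your reduction has three pieces: two-sided pointwise exponential rates outside a window $|n-c|\lesssim\log^2c$, Theorem~\ref{qt:KRD} and a trivial index count inside it, and inversion via monotonicity of $n\mapsto\lambda_n$. This agrees with the paper's own description in \S\ref{sec:problem} of how \cite{KDL} assemble the result. Your corrected exponent is also consistent with Theorem~\ref{thm:tail}: inverting $k=\pi^{-2}L\ln(4\pi^2c/L)$ gives $L\asymp k/\ln(\alpha c/k)$. Granted the pointwise law, the bookkeeping in (a)--(c) is fine.

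The problem is that all the content sits in that law, and the route you propose for proving it yourself cannot work.
\begin{itemize}
\item Theorem~\ref{qt:KRD} bounds plunge counts only from above. Together with $N_{1/2}=c+O(\ln c)$ it gives $-\ln\lambda_{c+k}\gtrsim k/\ln c$ for $k\gtrsim\ln c$, and the analogous bound for $1-\lambda_{c-k}$. It never gives a lower bound on $\lambda_{c+k}$.
\item Monotonicity of $\lambda_n(c)$ in $c$ does not help. It imports upper bounds only from $c'\ge c$, with the worse factor $\ln c'$. A lower bound on $\lambda_{c+k}(c)$ could only come from one on $\lambda_{c+k}(c'')$ with $c''\le c$, which is an index even deeper in that operator's tail.
\end{itemize}
So this route yields nothing for (b), which needs $\lambda_{c+k}>\eps$ for all $k\le c_\ast LR$, i.e.\ a lower bound on $N_\eps$. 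For (a) it yields $L\ln c$ in place of $LR$. That is off by the unbounded factor $\ln c/R$ whenever $\ln(c/L)=o(\ln c)$, for instance when $L\asymp c$ and $R=O(1)$.

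Lower bounds of this kind need a genuinely two-sided analytic input. One option is the WKB/Riemann--Hilbert eigenvalue asymptotics of \cite{Kulikov26,BonamiKarouiDecay} as packaged in \cite[\S2]{KDL}; their $O(\log n)$ exponent error is exactly what forces your window. The other, used in this paper, is a determinant asymptotic at depth $u\asymp L$, converted into counts by Theorem~\ref{thm:red} or \S\ref{sec:tailquant}. The paper's signed theorem (Theorem~\ref{thm:signedmain}) reaches only $L\le A\ln c$ for fixed $A$. It therefore gives the two-sided order on that logarithmic window, but not beyond every fixed multiple of $\ln c$, which is where Proposition~\ref{prop:full} invokes (b). That is precisely why \cite{KDL} is cited there.

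With your fallback of citing \cite[\S2]{KDL} for the pointwise law, your argument is a correct reconstruction, but it is no more independent of \cite{KDL} than the paper's black-box citation. Without that citation there is a genuine gap at the uniform two-sided pointwise law.
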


\subsection{The head-side asymptotic in the
\texorpdfstring{$c$}{c} variable}\label{sec:Fasym}

Under Lemma~\ref{lem:dict}, with $s=\pi c/2$ (so that
$\ln4s=\ln(2\pi c)$), the function
\begin{equation}\label{eq:Fdef}
F_c(v)=\ln\det\bigl(I-\gamma(v)S_c\bigr),\qquad \gamma(v)=1-e^{-2v},
\end{equation}
of \eqref{eq:gammav} satisfies, by Theorem~\ref{qt:BDIK},
\begin{equation}\label{eq:F-asym}
F_c(v)=-2vc+\frac{2v^2}{\pi^2}\ln(2\pi c)+A(v)+r_c(v),
\qquad r_c(v):=r(\pi c/2,v),
\end{equation}
because $-\frac{4v}\pi s=-\frac{4v}\pi\cdot\frac{\pi c}2=-2vc$.  On the
head-side range the remainder satisfies
\begin{equation}\label{eq:F-head}
|r_c(v)|\le\rho_c(v):=\frac{2(c_1v+c_2v^3)}{\pi c},
\qquad 0\le v<(\pi c/2)^{1/3}.
\end{equation}
The corresponding tail-side estimate on $|v|=O(\ln c)$ is proved in
Section~\ref{sec:tail}; it is not an input to the head-side argument,
and the head-side argument is not an input to it.

Note that $\rho_c(v)\le\rho^*:=c_1+c_2$ on the whole admissible
head-side range $0\le v\le(\pi c/2)^{1/3}$ and $c\ge1$: there
$v^3\le\pi c/2$ and $v\le(\pi c/2)^{1/3}\le\pi c/2$, so
$\rho_c(v)\le\frac{2}{\pi c}(c_1+c_2)\frac{\pi c}{2}=\rho^*$.  Moreover
$\rho_c(v)=O(\ln^3c/c)$ for $v=O(\ln c)$.

\begin{remark}[applicability of Theorem~\ref{qt:KDL}]
\label{rem:kdl-scope}
\cite[Thm.~1.1]{KDL} is stated for the pair $A=B=[0,1]^d$ rescaled as
$cA$ versus $B$, i.e.\ at $d=1$ for time interval $[0,c]$ and frequency
interval $[0,1]$.  Our $B=(-\tfrac12,\tfrac12)$ is a translate of
$[0,1]$, and translating $B$ conjugates the concentration operator by
the unitary modulation $f\mapsto e^{2\pi i\xi_0x}f$, which leaves the
spectrum unchanged; translating $A$ likewise conjugates by a
translation.  Hence the eigenvalues, and therefore all the counts
$\Lambda^\pm_\eps$, are the same for the two pairs, and
$|A||B|=c$ in both.  The theorem therefore applies verbatim to $S_c$.
\end{remark}
\section{The Barnes term: elementary bounds and the large-argument
asymptotic}\label{sec:barnes}

Everything needed about $A(v)$ is elementary and proved here, from the
Weierstrass product for $\BG$.  Recall from \eqref{eq:Adef} that
$A(v)=2\ln[\BG(1+iv/\pi)\BG(1-iv/\pi)]$.

\begin{lemma}[the conjugate Barnes product]\label{lem:barnesproduct}
For every real $y$,
\begin{equation}\label{eq:Bproduct}
 \BG(1+iy)\BG(1-iy)
 =\exp\Bigl\{(1+\gamma_E)y^2\Bigr\}
 \prod_{k=1}^\infty\Bigl(1+\frac{y^2}{k^2}\Bigr)^{k}e^{-y^2/k},
\end{equation}
the product converging absolutely.  In particular
$\BG(1+iy)\BG(1-iy)=|\BG(1+iy)|^2>0$, and
\begin{equation}\label{eq:Blog}
 \ln\bigl[\BG(1+iy)\BG(1-iy)\bigr]
 =(1+\gamma_E)y^2
 +\sum_{k\ge1}\Bigl[k\ln\Bigl(1+\frac{y^2}{k^2}\Bigr)-\frac{y^2}{k}\Bigr],
\end{equation}
with the real logarithm on the left.  Consequently
$A(v)=2\bigl[(1+\gamma_E)x^2+\sum_{k\ge1}
\bigl(k\ln(1+\tfrac{x^2}{k^2})-\tfrac{x^2}{k}\bigr)\bigr]$
with $x=v/\pi$.
\end{lemma}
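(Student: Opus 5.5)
The plan is to start from the Weierstrass product for the Barnes function, \cite[eq.~5.17.3]{NIST},
\[
 \BG(1+z)=(2\pi)^{z/2}\exp\Bigl(-\tfrac{z+z^2(1+\gamma_E)}{2}\Bigr)
 \prod_{k\ge1}\Bigl(1+\frac zk\Bigr)^{k}\exp\Bigl(-z+\frac{z^2}{2k}\Bigr),
\]
valid for all $z\in\C$ with the product converging absolutely and locally uniformly. I will multiply the expansions at $z$ and at $-z$. In the prefactors the odd parts cancel: $(2\pi)^{z/2}(2\pi)^{-z/2}=1$ and $e^{-z/2}e^{z/2}=1$. This leaves $e^{-(1+\gamma_E)z^2}$. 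In each factor of the product, $(1+z/k)^k(1-z/k)^k=(1-z^2/k^2)^k$, the exponentials $e^{-z}e^{z}$ cancel, and $e^{z^2/(2k)}e^{z^2/(2k)}=e^{z^2/k}$. This gives the identity already quoted in Remark~\ref{rem:bdik-checks}(iii):
\[
\BG(1+z)\BG(1-z)=e^{-(1+\gamma_E)z^2}\prod_{k\ge1}\Bigl(1-\frac{z^2}{k^2}\Bigr)^{k}e^{z^2/k}.
\]
Multiplying the two absolutely convergent products termwise is legitimate. Substituting $z=iy$ with $y$ real turns $-z^2$ into $y^2$, which yields \eqref{eq:Bproduct}.

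For absolute convergence I will check directly that $\sum_k\bigl|k\ln(1+y^2/k^2)-y^2/k\bigr|<\infty$. Since $0\le t-\ln(1+t)\le t^2/2$ for $t\ge0$, taking $t=y^2/k^2$ bounds the $k$-th term by $y^4/(2k^3)$, which is summable. Each factor $(1+y^2/k^2)^ke^{-y^2/k}$ is a positive real number, so the product is a positive real number. Taking real logarithms of a convergent product of positive factors then gives \eqref{eq:Blog} directly, with no branch issue.

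For the modulus identity I will use the reflection $\BG(\bar z)=\overline{\BG(z)}$. This holds because $\BG$ is entire and real on the real axis, which is visible from the Weierstrass product with its real coefficients. Hence $\BG(1-iy)=\overline{\BG(1+iy)}$, and the product equals $|\BG(1+iy)|^2$. This is consistent with the positivity already established, and it shows $\BG(1+iy)\neq0$. The final assertion about $A(v)$ follows from \eqref{eq:Adef} by setting $y=x=v/\pi$ and multiplying \eqref{eq:Blog} by $2$.

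There is no genuine obstacle here. The only point needing care is the normalization of the quoted Weierstrass product, namely the sign of the $\gamma_E$ term and the factor $\tfrac12$ in $z^2/(2k)$. I would confirm these against the second-order Taylor coefficient of $\ln\BG(1+z)$ at $0$, which is $(z^2/2)(\ln(2\pi)-1-\gamma_E)$ minus the odd terms, before combining the two products.
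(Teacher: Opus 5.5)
Your proposal is correct and follows the paper's proof: you multiply the Weierstrass product at $\pm z$ and then set $z=iy$, where the paper substitutes $\pm iy$ directly, and the remaining steps — positive factors, $\BG(\bar z)=\overline{\BG(z)}$, real logarithms termwise — match, with your inequality $0\le t-\ln(1+t)\le t^2/2$ taking the place of the paper's expansion $-y^4/(2k^3)+O(k^{-5})$ for absolute convergence. One slip in your optional normalization check: the $z^2$ coefficient of $\ln\BG(1+z)$ at $0$ is $-\tfrac12(1+\gamma_E)$, and $\ln(2\pi)$ enters only through the linear term $\tfrac z2(\ln 2\pi-1)$, so checking against your stated value $\tfrac12(\ln 2\pi-1-\gamma_E)$ would wrongly flag the quoted product, which is in fact correct.
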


\begin{proof}
The Weierstrass product \cite[eq.~5.17.3]{NIST} is
\[
 \BG(z+1)=(2\pi)^{z/2}
 \exp\Bigl(-\tfrac12z(z+1)-\tfrac12\gamma_Ez^2\Bigr)
 \prod_{k=1}^\infty
 \Bigl(1+\frac zk\Bigr)^{k}\exp\Bigl(-z+\frac{z^2}{2k}\Bigr),
\]
valid for all $z\in\C$, the product converging absolutely and locally
uniformly because the $k$-th factor is
$1+O(|z|^3k^{-2})$.  Apply it at $z=iy$ and at $z=-iy$ and multiply.

The prefactors $(2\pi)^{iy/2}(2\pi)^{-iy/2}=1$.  In the exponentials,
$-\tfrac12z(z+1)=-\tfrac12z^2-\tfrac12z$ contributes
$(-\tfrac12(iy)^2-\tfrac12(-iy)^2)+(-\tfrac12(iy)-\tfrac12(-iy))
=(\tfrac12y^2+\tfrac12y^2)+0=y^2$, and
$-\tfrac12\gamma_Ez^2$ contributes
$-\tfrac12\gamma_E((iy)^2+(-iy)^2)=\gamma_Ey^2$.  Together
$e^{(1+\gamma_E)y^2}$.

In the $k$-th factor, $(1+\tfrac{iy}k)^k(1-\tfrac{iy}k)^k
=(1+\tfrac{y^2}{k^2})^k$, while
$\exp(-iy+\tfrac{(iy)^2}{2k})\exp(iy+\tfrac{(-iy)^2}{2k})
=\exp(-\tfrac{y^2}{2k}-\tfrac{y^2}{2k})=e^{-y^2/k}$.
This is \eqref{eq:Bproduct}.

Every factor on the right of \eqref{eq:Bproduct} is a positive real
number, so the product is positive; since $\BG$ is real on the real
axis and analytic, $\BG(1-iy)=\overline{\BG(1+iy)}$ for real $y$, so
the left side equals $|\BG(1+iy)|^2$.  Taking the real logarithm term by
term gives \eqref{eq:Blog}; the series converges absolutely because
$k\ln(1+\tfrac{y^2}{k^2})-\tfrac{y^2}k=-\tfrac{y^4}{2k^3}+O(k^{-5})$.
Finally $A(v)=2\ln[\BG(1+iv/\pi)\BG(1-iv/\pi)]$ is \eqref{eq:Blog} at
$y=x=v/\pi$, doubled.
\end{proof}

\begin{lemma}[Barnes derivative]\label{lem:barnes1}
$A$ is real and even on $\R$, $A\in C^\infty(\R)$, $A(0)=0$, and for
$v\ge0$, with $x=v/\pi$,
\begin{equation}\label{eq:Aprime}
A'(v)=\frac{4v}{\pi^2}\bigl[\,1-\Rey\psi(1+ix)\,\bigr]
=\frac{4v}{\pi^2}\bigl[\,1+\gamma_E-S(x)\,\bigr],
\qquad
S(x):=\sum_{k\ge1}\frac{x^2}{k(k^2+x^2)} .
\end{equation}
\end{lemma}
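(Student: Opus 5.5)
We differentiate the logarithmic series of Lemma~\ref{lem:barnesproduct} term by term. Reality, evenness and $A(0)=0$ are immediate from that lemma. By \eqref{eq:Blog} at $y=x=v/\pi$, $A(v)=2g(v/\pi)$ with
\[
g(x)=(1+\gamma_E)x^2+\sum_{k\ge1}\Bigl[k\ln\Bigl(1+\frac{x^2}{k^2}\Bigr)-\frac{x^2}{k}\Bigr].
\]
Each summand is a real-analytic, even function of $x$, so $g$ is even and real, and $g(0)=0$.

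For smoothness, we show that the series of $j$-th derivatives converges uniformly on compact sets for every $j$. Write each summand as $k\,h(x^2/k^2)$ with $h(t)=\ln(1+t)-t$. For $|x|\le M$ and $k\ge 2M$, expand $h(t)=\sum_{m\ge2}(-1)^{m+1}t^m/m$. This gives the summand as a power series in $x^2$ whose coefficients are bounded by $k^{1-2m}$ for $m\ge2$. Its derivatives of any order are therefore $O_{M,j}(k^{-3})$ uniformly on $[-M,M]$. The finitely many terms with $k<2M$ are smooth. Hence $g\in C^\infty(\R)$, and term-by-term differentiation is legitimate.

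Differentiating term by term, the $k$-th summand has derivative
\[
\frac{2x}{k}\cdot\frac{k^2}{k^2+x^2}-\frac{2x}{k}=-\frac{2x\,x^2}{k(k^2+x^2)}.
\]
Therefore $g'(x)=2x\bigl[(1+\gamma_E)-S(x)\bigr]$. By the chain rule, $A'(v)=\frac{2}{\pi}g'(v/\pi)=\frac{4v}{\pi^2}\bigl[1+\gamma_E-S(x)\bigr]$, which is the second form in \eqref{eq:Aprime}.

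For the first form, use the series $\psi(1+z)=-\gamma_E+\sum_{k\ge1}\bigl(\frac1k-\frac1{k+z}\bigr)$. At $z=ix$ we have $\Rey\frac1{k+ix}=\frac{k}{k^2+x^2}$, so
\[
\Rey\psi(1+ix)=-\gamma_E+\sum_{k\ge1}\Bigl(\frac1k-\frac{k}{k^2+x^2}\Bigr)=-\gamma_E+S(x),
\]
since $\frac1k-\frac{k}{k^2+x^2}=\frac{x^2}{k(k^2+x^2)}$. Substituting gives $1-\Rey\psi(1+ix)=1+\gamma_E-S(x)$, matching the two forms.

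The only step requiring care is justifying term-by-term differentiation, which is routine given the $k^{-3}$ bound above. Nothing here is a genuine obstacle.
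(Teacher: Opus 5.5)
Your proposal is correct and follows the paper's proof essentially step for step: term-by-term differentiation of the logarithmic Barnes series \eqref{eq:Blog}, the identity $\frac{d}{dx}\bigl[k\ln(1+x^2/k^2)-x^2/k\bigr]=-2x^3/(k(k^2+x^2))$, the chain rule $A'(v)=\frac{2}{\pi}g'(v/\pi)$, and the digamma series at $z=ix$. Your justification of $C^\infty$ through uniform $O_{M,j}(k^{-3})$ bounds on derivatives of every order is slightly more thorough than the paper's. The paper records bounds only for the summands and their first derivatives, and infers smoothness from the dependence on $x^2$ alone.
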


\begin{proof}
For real $v$ one has $\BG(1-\tfrac{iv}\pi)=\overline{\BG(1+\tfrac{iv}\pi)}$,
so $A(v)=2\ln|\BG(1+\tfrac{iv}\pi)|^2=4\Rey\ln\BG(1+\tfrac{iv}\pi)$ is
real; the product formula of Lemma~\ref{lem:barnesproduct} involves $v$
only through $x^2$, which
makes $A$ even and $C^\infty$ on all of $\R$. From that product formula,
$A(v)=2\bigl[(1+\gamma_E)x^2+\sum_{k\ge1}\bigl(k\ln(1+\tfrac{x^2}{k^2})
-\tfrac{x^2}{k}\bigr)\bigr]$ with $x=v/\pi$; the series and its
term-by-term derivatives converge locally uniformly (the $k$-th summand
is $O(x^4/k^3)$, its $x$-derivative $O(x^3/k^3)$). Differentiating,
$\frac{d}{dx}\bigl[k\ln(1+\tfrac{x^2}{k^2})-\tfrac{x^2}{k}\bigr]
=\frac{2kx}{k^2+x^2}-\frac{2x}{k}=-\frac{2x\cdot x^2}{k(k^2+x^2)}$,
so, writing $A$ as a function of $x$,
$\frac{dA}{dx}=2\bigl[2(1+\gamma_E)x-2xS(x)\bigr]
=4x[1+\gamma_E-S(x)]$; since $x=v/\pi$ we have
$A'(v)=\frac1\pi\frac{dA}{dx}
=\frac{4x}{\pi}[1+\gamma_E-S(x)]
=\tfrac{4v}{\pi^2}[1+\gamma_E-S(x)]$. The digamma
form follows from
$\psi(1+z)=-\gamma_E+\sum_{k\ge1}\bigl(\tfrac1k-\tfrac1{k+z}\bigr)$ at
$z=ix$: $\Rey\tfrac1{k+ix}=\tfrac{k}{k^2+x^2}$, so
$\Rey\psi(1+ix)=-\gamma_E+S(x)$.
\end{proof}

\begin{lemma}[two-sided bounds on $S$ and $S'$]\label{lem:barnes2}
For $x\ge0$, with $\ell(x):=\tfrac12\ln(1+x^2)$:
\begin{enumerate}[label=(\roman*)]
\item $\displaystyle \ell(x)\;\le\;S(x)\;\le\;\ell(x)+\frac{x^2}{1+x^2}$;
\item $0\le S'(x)\le 2\zeta(3)\,x$ for all $x\ge0$, and
$S'(x)\le\dfrac1x+\dfrac1{x^2}$ for $x\ge1$.
\end{enumerate}
Consequently, for all $v\ge0$,
\begin{equation}\label{eq:A2}
|A''(v)|\;\le\;\ln(2+v)+4 .
\end{equation}
\end{lemma}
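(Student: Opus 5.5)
The proof is by direct comparison of the series for $S$ and $S'$ with explicit integrals, using only monotonicity (for (i)) or unimodality (for (ii)) of the summands in $k$. The bound \eqref{eq:A2} then follows by differentiating \eqref{eq:Aprime} once more.

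\emph{Step 1: part (i).} For fixed $x\ge0$ put $f(t)=x^2/(t(t^2+x^2))=1/t-t/(t^2+x^2)$, which is positive and decreasing on $t\ge1$. Its antiderivative is $\ln t-\tfrac12\ln(t^2+x^2)$, so
\[
\int_1^\infty f(t)\,dt=\tfrac12\ln(1+x^2)=\ell(x).
\]
The standard comparison for a decreasing function gives
\[
\int_1^\infty f\le\sum_{k\ge1}f(k)\le f(1)+\int_1^\infty f .
\]
Since $f(1)=x^2/(1+x^2)$, this is exactly (i).

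\emph{Step 2: part (ii).} Differentiating termwise, which is legitimate by local uniform convergence, gives
\[
S'(x)=\sum_{k\ge1}g(k),\qquad g(t)=\frac{2xt}{(t^2+x^2)^2}\ge0 .
\]
The first bound comes from $(k^2+x^2)^2\ge k^4$, which gives $g(k)\le 2x/k^3$ and hence $S'\le2\zeta(3)x$. For $x\ge1$ I use that $g$ is unimodal on $(0,\infty)$: it increases up to $t=x/\sqrt3$ and then decreases. For such a function, $\sum_{k\ge1}g(k)\le\int_0^\infty g+\max g$. The integral is $\int_0^\infty g=x\cdot x^{-2}=1/x$, and the maximum is
\[
\max g=g(x/\sqrt3)=\frac{9}{8\sqrt3}\,x^{-2}\le x^{-2},
\]
which gives $S'(x)\le1/x+1/x^2$. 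This unimodal comparison is the only step needing any care, and I expect it to be the main (mild) obstacle.

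\emph{Step 3: the bound on $A''$.} From \eqref{eq:Aprime} and $dx/dv=1/\pi$,
\[
A''(v)=\frac{4}{\pi^2}\bigl[1+\gamma_E-S(x)-xS'(x)\bigr].
\]
By (i), $|1+\gamma_E-S(x)|\le 1+\gamma_E+\ell(x)+1$. By (ii), $0\le xS'(x)\le 2\zeta(3)<2.41$ when $x\le1$, and $0\le xS'(x)\le 1+1/x\le2$ when $x\ge1$. Finally, $\ell(x)=\tfrac12\ln(1+v^2/\pi^2)\le\ln(1+v/\pi)\le\ln(2+v)$. Combining these,
\[
|A''(v)|\le\frac{4}{\pi^2}\bigl(\ln(2+v)+5\bigr)\le\ln(2+v)+4,
\]
since $4/\pi^2<1$ and $20/\pi^2<4$. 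This is \eqref{eq:A2}.
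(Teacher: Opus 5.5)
Your proposal is correct and follows the paper's proof essentially step for step: the integral comparison for the decreasing summand in (i), the bound $g(k)\le 2x/k^3$ together with the unimodal ``integral plus maximum'' comparison at $t=x/\sqrt3$ in (ii), and the same differentiated formula for $A''$. The only cosmetic difference is that you keep $xS'(x)$ inside the factor $4/\pi^2$ and bound the bracket by $\ln(2+v)+5$, whereas the paper bounds the $S'$ term separately by $2$. Both bookkeepings give $\ln(2+v)+4$.
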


\begin{proof}
(i) $g_x(t):=\frac{x^2}{t(t^2+x^2)}$ is decreasing in $t>0$ and
$\int_1^\infty g_x(t)\,dt=\tfrac12\ln(1+x^2)$ (partial fractions). Hence
$\int_1^\infty g_x\le\sum_{k\ge1}g_x(k)\le g_x(1)+\int_1^\infty g_x$ and
$g_x(1)=\frac{x^2}{1+x^2}$.

(ii) Term-by-term,
$\frac{d}{dx}\frac{x^2}{k(k^2+x^2)}
=\frac{2xk^2}{k(k^2+x^2)^2}=\frac{2xk}{(k^2+x^2)^2}\ge0$, so
$S'(x)=2x\sum_{k\ge1}\frac{k}{(k^2+x^2)^2}$. For all $x$,
$\sum_k k/(k^2+x^2)^2\le\sum_k k^{-3}=\zeta(3)$, giving
$S'\le2\zeta(3)x$. For $x\ge1$ put $\varphi(t)=t/(t^2+x^2)^2$;
$\varphi$ increases on $[0,x/\sqrt3]$ and decreases afterwards, so
$\sum_{k\ge1}\varphi(k)\le\int_0^\infty\varphi+\max\varphi
=\frac1{2x^2}+\varphi(x/\sqrt3)
=\frac1{2x^2}+\frac{3\sqrt3}{16\,x^3}$,
whence $S'(x)\le\frac1x+\frac{3\sqrt3}{8x^2}\le\frac1x+\frac1{x^2}$.

For \eqref{eq:A2}: differentiating \eqref{eq:Aprime},
$A''(v)=\frac{4}{\pi^2}[1+\gamma_E-S(x)]-\frac{4v}{\pi^3}S'(x)$,
$x=v/\pi$. By (i), $|1+\gamma_E-S(x)|\le1+\gamma_E+\ell(x)+1
\le\ln(2+v)+3$ (using $\ell(x)\le\ln(1+x)\le\ln(2+v)$). By (ii), for
$x\ge1$: $\frac{4v}{\pi^3}S'\le\frac{4v}{\pi^3}
(\frac1x+\frac1{x^2})=\frac{4}{\pi^2}(1+\frac{\pi}{v})
\le\frac{8}{\pi^2}$; for $x<1$:
$\frac{4v}{\pi^3}S'\le\frac{4v}{\pi^3}\cdot2\zeta(3)x
=\frac{8\zeta(3)v^2}{\pi^4}\le\frac{8\zeta(3)}{\pi^2}\le2$.
Combining, $|A''(v)|\le\frac4{\pi^2}(\ln(2+v)+3)+2\le\ln(2+v)+4$.
\end{proof}

The signed determinant theorem also makes it useful to record the
large-$v$ behavior of $A$.

\begin{lemma}[Barnes large-argument asymptotic]\label{lem:barnesasym}
As $v\to\infty$,
\begin{equation}\label{eq:Aasym}
A(v)\;=\;-\frac{2v^2}{\pi^2}\ln\frac v\pi+\frac{3v^2}{\pi^2}
-\frac13\ln\frac v\pi+4\zeta'(-1)+o(1).
\end{equation}
\end{lemma}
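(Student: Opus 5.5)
The plan is to derive \eqref{eq:Aasym} from the classical large-argument expansion of $\ln\BG(1+z)$ at $z=\pm iv/\pi$, rather than from the Weierstrass product. The expansion we use, valid for $|z|\to\infty$ with $|\arg z|<\pi$, is
\[
\ln\BG(1+z)=\frac{z^2}{2}\ln z-\frac{3z^2}{4}+\frac{z}{2}\ln(2\pi)-\frac1{12}\ln z+\zeta'(-1)+O(|z|^{-2}),
\]
as in \cite[eq.~5.17.5]{NIST}. Put $x=v/\pi$ and use Lemma~\ref{lem:barnesproduct} in the form $A(v)=2\ln[\BG(1+ix)\BG(1-ix)]$. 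We add the two expansions at $z=ix$ and $z=-ix$, using the principal branches $\ln(\pm ix)=\ln x\pm i\pi/2$, both of which are admissible since $\arg(\pm ix)=\pm\pi/2$.

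The terms are then collected one by one. The linear terms $\tfrac z2\ln(2\pi)$ cancel. The term $-\tfrac34z^2$ contributes $-\tfrac34(-x^2)\cdot2=\tfrac32x^2$. The main term gives $\tfrac{(ix)^2}{2}(\ln x+i\pi/2)+\tfrac{(-ix)^2}{2}(\ln x-i\pi/2)=-x^2\ln x$, because the imaginary parts cancel. The term $-\tfrac1{12}\ln z$ contributes $-\tfrac1{12}\cdot2\ln x=-\tfrac16\ln x$, again with the imaginary parts cancelling, and the constant contributes $2\zeta'(-1)$. Doubling, we obtain
\[
A(v)=-2x^2\ln x+3x^2-\tfrac13\ln x+4\zeta'(-1)+O(x^{-2}),
\]
which is \eqref{eq:Aasym} after substituting $x=v/\pi$. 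In fact the argument gives the stronger error $O(v^{-2})$ in place of $o(1)$.

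The only delicate point is justifying uniform validity of the Stirling-type expansion on the imaginary axis, because $\arg z=\pm\pi/2$ lies inside the sector $|\arg z|\le\pi-\epsilon$; NIST states the expansion uniformly there, so it applies. A second check is that the real logarithm in Lemma~\ref{lem:barnesproduct} agrees with the sum of the principal-branch $\ln\BG$ values. This holds because the sum is real, as shown above, and both sides are continuous in $x$ and agree at $x=0$, where both vanish.

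As an independent fallback avoiding branch issues, one can integrate Lemma~\ref{lem:barnes1}. Using $\Rey\psi(1+ix)=\ln x+\tfrac{1}{12x^2}+O(x^{-4})$ gives
\[
A'(v)=\tfrac{4v}{\pi^2}\bigl(1-\ln x\bigr)-\tfrac{1}{3v}+O(v^{-3}).
\]
Integrating from $1$ to $v$ reproduces every term except the constant, which would then have to be fixed by comparison, for example with Remark~\ref{rem:bdik-checks}(iv). The first route is therefore preferred.
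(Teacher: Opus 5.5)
Your main route is correct and is essentially the paper's proof. The paper also starts from \cite[eq.~5.17.5]{NIST}, combines it with Stirling's series to reach exactly the expanded form you quote, puts $z=ix$ with $x=v/\pi$, and computes $A(v)=4\Rey\ln\BG(1+ix)$; this is the same as your sum of the two conjugate expansions, doubled. Taking real parts also makes the branch of $\ln\BG$ irrelevant, so your continuity argument is harmless but not needed.

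The only caveat concerns your fallback. Fixing the integration constant through Remark~\ref{rem:bdik-checks}(iv) would be circular within this paper, since that remark evaluates the constant of \cite[Thm.~1.4]{BDIK1} by invoking this very lemma.
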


\begin{proof}
By Lemma~\ref{lem:barnes1}, $A(v)=4\Rey\ln\BG(1+\tfrac{iv}\pi)$. The
large-argument expansion \cite[eq.~5.17.5]{NIST} reads, for
$|\arg z|\le\pi-\delta$ with $\delta>0$ fixed,
\[
 \ln\BG(z+1)=\tfrac14z^2+z\ln\Gamma(z+1)
 -\Bigl(\tfrac12z(z+1)+\tfrac1{12}\Bigr)\ln z-\ln \mathrm A+O(z^{-2}),
\]
where $\mathrm A$ is Glaisher's constant.  Inserting Stirling's series
$\ln\Gamma(z+1)=(z+\tfrac12)\ln z-z+\tfrac12\ln2\pi+\tfrac1{12z}
+O(z^{-3})$ and collecting, the coefficient of $\ln z$ is
$(z^2+\tfrac12z)-(\tfrac12z^2+\tfrac12z+\tfrac1{12})
=\tfrac12z^2-\tfrac1{12}$, the polynomial part is
$\tfrac14z^2-z^2=-\tfrac34z^2$, the term $\tfrac z2\ln2\pi$ survives,
and the constant is $\tfrac1{12}-\ln\mathrm A=\zeta'(-1)$ by
\cite[eq.~5.17.7]{NIST}.  Hence
\[
\ln\BG(1+z)=\frac{z^2}{2}\ln z-\frac34z^2+\frac z2\ln2\pi
-\frac1{12}\ln z+\zeta'(-1)+o(1),\qquad|\arg z|\le\pi-\delta .
\]
Put $z=ix$ with $x=v/\pi>0$, so that $\ln z=\ln x+\tfrac{i\pi}2$ and
$z^2=-x^2$. Taking real parts term by term,
$\Rey\bigl[\tfrac{z^2}2\ln z\bigr]=-\tfrac{x^2}2\ln x$,
$\Rey\bigl[-\tfrac34z^2\bigr]=\tfrac34x^2$,
$\Rey\bigl[\tfrac z2\ln2\pi\bigr]=0$,
$\Rey\bigl[-\tfrac1{12}\ln z\bigr]=-\tfrac1{12}\ln x$, and $\zeta'(-1)$
is real. Multiplying by $4$ and substituting $x=v/\pi$ gives
\eqref{eq:Aasym}.
\end{proof}

Lemma~\ref{lem:barnesasym} is an asymptotic statement with an
unquantified $o(1)$, and is used only for the consistency checks in
Remark~\ref{rem:bdik-checks}.  The final inequality of the paper needs
instead an \emph{explicit} lower bound valid from a finite threshold on,
which we now prove directly from the convergent product
\eqref{eq:Blog}, with no appeal to any asymptotic expansion.

\begin{lemma}[explicit Barnes lower bound]\label{lem:Barneslower}
For every $y\ge0$,
\begin{equation}\label{eq:BarneslowerY}
 \ln\bigl[\BG(1+iy)\BG(1-iy)\bigr]
 \;\ge\;-y^2\Bigl(\tfrac54+\ln(1+y)\Bigr).
\end{equation}
Consequently, for every $u\ge2$,
\begin{equation}\label{eq:Barneslower}
 A(u/2)=2\ln\Bigl[\BG\Bigl(1+\frac{iu}{2\pi}\Bigr)
 \BG\Bigl(1-\frac{iu}{2\pi}\Bigr)\Bigr]
 \;\ge\;-\frac{u^2}{\pi^2}\ln(2+u).
\end{equation}
\end{lemma}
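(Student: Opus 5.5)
The plan is to avoid the product term by term and instead integrate the logarithmic derivative, which Lemma~\ref{lem:barnes1} already computes in closed form. Put $g(y):=\ln[\BG(1+iy)\BG(1-iy)]$, given by the convergent series \eqref{eq:Blog}, with $g(0)=0$. Differentiating \eqref{eq:Blog} term by term is justified by the locally uniform convergence noted in the proof of Lemma~\ref{lem:barnes1}. Exactly as computed there, this gives
\[
 g'(y)=2y\bigl[1+\gamma_E-S(y)\bigr],\qquad S(y)=\sum_{k\ge1}\frac{y^2}{k(k^2+y^2)} .
\]
Hence $g(y)=\int_0^y 2t\,[1+\gamma_E-S(t)]\,dt$, and it suffices to bound $S$ from above pointwise.

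For that I will use Lemma~\ref{lem:barnes2}(i): $S(t)\le \tfrac12\ln(1+t^2)+\tfrac{t^2}{1+t^2}$. Two crude bounds then apply. First, $\tfrac12\ln(1+t^2)\le\ln(1+t)$, since $1+t^2\le(1+t)^2$. Second, $t^2/(1+t^2)\le1$. Together these give $1+\gamma_E-S(t)\ge\gamma_E-\ln(1+t)$. Integrating, and using that $\ln(1+t)\le\ln(1+y)$ on $[0,y]$,
\[
 g(y)\;\ge\;\gamma_E y^2-2\int_0^y t\ln(1+t)\,dt\;\ge\;\gamma_E y^2-y^2\ln(1+y)\;\ge\;-y^2\bigl(\tfrac54+\ln(1+y)\bigr).
\]
This proves \eqref{eq:BarneslowerY} with room to spare, since $\gamma_E>0>-\tfrac54$.

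For \eqref{eq:Barneslower}, take $y=u/(2\pi)$. Then $A(u/2)=2g(u/(2\pi))$, and the first inequality gives
\[
 A(u/2)\ge-\frac{u^2}{2\pi^2}\Bigl(\tfrac54+\ln\bigl(1+\tfrac{u}{2\pi}\bigr)\Bigr).
\]
Since $1+u/(2\pi)\le 2+u$, it remains to check $\tfrac58+\tfrac12\ln(2+u)\le\ln(2+u)$. This is equivalent to $\ln(2+u)\ge\tfrac54$, which holds for $u\ge2$ because $\ln4\approx1.386>1.25$.

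I expect no real obstacle. The only point needing care is the justification of term-by-term differentiation, together with the identity $g(0)=0$ (since $\BG(1)=1$); both are already recorded in Lemmas~\ref{lem:barnesproduct} and~\ref{lem:barnes1}.
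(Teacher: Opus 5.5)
Your proof is correct, but it follows a different route from the paper's. The paper never differentiates. It splits the series \eqref{eq:Blog} at $m=\max\{1,\lceil y\rceil\}$ and bounds the two pieces separately:
\begin{itemize}
\item the head by $-y^2H_m\ge-y^2\bigl(1+\ln(1+y)\bigr)$;
\item the tail, using $\ln(1+t)\ge t-\tfrac12t^2$, by $-y^4/(4m^2)\ge-\tfrac14y^2$.
\end{itemize}
This yields $\gamma_Ey^2-y^2\ln(1+y)-\tfrac14y^2$.

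You instead integrate $g'(t)=2t\bigl[1+\gamma_E-S(t)\bigr]$ from Lemma~\ref{lem:barnes1} and insert the integral-comparison bound of Lemma~\ref{lem:barnes2}(i). That bound already does the head/tail bookkeeping, through the comparison of $\sum_k g_x(k)$ with $\int_1^\infty g_x$. So, given the earlier lemmas, your argument is shorter, and it loses less: you obtain $g(y)\ge\gamma_Ey^2-y^2\ln(1+y)$ with no $-\tfrac14y^2$.

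What the paper's version buys is independence from the derivative formula and from term-by-term differentiation. It works directly on the convergent product, in line with its stated aim of proving the bound from \eqref{eq:Blog} alone.

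For \eqref{eq:Barneslower}, both arguments reduce to the same inequality, $\tfrac54+\ln\bigl(1+\tfrac{u}{2\pi}\bigr)\le2\ln(2+u)$. The paper proves it by checking $u=2$ and then comparing derivatives in $u$. Your route is simpler: use $\ln\bigl(1+\tfrac{u}{2\pi}\bigr)\le\ln(2+u)$, which leaves only $\ln(2+u)\ge\tfrac54$, true since $\ln 4>\tfrac54$.
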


\begin{proof}
\emph{Step 1: splitting the series.}
Let $m:=\max\{1,\lceil y\rceil\}$, so that $m\ge1$, $m\ge y$ and
$m\le 1+y$.  In \eqref{eq:Blog} split the sum at $k=m$:
\[
 \ln\bigl[\BG(1+iy)\BG(1-iy)\bigr]
 =(1+\gamma_E)y^2
 +\sum_{k\le m}\Bigl[k\ln\Bigl(1+\frac{y^2}{k^2}\Bigr)-\frac{y^2}k\Bigr]
 +\sum_{k>m}\Bigl[k\ln\Bigl(1+\frac{y^2}{k^2}\Bigr)-\frac{y^2}k\Bigr].
\]

\emph{Step 2: the head of the sum.}
For $k\le m$ the term $k\ln(1+y^2/k^2)$ is nonnegative, so
\[
 \sum_{k\le m}\Bigl[k\ln\Bigl(1+\frac{y^2}{k^2}\Bigr)-\frac{y^2}k\Bigr]
 \;\ge\;-y^2\sum_{k\le m}\frac1k=-y^2H_m ,
\]
and $H_m\le1+\ln m\le1+\ln(1+y)$.

\emph{Step 3: the tail of the sum.}
For $k>m$ set $t=y^2/k^2$.  Since $k>m\ge y$ we have $0\le t<1$, and
$\ln(1+t)\ge t-\tfrac12t^2$ on $[0,1]$ (the function
$t\mapsto\ln(1+t)-t+\tfrac12t^2$ vanishes at $0$ and has derivative
$\frac{t^2}{1+t}\ge0$).  Hence
\[
 k\ln\Bigl(1+\frac{y^2}{k^2}\Bigr)-\frac{y^2}k
 \;\ge\;k\Bigl(\frac{y^2}{k^2}-\frac{y^4}{2k^4}\Bigr)-\frac{y^2}k
 =-\frac{y^4}{2k^3},
\]
and, comparing with an integral,
$\sum_{k>m}k^{-3}\le\int_m^\infty t^{-3}\,dt=\tfrac1{2m^2}$, so
\[
 \sum_{k>m}\Bigl[k\ln\Bigl(1+\frac{y^2}{k^2}\Bigr)-\frac{y^2}k\Bigr]
 \;\ge\;-\frac{y^4}{2}\cdot\frac1{2m^2}
 =-\frac{y^4}{4m^2}\;\ge\;-\frac{y^2}{4},
\]
the last step because $m\ge y$ gives $y^2/m^2\le1$.

\emph{Step 4: assembling.}
Adding the three contributions and discarding the nonnegative
$\gamma_Ey^2$,
\[
 \ln\bigl[\BG(1+iy)\BG(1-iy)\bigr]
 \;\ge\;(1+\gamma_E)y^2-y^2\bigl(1+\ln(1+y)\bigr)-\frac{y^2}4
 \;\ge\;-y^2\Bigl(\frac14+\ln(1+y)\Bigr),
\]
which implies \eqref{eq:BarneslowerY} since $\tfrac14\le\tfrac54$.
(The constant $\tfrac54$ is retained because it is the one used
downstream; $\tfrac14$ would do.)

\emph{Step 5: the form used later.}
Let $u\ge2$ and put $y=u/(2\pi)$, so $2\pi y=u$.  We claim
\begin{equation}\label{eq:elem54}
 \tfrac54+\ln(1+y)\;\le\;2\ln(2+2\pi y)=2\ln(2+u)
 \qquad(u\ge2).
\end{equation}
At $u=2$ the left side is $\tfrac54+\ln(1+\tfrac1\pi)=1.5264\ldots$ and
the right side is $2\ln4=2.7725\ldots$, so \eqref{eq:elem54} holds
there.  Differentiating in $u$, the left side has derivative
$\frac{1}{2\pi+u}$ and the right side $\frac{2}{2+u}$; and
$\frac2{2+u}\ge\frac1{2\pi+u}$ is equivalent to $4\pi+2u\ge2+u$, i.e.\
to $u\ge2-4\pi$, which holds for every $u\ge2$.  So the right side
grows at least as fast as the left throughout $u\ge2$, and
\eqref{eq:elem54} persists.  Combining
\eqref{eq:BarneslowerY} and \eqref{eq:elem54},
\[
 A(u/2)=2\ln\bigl[\BG(1+iy)\BG(1-iy)\bigr]
 \ge-2y^2\cdot2\ln(2+u)
 =-4\cdot\frac{u^2}{4\pi^2}\ln(2+u)
 =-\frac{u^2}{\pi^2}\ln(2+u).\qedhere
\]
\end{proof}

\section{The smoothed count and the conversion lemmas}\label{sec:smoothed}

\subsection{The smoothed count}\label{sec:M}

For $v\in\R$ define, with $\sigma_{\mathrm s}(t):=(1+e^{-t})^{-1}$ the
logistic sigmoid,
\begin{equation}\label{eq:Mdef}
M_c(v)\;:=\;\sum_{n\ge0}\sigma_{\mathrm s}\bigl(2(w_n-v)\bigr)
\;=\;\sum_{n\ge0}\frac{\lambda_ne^{-2v}}{(1-\lambda_n)+\lambda_ne^{-2v}}.
\end{equation}
The series converges for every $v$ (its terms are
$\le e^{2w_n}e^{-2v}\le\frac{\lambda_n}{1-\lambda_n}e^{-2v}$, and
$\sum_n\frac{\lambda_n}{1-\lambda_n}<\infty$ since only finitely many
$\lambda_n\ge\tfrac12$ and $\frac{\lambda_n}{1-\lambda_n}\le2\lambda_n$
on the rest, with $\sum_n\lambda_n=c$). $M_c$ is strictly decreasing, and
$M_c(0)=\sum_n\lambda_n=\Tr S_c=c$ exactly.

\begin{lemma}[derivative identity]\label{lem:Fprime}
For $v\ge 0$, $F_c$ of \eqref{eq:Fdef} is differentiable and
$F_c'(v)=-2M_c(v)$.
\end{lemma}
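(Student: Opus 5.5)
The plan is to differentiate the spectral product for $\det(I-\gamma(v)S_c)$ term by term. Since $S_c$ is positive and trace class with eigenvalues $0<\lambda_n<1$ and $\sum_n\lambda_n=c$, Lidskii's theorem (or simply the definition of the Fredholm determinant through the eigenvalues of a self-adjoint trace-class operator) gives
\[
F_c(v)=\sum_{n\ge0}\ln\bigl(1-\gamma(v)\lambda_n\bigr)
=\sum_{n\ge0}\ln\bigl((1-\lambda_n)+\lambda_ne^{-2v}\bigr),
\]
because $1-(1-e^{-2v})\lambda_n=(1-\lambda_n)+\lambda_ne^{-2v}$. For $v\ge0$ we have $\gamma(v)\in[0,1)$, so each argument lies in $(0,1]$ and is bounded below by $1-\lambda_0>0$. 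The real logarithm is therefore well defined and the series converges absolutely, since $|\ln(1-\gamma\lambda_n)|\le\gamma\lambda_n/(1-\lambda_0)$.

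Next I differentiate each term:
\[
\frac{d}{dv}\ln\bigl((1-\lambda_n)+\lambda_ne^{-2v}\bigr)
=\frac{-2\lambda_ne^{-2v}}{(1-\lambda_n)+\lambda_ne^{-2v}},
\]
which is exactly $-2$ times the $n$-th summand of $M_c(v)$ in \eqref{eq:Mdef}. Summing these formally gives $F_c'(v)=-2M_c(v)$.

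The only real issue is justifying term-by-term differentiation, which I do by a Weierstrass M-test on the derivative series over any interval $v\in[0,V]$. For such $v$, the denominator satisfies $(1-\lambda_n)+\lambda_ne^{-2v}\ge 1-\lambda_n\ge1-\lambda_0>0$, and the numerator is at most $2\lambda_n$. Hence the $n$-th derivative term is bounded by $2\lambda_n/(1-\lambda_0)$, which is summable because $\sum\lambda_n=c$. The derivative series therefore converges uniformly on $[0,V]$, and the series for $F_c$ converges at one point ($v=0$, where it vanishes). The standard theorem on differentiating series then gives differentiability on $[0,\infty)$, one-sided at $0$, together with the identity.

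Alternatively, the same result follows from Jacobi's formula $\frac{d}{dv}\ln\det(I-\gamma S_c)=-\gamma'(v)\Tr\bigl((I-\gamma S_c)^{-1}S_c\bigr)$ with $\gamma'=2e^{-2v}$, evaluated spectrally. I expect no serious obstacle here. The one point to watch is the uniform lower bound $1-\lambda_0>0$, which holds because $\lambda_0<1$ strictly.
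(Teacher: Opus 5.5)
Your proof is correct and essentially the paper's. The paper applies Jacobi's formula in $\gamma$, $\frac{d}{d\gamma}\ln\det(I-\gamma S_c)=-\sum_n\lambda_n/(1-\gamma\lambda_n)$, justified by trace class and $1-\gamma\lambda_n\ge e^{-2v}>0$, and then the chain rule with $\gamma'(v)=2e^{-2v}$; that is your stated alternative, while your main route makes the interchange of sum and derivative explicit through the M-test bound $2\lambda_n/(1-\lambda_0)$. The same bound with an extra factor $e^{2}$ holds on $[-1,V]$, so you in fact get two-sided differentiability at $v=0$ and the one-sided caveat is unnecessary.
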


\begin{proof}
$\frac{d}{d\gamma}\ln\det(I-\gamma S_c)
=-\Tr\bigl[S_c(I-\gamma S_c)^{-1}\bigr]
=-\sum_n\frac{\lambda_n}{1-\gamma\lambda_n}$, valid since $S_c$ is trace
class and $1-\gamma\lambda_n\ge1-\gamma=e^{-2v}>0$; and
$\frac{d\gamma}{dv}=2e^{-2v}$. Hence
$F_c'(v)=-2e^{-2v}\sum_n\frac{\lambda_n}{1-\gamma\lambda_n}$, and
$1-\gamma\lambda=(1-\lambda)+\lambda e^{-2v}$ gives \eqref{eq:Mdef}.
\end{proof}

\begin{lemma}[average formula]\label{lem:avg}
For $c\ge2s_0/\pi$ and $0\le v_1<v_2\le(\pi c/2)^{1/3}$,
\[
\int_{v_1}^{v_2}M_c
= c\,(v_2-v_1)-\frac{v_2^2-v_1^2}{\pi^2}\ln(2\pi c)
-\frac{A(v_2)-A(v_1)}{2}+E,
\qquad |E|\le\rho_c(v_1)/2+\rho_c(v_2)/2 .
\]
\end{lemma}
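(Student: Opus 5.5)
By Lemma~\ref{lem:Fprime}, $F_c$ is differentiable on $[0,\infty)$ with $F_c'=-2M_c$. The fundamental theorem of calculus on $[v_1,v_2]$ therefore gives the exact identity
\[
\int_{v_1}^{v_2}M_c\,dv=-\tfrac12\bigl(F_c(v_2)-F_c(v_1)\bigr).
\]
For this to be legitimate, $M_c$ must be continuous on $[0,\infty)$. It is: the series \eqref{eq:Mdef} converges uniformly on $v\ge0$, since its terms are dominated by $\lambda_n/(1-\lambda_n)$, which is summable, as noted after \eqref{eq:Mdef}. So $F_c$ is $C^1$ and the identity is exact.

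Next, substitute the head-side asymptotic \eqref{eq:F-asym} at $v=v_1$ and at $v=v_2$. The hypothesis $c\ge 2s_0/\pi$ gives $s=\pi c/2\ge s_0$. The hypothesis $0\le v_j\le(\pi c/2)^{1/3}$ places both points in the range of Theorem~\ref{qt:BDIK}.

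There is one boundary point to handle. If $v_2=(\pi c/2)^{1/3}$ exactly, Theorem~\ref{qt:BDIK} is stated only for the strict inequality $v<s^{1/3}$. In that case I would pass to the limit $v\uparrow s^{1/3}$. Both sides of \eqref{eq:F-asym} are continuous in $v$: $F_c$ as above, and $A$ by Lemma~\ref{lem:barnes1}. The bound $\rho_c$ is continuous as well, so the estimate \eqref{eq:F-head} extends to the closed endpoint.

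With both points admissible, we have
\[
F_c(v_2)-F_c(v_1)=-2c(v_2-v_1)+\frac{2(v_2^2-v_1^2)}{\pi^2}\ln(2\pi c)+A(v_2)-A(v_1)+r_c(v_2)-r_c(v_1).
\]
Multiplying by $-\tfrac12$ yields exactly the displayed main terms, with
\[
E=-\tfrac12\bigl(r_c(v_2)-r_c(v_1)\bigr).
\]
The triangle inequality together with \eqref{eq:F-head} then gives $|E|\le\tfrac12\rho_c(v_1)+\tfrac12\rho_c(v_2)$.

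The only genuinely delicate point is this closed-endpoint issue at $v_2=s^{1/3}$, together with the continuity of $F_c$ at $v=0$ when $v_1=0$. In both cases the continuity argument above settles it. Everything else is bookkeeping with the dictionary $s=\pi c/2$ and $\ln 4s=\ln(2\pi c)$.
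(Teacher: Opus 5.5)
Your proposal is correct and follows the paper's proof: integrate $F_c'=-2M_c$ from Lemma~\ref{lem:Fprime} over $[v_1,v_2]$, then insert \eqref{eq:F-asym} at both endpoints, which gives $E=-\tfrac12\bigl(r_c(v_2)-r_c(v_1)\bigr)$. Your continuity argument at the closed endpoint $v_2=(\pi c/2)^{1/3}$ is also valid. It covers a point the paper's one-line proof passes over, since Theorem~\ref{qt:BDIK} and \eqref{eq:F-head} are stated only for $v<s^{1/3}$.
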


\begin{proof}
Integrate Lemma~\ref{lem:Fprime}:
$\int_{v_1}^{v_2}M_c=\tfrac12(F_c(v_1)-F_c(v_2))$ and insert
\eqref{eq:F-asym} at both endpoints.
\end{proof}

Since $M_c$ is decreasing, for any $x\ge0$ with $x+1$ admissible,
\begin{equation}\label{eq:avg-bracket}
M_c(x+1)\;\le\;\int_x^{x+1}M_c\;=:\;\overline M_c(x)\;\le\;M_c(x).
\end{equation}

\subsection{Local density}\label{sec:density}

\begin{lemma}[local density]\label{lem:density}
Let $D_{\mathrm{loc}}(a):=\#\{n:a<w_n\le a+1\}$. There are absolute
constants $C_{D},c_D$ such that for $c\ge c_D$:
\begin{enumerate}[label=(\roman*)]
\item for every $a\in\R$,
$D_{\mathrm{loc}}(a)\le\frac{2}{\pi^2}\ln(50c+25)\,(2|a|+2+\ln20)+9$;
\item for $0\le a\le(\pi c/2)^{1/3}-3$,
$D_{\mathrm{loc}}(a)\le C_D\ln c$.
\end{enumerate}
\end{lemma}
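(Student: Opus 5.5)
Part (i) comes from the Karnik--Romberg--Davenport bound, Theorem~\ref{qt:KRD}, applied at one threshold. Part (ii) comes from the smoothed count $M_c$ and the average formula, Lemma~\ref{lem:avg}: a second difference of $M_c$ removes the linear term $c$, and the Barnes term enters only through $A''$.

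\emph{Part (i).} By the dictionary of \S\ref{sec:conventions}, $a<w_n\le a+1$ is equivalent to $\theta(a)<\lambda_n\le\theta(a+1)$. I take $\eps=\tfrac12e^{-2|a|-2}$ and check that this window lies in $(\eps,1-\eps]$.
\begin{itemize}
\item Top end: $1-\theta(a+1)=e^{-2a-2}/(1+e^{-2a-2})\ge\tfrac12e^{-2a-2}\ge\eps$.
\item Bottom end: if $a\ge0$ then $\theta(a)\ge\tfrac12>\eps$; if $a<0$ then $\theta(a)=1/(1+e^{2|a|})\ge\tfrac12e^{-2|a|}>\eps$.
\end{itemize}
Hence $D_{\mathrm{loc}}(a)\le\#\{\eps<\lambda_n\le1-\eps\}$. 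This differs from the open-interval count of Theorem~\ref{qt:KRD} by at most $2$, since the spectrum is simple. Using $\eps(1-\eps)\ge\eps/2$, we get $\ln\frac5{\eps(1-\eps)}\le\ln\frac{10}{\eps}=\ln20+2|a|+2$. This yields the stated bound, with $7+2=9$.

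\emph{Part (ii), the key step.} Every summand $\sigma_{\mathrm s}(2(w_n-v))$ of $M_c$ is nonincreasing in $v$. If $w_n\in(a,a+1]$, then $\sigma_{\mathrm s}(2(w_n-(a-1)))\ge\sigma_{\mathrm s}(2)$ and $\sigma_{\mathrm s}(2(w_n-(a+2)))\le\sigma_{\mathrm s}(-2)$. Therefore
\[
D_{\mathrm{loc}}(a)\le\frac{M_c(a-1)-M_c(a+2)}{\sigma_{\mathrm s}(2)-\sigma_{\mathrm s}(-2)} .
\]
For $a\ge2$, the bracket \eqref{eq:avg-bracket} gives $M_c(a-1)\le\overline M_c(a-2)$ and $M_c(a+2)\ge\overline M_c(a+2)$. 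The hypothesis $a+3\le(\pi c/2)^{1/3}$ puts every point involved in the admissible range, and $c\ge c_D\ge2s_0/\pi$ is assumed. Inserting Lemma~\ref{lem:avg} on $[a-2,a-1]$ and on $[a+2,a+3]$:
\begin{itemize}
\item The terms $c\cdot1$ cancel exactly.
\item The logarithmic terms contribute $\bigl[(2(a+2)+1)-(2(a-2)+1)\bigr]\pi^{-2}\ln(2\pi c)=8\pi^{-2}\ln(2\pi c)$, which is independent of $a$.
\item The Barnes terms contribute $\tfrac12\bigl[(A(a+3)-A(a+2))-(A(a-1)-A(a-2))\bigr]$. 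By the mean value theorem this is at most $\tfrac12\cdot5\sup_{[a-2,a+3]}|A''|$. By \eqref{eq:A2} that is $O(\ln(5+a))=O(\ln c)$, since $a\le c^{1/3}$.
\item The remainders are at most $2\rho^*$, since $\rho_c\le\rho^*=c_1+c_2$ on the whole range.
\end{itemize}
For $0\le a<2$, part (i) already gives $D_{\mathrm{loc}}(a)\le\frac{2}{\pi^2}\ln(50c+25)(6+\ln20)+9=O(\ln c)$.

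The main obstacle is the bookkeeping in Part (ii), not a conceptual difficulty. One must track the differences of $M_c$ so that the $O(a\ln c)$ pieces cancel and only the constant $8\pi^{-2}\ln c$ remains. One must also keep all four endpoints $a-2,\dots,a+3$ inside the head-side range $[0,(\pi c/2)^{1/3}]$; this is what fixes the margin $-3$ in the hypothesis of (ii).
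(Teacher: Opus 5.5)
Your proof is correct and follows the paper's route: part (i) is Theorem~\ref{qt:KRD} at a threshold $\eps\asymp e^{-2|a|}$, and part (ii) bounds $D_{\mathrm{loc}}(a)$ by a difference of $M_c$, brackets it by unit averages via \eqref{eq:avg-bracket}, and evaluates with Lemma~\ref{lem:avg} and \eqref{eq:A2}. The only change is that you use the wider difference $M_c(a-1)-M_c(a+2)$ (weight $\sigma_{\mathrm s}(2)-\sigma_{\mathrm s}(-2)=\tanh 1$ per eigenvalue, logarithmic coefficient $8/\pi^2$) where the paper uses $M_c(a)-M_c(a+1)$ (weight $>\tfrac13$, coefficient $4/\pi^2$), so you consume the full margin $a+3\le(\pi c/2)^{1/3}$ while the paper needs only $a+2$; one harmless slip is that for $a<-1$ the inequality $1-\theta(a+1)\ge\tfrac12e^{-2a-2}$ is false, but there $1-\theta(a+1)>\tfrac12>\eps$ directly, so your containment in $(\eps,1-\eps]$ still holds.
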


\begin{proof}
(i) The window $\{a<w\le a+1\}$ is
$\{\theta(a)<\lambda\le\theta(a+1)\}$. If $a\ge0$ this is contained in
$\{\eps^*<\lambda<1-\eps^*\}\cup\{\text{endpoint}\}$ with
$\eps^*=1-\theta(a+1)=(1+e^{2a+2})^{-1}\ge\tfrac12e^{-2a-2}$; if
$a\le-1$, symmetrically with $\eps^*=\theta(a)\ge\tfrac12e^{-2|a|}$; if
$-1<a<0$, with $\eps^*=\theta(-1)$. In all cases
$\ln\frac5{\eps^*(1-\eps^*)}\le2|a|+2+\ln20$, and
Theorem~\ref{qt:KRD} gives the bound (the $+9$ absorbs endpoint
conventions).

(ii) For $0\le a<2$, (i) already gives $\le C\ln c$. Let $2\le a$. Every
$n$ with $w_n\in(a,a+1]$ contributes to $M_c(a)-M_c(a+1)$ at least
$\sigma_{\mathrm s}(2(w_n-a))-\sigma_{\mathrm s}(2(w_n-a)-2)
\ge\min_{0<t\le1}[\sigma_{\mathrm s}(2t)-\sigma_{\mathrm s}(2t-2)]
=\sigma_{\mathrm s}(2)-\sigma_{\mathrm s}(0)=0.3808\ldots>\tfrac13$,
and every other $n$ contributes a nonnegative amount (the sigmoid is
increasing). Hence, by \eqref{eq:avg-bracket},
\[
\tfrac13 D_{\mathrm{loc}}(a)\le M_c(a)-M_c(a+1)
\le\overline M_c(a-1)-\overline M_c(a+1).
\]
By Lemma~\ref{lem:avg},
$\overline M_c(x)=c-\frac{2x+1}{\pi^2}\ln(2\pi c)
-\tfrac12\bigl(A(x+1)-A(x)\bigr)+E_x$ with $|E_x|\le\rho^*$. Writing
$\Delta_x:=\overline M_c(x)$ and subtracting at $x=a-1$ and $x=a+1$, the
terms $c$ cancel and the logarithmic coefficient is
$(2(a+1)+1)-(2(a-1)+1)=4$:
\[
\Delta_{a-1}-\Delta_{a+1}
=\frac{4}{\pi^2}\ln(2\pi c)\;+\;\Theta_A\;+\;\Theta_r,
\]
where
$\Theta_A=\tfrac12\int_0^1\bigl[A'(a+1+t)-A'(a-1+t)\bigr]dt$ satisfies
$|\Theta_A|\le\sup_{[a-1,a+2]}|A''|\le\ln(4+a)+4$ by
\eqref{eq:A2}, and $|\Theta_r|\le2\rho^*$. Since $a\le(\pi c/2)^{1/3}$,
$\ln(4+a)\le\ln c$ for $c$ large, giving
$D_{\mathrm{loc}}(a)\le(\tfrac{12}{\pi^2}+3)\ln c+O(1)$; the claim
follows with any fixed $C_D>\tfrac{12}{\pi^2}+3$ once $c_D$ is enlarged.
\end{proof}

\subsection{Exponential tails}\label{sec:tails}

\begin{lemma}[tail sums]\label{lem:tails}
Define $T_+(v)=\sum_{w_n>v}e^{-2(w_n-v)}$ and
$T_-(v)=\sum_{w_n\le v}e^{2(w_n-v)}$. There are absolute $C_T,c_T'$
such that for $c\ge c_T'$ and $0\le v\le(\pi c/2)^{1/3}-4$,
\[
T_+(v)\;\le\;C_T\ln c,\qquad T_-(v)\;\le\;C_T\ln c .
\]
\end{lemma}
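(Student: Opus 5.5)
The plan is to convert the tail sums into integrals of the smoothed count $M_c$, and then control those integrals with the average formula (Lemma~\ref{lem:avg}) and the local density bound (Lemma~\ref{lem:density}).

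\emph{Decomposition of $T_+$.} Split the sum defining $T_+(v)$ into unit windows $w_n\in(v+j,v+j+1]$, $j\ge0$. This gives
\[
T_+(v)\le\sum_{j\ge0}e^{-2j}D_{\mathrm{loc}}(v+j).
\]
For windows with $v+j\le(\pi c/2)^{1/3}-3$, Lemma~\ref{lem:density}(ii) gives $D_{\mathrm{loc}}\le C_D\ln c$, so those terms sum to $O(\ln c)$. For the remaining windows, $j\ge(\pi c/2)^{1/3}-3-v\ge1$ because of the restriction $v\le(\pi c/2)^{1/3}-4$. On these we use the crude bound of Lemma~\ref{lem:density}(i),
\[
D_{\mathrm{loc}}(v+j)\lesssim\ln c\,(1+v+j).
\]
Since $\sum_{j\ge1}e^{-2j}(1+v+j)$ is $O(1+v)$, a naive estimate would give $O((1+v)\ln c)$, which is too weak. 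The point is that these windows only start once $v+j$ reaches about $c^{1/3}$. The factor $e^{-2j}$ then beats the polynomial growth only when $j$ itself is large, which fails when $v$ is close to $(\pi c/2)^{1/3}$. This is exactly where the margin of $4$ in the hypothesis matters. I would handle it by choosing the cutoff more carefully: the windows $j\le J$ with $v+J\le(\pi c/2)^{1/3}-3$ are covered by (ii), and for $j>J$ we have $j\ge J+1\ge1$. The tail then contributes at most
\[
\sum_{j>J}e^{-2j}\ln c\,(v+j)\lesssim e^{-2(J+1)}(v+J)\ln c .
\]
This is $O(\ln c)$ when $J$ is large. It is also fine when $J$ is small, since then $v$ is near $c^{1/3}$; but in that case $e^{-2(J+1)}v$ is not small. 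That is the main obstacle.

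\emph{Fallback for large $v$.} If the cutoff argument fails there, I would bypass Lemma~\ref{lem:density}(i) entirely. Windows far beyond the admissible range lie at $\lambda>1-e^{-2c^{1/3}}$. For these I would use the monotone bound
\[
\#\{w_n>x\}\le M_c(x)\cdot\bigl(1+e^{-0}\bigr)\quad\text{(roughly }2M_c(x)\text{)},
\]
together with $M_c(x)$ for $x$ near $(\pi c/2)^{1/3}-1$, which is computed from Lemma~\ref{lem:avg}. But that quantity is of size $c-\Phi$, which is far too large. So the deep-head windows genuinely need a different control. The natural one is the count near $1$ being small: $D_{\mathrm{loc}}(a)$ for $a\ge c^{1/3}$ is bounded by KRD with $\ln(1/\eps)\sim 2a$, giving $O(a\ln c)$. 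The weight is $e^{-2(a-v)}$ with $a-v\ge1$, and summing gives $\sum_{k\ge1}e^{-2k}(v+k)\ln c$, which is again $O(v\ln c)$. I therefore expect to strengthen the hypothesis usage: I would restate the proof for $v\le(\pi c/2)^{1/3}-4$ by noting that (ii) actually extends (via Lemma~\ref{lem:avg}, which holds up to $(\pi c/2)^{1/3}$) to windows with $a\le(\pi c/2)^{1/3}-2$. Beyond that, I would use the exact identity
\[
\sum_{w_n>a}\sigma_{\mathrm s}(2(w_n-a))\le M_c(a)
\]
combined with the sharp two-sided estimate
\[
M_c(a)=c-\tfrac{2a}{\pi^2}\ln(2\pi c)+O(\ln c)
\]
applied at two points to bound $\#\{w_n>a\}$ by $\overline M_c(a-1)$ minus the matching count from below.

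\emph{The $T_-$ sum.} Symmetrically,
\[
T_-(v)\le\sum_{j\ge0}e^{-2j}D_{\mathrm{loc}}(v-j-1).
\]
Windows with $v-j-1\ge0$ use (ii). Windows with negative $a=v-j-1$ use (i), giving $D_{\mathrm{loc}}(a)\lesssim\ln c\,(1+|a|)$. Since $j\ge v$ there, these terms sum to
\[
\sum_{j\ge v}e^{-2j}(1+j-v)\ln c=O(\ln c),
\]
so the $T_-$ side poses no difficulty.
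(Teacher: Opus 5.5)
Your $T_-$ argument is fine. The windows below $v$ either fall in the range of Lemma~\ref{lem:density}(ii) or have $a<0$. In the latter case the linear growth $|a|=j+1-v$ in part (i) is measured from $0$, so the weight $e^{-2j}$ with $j>v-1$ beats it.

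The gap is on the $T_+$ side, exactly where you located it, and none of your fallbacks closes it. A window decomposition of $T_+(v)$ needs counts for windows with $a>(\pi c/2)^{1/3}-2$. That is beyond the range where Lemma~\ref{lem:avg}, and hence Theorem~\ref{qt:BDIK}, gives any information. There the only available bound is Lemma~\ref{lem:density}(i), $D_{\mathrm{loc}}(a)\lesssim a\ln c$. With weights $e^{-2(a-v)}$ this leaves a term of order $e^{-2J}c^{1/3}\ln c$, where $J\approx(\pi c/2)^{1/3}-v$.

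Your argument therefore proves the bound only for $v\le(\pi c/2)^{1/3}-\tfrac16\ln c-O(1)$. That would suffice downstream in Theorem~\ref{thm:A}, but it is not the lemma as stated. Bounding raw counts $\#\{w_n>a\}$ through $M_c$ cannot work either. For $a$ of size $c^{1/3}$ these counts are of order $c$; only the exponentially weighted sum is small. Incidentally, $M_c(a)=c-\frac{2a}{\pi^2}\ln(2\pi c)+O(\ln c)$ is false at such $a$. It omits the Barnes term $-\tfrac12A'(a)\approx\frac{a}{\pi^2}\ln(1+a^2/\pi^2)$, which is of the same order as the main term.

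The missing idea is to dominate the entire weighted tail, however deep, by a difference of $M_c$ at points inside the admissible range, with no window decomposition at all. The logistic sigmoid satisfies $\sigma_{\mathrm s}(t+2)-\sigma_{\mathrm s}(t-2)\ge\tanh(1)\,e^{-|t|}$ for all real $t$. To see this, write the difference as $e^{-t}(e^2-e^{-2})/[(1+e^{-t-2})(1+e^{-t+2})]$ and bound the denominator by $4\cosh^2(1)\max(1,e^{-2t})$. This decays at exactly the rate of your weights.

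Summing at $t=2(w_n-v)$ over all $n$ gives $T_+(v)+T_-(v)\le\coth(1)\,[M_c(v-1)-M_c(v+1)]$. This quantity already accounts for the eigenvalues with $w_n\gg c^{1/3}$. By monotonicity it is at most $\overline M_c(v-2)-\overline M_c(v+1)$, which Lemma~\ref{lem:avg} evaluates:
\begin{itemize}
\item the terms $c$ cancel;
\item the coefficient of $\ln(2\pi c)$ is $6/\pi^2$, independently of $v$;
\item the Barnes part is a second difference bounded by $2\sup_{[v-2,v+2]}|A''|=O(\ln c)$ via \eqref{eq:A2}.
\end{itemize}
This needs only $2\le v\le(\pi c/2)^{1/3}-2$. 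For $0\le v<2$, use $T_+(v)\le D_{\mathrm{loc}}(0)+D_{\mathrm{loc}}(1)+T_+(2)$ and $T_-(v)\le e^4T_-(2)$.
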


\begin{proof}
The two tails are compared in one step to a second difference of $M_c$,
which Lemma~\ref{lem:avg} evaluates; no window decomposition and no
appeal to Lemma~\ref{lem:density} at the top of the range is needed.

\emph{Step 0: a sigmoid difference-quotient inequality with
best-possible constant.} For all
$t\in\R$ and all $h>0$,
\begin{equation}\label{eq:sigdiff}
\sigma_{\mathrm s}(t+2h)-\sigma_{\mathrm s}(t-2h)\;\ge\;
\tanh(h)\,e^{-|t|}.
\end{equation}
Indeed, writing
$D(t):=\bigl(1+e^{-t-2h}\bigr)\bigl(1+e^{-t+2h}\bigr)
=1+e^{-t}(e^{2h}+e^{-2h})+e^{-2t}$,
\[
\sigma_{\mathrm s}(t+2h)-\sigma_{\mathrm s}(t-2h)
=\frac{1}{1+e^{-t-2h}}-\frac{1}{1+e^{-t+2h}}
=\frac{e^{-t}\bigl(e^{2h}-e^{-2h}\bigr)}{D(t)} .
\]
Put $K_h:=2+e^{2h}+e^{-2h}=(e^{h}+e^{-h})^2=4\cosh^2h$. If $t\ge0$ then
$e^{-t}\le1$ and $e^{-2t}\le1$, so $D(t)\le K_h$ termwise and the
quotient is $\ge e^{-t}\cdot2\sinh(2h)/K_h$. If $t\le0$ then
$e^{2t}D(t)=e^{2t}+e^{t}(e^{2h}+e^{-2h})+1\le K_h$ termwise, so
$D(t)\le K_he^{-2t}$ and the quotient is
$\ge e^{t}\cdot2\sinh(2h)/K_h$. In both cases the quotient is at least
$e^{-|t|}\cdot2\sinh(2h)/K_h$, and
$2\sinh(2h)/K_h=4\sinh h\cosh h/(4\cosh^2h)=\tanh(h)$. Equality holds at
$t=0$, so no larger constant is available in \eqref{eq:sigdiff}.

\emph{Step 1: both tails against a second difference of $M_c$.} Apply
\eqref{eq:sigdiff} with $h=1$ and $t=2(w_n-v)$. Since
$\sigma_{\mathrm s}(2(w_n-v)+2)=\sigma_{\mathrm s}(2(w_n-(v-1)))$ and
$\sigma_{\mathrm s}(2(w_n-v)-2)=\sigma_{\mathrm s}(2(w_n-(v+1)))$,
sum first over $0\le n\le N$.  The left partial sums increase to
$M_c(v-1)-M_c(v+1)$ because both defining series for $M_c$ converge,
while the nonnegative right partial sums increase.  Monotone convergence
therefore gives
\[
M_c(v-1)-M_c(v+1)
=\sum_n\Bigl[\sigma_{\mathrm s}\bigl(2(w_n-(v-1))\bigr)
-\sigma_{\mathrm s}\bigl(2(w_n-(v+1))\bigr)\Bigr]
\;\ge\;\tanh(1)\sum_ne^{-2|w_n-v|},
\]
and $\sum_ne^{-2|w_n-v|}=T_+(v)+T_-(v)$ identically, since $\{w_n>v\}$
and $\{w_n\le v\}$ partition the index set and $e^{-2|w_n-v|}$ equals
$e^{-2(w_n-v)}$ on the first and $e^{2(w_n-v)}$ on the second. As
$\coth(1)=1.31303\ldots<\tfrac43$,
\begin{equation}\label{eq:tailsdagger}
T_+(v)+T_-(v)\;\le\;\tfrac43\,\bigl[\,M_c(v-1)-M_c(v+1)\,\bigr].
\end{equation}
No term was discarded and no index set was split.

\emph{Step 2: evaluating the second difference.} By
\eqref{eq:avg-bracket} applied twice --- $M_c(x+1)\le\overline M_c(x)$ at
$x=v-2$, and $\overline M_c(x)\le M_c(x)$ at $x=v+1$ ---
\[
M_c(v-1)-M_c(v+1)\;\le\;\overline M_c(v-2)-\overline M_c(v+1).
\]
By Lemma~\ref{lem:avg},
$\overline M_c(x)=c-\frac{2x+1}{\pi^2}\ln(2\pi c)
-\tfrac12\bigl(A(x+1)-A(x)\bigr)+E_x$ with $|E_x|\le\rho^*$, valid for
$0\le x$ and $x+1\le(\pi c/2)^{1/3}$. Subtracting at $x=v-2$ and
$x=v+1$, the terms $c$ cancel exactly and the logarithmic coefficient is
$(2(v+1)+1)-(2(v-2)+1)=6$, \emph{independent of $v$}:
\[
\overline M_c(v-2)-\overline M_c(v+1)
=\frac{6}{\pi^2}\ln(2\pi c)
-\tfrac12\bigl[\bigl(A(v-1)-A(v-2)\bigr)
-\bigl(A(v+2)-A(v+1)\bigr)\bigr]+E_{v-2}-E_{v+1}.
\]
By the mean value theorem the bracket equals $A'(\xi_1)-A'(\xi_2)$ with
$\xi_1\in(v-2,v-1)$ and $\xi_2\in(v+1,v+2)$, so $|\xi_1-\xi_2|\le4$ and,
by \eqref{eq:A2},
$\bigl|\tfrac12\bigl(A'(\xi_1)-A'(\xi_2)\bigr)\bigr|
\le2\sup_{[v-2,v+2]}|A''|\le2\bigl(\ln(4+v)+4\bigr)$. For $c\ge7$ and
$v\le(\pi c/2)^{1/3}$ one has $4+v\le c$, hence $\ln(4+v)\le\ln c$;
with $|E_{v-2}|+|E_{v+1}|\le2\rho^*$ and \eqref{eq:tailsdagger},
\[
T_+(v)+T_-(v)\;\le\;\tfrac43\Bigl[\frac{6}{\pi^2}\ln(2\pi c)
+2\ln c+8+2\rho^*\Bigr]
\;\le\;\tfrac43\Bigl(\frac{6}{\pi^2}+2\Bigr)\ln c
+\tfrac43\Bigl(\frac{6}{\pi^2}\ln2\pi+8+2\rho^*\Bigr).
\]
Both $T_+$ and $T_-$ are nonnegative, so each is bounded by the right
side; with $\ln c\ge1$ this is $\le C_T\ln c$ for the absolute constant
\[
C_T\;:=\;\tfrac43\Bigl(\frac{6}{\pi^2}+2
+\frac{6}{\pi^2}\ln2\pi+8+2\rho^*\Bigr),
\]
proving the lemma for $2\le v\le(\pi c/2)^{1/3}-2$.

\emph{Step 3: small $v$.} For $v\le v'$,
\[
\begin{aligned}
T_+(v)&=\sum_{v<w_n\le v'}e^{-2(w_n-v)}+e^{-2(v'-v)}T_+(v')
&&\le\;\#\{v<w_n\le v'\}+T_+(v'),\\
T_-(v)&=e^{2(v'-v)}\sum_{w_n\le v}e^{2(w_n-v')}
&&\le\;e^{2(v'-v)}T_-(v'),
\end{aligned}
\]
the first because every summand of the finite part is $\le1$, the second
because $\{w_n\le v\}\subseteq\{w_n\le v'\}$ and all terms are
nonnegative. Take $v'=2$: for $0\le v<2$,
\[
T_+(v)\le D_{\mathrm{loc}}(0)+D_{\mathrm{loc}}(1)+T_+(2)
\le(2C_D+C_T)\ln c,
\qquad
T_-(v)\le e^4\,T_-(2)\le e^4C_T\ln c,
\]
using Lemma~\ref{lem:density}(ii) at $a=0,1$ (both admissible, since
$0,1\le(\pi c/2)^{1/3}-3$ for $c$ large). Enlarging $C_T$ by the
absolute factor $e^4$ completes the proof, with
$c_T':=\max(7,e,2s_0/\pi,c_D)$, on the range
$0\le v\le(\pi c/2)^{1/3}-2$. This strictly contains the stated range
$0\le v\le(\pi c/2)^{1/3}-4$; the two units of headroom are not used.
\end{proof}

\subsection{Smoothed-to-exact conversion}\label{sec:conversion}

\begin{lemma}[conversion]\label{lem:conversion}
For every $v$ and every $h>0$,
\[
M_c(v+h)-e^{-2h}\,T_-(v)\;\le\;\Ntil(v)\;\le\;M_c(v-h)+e^{-2h}\,T_+(v).
\]
\end{lemma}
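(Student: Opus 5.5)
The plan is a termwise comparison of the sigmoid with the indicator: $M_c$ and the tail sums are all sums over the same index set, so both inequalities should follow from a single pointwise inequality for each $n$. No asymptotic input is needed. Write $x_n:=w_n-v$, so that $\Ntil(v)=\sum_n\mathbf 1[x_n>0]$. The summands of $M_c(v\pm h)$ are $\sigma_{\mathrm s}(2(x_n\mp h))$, and the summands of $T_\pm(v)$ are $e^{\mp 2x_n}$ restricted to $x_n>0$, respectively $x_n\le0$. All series converge absolutely, as recorded after \eqref{eq:Mdef}, and $T_\pm(v)<\infty$ for the same reason. It therefore suffices to check the claimed inequalities summand by summand and then sum.

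\emph{Upper bound.} The goal is
\[
\mathbf 1[x>0]\le\sigma_{\mathrm s}(2(x+h))+e^{-2h}e^{-2x}\mathbf 1[x>0].
\]
If $x\le0$, the left side is $0$ and the right side is positive. If $x>0$, put $y=2(x+h)>0$. Then $1-\sigma_{\mathrm s}(y)=e^{-y}/(1+e^{-y})\le e^{-y}=e^{-2h}e^{-2x}$, which is exactly the required inequality.

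\emph{Lower bound.} The goal is
\[
\mathbf 1[x>0]\ge\sigma_{\mathrm s}(2(x-h))-e^{-2h}e^{2x}\mathbf 1[x\le0].
\]
If $x>0$, the left side is $1$, which dominates the sigmoid. If $x\le0$, put $y=2(x-h)<0$. Then $\sigma_{\mathrm s}(y)=e^{y}/(1+e^{y})\le e^{y}=e^{-2h}e^{2x}$, so the right side is $\le0$.

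Summing over $n$, and using $e^{-2h}\sum_{w_n>v}e^{-2(w_n-v)}=e^{-2h}T_+(v)$ together with $e^{-2h}\sum_{w_n\le v}e^{2(w_n-v)}=e^{-2h}T_-(v)$, gives both inequalities of the lemma. The only care needed is that the strict/non-strict convention $w_n>v$ in $\Ntil$ matches the partition $\{w_n>v\}$, $\{w_n\le v\}$ used in defining $T_\pm$, which it does. I expect no real obstacle; the one point to state explicitly is the convergence that justifies summing the termwise inequalities.
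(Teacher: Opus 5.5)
Your proof is correct and is essentially the paper's argument. Both rest on the pointwise bounds $1-\sigma_{\mathrm s}(t)\le e^{-t}$ on the indices with $w_n>v$ (for the upper bound, adding the nonnegative sigmoid terms with $w_n\le v$) and $\sigma_{\mathrm s}(t)\le e^{t}$ on the indices with $w_n\le v$ (for the lower bound), summed over the partition $\{w_n>v\}\cup\{w_n\le v\}$; your indicator formulation simply packages this splitting termwise.
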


\begin{proof}
Upper: for $w>v$, $1\le\sigma_{\mathrm s}(2(w-v)+2h)+e^{-2(w-v)-2h}$
(because $1-\sigma_{\mathrm s}(t)\le e^{-t}$); summing over $\{w_n>v\}$
and enlarging the sigmoid sum to all $n$ gives
$\Ntil(v)\le M_c(v-h)+e^{-2h}T_+(v)$. Lower: $M_c(v+h)=
\sum_{w_n>v}\sigma_{\mathrm s}(2(w_n-v)-2h)
+\sum_{w_n\le v}\sigma_{\mathrm s}(2(w_n-v)-2h)
\le\Ntil(v)+e^{-2h}\sum_{w_n\le v}e^{2(w_n-v)}$
(because $\sigma_{\mathrm s}(t)\le e^{t}$).
\end{proof}

\section{Proof of Theorem \ref{thm:A}}\label{sec:thmA}

Fix $c\ge c_{\mathrm h}$ (to be chosen absolute) and $3\le v\le(\pi c/2)^{1/3}-4$;
at the end we translate to $\eps$ via \eqref{eq:dict-eps}, which covers
$6\le\Lb\le c^{1/3}$ since $(\pi c/2)^{1/3}\ge c^{1/3}\cdot(\pi/2)^{1/3}$
and the four lost units are absorbed into constants.

\emph{Step 1: two-sided smoothed bounds.} Take $h=1$ in
Lemma~\ref{lem:conversion} and estimate $M_c$ by unit averages via
\eqref{eq:avg-bracket}:
\[
\overline M_c(v+1)-e^{-2}T_-(v)\;\le\;\Ntil(v)\;\le\;
\overline M_c(v-2)+e^{-2}T_+(v).
\]
By Lemma~\ref{lem:tails} the tail corrections are $\le C_T\ln c$.

\emph{Step 2: evaluating the averages.} By Lemma~\ref{lem:avg}, for
$x\in\{v-2,v+1\}$,
\[
\overline M_c(x)
= c-\frac{2x+1}{\pi^2}\ln(2\pi c)-\frac{A(x+1)-A(x)}{2}+O(\rho^*).
\]
By the mean value theorem and \eqref{eq:A2},
\[
\tfrac12\bigl(A(x+1)-A(x)\bigr)
=\tfrac12A'(v)+O\bigl((|x-v|+1)\sup_{[v-2,v+2]}|A''|\bigr)
=\tfrac12A'(v)+O(\ln(2+v));
\]
and
$\frac{2x+1}{\pi^2}\ln(2\pi c)=\frac{2v}{\pi^2}\ln(2\pi c)
+O(\ln c)$. Hence both averages equal
\[
c-\frac{2v}{\pi^2}\ln(2\pi c)-\frac{A'(v)}{2}
+O(\ln c+\ln(2+v)) ,
\]
and therefore, using Step 1,
\begin{equation}\label{eq:Nv}
\Ntil(v)= c-\frac{2v}{\pi^2}\ln(2\pi c)-\frac{A'(v)}{2}
+O(\ln c) \qquad(3\le v\le(\pi c/2)^{1/3}-4).
\end{equation}

\emph{Step 3: the Barnes term.} By Lemmas
\ref{lem:barnes1}--\ref{lem:barnes2}, with $x=v/\pi$,
\[
-\frac{A'(v)}2=\frac{2v}{\pi^2}\bigl[S(x)-1-\gamma_E\bigr]
=\frac{2v}{\pi^2}\,\ell(x)+O(v)
=\frac{v}{\pi^2}\ln\Bigl(1+\frac{v^2}{\pi^2}\Bigr)+O(v).
\]
Substituting into \eqref{eq:Nv},
\begin{equation}\label{eq:NvModel}
\Ntil(v)=c-\frac{2v}{\pi^2}\ln(2\pi c)
+\frac{v}{\pi^2}\ln\Bigl(1+\frac{v^2}{\pi^2}\Bigr)+O(\ln c+v).
\end{equation}

\emph{Step 4: the $\eps$-form.} With $\Lb=2v$,
\[
\frac{2v}{\pi^2}\ln(2\pi c)-\frac{v}{\pi^2}\ln\Bigl(1+\frac{v^2}{\pi^2}
\Bigr)
=\frac{\Lb}{2\pi^2}\,\ln\frac{16\pi^4c^2}{4\pi^2+\Lb^2}
=\frac{\Lb}{\pi^2}\,\ln\frac{4\pi^2c}{\sqrt{4\pi^2+\Lb^2}} ,
\]
and for $\Lb\ge6$,
$0\le\ln\sqrt{4\pi^2+\Lb^2}-\ln\Lb\le\tfrac12\ln(1+4\pi^2/36)\le1$, so
the display differs from $\frac{\Lb}{\pi^2}\ln\frac{4\pi^2c}{\Lb}$ by
$O(\Lb)$. With \eqref{eq:dict-eps} this proves the main display of
Theorem~\ref{thm:A}.

\emph{Step 5: $N_{1/2}$.} $N_{1/2}=\Ntil(0)$ and
$\Ntil(0)-\Ntil(3)=\#\{0<w_n\le3\}\le
D_{\mathrm{loc}}(0)+D_{\mathrm{loc}}(1)+D_{\mathrm{loc}}(2)\le C\ln c$
by Lemma~\ref{lem:density}(i); and \eqref{eq:NvModel} at $v=3$ gives
$\Ntil(3)=c+O(\ln c)$. \hfill$\qed$

\begin{remark}\label{rem:beyond}
\eqref{eq:NvModel} is the head-side half of a uniform quantile theorem:
the crossing index at level
$1-\eps$ is located to within $O(\ln c+\Lb)$, uniformly across the whole
$\log^2c$ window and far beyond it (depths up to $\Lb\asymp c^{1/3}$,
i.e.\ index displacements up to $\asymp c^{1/3}\ln c$). No pointwise
eigenvalue asymptotic enters: the determinant controls the counting
function directly, which is how the window obstruction of
\cite[\S2]{KDL} is bypassed on this side.
\end{remark}

\section{Proofs of Theorem \ref{thm:B} and the corollaries}\label{sec:thmB}

\begin{proof}[Proof of Theorem \ref{thm:B}]
Write $W:=\frac{L}{\pi^2}\ln\frac{4\pi^2c}{L}$ for the main term, so
that \eqref{eq:Cstar} reads $|\Lambda^+_\delta-W|\le C_\star(\ln c+L)$.
Since $L\le c^{1/3}$,
\[
\ln\frac{4\pi^2c}{L}=\ln c-\ln L+\ln4\pi^2
\;\ge\;\tfrac23\ln c+\ln4\pi^2\;>\;\tfrac23\ln c,
\qquad\text{whence}\qquad
W\;\ge\;\frac{2}{3\pi^2}\,L\,\ln c .
\]
Dominate the two error pieces \emph{separately} against $W$, each with
half of $\eta$: by the displayed anchor inequality,
\[
\begin{aligned}
C_\star\ln c\;&\le\;\frac{\eta}{3\pi^2}\,L\ln c\;\le\;\frac\eta2\,W
&&\iff\; L\ge\frac{3\pi^2C_\star}{\eta},\\[2pt]
C_\star L\;&\le\;\frac{\eta}{3\pi^2}\,L\ln c\;\le\;\frac\eta2\,W
&&\iff\; \ln c\ge\frac{3\pi^2C_\star}{\eta}.
\end{aligned}
\]
Adding the two, $C_\star(\ln c+L)\le\eta\,W$ for all
$L\ge L_0(\eta):=\max\bigl(7,\,3\pi^2C_\star/\eta\bigr)$ and
$c\ge c_0(\eta):=e^{3\pi^2C_\star/\eta}$, which is exactly the two-sided
display. (The pairing matters: it is $L$ that can be large and $\ln c$
that then fails to keep up, so $C_\star\ln c$ must be met by a threshold
on $L$ and $C_\star L$ by a threshold on $c$. The latter cannot be
beaten by any threshold on $L$ alone, which is why $c_0(\eta)$ is forced
to be exponential in $1/\eta$.)

For the bridge form: on $c^{-\alpha}\le\delta\le\delta_0=e^{-L_0}$ we
have $L\le\alpha\ln c$, hence $L\le c^{1/3}$ for $c\ge c_1(\alpha)$, and
$\ln\frac{\alpha c}{L}\ge\ln\frac{c}{\ln c}\to\infty$ uniformly on the
bridge. The two logarithms in play differ by a constant, not a factor:
\[
\ln\frac{4\pi^2c}{L}=\ln\frac{\alpha c}{L}+\ln\frac{4\pi^2}{\alpha}.
\]
If $\alpha\le4\pi^2$ then $\ln\frac{4\pi^2}{\alpha}\ge0$ and
$\ln\frac{4\pi^2c}{L}\ge\ln\frac{\alpha c}{L}$ outright; if
$\alpha>4\pi^2$ then $\bigl|\ln\frac{4\pi^2}{\alpha}\bigr|$ is a fixed
constant while $\ln\frac{\alpha c}{L}\to\infty$, so it is at most
$\eta'\ln\frac{\alpha c}{L}$ once $c\ge c_0(\alpha,\eta')$. Either way,
for every $\eta'\in(0,1)$,
\begin{equation}\label{eq:logcompare}
\ln\frac{4\pi^2c}{L}\;\ge\;(1-\eta')\ln\frac{\alpha c}{L}
\qquad\text{for }c\ge c_0(\alpha,\eta'),\;
c^{-\alpha}\le\delta\le\delta_0 .
\end{equation}
Take $\eta=\eta'=\tfrac14$ and combine the two-sided bound with
\eqref{eq:logcompare}:
\[
\Lambda^+_\delta\;\ge\;(1-\eta)\frac{L}{\pi^2}\ln\frac{4\pi^2c}{L}
\;\ge\;(1-\eta)(1-\eta')\frac{L}{\pi^2}\ln\frac{\alpha c}{L}
\;=\;\frac{9}{16}\cdot\frac{L}{\pi^2}\ln\frac{\alpha c}{L}
\;\ge\;\frac{1}{2\pi^2}\,L\,\ln\frac{\alpha c}{L},
\]
since $\tfrac9{16}=0.5625>\tfrac12$. This is the bridge form, with
$C=0$, $\delta_0=e^{-L_0(1/4)}$ and
$c\ge c_0(\alpha):=\max\bigl(c_0(\tfrac14),c_0(\alpha,\tfrac14),
c_1(\alpha)\bigr)$.
\end{proof}

\begin{remark}\label{rem:kappaceiling}
Letting $\eta,\eta'\to0$ in the last display, the same route gives the
bridge form with any $\kappa<\pi^{-2}$, at the price of larger
$L_0,\delta_0,c_0$.  This is a proved lower-bound constant in the
joint range; it is not a claim that $\pi^{-2}$ is a sharp asymptotic
constant when $L$ is bounded.  The stated value $\tfrac1{2\pi^2}$ is a
rounding comfortably inside the proved range.  The wasteful comparison
$\ln\frac{4\pi^2c}{L}\ge\tfrac12\ln\frac{\alpha c}{L}$ --- which discards
a factor $2$ where \eqref{eq:logcompare} costs only $1-\eta'$ --- would
cap the route at $\tfrac1{4\pi^2}$.
\end{remark}

\begin{lemma}[tensor identity; restated and reproved]\label{lem:tensor}
For the cube pair $A=(0,c)^d$, $B=(-\tfrac12,\tfrac12)^d$, the
compression of $P_AQ_BP_A$ to
$\operatorname{Ran}P_A\simeq L^2(A)$ is unitarily equivalent to
$S_c^{\otimes d}$.  On the full space $L^2(\R^d)$ there is additionally
a zero eigenspace, irrelevant to every positive-threshold count.  The
positive eigenvalue multiset is
$\{\prod_{i=1}^d\lambda_{n_i}(c)\}$ over multi-indices
$(n_1,\dots,n_d)$.
\end{lemma}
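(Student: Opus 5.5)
The plan is to exhibit an explicit unitary on $L^2(A)=L^2((0,c)^d)$ that intertwines the compression of $P_AQ_BP_A$ with $S_c^{\otimes d}$, and then read off the spectrum.

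\emph{Step 1: factor the projections.} Since $A=(0,c)^d$ is a product set, multiplication by $\mathbf 1_A=\prod_{r=1}^d\mathbf 1_{(0,c)}(x_r)$ is $P_A=P_{(0,c)}^{\otimes d}$ under the identification $L^2(\R^d)=L^2(\R)^{\otimes d}$. Likewise $Q_B=\F^{-1}\mathbf 1_B\F$. The unitary Fourier transform on $\R^d$ factors as $\F^{\otimes d}$, because $e^{-2\pi i x\cdot\xi}=\prod_r e^{-2\pi i x_r\xi_r}$. Together with $\mathbf 1_B=\prod_r\mathbf 1_{(-1/2,1/2)}(\xi_r)$, this gives $Q_B=Q^{\otimes d}$. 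Both identities hold first on elementary tensors and then everywhere by density and boundedness. Hence $P_AQ_BP_A=(PQP)^{\otimes d}$ on $L^2(\R^d)$.

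\emph{Step 2: restrict to $\operatorname{Ran}P_A$.} We have $\operatorname{Ran}P_A=L^2(A)=L^2(0,c)^{\otimes d}$, and the compression of $PQP$ to $L^2(0,c)$ is $S_c$ by \eqref{eq:op}. Therefore the compression of $P_AQ_BP_A$ is $S_c^{\otimes d}$. Alternatively, one can verify this at the kernel level: the kernel is $\prod_r\sinc(x_r-y_r)$ on $A\times A$.

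\emph{Step 3: the kernel of $P_AQ_BP_A$.} On the orthogonal complement $L^2(\R^d\setminus A)$ the operator vanishes, which gives the zero eigenspace. This eigenspace is irrelevant to any count of eigenvalues exceeding a positive threshold.

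\emph{Step 4: the spectrum.} Let $\{\phi_n\}$ be the orthonormal eigenbasis of the compact self-adjoint operator $S_c$. The basis is complete in $L^2(0,c)$ because $0<\lambda_n$ and $\ker S_c=0$, i.e. all eigenvalues are positive as stated in \S\ref{sec:conventions}. Then the products $\phi_{n_1}\otimes\cdots\otimes\phi_{n_d}$ form an orthonormal basis of $L^2(A)$. Each is an eigenvector of $S_c^{\otimes d}$ with eigenvalue $\prod_r\lambda_{n_r}(c)$, and since these vectors span, they diagonalize the operator completely. So the positive eigenvalue multiset is exactly $\{\prod_r\lambda_{n_r}\}$, counted with slot multiplicity.

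There is no real obstacle here; the only point requiring care is the completeness in Step 4, which ensures that no further positive eigenvalues exist. That is exactly why the strict positivity of every $\lambda_n$ is invoked. Even without it, a zero eigenvalue of $S_c$ would contribute only zero products, so the positive multiset would be unchanged.
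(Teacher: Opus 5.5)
Your proof is correct and follows the paper's argument. Like the paper, you factor $\mathbf 1_A$ and the $d$-dimensional Fourier transform as tensor products to identify the compression with $S_c^{\otimes d}$ on $L^2(A)$, then read off the spectrum from elementary tensors of one-dimensional eigenbases. You give more detail than the paper's two-line proof, namely the explicit zero eigenspace $L^2(\R^d\setminus A)$ and the completeness of the eigenbasis via $\ker S_c=0$, but the route is the same.
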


\begin{proof}
On $\operatorname{Ran}P_A$, indicators of product sets factor as tensor
products, and the $d$-dimensional Fourier transform is the tensor product
of the one-dimensional ones; hence the compression of $P_AQ_BP_A$ is
unitarily equivalent to $S_c^{\otimes d}$. Choosing an orthonormal eigenbasis in
each factor, elementary tensors form a complete orthonormal system of
eigenvectors with the product eigenvalues.
\end{proof}

\begin{proof}[Proof of Corollary \ref{cor:tensor}]
Let $\delta\le2^{-d}$ and take any multi-index whose $d$ entries all
satisfy $\tfrac12<\lambda_{n_i}\le1-\delta$. The product $\mu$ of the
$d$ factors obeys $\mu>2^{-d}\ge\delta$ and $\mu\le1-\delta$ (indeed
$\mu\le(1-\delta)^d\le1-\delta$). Hence each such multi-index is counted
by $\Lambda_\delta(c;d)$, and there are $(\Lambda^+_\delta(c))^d$ of
them by Lemma~\ref{lem:tensor}. Apply Theorem~\ref{thm:B}.
\end{proof}

\begin{proof}[Proof of Corollary \ref{cor:upperD}]
By \eqref{eq:identities}, $D=\Lambda_\delta-\Lambda^+_\delta$.
Theorem~\ref{qt:KRD} with $\ln(50c+25)\le\ln c+\ln51$ and
$\ln\frac5{\delta(1-\delta)}=L+\ln\frac5{1-\delta}\le L+\ln10$ gives
\[
\Lambda_\delta\;\le\;\frac{2}{\pi^2}\bigl(\ln c+\ln51\bigr)
\bigl(L+\ln10\bigr)+9
\;=\;\frac{2L}{\pi^2}\ln c+\frac{2\ln51}{\pi^2}\,L+O(\ln c),
\qquad \frac{2\ln51}{\pi^2}=0.796754\ldots,
\]
while \eqref{eq:Cstar} --- i.e.\ Theorem~\ref{thm:A} used directly, with
no appeal to Theorem~\ref{thm:B} and hence no $\eta$ anywhere --- gives
\[
\Lambda^+_\delta\;\ge\;\frac{L}{\pi^2}\bigl(\ln c-\ln L+\ln4\pi^2\bigr)
-C_\star(\ln c+L)
=\frac{L}{\pi^2}\ln c-\frac{L}{\pi^2}\ln L+\frac{\ln4\pi^2}{\pi^2}L
-C_\star L-O(\ln c),
\]
with $\frac{\ln4\pi^2}{\pi^2}=0.372432\ldots$. Subtracting, the terms
$\frac{2L}{\pi^2}\ln c-\frac{L}{\pi^2}\ln c$ leave $\frac{L}{\pi^2}\ln c$
and the two $\Theta(L)$ contributions combine into $C_2L$:
\[
D\;\le\;\frac{L}{\pi^2}\bigl(\ln c+\ln L\bigr)+C_2\,L+C'\ln c,
\qquad
C_2=\frac{2\ln51-\ln4\pi^2}{\pi^2}+C_\star=0.424323\ldots+C_\star,
\]
which is \eqref{eq:upperD-a}; every constant in it is absolute.
For the second display, let $L\le c^{\eta/2}$. Then
$\ln L\le\tfrac\eta2\ln c$, so
$\frac{L}{\pi^2}\ln L\le\tfrac\eta2\cdot\frac{L}{\pi^2}\ln c$; and
$C_2L\le\tfrac\eta2\cdot\frac{L}{\pi^2}\ln c$ as soon as
$\ln c\ge2\pi^2C_2/\eta$, i.e.\ $c\ge c_0(\eta):=e^{2\pi^2C_2/\eta}$.
Adding, $D\le(1+\eta)\frac{L}{\pi^2}\ln c+C'\ln c$, so the second
display holds with $C_\eta=C'$ absolute.
\end{proof}

\begin{remark}\label{rem:squeeze}
Corollary~\ref{cor:upperD} says the two half-windows cannot both exceed
their Landau--Widom shares: the upper half provably takes its share
$\pi^{-2}\Lb\ln c\,(1+o(1))$ when $\Lb\to\infty$ in the range of
Theorem~\ref{thm:B}; the lower half is capped at the same order, up to
the slack in Theorem~\ref{qt:KRD}'s constant.  A matching
lower-half asymptotic would require an additional lower bound.
\end{remark}

\section{The reduction: proof of Theorem \ref{thm:red}}\label{sec:reduction}

Throughout this section $\beta=e^u-1$, $u>0$, and, as in \eqref{eq:GF},
\[
G_c(u)\;=\;\ln\det(I+\beta S_c)\;=\;\sum_{n\ge0}\ln(1+\beta\lambda_n)
\;=\;F_c(-u/2),
\]
finite since $S_c$ is trace class: this is the tail half of the
determinant \eqref{eq:gammav}. Two elementary facts are used
repeatedly: $\ln(1+\beta t)\le u$ for $0<t\le1$, and
$\ln(1+\beta t)\le\beta t$.

\begin{lemma}[tail-sum bound]\label{lem:tailsum}
For all $c\ge2$ and $0<\tau\le\tfrac12$,
\[
\sum_{\lambda_n\le\tau}\lambda_n\;\le\;\frac{4}{\pi^2}\,\tau\,
\ln(50c+25)\,\bigl(\ln\tfrac1\tau+6\bigr)\;+\;16\,\tau .
\]
\end{lemma}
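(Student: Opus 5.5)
The plan is a dyadic decomposition of the range $(0,\tau]$, with Theorem~\ref{qt:KRD} used on each dyadic shell to count the eigenvalues there. Every eigenvalue is strictly positive, so
\[
(0,\tau]=\bigcup_{k\ge0}I_k,\qquad I_k:=(\tau2^{-k-1},\,\tau2^{-k}],
\]
is a disjoint cover of the set of eigenvalues $\lambda_n\le\tau$. Each eigenvalue in $I_k$ is at most $\tau2^{-k}$. Hence
\[
\sum_{\lambda_n\le\tau}\lambda_n\le\sum_{k\ge0}\tau2^{-k}\,\#\{n:\lambda_n\in I_k\}.
\]
The left side converges absolutely, since $\sum_n\lambda_n=c$, so no convergence issue arises.

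\emph{Step 1: count each shell.} Fix $k$ and set $\eps_k=\tau2^{-k-1}\le\tfrac14$. The top of $I_k$ satisfies $\tau2^{-k}\le\tfrac12<1-\eps_k$, so
\[
I_k\subset\{\eps_k<\lambda<1-\eps_k\}.
\]
This is exactly the open window of Theorem~\ref{qt:KRD}, so the bare constant $+7$ applies and no endpoint correction is needed. That theorem gives
\[
\#\{n:\lambda_n\in I_k\}\le\frac{2}{\pi^2}\,\ell\,\ln\frac{5}{\eps_k(1-\eps_k)}+7,\qquad \ell:=\ln(50c+25).
\]
Since $1-\eps_k\ge\tfrac12$, we have
\[
\ln\frac{5}{\eps_k(1-\eps_k)}\le\ln\frac1\tau+(k+1)\ln2+\ln10.
\]

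\emph{Step 2: sum the geometric series.} We use $\sum_{k\ge0}2^{-k}=2$ and $\sum_{k\ge0}(k+1)2^{-k}=4$. This gives
\[
\sum_{\lambda_n\le\tau}\lambda_n\le\frac{2}{\pi^2}\,\ell\,\tau\Bigl[2\ln\frac1\tau+4\ln2+2\ln10\Bigr]+14\tau
=\frac{4}{\pi^2}\,\tau\,\ell\Bigl(\ln\frac1\tau+2\ln2+\ln10\Bigr)+14\tau.
\]
Since $2\ln2+\ln10=3.69\ldots\le6$ and $14\le16$, this yields the stated bound.

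\emph{Main obstacle.} The only delicate point is the endpoint convention in Theorem~\ref{qt:KRD}. The paper's general remark absorbs closed-endpoint discrepancies into a constant $+9$. Using $+9$ here would produce $18\tau$, which exceeds the allowed $16\tau$. The fix is the observation in Step 1: each shell $I_k$ sits strictly inside the open interval $(\eps_k,1-\eps_k)$, so the $+7$ version applies verbatim. The remaining slack, between $3.69$ and $6$ and between $14$ and $16$, is more than enough. It also covers the case $c\ge2$, where $\ell>0$ and no sign issue arises.
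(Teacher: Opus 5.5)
Your proof is correct and is essentially the paper's argument: a dyadic decomposition of $(0,\tau]$, Theorem~\ref{qt:KRD} applied on each shell with $\eps_j=2^{-j-1}\tau$, and summation against $\sum_j 2^{-j}=2$ and $\sum_j(j+1)2^{-j}=4$. The only difference is the per-shell constant. The paper carries $+8$, one unit of endpoint slack, which is exactly what produces its $16\tau$. Your observation that each shell lies strictly inside the open window $(\eps_j,1-\eps_j)$ justifies the bare $+7$ and gives the slightly sharper $14\tau$.
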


\begin{proof}
Decompose dyadically:
\[
\sum_{\lambda\le\tau}\lambda
=\sum_{j\ge0}\ \sum_{\lambda\in(2^{-j-1}\tau,\,2^{-j}\tau]}\lambda
\;\le\;\sum_{j\ge0}2^{-j}\tau\,
\#\{\lambda\in(2^{-j-1}\tau,2^{-j}\tau]\}.
\]
Each dyadic window lies in $(\eps_j,1-\eps_j)$ with
$\eps_j=2^{-j-1}\tau$, so by Theorem~\ref{qt:KRD} its count is at
most $\frac{2}{\pi^2}\ln(50c+25)\bigl(\ln\tfrac1\tau+(j+1)\ln2+\ln10
\bigr)+8$. Summing against $2^{-j}$:
$\sum_j2^{-j}(j+1)\le4$, $\sum_j2^{-j}\le2$, giving the claim.
\end{proof}

\subsection{\texorpdfstring{\eqref{eq:T} implies
\eqref{eq:bridgealpha}}{The tail bound implies the bridge bound}}\label{sec:red1}

Assume \eqref{eq:T} with constants $\kappa_T,L_T,c_T$. Let
$u\in[\max(L_T,1),\alpha\ln c]$ and $\tau=e^{-u}$. Splitting $G_c(u)$ at
$\tau$,
\[
G_c(u)\;\le\;u\,N_{\tau}(c)\;+\;\beta\sum_{\lambda_n\le\tau}\lambda_n
\;\le\;u\,N_\tau(c)+e^u\Bigl[\frac4{\pi^2}e^{-u}\ln(50c+25)(u+6)
+16e^{-u}\Bigr],
\]
using Lemma~\ref{lem:tailsum} (note $\tau\le e^{-1}<\tfrac12$). With
\eqref{eq:T},
\[
N_{e^{-u}}(c)\;\ge\;c+\kappa_T\,u\ln c
-\frac4{\pi^2}\Bigl(1+\frac6u\Bigr)\ln(50c+25)-\frac{16}{u}
\;\ge\;c+\frac{\kappa_T}{2}\,u\ln c
\]
for all $u\ge L_0':=\max\bigl(L_T,\,\lceil40/(\pi^2\kappa_T)\rceil,\,
6\bigr)$ and $c\ge c_0'$. Now let $\delta\in[c^{-\alpha},e^{-L_0'}]$ and
$u=L=\ln\frac1\delta\in[L_0',\alpha\ln c]$. By Theorem~\ref{thm:A},
$N_{1/2}\le c+C_{\mathrm h}\ln c$, so
\[
D(\delta,c)=N_\delta-N_{1/2}\;\ge\;\frac{\kappa_T}{2}L\ln c-C_{\mathrm h}\ln c
\;\ge\;\frac{\kappa_T}{4}\,L\ln c
\]
for $L\ge L_0'':=\max(L_0',\lceil4C_{\mathrm h}/\kappa_T\rceil)$. Finally, on the
bridge $\ln\frac{\alpha c}{L}\le\ln(\alpha c)\le2\ln c$ for
$c\ge\alpha$, whence $D\ge\frac{\kappa_T}{8}L\ln\frac{\alpha c}{L}$;
i.e.\ \eqref{eq:bridgealpha} holds with $\kappa=\kappa_T/8$, $C=0$,
$\delta_0=e^{-L_0''}$. \hfill$\qed$

\subsection{\texorpdfstring{\eqref{eq:bridgealpha} implies
\eqref{eq:T}}{The bridge bound implies the tail bound}}\label{sec:red2}

Assume \eqref{eq:bridgealpha} with constants
$\alpha,\delta_0,\kappa,C,c_0$; write $L_0=\ln\frac1{\delta_0}$. By the
layer-cake identity (equivalently, by Tonelli:
$G_c(u)=\sum_n\int_0^{\lambda_n}\frac{\beta}{1+\beta t}\,dt
=\int_0^1 N_t\,\frac{\beta}{1+\beta t}\,dt$, the integrand being
nonnegative),
\[
G_c(u)=\int_0^1 N_t\,\frac{\beta}{1+\beta t}\,dt
= cu+\int_0^1\bigl(N_t-c\bigr)\frac{\beta}{1+\beta t}\,dt .
\]
Split $(0,1)=(0,e^{-L_0}]\cup(e^{-L_0},\tfrac12]\cup(\tfrac12,1)$.

\emph{Range $(\tfrac12,1)$:} here $\frac{\beta}{1+\beta t}\le\frac1t\le2$,
so
\[
\int_{1/2}^1\bigl(N_t-c\bigr)\frac{\beta}{1+\beta t}\,dt
\;\ge\;-2\int_{1/2}^1(c-N_t)_+\,dt .
\]
Now $(c-N_t)_+=(c-N_t)+(N_t-c)_+$, and for $t>\tfrac12$ monotonicity of
$N$ and Theorem~\ref{thm:A} give
$(N_t-c)_+\le(N_{1/2}-c)_+\le C_{\mathrm h}\ln c$, so
\[
\int_{1/2}^1(c-N_t)_+\,dt\;\le\;\int_{1/2}^1(c-N_t)\,dt
\;+\;\tfrac12C_{\mathrm h}\ln c .
\]
Since $\int_0^1N_t\,dt=\Tr S_c=c$ and
$\int_0^{1/2}N_t\,dt=\tfrac12N_{1/2}+\sum_{\lambda_n\le1/2}\lambda_n$,
\[
\int_{1/2}^1(c-N_t)\,dt=\int_0^{1/2}(N_t-c)\,dt
=\tfrac12\,(N_{1/2}-c)+\sum_{\lambda_n\le1/2}\lambda_n
\;\le\;C'\ln c,
\]
by Theorem~\ref{thm:A} and Lemma~\ref{lem:tailsum} at $\tau=\tfrac12$.
So this range contributes $\ge-C''\ln c$.

\emph{Range $(e^{-L_0},\tfrac12]$:} $N_t\ge N_{1/2}\ge c-C_{\mathrm h}\ln c$, and
$\int\frac{\beta}{1+\beta t}dt\le\int_{e^{-L_0}}^{1/2}\frac{dt}t=
L_0-\ln2$; contribution $\ge-C_{\mathrm h}L_0\ln c$.

\emph{Range $(0,e^{-L_0}]$:} for $t\in[2e^{-u},e^{-L_0}]$ (nonempty for
$u\ge L_0+2$) monotonicity of $N$ and \eqref{eq:bridgealpha} give, with
$j=\ln\frac1t\in[L_0,u-\ln2]$,
\[
N_t-c\;\ge\;\kappa\,\min(j,\alpha\ln c)\,
\ln\frac{\alpha c}{\min(j,\alpha\ln c)}-C-C_{\mathrm h}\ln c
\;\ge\;\kappa\,\min(j,\alpha\ln c)\cdot\tfrac12\ln c-C-C_{\mathrm h}\ln c,
\]
for $c\ge c_1(\alpha)$ (on the bridge
$\ln\frac{\alpha c}{L}\ge\tfrac12\ln c$ for large $c$; for
$j>\alpha\ln c$ we simply use $N_t\ge N_{c^{-\alpha}}$). On this range
$\frac{\beta}{1+\beta t}\in\bigl[\frac1{2t},\frac1t\bigr]$: the lower bound
since $\beta t\ge\beta\cdot2e^{-u}\ge1$, the upper always. Write
$B_j:=\frac{\kappa\ln c}{2}\min(j,\alpha\ln c)-C-C_{\mathrm h}\ln c$; for
$j<\alpha\ln c$ its root is $j_0:=\frac{2(C+C_{\mathrm h}\ln c)}{\kappa\ln c}$, which
is $<\alpha\ln c$ for large $c$ (the root before the plateau of
$\min(j,\alpha\ln c)$). For large $c$ the plateau value
$\frac{\kappa\alpha}{2}(\ln c)^2-C-C_{\mathrm h}\ln c$ is positive, so $B_j\ge0$ for
$j\ge\alpha\ln c$ as well. Set $j_\star:=\max\{L_0,j_0\}$ ($B_j<0$ for
$j<j_0$, $B_j\ge0$ for $j\ge j_0$); if $j_0\le L_0$ the window
$[L_0,j_\star]$ is empty. Either way $j_\star-L_0=O(1)$. Substituting $t=e^{-j}$, so that
$\frac{\beta}{1+\beta t}\,dt=\frac{\beta t}{1+\beta t}\,dj$ with
$\frac{\beta t}{1+\beta t}\in[\tfrac12,1]$, and dropping the positive part
beyond $j=u-\ln2$, split the $j$-integral at $j_\star$. On $[L_0,j_\star]$,
where $B_j<0$, monotonicity gives $N_t-c\ge N_{1/2}-c\ge-C_{\mathrm h}\ln c$ and
$\frac{\beta t}{1+\beta t}\le1$, so this window contributes
$\ge-C_{\mathrm h}\ln c\,(j_\star-L_0)\ge-C'''\ln c$; on $[j_\star,u-\ln2]$, where
$B_j\ge0$, we have $N_t-c\ge B_j\ge0$ and
$\frac{\beta t}{1+\beta t}\ge\frac12$, so that window contributes
$\ge\frac12\int_{j_\star}^{u-\ln2}B_j\,dj\ge\frac12\int_{L_0}^{u-\ln2}B_j\,dj$
(extending the lower limit only removes negative area). Hence
\[
\int_{2e^{-u}}^{e^{-L_0}}(N_t-c)\frac{\beta}{1+\beta t}dt
\;\ge\;\frac12\int_{L_0}^{u-\ln2}
\Bigl[\frac{\kappa\ln c}{2}\min(j,\alpha\ln c)-C-C_{\mathrm h}\ln c\Bigr]dj\;-\;C'''\ln c .
\]
For $u\le\alpha\ln c$ the minimum is $j$ throughout, and the integral is
$\ge\frac{\kappa\ln c}{4}\cdot\frac{(u-\ln2)^2-L_0^2}{2}
-(C+C_{\mathrm h}\ln c)\,u\ge\frac{\kappa}{16}\,u^2\ln c$ for all
$u\ge L_T:=\max\bigl(4L_0+4,\;\lceil32(C+C_{\mathrm h})/\kappa\rceil\bigr)$ and
$c\ge c_2$. (For the last inequality: $\ln c\ge1$ and
$u\ge32(C+C_{\mathrm h})/\kappa$ give
$(C+C_{\mathrm h}\ln c)u\le(C+C_{\mathrm h})(\ln c)u\le\frac\kappa{32}u^2\ln c$, so it
suffices that $(u-\ln2)^2-L_0^2\ge\frac34u^2$; and since
$\delta_0<\tfrac12$ forces $L_0>\ln2$, while $u\ge4L_0+4$ forces
$L_0\le\frac{u-4}4$, the left side is at least
$(u-\ln2)^2-\frac{(u-4)^2}{16}$, and
$(u-\ln2)^2-\frac{(u-4)^2}{16}-\frac34u^2
=\frac3{16}u^2-(2\ln2-\tfrac12)u+(\ln^2\!2-1)$ is positive for
$u\ge4\ln2+4$, hence throughout.) On $(0,2e^{-u})$ finally $N_t-c\ge N_{2e^{-u}}-c\ge0$ for
such $u$, so that piece is nonnegative.

Collecting the three ranges,
\[
G_c(u)\;\ge\;cu+\frac{\kappa}{16}u^2\ln c-C''\ln c-C_{\mathrm h}L_0\ln c-C'''\ln c
\;\ge\;cu+\frac{\kappa}{32}u^2\ln c
\]
for $u\ge L_T$, after enlarging $L_T$ by an amount depending only on the
data in $(\mathrm{BRIDGE}_\alpha)$, and $c\ge c_T$. This is
\eqref{eq:T} with $\kappa_T=\kappa/32$. \hfill$\qed$

\subsection{Completion of the bridge range}\label{sec:full}

Neither direction above says anything about
$\alpha^{-c}<\delta<c^{-\alpha}$: \S\ref{sec:red1} delivers only
$c^{-\alpha}\le\delta\le\delta_0$, and \S\ref{sec:red2} consumes
\eqref{eq:bridgealpha} only there (the truncation at
$\min(j,\alpha\ln c)$ exists precisely so that nothing below
$c^{-\alpha}$ is ever used). That complementary range is settled in the
literature, which closes the gap.

\begin{proof}[Proof of Proposition \ref{prop:full}]
$(\Leftarrow)$ Assume $(\mathrm{BRIDGE}_{\alpha_1})$ with constants
$\kappa,C,\delta_0,c_0$. On the complementary interval
$\alpha_1^{-c}<\eps<c^{-\alpha_1}$,
Theorem~\ref{qt:KDL}(b) gives
$\Lambda^-_\eps(c)\gtrsim LR$, where
$R=\ln(\alpha_1c/L)$. Thus there are $\kappa_1>0$ and $c_1<\infty$
such that
\[
D(\delta,c)=\Lambda^-_\delta(c)\;\ge\;\kappa_1\,L\,
\ln\frac{\alpha_1c}{L}
\qquad\text{for }c\ge c_1\text{ and }
\alpha_1^{-c}<\delta<c^{-\alpha_1}
\]
(endpoint conventions differ from \eqref{eq:counts} by at most $2$,
absorbed into $C$ as in \S\ref{sec:quoted}). Let $c\ge\max(c_0,c_1)$ and
let $\delta$ satisfy $\alpha_1^{-c}<\delta\le\delta_0$. Exactly one of
the two ranges applies:
\[
\begin{aligned}
\delta\ge c^{-\alpha_1}:\quad
&D\ge\kappa\,L\ln\tfrac{\alpha_1c}{L}-C
&&\text{by }(\mathrm{BRIDGE}_{\alpha_1});\\
\delta<c^{-\alpha_1}:\quad
&D\ge\kappa_1\,L\ln\tfrac{\alpha_1c}{L}
&&\text{by Thm.~\ref{qt:KDL}(b)} .
\end{aligned}
\]
In both cases
$D(\delta,c)\ge\min(\kappa,\kappa_1)\,L\ln\frac{\alpha_1c}{L}-C$, which
is \eqref{eq:bridge} with $\alpha:=\alpha_1\ge4$,
$\kappa:=\min(\kappa,\kappa_1)$, and the same $C,\delta_0$.

$(\Rightarrow)$ Assume Problem~\ref{op:bridge} has a positive
answer, with constants $\alpha\ge4,\delta_0,\kappa,C,c_0$. For $c$ large
enough that $c^{-\alpha_1}>\alpha^{-c}$, the hypothesis covers every
$\delta\in[c^{-\alpha_1},\delta_0]$ and gives
$D\ge\kappa L\ln\frac{\alpha c}{L}-C$ there. Convert the constant inside
the logarithm by
$\ln\frac{\alpha c}{L}=\ln\frac{\alpha_1c}{L}+\ln\frac\alpha{\alpha_1}$.
If $\alpha\ge\alpha_1$ the correction is $\ge0$ and
$(\mathrm{BRIDGE}_{\alpha_1})$ follows with the same $\kappa,C$. If
$\alpha<\alpha_1$ then $\ln\frac\alpha{\alpha_1}$ is a fixed negative
constant, while on $\delta\ge c^{-\alpha_1}$ one has $L\le\alpha_1\ln c$
and hence $\ln\frac{\alpha_1c}{L}\ge\ln\frac{c}{\ln c}\to\infty$; so
$\ln\frac{\alpha c}{L}\ge\tfrac12\ln\frac{\alpha_1c}{L}$ for
$c\ge c_2(\alpha,\alpha_1)$, giving
$D\ge\tfrac\kappa2L\ln\frac{\alpha_1c}{L}-C$. Either way
$(\mathrm{BRIDGE}_{\alpha_1})$ holds.
\end{proof}

\begin{proof}[Proof of Corollary \ref{cor:equiv}]
Combine Proposition~\ref{prop:full} with Theorem~\ref{thm:red} at
$\alpha=\alpha_1$.
\end{proof}

Only part (b) of Theorem~\ref{qt:KDL} is used, and only through
its unquantified $\gtrsim$: Problem~\ref{op:bridge} is existential
in $\kappa$, so all that is needed is that \emph{some} $\kappa_1>0$
exists with the implied constant uniform over
$\alpha_1^{-c}<\eps<c^{-\alpha_1}$ and $c\ge c_1$, which is what
\cite[Thm.~1.1]{KDL} asserts. No numerical value of $\kappa_1$ enters.
The reduction is lossy in the constant ($\kappa\mapsto
\min(\kappa,\kappa_1)$, and $\kappa\mapsto\kappa/2$ in the converse),
which is harmless for the same reason --- the same bookkeeping already
accepted in Theorem~\ref{thm:red}. The range $\delta\le\alpha_1^{-c}$ is
not part of Problem~\ref{op:bridge}, so part (c) is not needed.

\begin{remark}[what the equivalence buys]\label{rem:redmeaning}
Both directions cost only constants: the bridge problem \emph{is} the
tail-side determinant problem at logarithmic depth $u\le\alpha\ln c$.
At fixed $u$, \eqref{eq:T} follows from
\cite[Thm.~1.1]{Charlier21}; the new issue is uniformity as $u$ grows
like $\ln c$.  Section~\ref{sec:tail} proves precisely that range.  The
result is deliberately not claimed on the much larger
$u\asymp s^{1/3}$ head-side scale of \cite{BDIK2}.
\end{remark}

\section{Signed determinant asymptotics}\label{sec:tail}

\subsection{Statement and strategy}
\label{sec:tail-statement}

For $s>0$ let $K_s$ be as in Lemma~\ref{lem:dict} and put
\begin{equation}\label{eq:Ddef}
 \mathcal D(s,\omega)=\det\bigl(I+(e^{2\omega}-1)K_s\bigr),
 \qquad\omega\ge0 .
\end{equation}

\begin{theorem}[signed growing-parameter sine-kernel determinant]
\label{thm:signedmain}
Fix $A>0$.  There are
constants $s_A\ge5$ and $C_A<\infty$, depending only on $A$, such that
for all $s\ge s_A$ and all $\omega$ with $0\le\omega\le A\ln s$,
\begin{equation}\label{eq:RHmain}
 \ln\mathcal D(s,\omega)
 =\frac{4\omega s}{\pi}+\frac{2\omega^2}{\pi^2}\ln(4s)
 +2\ln\bigl[\BG(1+i\omega/\pi)\BG(1-i\omega/\pi)\bigr]
 +\mathcal R_A(s,\omega),
\end{equation}
where
\begin{equation}\label{eq:RHerr}
 |\mathcal R_A(s,\omega)|\;\le\;C_A\,\frac{(1+\omega)^4\ln^2s}{s}.
\end{equation}
Both logarithms are the real ones: $\mathcal D(s,\omega)>0$ by
Lemma~\ref{lem:solv}, and the Barnes product is positive by
Lemma~\ref{lem:barnesproduct}.
\end{theorem}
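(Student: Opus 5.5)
We plan to prove Theorem~\ref{thm:signedmain} by integrating a differential identity in $\omega$ from a fixed anchor, controlling the integrand by a Deift--Zhou steepest descent analysis of the IIKS Riemann--Hilbert problem that is uniform on $0\le\omega\le A\ln s$.

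\textbf{Step 1: set up the RHP and the differential identity.} Write $I+(e^{2\omega}-1)K_s$ in integrable (IIKS) form with $f,g$ built from $e^{\pm is\lambda}$ on $(-1,1)$. Let $Y$ be the associated $2\times2$ Riemann--Hilbert problem with jump $I+(e^{2\omega}-1)\bigl(\begin{smallmatrix}\ast&\ast\\ \ast&\ast\end{smallmatrix}\bigr)$ on $(-1,1)$. Existence for every $\omega\ge0$ follows from positivity, since $I+(e^{2\omega}-1)K_s\succeq I$ (Lemma~\ref{lem:solv}), so no vanishing-lemma subtlety arises. Jacobi's formula gives $\partial_\omega\ln\mathcal D=2e^{2\omega}\Tr[K_s(I+(e^{2\omega}-1)K_s)^{-1}]$, which we rewrite (Lemma~\ref{lem:diffid}) in terms of the residue at infinity of $Y$, or equivalently of the values of $Y$ near $\pm1$. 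This is the sign-independent identity.

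\textbf{Step 2: steepest descent.} We introduce the $g$-function $e^{is\lambda}$ and the Szeg\H o-type function $\mathcal S(z)=\bigl(\frac{z-1}{z+1}\bigr)^{\nu}$ with $\nu=\omega/\pi\cdot(-i)$ (purely imaginary exponent, of the opposite sign to the BDIK head-side case). We then open lenses above and below $(-1,1)$, where the jump factors are exponentially small away from $\pm1$. At $\pm1$ we place local confluent hypergeometric parametrices (Theorem~\ref{qt:P25}), with parameter $\pm i\omega/\pi$, on disks of radius shrinking like $r=r(s)\asymp(\ln s)/s$ or a fixed small multiple thereof.

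The main obstacle is uniformity in growing $\omega$. The factor $|\Gamma(1\pm i\omega/\pi)|^{\pm1}$ and the powers $(2sr)^{\pm i\nu}$ in the matching condition contribute entries of size $e^{O(\omega)}$, and the lens jumps carry $e^{2\omega}$. Hence the small-norm estimate on the matching circles must absorb these factors: the mismatch is $O((1+\omega)^2/(sr))$ times $e^{O(\omega)}$-controlled quantities, which stay bounded relative to $s^{\epsilon}$ only because $\omega\le A\ln s$. We would first verify carefully the jumps of the model problem on the three rays (Lemmas~\ref{lem:modeljumps}, \ref{qmi:third}) for this sign, then bound the Cauchy operator on the contour by Coifman--McIntosh--Meyer \cite{CMM82}. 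This yields $\|R-I\|=O_A((1+\omega)^2\ln s/s)$, with the exponential ledger of Appendix~\ref{app:ledger}.

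\textbf{Step 3: expand and integrate.} Substituting the parametrices into the identity of Step~1 gives
\[
\partial_\omega\ln\mathcal D=\frac{4s}{\pi}+\frac{4\omega}{\pi^2}\ln(4s)+A'(\omega)+O_A\!\Bigl(\frac{(1+\omega)^3\ln^2 s}{s}\Bigr),
\]
where the Barnes derivative arises from digamma terms $\Rey\psi(1+i\omega/\pi)$, matching Lemma~\ref{lem:barnes1}. We then integrate from $\omega_0=0$, where $\mathcal D=1$ and $A(0)=0$. This needs no anchor from Lemma~\ref{lem:charlierconvert}, which serves only as a consistency check of the constant at fixed $\omega$. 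Integrating the remainder over $[0,\omega]$ produces the error $O_A((1+\omega)^4\ln^2s/s)$ stated in \eqref{eq:RHerr}. Reality of both logarithms follows from $\mathcal D>0$ and Lemma~\ref{lem:barnesproduct}.
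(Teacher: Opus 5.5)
\paragraph{Main gap: the $\omega$-differential identity.}
Your Step~2 matches the paper's Riemann--Hilbert skeleton: positivity instead of a vanishing lemma, the outer parametrix $\bigl(\frac{z-1}{z+1}\bigr)^{\nu\sigma_3}$, lenses, confluent hypergeometric models at $\pm1$, and Coifman--McIntosh--Meyer. The difference is in Steps~1 and~3, where you replace the paper's $s$-differential identity by an $\omega$-differential identity, and that is where the gap is.

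Lemma~\ref{lem:diffid} is an identity in $s$ at fixed $\gamma$. It expresses $\partial_s\ln\det(I-\gamma K_s)$ through $(Y_1)_{11}$, and this works only because $\partial_sK_s=\frac1{2\pi}(h_+\otimes h_++h_-\otimes h_-)$ has rank two. The quantity $\partial_\omega\ln\mathcal D=2e^{2\omega}\Tr[K_s(I+(e^{2\omega}-1)K_s)^{-1}]$ has no such structure. In IIKS terms it is the integral over $(-1,1)$ of the diagonal of the resolvent kernel. That involves $Y_\pm^{-1}Y_\pm'$ along the whole interval, including inside the endpoint disks, rather than the coefficient $Y_1$ at infinity. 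So Lemma~\ref{lem:diffid} does not provide it.

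To reduce it to endpoint data you would need an extended Jimbo--Miwa--Ueno or Deift--Its--Krasovsky type parameter identity. You would then have to evaluate, uniformly up to $\omega=A\ln s$ and to the accuracy $O_A((1+\omega)^3\ln^2s/s)$ your Step~3 claims:
\begin{itemize}
\item the analytic prefactor $\check{\mathcal P}$ of the local model at $\zeta=0$ (the matrix \eqref{eq:A0explicit}, built from $\psi(\pm\nu)$ and $1/\Gamma(\pm\nu)$, whose entries individually carry exponentially large or small factors in $\omega$);
\item $R$ and its $\lambda$-derivative at $\lambda=\pm1$.
\end{itemize}
That computation is exactly where $A'(\omega)$ would have to come from, and the proposal only asserts its outcome.

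The paper sidesteps it. Differentiating in $s$, it needs only $(Y_1)_{11}=-2\nu-i\nu^2/s+\cdots$, obtained from the residues of the first matching coefficients $\Delta_\pm$ (Lemmas~\ref{lem:R1} and~\ref{lem:Y1}). It gets the entire $\omega$-dependent constant, Barnes term included, by integrating from $s$ to $\infty$ at fixed $\omega$. Lemma~\ref{lem:charlierconvert} is used only for the value of the limit, and the constraint $\omega\le A\ln t$ propagates upward in $t$. Dropping the Charlier anchor would make the theorem self-contained, but it is not free: it is paid for with the hardest part of the analysis, which is missing.

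\paragraph{Quantitative defects in Step~2.}
First, the radius is wrong. With $r\asymp\ln s/s$ one has $|\zeta|=2sr\asymp\ln s$ on the circles, while the coefficients $M_k$ of \eqref{eq:P25} grow like $|\nu|^{2k}$. The matching error $(1+\omega)^2/(sr)\asymp(1+\omega)^2/\ln s$ is therefore not small once $\omega\gtrsim\sqrt{\ln s}$. Your bound $O_A((1+\omega)^2\ln s/s)$ corresponds to $r\asymp1/\ln s$, not to the radius you chose. The radius must satisfy three conditions:
\begin{itemize}
\item $sr\gg(1+\omega)^2$;
\item $2sr-2\omega\gg\ln s$, since the conjugated lens jumps are at most $e^{2\omega-2sr}$;
\item $\omega r=O(1)$, so that $|\beta^{\pm2}|=e^{O(\omega r)}$ in \eqref{eq:Deltabound} stays bounded.
\end{itemize}
The paper's $r_s=(\ln s)^{-2}$ meets all three.

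Second, it is not enough that the $e^{O(\omega)}$ factors stay bounded relative to $s^{\epsilon}$. For $\omega\le A\ln s$ they are of size $s^{cA}$, which for a fixed large $A$ destroys the $1/s$ rate in \eqref{eq:RHerr}, and when $cA\ge1$ destroys the small-norm condition itself. These factors must cancel exactly, and in the paper they do:
\begin{itemize}
\item the Stokes constants cancel identically against the across-the-cut term of \cite[eq.~13.2.44]{NIST} (Lemma~\ref{lem:stokes});
\item the dressing satisfies $\mathcal A\Lambda=P^{(\infty)}$ exactly, so that $e^{\pm i\pi\nu}$ in $M_1$ cancels against $a^{\pm2}$ (Lemmas~\ref{lem:AL} and~\ref{lem:Delta});
\item the DLMF remainder is used only in relative form.
\end{itemize}
Citing Appendix~\ref{app:ledger} points at this mechanism, but your own bookkeeping does not implement it.
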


The proof below derives the model jumps and origin behaviour in
Lemmas~\ref{lem:modeljumps} and \ref{qmi:third}, then carries the
endpoint matching through the small-norm problem and the differential
identity.

\paragraph{Comparison with \cite{BDIK2}.}
The published theorem \cite[Thm.~1.2]{BDIK2} is stated for
$\gamma\in[0,1)$, i.e.\ for $v=-\tfrac12\ln(1-\gamma)\ge0$.  We need
$\gamma=1-e^{2\omega}\le0$, i.e.\ $v=-\omega\le0$.  The estimate
\eqref{eq:RHmain} does not follow directly by analytic continuation from
that result, because its remainder bound is established only on
$[0,1)$.  We therefore carry out the nonlinear steepest-descent analysis
for $\gamma<0$, keeping track of the four places where the sign matters:

\begin{enumerate}[label=(S\arabic*),leftmargin=2.6em]
\item \emph{Solvability.} For $\gamma\le0$ the operator
$I-\gamma K_s\succeq I$ is directly invertible
(Lemma~\ref{lem:solv}), so no exceptional-parameter analysis is
required for the associated Riemann--Hilbert problem.
\item \emph{The lens coefficient.}  The quantity that multiplies the
oscillatory exponential in the lens factorization is $\gamma e^{2v}$.
On the head side this equals $e^{2v}-1$ and is exponentially
\emph{large}; on our side it equals $e^{-2\omega}-1$ and has modulus at
most $1$ (Lemma~\ref{lem:factorization}).  The sign is favourable here.
\item \emph{The endpoint model.}  The parameter of the model is
$\nu=iv/\pi=-i\omega/\pi$, purely imaginary in both cases; only the sign
of its imaginary part changes.  Every identity used is algebraic in
$\nu$ and hence valid for either sign.  What does change is which
entries of the raw model carry $e^{+\omega}$ and which carry
$e^{-\omega}$; these are tracked individually in
Appendix~\ref{app:ledger}.
\item \emph{The integration constant.}  The $s$-differential identity
determines $\ln\mathcal D(s,\omega)$ only up to a function of $\omega$.
We fix that function from a fixed-parameter theorem whose hypotheses
genuinely cover our sign, namely Theorem~\ref{qt:Charlier} at
$s_1=e^{2\omega}>1$; see Lemma~\ref{lem:charlierconvert} and
Remark~\ref{rem:charlier-scope}.
\end{enumerate}

\paragraph{Standing conventions for \S\ref{sec:tail}.}
Throughout this section $A>0$ is fixed; $C_A,c_A,\dots$ denote positive
constants depending only on $A$, whose value may change from occurrence
to occurrence, and $C$ denotes an absolute constant.  We write
\begin{equation}\label{eq:params}
 v=-\omega\le0,\qquad
 \gamma=1-e^{-2v}=1-e^{2\omega}\le0,\qquad
 \kappa=\frac vs,\qquad
 \nu=\frac{iv}\pi=-\frac{i\omega}\pi ,
\end{equation}
so that $2\kappa s=2v$ and $e^{-2\kappa s\sigma_3}=e^{-2v\sigma_3}$, and
\begin{equation}\label{eq:sigmas}
 \sigma_1=\begin{pmatrix}0&1\\1&0\end{pmatrix},\qquad
 \sigma_3=\begin{pmatrix}1&0\\0&-1\end{pmatrix} .
\end{equation}
For a matrix $M$, $\|M\|$ is the maximum of the moduli of its entries.
The \emph{endpoint radius} is
\begin{equation}\label{eq:rs}
 r_s:=(\ln s)^{-2},\qquad s\ge5 ,
\end{equation}
and we always assume $0\le\omega\le A\ln s$, so that
\begin{equation}\label{eq:sigmasmall}
 \frac{1+\omega}{s\,r_s}\le\frac{(1+A\ln s)(\ln s)^2}{s}
 =O_A\!\Bigl(\frac{\ln^3s}{s}\Bigr)\xrightarrow[s\to\infty]{}0 .
\end{equation}

\paragraph{Branch conventions.}
For $z\in\C\setminus(-\infty,0]$ we write
$z^{a}:=e^{a\operatorname{Log}z}$ with $\operatorname{Log}$ the
principal logarithm, $|\arg z|<\pi$.  The local variable $\zeta$ carries
the convention
\begin{equation}\label{eq:zetabranch}
 -\pi<\arg\zeta\le\pi .
\end{equation}
The notation $U(a;z):=U(a,1;z)$ ordinarily denotes the principal
Tricomi function, analytic on $\C\setminus(-\infty,0]$.  In the four
compositions $U(a;e^{\pm i\pi/2}\zeta)$ in \eqref{eq:Fmatrix}, however,
it denotes the analytic continuation from the central sector
$|\arg\zeta|<\pi/2$ to the logarithmic cover determined by
\begin{equation}\label{eq:Ubranch}
 \operatorname{ph}(e^{-i\pi/2}\zeta)
 =\arg\zeta-\frac{\pi}{2},
 \qquad
 \operatorname{ph}(e^{i\pi/2}\zeta)
 =\arg\zeta+\frac{\pi}{2},
\end{equation}
without reducing either phase modulo $2\pi$.  Thus, when one of these
phases crosses $\pm\pi$, the corresponding value is continued through
the cut on the indicated sheet rather than replaced by its principal
value.  On this logarithmic cover the standard large-$z$ expansion
$U(a;z)\sim z^{-a}$ holds on closed subsectors of
$-3\pi/2<\operatorname{ph}z<3\pi/2$, with the power $z^{-a}$ formed
using the same lifted phase \cite[\S13.2, \S13.7]{NIST}.  This is an
asymptotic sector, not a claim that the principal Tricomi function is
analytic across its cut.  These lifted conventions follow
\cite[\S2.2]{BDIK2}.

As $\arg\zeta$ varies through either of the internal rays
$\arg\zeta=\pm\pi/2$, the two phases in \eqref{eq:Ubranch} vary
continuously on the cover.  Consequently all four compositions in
\eqref{eq:Fmatrix} are analytic across those two rays.  Boundary values
at $\arg\zeta=\pi$ are understood one-sidedly.

\paragraph{The case $\omega=0$.}
If $\omega=0$ then $\gamma=0$, $\mathcal D(s,0)=1$, and the right-hand
side of \eqref{eq:RHmain} is $0+0+2\ln[\BG(1)^2]=0$, so
\eqref{eq:RHmain} holds with $\mathcal R_A\equiv0$.  From now on we may
and do assume $\omega>0$, so that $\nu\ne0$ and every $\Gamma$-ratio
below is finite and nonzero.

\subsection{The determinant, positivity, and trace-norm
differentiability}\label{sec:gate1}

\begin{lemma}[solvability from positivity]\label{lem:solv}
For every $s>0$ and $\omega\ge0$ the operator $K_s$ is trace class on
$L^2(-1,1)$ with $0\le K_s\le I$ and $\Tr K_s=2s/\pi$; the operator
$I-\gamma K_s=I+(e^{2\omega}-1)K_s$ satisfies
\begin{equation}\label{eq:posdef}
 I-\gamma K_s\;\succeq\;I ,
\end{equation}
hence is boundedly invertible with $\|(I-\gamma K_s)^{-1}\|\le1$, and
\begin{equation}\label{eq:detpos}
 \mathcal D(s,\omega)=\det(I-\gamma K_s)
 =\prod_{j\ge1}\bigl(1+(e^{2\omega}-1)\mu_j\bigr)\;\ge\;1>0 ,
\end{equation}
where $\mu_j\in[0,1]$ are the eigenvalues of $K_s$.
\end{lemma}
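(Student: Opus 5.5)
The plan is to deduce everything from the properties of $K_s$ already established in Lemma~\ref{lem:dict}, combined with the spectral theorem and the Fredholm-determinant product formula for trace-class operators.

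First, trace class and the trace. Lemma~\ref{lem:dict} shows that $K_s$ is unitarily equivalent to $S_c$ with $c=2s/\pi$. Since $S_c$ is positive and trace class with $\Tr S_c=c$ (by Mercer's theorem, as recorded there), the same holds for $K_s$. In particular $\Tr K_s=2s/\pi$; alternatively, Mercer applied directly gives $\int_{-1}^1K_s(\lambda,\lambda)\,d\lambda=2\cdot s/\pi$. The bound $0\le K_s\le I$ is the compression-of-a-projection statement at the end of that lemma.

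Second, positivity of the perturbation. Put $\beta:=e^{2\omega}-1=-\gamma\ge0$. For any $f$,
\[
\langle (I-\gamma K_s)f,f\rangle=\|f\|^2+\beta\langle K_sf,f\rangle\ge\|f\|^2,
\]
because $K_s\ge0$. This is \eqref{eq:posdef}. A self-adjoint operator bounded below by $I$ is boundedly invertible. By the spectral theorem its inverse has spectrum in $(0,1]$, so $\|(I-\gamma K_s)^{-1}\|\le1$.

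Third, the determinant. $K_s$ is a compact self-adjoint trace-class operator, so it has an orthonormal eigenbasis of $L^2(-1,1)$, with eigenvalues $\mu_j\in[0,1]$ and $\sum_j\mu_j<\infty$; any kernel directions are assigned $\mu_j=0$. The Fredholm determinant of $I+\beta K_s$ is then $\prod_j(1+\beta\mu_j)$ (Lidskii, or simply diagonalization), and this product converges absolutely because $\sum\beta\mu_j<\infty$. Every factor is $\ge1$, hence $\mathcal D\ge1>0$, which gives \eqref{eq:detpos}.

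None of these steps is a genuine obstacle. The only point needing care is to invoke trace class, rather than mere compactness, so that the determinant is defined and equals the eigenvalue product. That is supplied by the unitary equivalence with $S_c$ from Lemma~\ref{lem:dict}.
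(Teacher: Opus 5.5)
Your proposal is correct and follows essentially the paper's route. Like the paper, you get $0\le K_s\le I$ from the compression-of-a-projection picture, $I-\gamma K_s\succeq I$ directly from $-\gamma\ge0$, and $\mathcal D$ as the eigenvalue product. The only difference is cosmetic: you import trace class and $\Tr K_s=2s/\pi$ from Lemma~\ref{lem:dict} via $K_s\simeq S_{2s/\pi}$, whereas the paper re-derives them from the Hilbert--Schmidt factorization $P\Pi P=(\Pi P)^*(\Pi P)$, or equivalently from Mercer.
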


\begin{proof}
Under the unitary dilation $(V f)(t)=s^{-1/2}f(t/s)$ from $L^2(-1,1)$ to
$L^2(-s,s)$, the kernel $\sin(s(\lambda-\mu))/(\pi(\lambda-\mu))$ becomes
$\sin(t-t')/(\pi(t-t'))$ on $(-s,s)$, the kernel of
$P_{(-s,s)}\Pi P_{(-s,s)}$ with $\Pi$ the Fourier multiplier with symbol
$\mathbf1_{[-1,1]}$, an orthogonal projection.  Hence $0\le K_s\le I$.
Writing $P\Pi P=(\Pi P)^*(\Pi P)$ with $\Pi P$ Hilbert--Schmidt of
Hilbert--Schmidt norm squared $2s/\pi$, $K_s$ is trace class with
$\Tr K_s=2s/\pi$; equivalently, by Mercer,
$\Tr K_s=\int_{-1}^1K_s(\lambda,\lambda)\,d\lambda=2s/\pi$.
Since $e^{2\omega}-1\ge0$ and $K_s\ge0$, \eqref{eq:posdef} is immediate
and gives $\|(I-\gamma K_s)^{-1}\|\le1$.  The determinant of a
trace-class perturbation of the identity is the product of
$1+(e^{2\omega}-1)\mu_j$ \cite[Ch.~3]{Simon05}; each factor is $\ge1$.
\end{proof}

The next lemma establishes the trace-norm differentiability needed for
the differential identity in Lemma~\ref{lem:diffid}.

\begin{lemma}[trace-norm differentiability and Jacobi's formula]
\label{lem:tracenorm}
Fix $\gamma\le0$.  Then:
\begin{enumerate}[label=\textup{(\roman*)},leftmargin=2.6em]
\item the map $(0,\infty)\ni s\mapsto K_s\in\mathfrak S_1(L^2(-1,1))$ is
differentiable in trace norm, with
$\partial_sK_s$ the integral operator with kernel
$\pi^{-1}\cos\bigl(s(\lambda-\mu)\bigr)$;
\item $s\mapsto\det(I-\gamma K_s)$ is differentiable and
\begin{equation}\label{eq:jacobi}
 \frac{\partial}{\partial s}\ln\det(I-\gamma K_s)
 =-\gamma\Tr\Bigl((I-\gamma K_s)^{-1}\,\partial_sK_s\Bigr).
\end{equation}
\end{enumerate}
\end{lemma}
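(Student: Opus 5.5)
The plan is to reduce both parts to a single fact: the operator-valued map $s\mapsto K_s$ has a trace-norm derivative. Once that is established, the standard calculus for Fredholm determinants gives Jacobi's formula directly.

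For (i), I would factor the kernel through a Hilbert--Schmidt pair rather than estimate the trace norm of a kernel difference directly. Write
\[
K_s(\lambda,\mu)=\frac{1}{2\pi}\int_{-s}^{s}e^{i\xi(\lambda-\mu)}\,d\xi=\frac{s}{2\pi}\int_{-1}^{1}e^{is\eta(\lambda-\mu)}\,d\eta .
\]
This exhibits $K_s=\frac{s}{2\pi}E_s^*E_s$, where $E_s:L^2(-1,1)\to L^2(-1,1)$ has the smooth kernel $E_s(\eta,\mu)=e^{-is\eta\mu}$. Each $E_s$ is Hilbert--Schmidt, with norm bounded by $2$.

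The map $s\mapsto E_s$ is differentiable in Hilbert--Schmidt norm. Its derivative is the operator with kernel $-i\eta\mu\,e^{-is\eta\mu}$, and the difference quotient converges uniformly on the compact square $[-1,1]^2$ by Taylor's theorem with the bound $|\partial_s^2E_s|\le1$. Uniform convergence on a bounded square implies convergence in $L^2([-1,1]^2)$, which is the Hilbert--Schmidt norm.

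Since the product of two Hilbert--Schmidt operators is trace class with $\|AB\|_1\le\|A\|_2\|B\|_2$, the product rule applies in trace norm. Hence $s\mapsto K_s$ is trace-norm differentiable.

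The derivative is identified more simply from the first representation. Differentiating under the integral gives
\[
\partial_sK_s(\lambda,\mu)=\frac{1}{2\pi}\bigl(e^{is(\lambda-\mu)}+e^{-is(\lambda-\mu)}\bigr)=\pi^{-1}\cos\bigl(s(\lambda-\mu)\bigr).
\]
The trace-norm derivative has this kernel because trace-norm convergence implies Hilbert--Schmidt convergence, hence $L^2$ convergence of kernels, and the pointwise derivative agrees with that $L^2$ limit. Alternatively, $\cos(s(\lambda-\mu))=\cos s\lambda\cos s\mu+\sin s\lambda\sin s\mu$ shows directly that the derivative has rank two.

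For (ii), I would use that $T\mapsto\det(I+T)$ is Fréchet differentiable on $\mathfrak S_1$, with derivative at $T$ given by $H\mapsto\det(I+T)\Tr\bigl((I+T)^{-1}H\bigr)$ whenever $I+T$ is invertible \cite[Ch.~3]{Simon05}. Take $T=-\gamma K_s$. Lemma~\ref{lem:solv} gives invertibility for $\gamma\le0$, and also gives $\det(I-\gamma K_s)\ge1$, so the real logarithm is defined and smooth along the path.

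Composing the Fréchet derivative with the trace-norm differentiable path from (i) by the chain rule gives
\[
\partial_s\ln\det(I-\gamma K_s)=\Tr\bigl((I-\gamma K_s)^{-1}(-\gamma\,\partial_sK_s)\bigr),
\]
which is \eqref{eq:jacobi}.

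The only step requiring genuine care is the trace-norm (as opposed to merely Hilbert--Schmidt) differentiability in (i). The Hilbert--Schmidt factorization is exactly what resolves it, so I do not expect a serious obstacle there.
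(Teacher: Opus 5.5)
Your proof is correct. Part (ii) matches the paper's argument: it uses the same Jacobi/Fréchet-derivative formula for Fredholm determinants, applied to the trace-norm differentiable path, with invertibility and $\det\ge1$ supplied by Lemma~\ref{lem:solv}. Part (i) takes a genuinely different route.

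\emph{The paper's route.} It writes the candidate derivative directly as the rank-two family
$Q_t=\pi^{-1}(c_t\otimes c_t+q_t\otimes q_t)$, with $c_t=\cos(t\,\cdot)$ and $q_t=\sin(t\,\cdot)$. It notes that $t\mapsto Q_t$ is trace-norm continuous because $\|a\otimes b\|_{\mathfrak S_1}=\|a\|_2\|b\|_2$. It then integrates: $K_{s+h}-K_s=\int_s^{s+h}Q_t\,dt$ as a Bochner integral in $\mathfrak S_1$, so the difference quotients converge to $Q_s$.

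\emph{Your route.} You factor $K_s=\frac{s}{2\pi}E_s^*E_s$ through the Hilbert--Schmidt operators with kernel $e^{-is\eta\mu}$. You then apply the product rule together with $\|AB\|_1\le\|A\|_2\|B\|_2$.

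\emph{Comparison.} The paper's argument identifies the trace-norm derivative in one stroke. It exhibits the derivative as rank two, which is the form reused in Lemma~\ref{lem:diffid}, and gives trace-norm continuity of the derivative for free. Your factorization is a more general device: it works for any compressed Fourier multiplier, and it yields positivity and trace-class membership at the same time. However, it produces the derivative in the form
$\frac{1}{2\pi}E_s^*E_s+\frac{s}{2\pi}\bigl((\partial_sE_s)^*E_s+E_s^*\partial_sE_s\bigr)$, which is not visibly rank two. That is why you need the extra step identifying its kernel through $L^2$ convergence of the difference-quotient kernels. You supply that step correctly.
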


\begin{proof}
(i) Put $c_t(\lambda)=\cos(t\lambda)$,
$q_t(\lambda)=\sin(t\lambda)$, and
\[
 Q_t:=\pi^{-1}\bigl(c_t\otimes c_t+q_t\otimes q_t\bigr),
 \qquad (a\otimes b)h=a\langle b,h\rangle_{L^2(-1,1)}.
\]
Here the inner product is linear in its second argument.
The kernel of $Q_t$ is
$\pi^{-1}\cos(t(\lambda-\mu))=\partial_tK_t(\lambda,\mu)$.
Since $t\mapsto c_t,q_t$ is continuous in $L^2(-1,1)$ and
$\|a\otimes b\|_{\mathfrak S_1}=\|a\|_2\|b\|_2$, the map
$t\mapsto Q_t$ is continuous in trace norm.  The scalar fundamental
theorem of calculus for the kernels therefore gives the Bochner-integral
identity
\[
 K_{s+h}-K_s=\int_s^{s+h}Q_t\,dt
 \quad\text{in }\mathfrak S_1.
\]
It follows that
$h^{-1}(K_{s+h}-K_s)\to Q_s$ in trace norm, proving (i).

(ii) By (i) the map $s\mapsto I-\gamma K_s$ is $\mathfrak S_1$-valued and
differentiable, and by Lemma~\ref{lem:solv} it is invertible with
uniformly bounded inverse on compact $s$-intervals.  Jacobi's formula
for Fredholm determinants \cite[Thm.~3.6]{Simon05} states that if
$s\mapsto T_s$ is $\mathfrak S_1$-differentiable and $I+T_s$ is
invertible, then $\det(I+T_s)$ is differentiable with
$\partial_s\ln\det(I+T_s)=\Tr\bigl((I+T_s)^{-1}\partial_sT_s\bigr)$.
Applying this with $T_s=-\gamma K_s$ gives \eqref{eq:jacobi}.  The trace
is finite because $(I-\gamma K_s)^{-1}$ is bounded and $\partial_sK_s$
is trace class.
\end{proof}

\subsection{The IIKS formulation}
\label{sec:gate2}

The construction below uses the integrable-operator formalism of Its,
Izergin, Korepin and Slavnov \cite{IIKS}; all sign and endpoint conventions
needed here are derived explicitly.

\begin{definition}[the master Riemann--Hilbert problem]\label{def:RHPY}
Orient $(-1,1)$ from $-1$ to $1$; the $+$ side is then the upper side.
Find $Y=Y(\lambda;s,\gamma)\in\C^{2\times2}$ such that:
\begin{enumerate}[label=\textup{(Y\arabic*)},leftmargin=3.1em]
\item $Y$ is analytic in $\C\setminus[-1,1]$, with square-integrable
boundary values
$Y_\pm(\lambda)=\lim_{\eps\downarrow0}Y(\lambda\pm i\eps)$;
\item on $(-1,1)$,
\begin{equation}\label{eq:Yjump}
 Y_+(\lambda)=Y_-(\lambda)\,G_Y(\lambda),
 \qquad
 G_Y(\lambda)=\begin{pmatrix}
 1-\gamma&\gamma e^{2is\lambda}\\
 -\gamma e^{-2is\lambda}&1+\gamma\end{pmatrix};
\end{equation}
\item as $\lambda\to\pm1$,
\begin{equation}\label{eq:Yendpoint}
 Y(\lambda)=\check Y(\lambda)\Bigl[I+\frac{\gamma}{2\pi i}
 \begin{pmatrix}-1&1\\-1&1\end{pmatrix}
 \ln\Bigl(\frac{\lambda-1}{\lambda+1}\Bigr)\Bigr]e^{-is\lambda\sigma_3},
\end{equation}
with $\check Y$ analytic and invertible near the endpoint and the
principal branch of the logarithm;
\item $Y(\lambda)=I+Y_1\lambda^{-1}+O(\lambda^{-2})$ as
$\lambda\to\infty$.
\end{enumerate}
\end{definition}

Definition~\ref{def:RHPY} is \cite[RHP~2.1]{BDIK2} with $\gamma$ now
allowed to be negative.

\begin{lemma}[integrable structure]\label{lem:fg}
Put
\begin{equation}\label{eq:fg}
 f(\lambda)=\frac{1}{2\pi i}
 \begin{pmatrix}\gamma e^{is\lambda}\\ \gamma e^{-is\lambda}\end{pmatrix},
 \qquad
 g(\lambda)=\begin{pmatrix}-e^{-is\lambda}\\ e^{is\lambda}\end{pmatrix}.
\end{equation}
Then, for all $\lambda,\mu\in(-1,1)$,
\begin{equation}\label{eq:fgprops}
 f^{T}(\lambda)g(\lambda)=0,
 \qquad
 \frac{f^{T}(\lambda)g(\mu)}{\lambda-\mu}=-\gamma K_s(\lambda,\mu),
 \qquad
 I+2\pi i\,f(\lambda)g^{T}(\lambda)=G_Y(\lambda).
\end{equation}
\end{lemma}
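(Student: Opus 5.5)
The plan is to verify all three identities in \eqref{eq:fgprops} by direct substitution of \eqref{eq:fg}. No analytic input is needed beyond the definition of $K_s$ in Lemma~\ref{lem:dict} and the jump matrix \eqref{eq:Yjump}. The only point needing care is the diagonal $\lambda=\mu$ in the second identity, which I will treat by continuity.

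\emph{First identity.} Pairing the components gives
\[
f^{T}(\lambda)g(\lambda)=\frac{\gamma}{2\pi i}\bigl(-e^{is\lambda}e^{-is\lambda}+e^{-is\lambda}e^{is\lambda}\bigr)=0 .
\]

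\emph{Second identity.} For $\lambda\ne\mu$,
\[
f^{T}(\lambda)g(\mu)=\frac{\gamma}{2\pi i}\bigl(-e^{is(\lambda-\mu)}+e^{-is(\lambda-\mu)}\bigr)
=\frac{\gamma}{2\pi i}\bigl(-2i\sin(s(\lambda-\mu))\bigr)
=-\frac{\gamma}{\pi}\sin\bigl(s(\lambda-\mu)\bigr).
\]
Dividing by $\lambda-\mu$ gives $-\gamma K_s(\lambda,\mu)$. On the diagonal, the quotient $f^{T}(\lambda)g(\mu)/(\lambda-\mu)$ is read as its continuous extension. Its limit as $\mu\to\lambda$ is $-\gamma s/\pi$, which agrees with $-\gamma K_s(\lambda,\lambda)=-\gamma s/\pi$ from Lemma~\ref{lem:dict}. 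This is the step I expect to need a remark rather than a calculation, since the numerator vanishes there by the first identity.

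\emph{Third identity.} The factors $2\pi i$ cancel, and the rank-one product is
\[
2\pi i\,f(\lambda)g^{T}(\lambda)
=\gamma\begin{pmatrix}e^{is\lambda}\\ e^{-is\lambda}\end{pmatrix}
\begin{pmatrix}-e^{-is\lambda}& e^{is\lambda}\end{pmatrix}
=\gamma\begin{pmatrix}-1&e^{2is\lambda}\\ -e^{-2is\lambda}&1\end{pmatrix}.
\]
Adding $I$ reproduces $G_Y(\lambda)$ of \eqref{eq:Yjump} entry by entry.

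Finally, I will note that none of these computations uses the sign of $\gamma$. They therefore hold verbatim for $\gamma\le0$, which is the case needed for Definition~\ref{def:RHPY}.
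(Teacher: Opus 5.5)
Your proposal is correct and is essentially the paper's own argument: direct substitution giving $f^{T}(\lambda)g(\mu)=-\frac{\gamma}{\pi}\sin(s(\lambda-\mu))$, then specialization to $\mu=\lambda$, division by $\lambda-\mu$, and the explicit rank-one product $2\pi i\,fg^{T}$ for the jump matrix. Your continuity remark on the diagonal $\lambda=\mu$ is a small addition that the paper leaves implicit, and it is consistent with $K_s(\lambda,\lambda)=s/\pi$ from Lemma~\ref{lem:dict}.
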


\begin{proof}
Directly,
\[
 f^{T}(\lambda)g(\mu)
 =\frac{\gamma}{2\pi i}\Bigl[-e^{is(\lambda-\mu)}+e^{-is(\lambda-\mu)}\Bigr]
 =\frac{\gamma}{2\pi i}\bigl(-2i\sin(s(\lambda-\mu))\bigr)
 =-\frac{\gamma}{\pi}\sin\bigl(s(\lambda-\mu)\bigr).
\]
Setting $\mu=\lambda$ gives $f^{T}g=0$; dividing by $\lambda-\mu$ gives
$-\gamma\sin(s(\lambda-\mu))/(\pi(\lambda-\mu))=-\gamma K_s(\lambda,\mu)$.
For the third identity,
\[
 2\pi i\,fg^{T}
 =\gamma\begin{pmatrix}e^{is\lambda}\\e^{-is\lambda}\end{pmatrix}
 \begin{pmatrix}-e^{-is\lambda}&e^{is\lambda}\end{pmatrix}
 =\gamma\begin{pmatrix}-1&e^{2is\lambda}\\-e^{-2is\lambda}&1\end{pmatrix},
\]
and adding $I$ gives $G_Y$.
\end{proof}

\begin{remark}[the sign of the integrable kernel]\label{rem:signkernel}
The middle identity in \eqref{eq:fgprops} says that the IIKS operator
attached to $(f,g)$ --- the operator $\mathcal K$ with kernel
$f^{T}(\lambda)g(\mu)/(\lambda-\mu)$ --- is
\[
 \mathcal K=-\gamma K_s ,\qquad\text{so}\qquad I+\mathcal K=I-\gamma K_s .
\]
It is $I+\mathcal K$, not $I-\mathcal K$, that must be inverted, and
this forces the $+$ sign in \eqref{eq:IIKSY} below.  With this
convention the jump $I+2\pi ifg^{T}$ and the resolvent
$(I-\gamma K_s)^{-1}$ are consistent.
\end{remark}

\begin{lemma}[solution of the master problem]\label{lem:RHPsolv}
Let $\gamma\le0$, let $F:=(I-\gamma K_s)^{-1}f$ (applied to each of the
two components of $f$; the inverse exists by Lemma~\ref{lem:solv}), and
set
\begin{equation}\label{eq:IIKSY}
 Y(\lambda):=I+\int_{-1}^1\frac{F(\mu)g^{T}(\mu)}{\mu-\lambda}\,d\mu,
 \qquad\lambda\in\C\setminus[-1,1].
\end{equation}
Then:
\begin{enumerate}[label=\textup{(\alph*)},leftmargin=2.6em]
\item $Y$ satisfies \textup{(Y1)}, and its boundary values obey
\begin{equation}\label{eq:Fis}
 Y_+(\lambda)f(\lambda)=Y_-(\lambda)f(\lambda)=F(\lambda),
 \qquad\lambda\in(-1,1);
\end{equation}
\item $Y$ satisfies \textup{(Y2)};
\item $Y$ satisfies \textup{(Y3)} and \textup{(Y4)}, with
\begin{equation}\label{eq:Y1formula}
 Y_1=-\int_{-1}^1F(\mu)g^{T}(\mu)\,d\mu ;
\end{equation}
\item $\det Y\equiv1$, hence $\Tr Y_1=0$;
\item $Y$ is the unique solution of Definition~\ref{def:RHPY}.
\end{enumerate}
\end{lemma}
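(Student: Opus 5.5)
The argument is the standard IIKS verification, with the sign of $\gamma$ entering only through invertibility of $I-\gamma K_s$ (Lemma~\ref{lem:solv}). For (a), note that $F=(I-\gamma K_s)^{-1}f$ has entries in $L^2(-1,1)$. In fact $F$ is smooth on $[-1,1]$: from $F=f+\gamma K_sF$ and the entire kernel, $F$ extends to an entire function. Hence $Y$ is a Cauchy transform of the smooth matrix density $Fg^T$ on $[-1,1]$, analytic off $[-1,1]$, with $L^2$ boundary values by Plemelj--Sokhotski (Y1). To get \eqref{eq:Fis}, compute
\[
Y_\pm(\lambda)f(\lambda)=f(\lambda)+\mathrm{p.v.}\int\frac{F(\mu)g^T(\mu)f(\lambda)}{\mu-\lambda}\,d\mu\pm\pi iF(\lambda)g^T(\lambda)f(\lambda).
\]
The last term vanishes since $g^Tf=f^Tg=0$ by \eqref{eq:fgprops}. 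The principal value is a genuine integral, because $g^T(\mu)f(\lambda)$ vanishes at $\mu=\lambda$. By the middle identity of \eqref{eq:fgprops}, $g^T(\mu)f(\lambda)/(\mu-\lambda)=\gamma K_s(\lambda,\mu)$, so the expression equals $f+\gamma K_sF=F$.

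For (b), Plemelj gives $Y_+-Y_-=2\pi iFg^T$. Using \eqref{eq:Fis},
\[
Y_-G_Y=Y_-+2\pi i\,(Y_-f)g^T=Y_-+2\pi iFg^T=Y_+,
\]
where the last identity of \eqref{eq:fgprops} supplies $G_Y=I+2\pi ifg^T$. For (Y4), expand $1/(\mu-\lambda)=-\lambda^{-1}-\mu\lambda^{-2}-\cdots$ under the integral, which yields \eqref{eq:Y1formula}.

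For (Y3), which is the only step needing real care, the density $F(\mu)g^T(\mu)$ is smooth up to $\pm1$. Its Cauchy transform therefore has the form
\[
\text{analytic}+\frac{1}{2\pi i}\,h(\pm1)\ln(\lambda\mp1),
\]
with $h=2\pi iFg^T$ up to sign conventions. I would match this to \eqref{eq:Yendpoint} as in \cite{BDIK2}. First write $F(\mu)g^T(\mu)=\bigl(F(\mu)g^T(\mu)e^{is\mu\sigma_3}\bigr)e^{-is\mu\sigma_3}$. Then check that the constant matrix $\frac{\gamma}{2\pi i}\begin{pmatrix}-1&1\\-1&1\end{pmatrix}$ is exactly $e^{is\lambda\sigma_3}fg^Te^{-is\lambda\sigma_3}$ evaluated with the conjugation absorbed. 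Finally, factor $Y=\check Y[I+N\ln\frac{\lambda-1}{\lambda+1}]e^{-is\lambda\sigma_3}$ with $N$ nilpotent ($N^2=0$). Nilpotency makes the bracket invertible with inverse $I-N\ln(\cdot)$. The remaining task is to verify that $\check Y$ has no jump near the endpoint, which follows because the jump $G_Y$ conjugated by $e^{-is\lambda\sigma_3}$ is $I+2\pi iN$ and matches the monodromy of the logarithm. Removability of the isolated singularity of $\check Y$ then follows from the at-most-logarithmic growth. I expect this bookkeeping of conjugations to be the main obstacle, but it is sign-independent algebra.

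For (d) and (e), I would use the standard Liouville arguments. $\det G_Y=(1-\gamma)(1+\gamma)+\gamma^2=1$, so $\det Y$ has no jump on $(-1,1)$. It has at most logarithmic singularities at $\pm1$, hence removable ones, and it tends to $1$ at infinity; thus $\det Y\equiv1$. Expanding at infinity gives $\det(I+Y_1/\lambda+\cdots)=1+\Tr Y_1/\lambda+\cdots$, so $\Tr Y_1=0$. For uniqueness, if $\tilde Y$ is another solution, then $\tilde YY^{-1}$ has no jump, since both share $G_Y$. By (Y3) its endpoint singularities cancel: the common factor $[I+N\ln(\cdot)]e^{-is\lambda\sigma_3}$ drops out, leaving $\check{\tilde Y}\check Y^{-1}$, which is analytic. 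It tends to $I$ at infinity, hence equals $I$ by Liouville.
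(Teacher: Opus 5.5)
Your proposal is correct and is essentially the paper's proof: \eqref{eq:Fis} from the vanishing of $g^{T}(\mu)f(\lambda)$ at $\mu=\lambda$; (Y2) from Plemelj together with $F=Y_-f$; (Y3) by showing that $\check Y=Ye^{is\lambda\sigma_3}\bigl[I+\frac{\gamma}{2\pi i}N\ln\frac{\lambda-1}{\lambda+1}\bigr]^{-1}$, with $N=\bigl(\begin{smallmatrix}-1&1\\-1&1\end{smallmatrix}\bigr)$, has no jump (since $e^{-is\lambda\sigma_3}G_Ye^{is\lambda\sigma_3}=I+\gamma N$ matches the $2\pi i$ monodromy of the logarithm) and hence only a removable, logarithmically bounded singularity; and Liouville for (d)--(e). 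Three small repairs are needed: the constant matrix is $e^{-is\lambda\sigma_3}fg^{T}e^{is\lambda\sigma_3}=\frac{\gamma}{2\pi i}N$, since your displayed conjugation runs the other way and is not constant; for an entire density $h$ the Cauchy transform is an entire function plus $\frac{h(\lambda)}{2\pi i}\ln\frac{\lambda-1}{\lambda+1}$, so the coefficient of the logarithm is $h(\lambda)$ rather than $h(\pm1)$, which is harmless because you only use the logarithmic growth; and the invertibility of $\check Y$, which (Y3) demands and your uniqueness argument uses, should be read off from $\det Y\equiv1$, as the paper does in (d).
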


\begin{proof}
Write $C h(\lambda):=\frac1{2\pi i}\int_{-1}^1\frac{h(\mu)}{\mu-\lambda}
\,d\mu$, so that $Y=I+2\pi i\,C(Fg^{T})$, and recall the Plemelj
relations for the interval oriented from $-1$ to $1$ with $+$ the upper
side:
\begin{equation}\label{eq:plemelj}
 (Ch)_+-(Ch)_-=h,
 \qquad
 (Ch)_++(Ch)_-=\text{(principal value)} .
\end{equation}
Only the first is used.

(a) $F\in L^2(-1,1)^2$ because $f$ is bounded and
$\|(I-\gamma K_s)^{-1}\|\le1$; hence $Fg^{T}\in L^2$ and the Cauchy
transform has $L^2$ boundary values, giving (Y1).  For \eqref{eq:Fis},
fix $\lambda\in(-1,1)$ and compute the scalar
$g^{T}(\mu)f(\lambda)=f^{T}(\lambda)g(\mu)$, which by
\eqref{eq:fgprops} equals $-\gamma(\lambda-\mu)K_s(\lambda,\mu)$ and in
particular \emph{vanishes at $\mu=\lambda$}.  Therefore the integrand of
\[
 Y(\lambda')f(\lambda)
 =f(\lambda)+\int_{-1}^1F(\mu)\,
 \frac{g^{T}(\mu)f(\lambda)}{\mu-\lambda'}\,d\mu
\]
has, at $\lambda'=\lambda$, a removable singularity; consequently the
two boundary values coincide and equal the ordinary integral.  Using
$g^{T}(\mu)f(\lambda)/(\mu-\lambda)=+\gamma K_s(\lambda,\mu)$,
\[
 Y_\pm(\lambda)f(\lambda)
 =f(\lambda)+\gamma\int_{-1}^1K_s(\lambda,\mu)F(\mu)\,d\mu
 =f(\lambda)+\gamma(K_sF)(\lambda)
 =\bigl[f+\gamma K_sF\bigr](\lambda).
\]
Since $(I-\gamma K_s)F=f$ we have $f+\gamma K_sF=F$, which is
\eqref{eq:Fis}.

(b) By \eqref{eq:plemelj}, $Y_+-Y_-=2\pi i\,Fg^{T}$.  By
\eqref{eq:Fis}, $F=Y_-f$, so
\[
 Y_+-Y_-=2\pi i\,(Y_-f)g^{T}=Y_-\bigl(2\pi i\,fg^{T}\bigr),
\]
that is $Y_+=Y_-\bigl(I+2\pi ifg^{T}\bigr)=Y_-G_Y$ by
\eqref{eq:fgprops}.  This derives (Y2) from \eqref{eq:IIKSY} without any
appeal to a general correspondence.

(c) For $|\lambda|>1$ expand
$\frac1{\mu-\lambda}=-\frac1\lambda-\frac\mu{\lambda^2}-\cdots$
uniformly for $\mu\in[-1,1]$:
\[
 Y(\lambda)=I-\frac1\lambda\int_{-1}^1Fg^{T}\,d\mu
 +O(\lambda^{-2}),
\]
which is (Y4) with \eqref{eq:Y1formula}.

We next prove (Y3) in the form stated above.  The
integral equation $(I-\gamma K_s)F=f$ gives
\[
 F(z)=f(z)+\gamma\int_{-1}^1K_s(z,\mu)F(\mu)\,d\mu .
\]
The right-hand side is entire in $z$, because $K_s(z,\mu)$ and $f(z)$
are entire and $F\in L^2(-1,1)^2$.  We henceforth take this entire
representative of $F$.  Thus $F(z)g^T(z)$ is analytic near both
endpoints, and its Cauchy transform is
$O(1+|\ln|\lambda\mp1||)$ there.

Set
\[
 N:=\begin{pmatrix}-1&1\\-1&1\end{pmatrix},
 \qquad N^2=0,
 \qquad
 X(\lambda):=Y(\lambda)e^{is\lambda\sigma_3}.
\]
A direct conjugation of \eqref{eq:Yjump} gives
\[
 X_+=X_-J_0,\qquad
 J_0:=e^{-is\lambda\sigma_3}G_Y(\lambda)
       e^{is\lambda\sigma_3}
     =I+\gamma N .
\]
Let
\[
 \ell(\lambda):=
 \operatorname{Log}\!\left(\frac{\lambda-1}{\lambda+1}\right),
 \qquad
 L(\lambda):=I+\frac{\gamma}{2\pi i}N\ell(\lambda),
\]
where the logarithm is principal.  On the interval oriented from
$-1$ to $1$,
\[
 \ell_+-\ell_-=2\pi i .
\]
Since $N^2=0$, this implies
\[
 L_+=L_-(I+\gamma N)=L_-J_0,
 \qquad
 L^{-1}=I-\frac{\gamma}{2\pi i}N\ell .
\]
Therefore
\[
 \check Y(\lambda):=X(\lambda)L(\lambda)^{-1}
\]
has no jump across $(-1,1)$.  Near either endpoint it is
$O((1+|\ln|\lambda\mp1||)^2)$, so its isolated singularity there is
removable.  Consequently
\[
 Y(\lambda)=\check Y(\lambda)
 \left[I+\frac{\gamma}{2\pi i}N
 \ln\!\left(\frac{\lambda-1}{\lambda+1}\right)\right]
 e^{-is\lambda\sigma_3},
\]
which is exactly \eqref{eq:Yendpoint}.  The invertibility of
$\check Y$ follows in part (d).

(d) Since
$\det G_Y=(1-\gamma)(1+\gamma)+\gamma^2=1$, the determinant of $Y$
has no jump.  The factorization in (c), together with
$\det L=1$ and $\det e^{-is\lambda\sigma_3}=1$, shows that $\det Y$
extends analytically through both endpoints.  It is therefore entire,
and (Y4) gives $\det Y\to1$ at infinity.  Liouville's theorem yields
$\det Y\equiv1$.  Hence $\det\check Y\equiv1$ locally at each endpoint,
which proves the invertibility required in (Y3).  Expanding at infinity
also gives
\[
 \det Y=1+(\Tr Y_1)\lambda^{-1}+O(\lambda^{-2}),
 \qquad \Tr Y_1=0 .
\]

(e) If $\widetilde Y$ is another solution, write the endpoint
factorizations from (c) as
\[
 Y=\check Y\,L e^{-is\lambda\sigma_3},
 \qquad
 \widetilde Y=\widetilde{\check Y}\,L e^{-is\lambda\sigma_3}.
\]
Then $\widetilde YY^{-1}$ has no jump and equals
$\widetilde{\check Y}\check Y^{-1}$ near either endpoint, so it extends
analytically there.  It tends to $I$ at infinity, and hence is
identically $I$ by Liouville's theorem.
\end{proof}

\begin{lemma}[the small-$\gamma$ calibration]\label{lem:smallgamma}
As $\gamma\to0$ with $s$ fixed,
\begin{equation}\label{eq:smallgamma}
 (Y_1)_{11}=-\frac{i\gamma}\pi+O(\gamma^2),
 \qquad\text{hence}\qquad
 -2i(Y_1)_{11}=-\frac{2\gamma}\pi+O(\gamma^2).
\end{equation}
This agrees with the determinant:
\begin{equation}\label{eq:smallgammadet}
 \ln\det(I-\gamma K_s)=-\gamma\Tr K_s+O(\gamma^2)
 =-\frac{2\gamma s}\pi+O(\gamma^2),
 \qquad
 \partial_s\ln\det(I-\gamma K_s)=-\frac{2\gamma}\pi+O(\gamma^2),
\end{equation}
and therefore \emph{confirms} the differential identity
\eqref{eq:diffid} at first order.
\end{lemma}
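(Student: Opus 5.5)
We expand the IIKS solution \eqref{eq:IIKSY} to first order in $\gamma$ and compare with the trace of $K_s$. Since $\|(I-\gamma K_s)^{-1}\|\le 1$ for $\gamma\le0$, and more generally $(I-\gamma K_s)^{-1}=I+O(\gamma)$ in operator norm for $|\gamma|$ small (Neumann series, $\|K_s\|\le1$), we have $F=(I-\gamma K_s)^{-1}f=f+O(\gamma^2)$ in $L^2(-1,1)^2$, because $f$ itself is $O(\gamma)$ by \eqref{eq:fg}. Inserting this into \eqref{eq:Y1formula} gives
\[
 Y_1=-\int_{-1}^1 f(\mu)g^{T}(\mu)\,d\mu+O(\gamma^2),
\]
with the error controlled by Cauchy--Schwarz, since $g$ is bounded on $[-1,1]$ uniformly in $\gamma$.

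The $(1,1)$ entry is then explicit. By the computation in Lemma~\ref{lem:fg}, $2\pi i\,f g^{T}$ has $(1,1)$ entry $-\gamma$, so $(fg^T)_{11}=-\gamma/(2\pi i)$. Hence
\[
 (Y_1)_{11}=-\int_{-1}^1\Bigl(-\frac{\gamma}{2\pi i}\Bigr)d\mu+O(\gamma^2)=\frac{2\gamma}{2\pi i}+O(\gamma^2)=-\frac{i\gamma}{\pi}+O(\gamma^2).
\]
Multiplying by $-2i$ gives $-2i(Y_1)_{11}=-2\gamma/\pi+O(\gamma^2)$, which is \eqref{eq:smallgamma}.

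For \eqref{eq:smallgammadet}, we use $\ln\det(I-\gamma K_s)=\sum_j\ln(1-\gamma\mu_j)$ together with $|\ln(1-x)+x|\le Cx^2$ for $|x|\le\frac12$. This gives $\ln\det(I-\gamma K_s)=-\gamma\Tr K_s+O(\gamma^2\Tr K_s^2)$, and Lemma~\ref{lem:solv} supplies $\Tr K_s=2s/\pi$. For the $s$-derivative, we do not differentiate an $O$-term. Instead we apply Jacobi's formula \eqref{eq:jacobi}:
\[
 \partial_s\ln\det(I-\gamma K_s)=-\gamma\Tr\bigl((I-\gamma K_s)^{-1}\partial_sK_s\bigr)=-\gamma\Tr\partial_sK_s+O(\gamma^2),
\]
and by Lemma~\ref{lem:tracenorm}(i), $\Tr\partial_sK_s=\int_{-1}^1\pi^{-1}\,d\lambda=2/\pi$. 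The two first-order coefficients therefore coincide, which confirms \eqref{eq:diffid} at first order.

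The only delicate point is uniformity of the $O(\gamma^2)$ terms. These are uniform because the Neumann series is uniform for fixed $s$ and because $\partial_sK_s$ is a rank-two operator with bounded trace norm.
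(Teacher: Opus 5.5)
Your proposal is correct and follows the paper's proof: expand $F=f+O(\gamma^2)$ in $L^2$, insert it into \eqref{eq:Y1formula} to obtain $(Y_1)_{11}=-i\gamma/\pi+O(\gamma^2)$, and expand $\ln\det(I-\gamma K_s)=-\gamma\Tr K_s+O(\gamma^2)$ using $\Tr K_s=2s/\pi$. For the $s$-derivative you apply Jacobi's formula \eqref{eq:jacobi} together with $\Tr\partial_sK_s=2/\pi$; this is a small improvement, because the paper's proof leaves the derivative statement implicit and never justifies differentiating the $O(\gamma^2)$ remainder in $s$.
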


\begin{proof}
Since $f=O(\gamma)$ and $\gamma K_s=O(\gamma)$ in operator norm,
$F=(I-\gamma K_s)^{-1}f=f+O(\gamma^2)$ in $L^2$.  Hence by
\eqref{eq:Y1formula},
\begin{align*}
 (Y_1)_{11}&=-\int_{-1}^1f_1(\mu)g_1(\mu)\,d\mu+O(\gamma^2)
 =-\int_{-1}^1\frac{\gamma e^{is\mu}}{2\pi i}
 \bigl(-e^{-is\mu}\bigr)\,d\mu+O(\gamma^2)\\
 &=\frac{\gamma}{2\pi i}\cdot2+O(\gamma^2)
 =-\frac{i\gamma}\pi+O(\gamma^2),
\end{align*}
using $1/i=-i$.  Then $-2i(Y_1)_{11}=-2i(-i\gamma/\pi)=-2\gamma/\pi$.
For \eqref{eq:smallgammadet}, expand $\ln\det(I-\gamma K_s)
=\Tr\ln(I-\gamma K_s)=-\gamma\Tr K_s+O(\gamma^2)$ and use
$\Tr K_s=2s/\pi$ (Lemma~\ref{lem:solv}).
\end{proof}

\begin{remark}[calibration of the sign]\label{rem:calibration}
Lemma~\ref{lem:smallgamma} fixes the sign in \eqref{eq:IIKSY}.  The two
sides of \eqref{eq:diffid} are computed from different objects: the left
from the operator $K_s$, and the right from the solution of a
Riemann--Hilbert problem.  With the opposite sign one would obtain
$(Y_1)_{11}=+i\gamma/\pi$ and hence $-2i(Y_1)_{11}=+2\gamma/\pi$,
contradicting \eqref{eq:smallgammadet}.  Independently,
Lemma~\ref{lem:Y1} gives
$(Y_1)_{11}=-2\nu+O(s^{-1})=-2iv/\pi+O(s^{-1})$; for small $\gamma$,
$v=-\tfrac12\ln(1-\gamma)=\tfrac\gamma2+O(\gamma^2)$, and therefore
$-2iv/\pi=-i\gamma/\pi+O(\gamma^2)$.  Thus the two computations of
$(Y_1)_{11}$ agree and both exclude the opposite sign convention.
\end{remark}

\subsection{Factorization, lens geometry, and contour orientations}
\label{sec:gate3}

\begin{lemma}[exact factorization of $G_Y$]\label{lem:factorization}
For every $\lambda\in(-1,1)$,
\begin{equation}\label{eq:factorization}
 G_Y(\lambda)
 =\underbrace{\begin{pmatrix}1&0\\
   -\gamma e^{2s(\kappa-i\lambda)}&1\end{pmatrix}}_{=:S_L(\lambda)}
 \;e^{-2\kappa s\sigma_3}\;
 \underbrace{\begin{pmatrix}1&\gamma e^{2s(\kappa+i\lambda)}\\
   0&1\end{pmatrix}}_{=:S_U(\lambda)} ,
\end{equation}
with $e^{-2\kappa s\sigma_3}=e^{-2v\sigma_3}$.  Moreover, in the signed
regime $v=-\omega\le0$,
\begin{equation}\label{eq:lenscoef}
 \gamma e^{2v}=e^{2v}-1=e^{-2\omega}-1,
 \qquad\bigl|\gamma e^{2v}\bigr|\le1 .
\end{equation}
\end{lemma}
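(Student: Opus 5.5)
The statement is a purely algebraic identity between $2\times2$ matrices, followed by an elementary scalar inequality, so the plan is a direct multiplication.

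Set $a:=-\gamma e^{2s(\kappa-i\lambda)}$ and $b:=\gamma e^{2s(\kappa+i\lambda)}$, and write $e^{-2\kappa s\sigma_3}=\operatorname{diag}(e^{-2\kappa s},e^{2\kappa s})$. First multiply the diagonal factor on the right by $S_U$:
\[
e^{-2\kappa s\sigma_3}S_U=\begin{pmatrix}e^{-2\kappa s}&be^{-2\kappa s}\\0&e^{2\kappa s}\end{pmatrix}.
\]
Then multiply on the left by $S_L$, which gives
\[
\begin{pmatrix}e^{-2\kappa s}&be^{-2\kappa s}\\ae^{-2\kappa s}&abe^{-2\kappa s}+e^{2\kappa s}\end{pmatrix}.
\]

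Next, check the four entries against \eqref{eq:Yjump}, using $2\kappa s=2v$ and $e^{-2v}=1-\gamma$ from \eqref{eq:params}.
\begin{itemize}
\item The $(1,1)$ entry is $e^{-2v}=1-\gamma$.
\item The $(1,2)$ entry is $\gamma e^{2i s\lambda}$, because the factors $e^{\pm2\kappa s}$ cancel.
\item The $(2,1)$ entry is $-\gamma e^{-2is\lambda}$, for the same reason.
\item The $(2,2)$ entry is $-\gamma^2e^{2v}+e^{2v}=e^{2v}(1-\gamma^2)$. Since $e^{2v}=(1-\gamma)^{-1}$, this equals $1+\gamma$.
\end{itemize}
This reproduces $G_Y$ exactly, for every complex $\lambda$ and in particular on $(-1,1)$. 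No sign restriction on $\gamma$ is used, only $\gamma<1$, so that $v$ is defined.

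For \eqref{eq:lenscoef}, compute $\gamma e^{2v}=(1-e^{-2v})e^{2v}=e^{2v}-1$. Substituting $v=-\omega$ gives $e^{-2\omega}-1\in(-1,0]$ for $\omega\ge0$, hence its modulus is at most $1$.

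There is no genuine obstacle. The only place needing care is the $(2,2)$ entry, where one must use the defining relation $1-\gamma=e^{-2v}$ rather than treat $\kappa$ as an independent parameter.
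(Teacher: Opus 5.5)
Your proposal is correct and is essentially the paper's proof: a direct multiplication of the three factors, followed by the same use of $e^{-2v}=1-\gamma$ to identify the $(2,2)$ entry as $1+\gamma$, and the same one-line computation $\gamma e^{2v}=e^{2v}-1\in(-1,0]$ for $v\le0$. The only differences are cosmetic: you associate the product as $S_L(e^{-2\kappa s\sigma_3}S_U)$ where the paper computes $(S_Le^{-2v\sigma_3})S_U$, and your added observation that the identity holds for all complex $\lambda$ (needing only $\gamma<1$) is correct and consistent with the paper's later treatment of $S_U,S_L$ as entire functions in the lens construction.
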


\begin{proof}
Since $2s\kappa=2v$, the middle factor is
$\operatorname{diag}(e^{-2v},e^{2v})$.  Multiplying,
\[
 S_L\,e^{-2v\sigma_3}
 =\begin{pmatrix}e^{-2v}&0\\-\gamma e^{-2is\lambda}&e^{2v}\end{pmatrix},
 \qquad
 \bigl(S_Le^{-2v\sigma_3}\bigr)S_U
 =\begin{pmatrix}e^{-2v}&\gamma e^{2is\lambda}\\
 -\gamma e^{-2is\lambda}&e^{2v}(1-\gamma^2)\end{pmatrix}.
\]
Now $e^{-2v}=1-\gamma$ and $1-\gamma^2=(1-\gamma)(1+\gamma)
=e^{-2v}(1+\gamma)$, so $e^{2v}(1-\gamma^2)=1+\gamma$; the off-diagonal
entries already agree with \eqref{eq:Yjump}.  Finally
$\gamma e^{2v}=(1-e^{-2v})e^{2v}=e^{2v}-1\in(-1,0]$ for $v\le0$.
\end{proof}

\paragraph{Compatibility of the lens with the model.}
The local variable at $\lambda=1$ will be $\zeta=2s(\lambda-1)$ with
$2s>0$, so
\begin{equation}\label{eq:argzeta}
 \arg\zeta=\arg(\lambda-1)\qquad\text{exactly.}
\end{equation}
The model of \S\ref{sec:model} has jumps precisely on the three rays
$\arg\zeta\in\{\tfrac\pi2,-\tfrac\pi2,\pi\}$.  Compatibility with these
rays requires the lens contours to leave $\lambda=1$ vertically, while
the interval approaches $\lambda=1$ along
$\arg(\lambda-1)=\pi$.  If instead the lens left the endpoint at
$\arg(\lambda-1)=\pm(\pi-\theta_0)$ with $\theta_0\ne\pi/2$, the ratio
$R$ of \S\ref{sec:gate10} would have jumps on
$\arg\zeta=\pm(\pi-\theta_0)$, where the parametrix has none, and no
jumps on $\arg\zeta=\pm\pi/2$, where the parametrix does.  It would
therefore not be analytic inside the endpoint disks.  We use the
following compatible geometry.

\begin{definition}[contours and orientations]\label{def:contours}
Fix $h:=\tfrac12$.  Let
\[
 \Sigma_{\mathrm{up}}
 :=[-1,\,-1+ih]\cup[-1+ih,\,1+ih]\cup[1+ih,\,1],
 \qquad
 \Sigma_{\mathrm{dn}}:=\overline{\Sigma_{\mathrm{up}}},
\]
each oriented \emph{from $-1$ to $1$}, and let
$\Omega_{\mathrm{up}}:=(-1,1)\times(0,h)$ and
$\Omega_{\mathrm{dn}}:=\overline{\Omega_{\mathrm{up}}}$ be the open
regions they bound together with $(-1,1)$.  Set
\[
 \Sigma_T:=[-1,1]\cup\Sigma_{\mathrm{up}}\cup\Sigma_{\mathrm{dn}},
\]
with $(-1,1)$ oriented from $-1$ to $1$.  On every component the $+$
side is the one on the left of the direction of travel.  The endpoint
disks are $D_{\pm1}:=\{|\lambda\mp1|<r_s\}$ with $r_s$ as in
\eqref{eq:rs}; their boundary circles are oriented \emph{clockwise}.
\end{definition}

\begin{lemma}[the local geometry matches the model]\label{lem:geometry}
For $s\ge5$, with Definition~\ref{def:contours} and
$\zeta=2s(\lambda-1)$:
\begin{enumerate}[label=\textup{(\roman*)},leftmargin=2.6em]
\item $\Sigma_{\mathrm{up}}\cap D_1=\{1+iy:0<y<r_s\}$, whose image is
the ray $\arg\zeta=\tfrac\pi2$, traversed \emph{towards} $\zeta=0$;
\item $\Sigma_{\mathrm{dn}}\cap D_1=\{1-iy:0<y<r_s\}$, whose image is
the ray $\arg\zeta=-\tfrac\pi2$, traversed \emph{towards} $\zeta=0$;
\item $(1-r_s,1)$ has image the ray $\arg\zeta=\pi$;
\item $\Omega_{\mathrm{up}}\cap D_1$ has image
$\{\tfrac\pi2<\arg\zeta<\pi\}$ and
$\Omega_{\mathrm{dn}}\cap D_1$ has image
$\{-\pi<\arg\zeta<-\tfrac\pi2\}$, while $D_1$ outside both lenses has
image $\{|\arg\zeta|<\tfrac\pi2\}$;
\item on
$(\Sigma_{\mathrm{up}}\cup\Sigma_{\mathrm{dn}})\setminus(D_1\cup D_{-1})$
one has $|\Im\lambda|\ge r_s$;
\item all junctions of $\Sigma_T$ with $\partial D_{\pm1}$ are
orthogonal, all corners of $\Sigma_{\mathrm{up}},\Sigma_{\mathrm{dn}}$
are right angles, the total length of $\Sigma_T$ is at most $10$, and
$\operatorname{dist}(\partial D_1,\partial D_{-1})\ge1$.
\end{enumerate}
\end{lemma}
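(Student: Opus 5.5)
The plan is to verify the six items one at a time by direct computation with the explicit contours of Definition~\ref{def:contours}. The only analytic input is the affine map $\zeta=2s(\lambda-1)$, which by \eqref{eq:argzeta} preserves arguments about $\lambda=1$ and sends $D_1$ onto the disk $|\zeta|<2sr_s$. Throughout I use that $r_s=(\ln s)^{-2}\le(\ln5)^{-2}<0.39<h=\tfrac12$ for $s\ge5$. This guarantees that each disk $D_{\pm1}$ meets the lens boundaries only along the vertical segments issuing from the endpoints and never reaches the horizontal segments at height $\pm h$.

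For (i)--(iii) I intersect each piece with $D_1$. The horizontal piece $\{\Im\lambda=h\}$ stays at distance $h>r_s$ from $1$, so it misses $D_1$. The piece $[1+ih,1]$ meets $D_1$ exactly in $\{1+iy:0<y<r_s\}$. Its image is $\{2siy\}$, the positive imaginary ray $\arg\zeta=\pi/2$. Since the segment is traversed from $1+ih$ down to $1$, the image is traversed towards $\zeta=0$. For (ii) I conjugate; the segment $[1-ih,1]$ is again traversed from $1-ih$ to $1$, hence towards the origin. For (iii), $(1-r_s,1)$ maps to $(-2sr_s,0)$, which lies on $\arg\zeta=\pi$. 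For (iv), points of $\Omega_{\mathrm{up}}\cap D_1$ satisfy $\Re\lambda<1$ and $\Im\lambda>0$, so $\arg(\lambda-1)\in(\pi/2,\pi)$. Conversely, any point of $D_1$ with argument in that range has $\Re\lambda>1-r_s>-1$ and $0<\Im\lambda<r_s<h$, so it lies in $\Omega_{\mathrm{up}}$. The lower lens is handled the same way. The remaining part of $D_1$, which has $\Re\lambda\ge1$ off the two vertical segments, has $|\arg\zeta|<\pi/2$. One needs care with the closed rays here, and they are exactly the contour pieces already identified.

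For (v) I split the lens boundary outside the disks into its three pieces. On the horizontal pieces, $|\Im\lambda|=h>r_s$. On the vertical piece at $\lambda=1$ outside $D_1$, the points are $1+iy$ with $|1+iy-1|=|y|\ge r_s$, so $|\Im\lambda|\ge r_s$. The vertical pieces at $-1$ are treated identically with $D_{-1}$.

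For (vi), the vertical segments are radial from the disk centre, so they meet the circle $\partial D_{\pm1}$ orthogonally. The interval $[-1,1]$ is also radial and meets the circle orthogonally. The corners $\pm1\pm ih$ join a horizontal and a vertical segment, so they are right angles. For the length, $[-1,1]$ contributes $2$, and each lens boundary contributes $h+2+h=3$, giving a total of $8\le10$. Finally, $\operatorname{dist}(\partial D_1,\partial D_{-1})=2-2r_s\ge2-2(\ln5)^{-2}>1$.

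There is no real obstacle in this argument. The only point requiring care is the orientation bookkeeping in (i)--(ii), namely checking that ``from $-1$ to $1$'' on the final segment $[1\pm ih,1]$ means moving towards the endpoint. I also need to state explicitly the bound $r_s<h$ used in (iv) and (v).
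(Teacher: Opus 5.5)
Your proposal is correct and follows the same item-by-item verification as the paper's proof: it uses argument preservation under $\zeta=2s(\lambda-1)$, the orientation of the final segments $[1\pm ih,1]$, and the inequality $r_s<h$ for (iv)--(v). Your length count of $8$ for $\Sigma_T$, which includes $[-1,1]$, and your check that $[-1,1]$ also meets $\partial D_1$ radially, are slightly more careful than the paper's bookkeeping, which tallies only the lens boundaries ($4h+4=6$) together with the circles.
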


\begin{proof}
(i)--(iii) On $\{1+iy\}$, $\lambda-1=iy$ with $y>0$, so
$\arg(\lambda-1)=\pi/2$; as $\lambda$ moves along the orientation
($-1\to1$) it moves \emph{downward} on this segment, so $y$ decreases
and $|\zeta|=2sy$ decreases: the ray is traversed towards $0$.  On
$\{1-iy\}$ the orientation $-1\to1$ moves \emph{upward}, so again $y$
decreases and the ray $\arg\zeta=-\pi/2$ is traversed towards $0$.  On
$(1-r_s,1)$, $\lambda-1$ is negative real, so $\arg\zeta=\pi$ by
\eqref{eq:zetabranch}.
(iv) is \eqref{eq:argzeta} together with
$\Omega_{\mathrm{up}}\cap D_1=\{\Im\lambda>0,\ \Re\lambda<1\}\cap D_1$.
(v) On the vertical segments $|\Im\lambda|$ ranges over $[r_s,h]$
outside the disks, and on the horizontal segments $|\Im\lambda|=h>r_s$.
(vi) The vertical segments meet the circles $|\lambda\mp1|=r_s$ at
$\pm1\pm ir_s$, where the circle's tangent is horizontal and the segment
vertical.  The length is $4h+4=6\le10$ plus the two circles, of total
length $4\pi r_s\le4\pi$.  Finally $|\lambda-\mu|\ge2-2r_s\ge1$ for
$\lambda\in\partial D_1$, $\mu\in\partial D_{-1}$, since $r_s\le1/2$.
\end{proof}

\begin{definition}[the lensed unknown]\label{def:S}
Put
\begin{equation}\label{eq:Sdef}
 T(\lambda)=\begin{cases}
 Y(\lambda)\,S_U(\lambda)^{-1},&\lambda\in\Omega_{\mathrm{up}},\\[2pt]
 Y(\lambda)\,S_L(\lambda),&\lambda\in\Omega_{\mathrm{dn}},\\[2pt]
 Y(\lambda),&\text{elsewhere,}
 \end{cases}
\end{equation}
with $S_L,S_U$ as in \eqref{eq:factorization}, which are entire.
\end{definition}

\begin{lemma}[jumps of $T$]\label{lem:Tjumps}
$T$ is analytic off $\Sigma_T$, satisfies
$T=I+Y_1\lambda^{-1}+O(\lambda^{-2})$ at infinity, has the endpoint
behaviour \eqref{eq:Yendpoint} right-multiplied by $S_U^{-1}$ in
$\Omega_{\mathrm{up}}$, by $S_L$ in $\Omega_{\mathrm{dn}}$ and by $I$
elsewhere, and $T_+=T_-J_T$ with
\begin{equation}\label{eq:Tjumps}
 J_T=
 \begin{cases}
 e^{-2v\sigma_3},&\lambda\in(-1,1),\\[2pt]
 S_U(\lambda),&\lambda\in\Sigma_{\mathrm{up}},\\[2pt]
 S_L(\lambda),&\lambda\in\Sigma_{\mathrm{dn}} .
 \end{cases}
\end{equation}
\end{lemma}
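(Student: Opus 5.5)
The plan is to prove Lemma~\ref{lem:Tjumps} directly from Definition~\ref{def:S}, using the properties of $Y$ in Lemma~\ref{lem:RHPsolv}, the factorization of Lemma~\ref{lem:factorization}, and the fact that $S_L,S_U$ are entire with determinant one.

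\emph{Analyticity and behaviour at infinity.} On each of the three regions $\Omega_{\mathrm{up}}$, $\Omega_{\mathrm{dn}}$ and $\C\setminus(\overline{\Omega_{\mathrm{up}}\cup\Omega_{\mathrm{dn}}})$, the function $T$ is $Y$ times an entire matrix. Since $Y$ is analytic off $[-1,1]$ by (Y1), $T$ is analytic off $\Sigma_T$. The lenses are bounded, so $T=Y$ for $|\lambda|>2$, and (Y4) gives $T=I+Y_1\lambda^{-1}+O(\lambda^{-2})$. The endpoint statement follows by right-multiplying \eqref{eq:Yendpoint} by the corresponding entire factor in each region; $S_U^{\pm1}$ and $S_L^{\pm1}$ are bounded near $\pm1$.

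\emph{Jumps on the lens boundaries.} Use the orientation of Definition~\ref{def:contours}, with the $+$ side on the left.
\begin{itemize}
\item On the horizontal part of $\Sigma_{\mathrm{up}}$, traversed left to right, the $+$ side is above (outside the lens) and the $-$ side is inside $\Omega_{\mathrm{up}}$. So $T_+=Y$ and $T_-=YS_U^{-1}$, giving $T_+=T_-S_U$.
\item On the vertical segment $[-1,-1+ih]$, traversed upward, the left side is $\Re\lambda<-1$ (outside), so again $T_+=Y=T_-S_U$.
\item On $[1+ih,1]$, traversed downward, the left side is $\Re\lambda>1$ (outside), with the same conclusion.
\item On $\Sigma_{\mathrm{dn}}$ the orientation is also $-1\to1$, so the left side now lies inside $\Omega_{\mathrm{dn}}$. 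Then $T_+=YS_L$ and $T_-=Y$, giving $J_T=S_L$.
\end{itemize}
I would check each segment separately, since the sign bookkeeping here is where an error is most likely.

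\emph{Jump on $(-1,1)$.} The $+$ side is $\Omega_{\mathrm{up}}$ and the $-$ side is $\Omega_{\mathrm{dn}}$, so $T_+=Y_+S_U^{-1}$ and $T_-=Y_-S_L$. Then
\[
T_-^{-1}T_+=S_L^{-1}Y_-^{-1}Y_+S_U^{-1}=S_L^{-1}G_YS_U^{-1}=e^{-2v\sigma_3}
\]
by \eqref{eq:factorization}. Invertibility of $Y_-$ comes from $\det Y\equiv1$ (Lemma~\ref{lem:RHPsolv}(d)).

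The only delicate point is at the corners $\pm1$, where several contours meet. There the claimed local form of $T$ is read off directly from the factorization in (Y3), which is all the lemma asserts, so no additional removability argument is needed.
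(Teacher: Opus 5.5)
Your proposal is correct and follows essentially the same route as the paper: the same orientation checks on each segment of $\Sigma_{\mathrm{up}}$ and $\Sigma_{\mathrm{dn}}$, the factorization $G_Y=S_Le^{-2v\sigma_3}S_U$ on $(-1,1)$, and compactness of the lenses for the behaviour at infinity. The only cosmetic difference is that you form $T_-^{-1}T_+$, which requires $\det Y\equiv1$ to invert $Y_-$. The paper instead substitutes $Y_+=Y_-G_Y$ directly and never needs $Y_-^{-1}$.
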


\begin{proof}
On $(-1,1)$, the $+$ side lies in $\Omega_{\mathrm{up}}$ and the $-$
side in $\Omega_{\mathrm{dn}}$, so
$T_+=Y_+S_U^{-1}=Y_-G_YS_U^{-1}=Y_-S_Le^{-2v\sigma_3}
=T_-e^{-2v\sigma_3}$ by \eqref{eq:factorization}.
On $\Sigma_{\mathrm{up}}$, oriented $-1\to1$, the $+$ side is the left,
which is the side \emph{away} from $\Omega_{\mathrm{up}}$: on the
horizontal segment the direction is $+1$ and the left is the upper side,
outside the rectangle; on the segment $[-1,-1+ih]$ the direction is
$+i$ and the left is $\{\Re\lambda<-1\}$, again outside; on
$[1+ih,1]$ the direction is $-i$ and the left is $\{\Re\lambda>1\}$,
again outside.  Hence $T_+=Y$ and $T_-=YS_U^{-1}$, so $T_+=T_-S_U$.
The same three checks on $\Sigma_{\mathrm{dn}}$ give $T_+=YS_L$ and
$T_-=Y$, so $T_+=T_-S_L$.  The behaviour at infinity is unchanged since
$\Sigma_{\mathrm{up}},\Sigma_{\mathrm{dn}}$ are compact.
\end{proof}

\subsection{The outer parametrix}
\label{sec:gate5}

\begin{definition}[outer parametrix]\label{def:Pinf}
For $\lambda\in\C\setminus[-1,1]$ put
\begin{equation}\label{eq:Pinf}
 P^{(\infty)}(\lambda)=
 \Bigl(\frac{\lambda-1}{\lambda+1}\Bigr)^{\nu\sigma_3},
\end{equation}
with the principal branch of $w\mapsto w^{\nu}$, $|\arg w|<\pi$, applied
to $w=\frac{\lambda-1}{\lambda+1}$.
\end{definition}

\begin{lemma}[properties of $P^{(\infty)}$]\label{lem:Pinf}
The map $\lambda\mapsto w=\frac{\lambda-1}{\lambda+1}$ is a Möbius
bijection of $\C\setminus[-1,1]$ onto $\C\setminus(-\infty,0]$, so
\eqref{eq:Pinf} is well defined and analytic there.  Moreover:
\begin{enumerate}[label=\textup{(\roman*)},leftmargin=2.6em]
\item $\det P^{(\infty)}\equiv1$;
\item on $(-1,1)$, oriented from $-1$ to $1$ with $+$ the upper side,
\begin{equation}\label{eq:Pinfjump}
 P^{(\infty)}_+=P^{(\infty)}_-\,e^{+2\pi i\nu\sigma_3}
 =P^{(\infty)}_-\,e^{-2v\sigma_3},
\end{equation}
\emph{the exponent being $+2\pi i\nu$};
\item $P^{(\infty)}(-\lambda)=\sigma_1P^{(\infty)}(\lambda)\sigma_1$;
\item as $\lambda\to\infty$,
 \begin{equation}\label{eq:Pinfinfty}
  P^{(\infty)}(\lambda)=I-\frac{2\nu}{\lambda}\sigma_3+O(\lambda^{-2});
 \end{equation}
\item $\|P^{(\infty)}(\lambda)^{\pm1}\|\le e^{\omega}$ for every
$\lambda\in\C\setminus[-1,1]$.
\end{enumerate}
\end{lemma}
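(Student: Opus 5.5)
Everything follows from the Möbius map $w=\frac{\lambda-1}{\lambda+1}$ and elementary properties of the principal power $w^{\nu}$, with $\nu=-i\omega/\pi$ purely imaginary. We take the items in order.

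\emph{Preliminaries.} The inverse map is $\lambda=\frac{1+w}{1-w}$. We check that $w\in(-\infty,0]$ exactly when $\lambda\in[-1,1]$: the endpoints correspond via $w=0\leftrightarrow\lambda=1$ and $w=-\infty\leftrightarrow\lambda=-1$. The point $w=1$ corresponds to $\lambda=\infty$ and lies off the cut. Hence \eqref{eq:Pinf} is analytic on $\C\setminus[-1,1]$.

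\emph{Items (i) and (iii).} Item (i) holds because $P^{(\infty)}$ is diagonal with entries $w^{\nu}$ and $w^{-\nu}$. For (iii), replacing $\lambda$ by $-\lambda$ sends $w$ to $1/w$. We have $(1/w)^{\nu}=w^{-\nu}$, since principal arguments satisfy $\arg(1/w)=-\arg w$ off the cut. Conjugation by $\sigma_1$ swaps the two diagonal entries, which gives (iii).

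\emph{Item (ii).} For $\lambda\in(-1,1)$ approached from above, $\lambda-1$ has argument near $\pi$ and $\lambda+1>0$, so $\arg w\to\pi$. From below, $\arg w\to-\pi$. Therefore $w_+^{\nu}=w_-^{\nu}e^{2\pi i\nu}$. Moreover $2\pi i\nu=2\pi i\cdot iv/\pi=-2v$. Because both factors are diagonal they commute, and we obtain $P_+=P_-e^{2\pi i\nu\sigma_3}=P_-e^{-2v\sigma_3}$. This orientation step is the only place where a sign can go wrong. It must match the jump $e^{-2v\sigma_3}$ of $T$ in Lemma~\ref{lem:Tjumps}, so I would verify it directly with the upper side as the $+$ side.

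\emph{Item (iv).} As $\lambda\to\infty$ we have $w=1-\frac{2}{\lambda}+O(\lambda^{-2})$, hence $\operatorname{Log}w=-\frac{2}{\lambda}+O(\lambda^{-2})$. Then $w^{\pm\nu}=1\mp\frac{2\nu}{\lambda}+O(\lambda^{-2})$, which is \eqref{eq:Pinfinfty}.

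\emph{Item (v).} We compute $|w^{\pm\nu}|=|e^{\pm\nu\operatorname{Log}w}|$. Since $\nu$ is purely imaginary, $\Re(\pm\nu\operatorname{Log}w)=\mp\frac{\omega}{\pi}\cdot(-\arg w)\cdot(-1)$, up to sign. In every case its absolute value is at most $\frac{\omega}{\pi}|\arg w|<\omega$, because $|\arg w|<\pi$. So each entry of $(P^{(\infty)})^{\pm1}$ has modulus at most $e^{\omega}$, and the off-diagonal entries vanish. This gives the bound in the max-entry norm.
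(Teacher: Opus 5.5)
Your proof is correct and is essentially the paper's own argument: the same M\"obius reduction, $w(-\lambda)=1/w$ for (iii), the boundary values $\arg w_\pm=\pm\pi$ for (ii), the expansion $w=1-2/\lambda+O(\lambda^{-2})$ for (iv), and $|w^{\pm\nu}|=\exp\bigl(\pm\frac{\omega}{\pi}\arg w\bigr)\le e^{\omega}$ for (v). The one blemish is the garbled real-part formula in (v), which should read $\Rey(\pm\nu\operatorname{Log}w)=\pm\frac{\omega}{\pi}\arg w$; only its absolute value is used, so nothing is affected. Your remark that $w=1$ is the image of $\lambda=\infty$ is also apt: strictly, the image of $\C\setminus[-1,1]$ is $\C\setminus\bigl((-\infty,0]\cup\{1\}\bigr)$, which is harmless for well-definedness and analyticity.
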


\begin{proof}
The Möbius map sends $\infty\mapsto1$, $1\mapsto0$, $-1\mapsto\infty$
and $[-1,1]$ onto $(-\infty,0]$; being a bijection of the sphere it maps
the complement onto the complement.

(i) $\det w^{\nu\sigma_3}=w^\nu w^{-\nu}=1$.

(ii) Let $\lambda\in(-1,1)$.  Then $\lambda-1<0$ and $\lambda+1>0$, so
$w<0$.  \emph{Upper side:} for $\lambda+i\eps$ we have
$\arg(\lambda-1+i\eps)\to\pi$ (negative real part, positive imaginary
part) and $\arg(\lambda+1+i\eps)\to0$, so
$\arg w_+=\pi-0=\pi$.  \emph{Lower side:} for $\lambda-i\eps$,
$\arg(\lambda-1-i\eps)\to-\pi$ and $\arg(\lambda+1-i\eps)\to0$, so
$\arg w_-=-\pi$.  Therefore
$w_+^{\nu}=|w|^{\nu}e^{i\pi\nu}$ and $w_-^{\nu}=|w|^{\nu}e^{-i\pi\nu}$,
whence
\[
 \frac{w_+^{\nu}}{w_-^{\nu}}=e^{2\pi i\nu},
 \qquad
 P^{(\infty)}_+
 =\operatorname{diag}\bigl(w_+^{\nu},w_+^{-\nu}\bigr)
 =\operatorname{diag}\bigl(w_-^{\nu}e^{2\pi i\nu},
 w_-^{-\nu}e^{-2\pi i\nu}\bigr)
 =P^{(\infty)}_-e^{2\pi i\nu\sigma_3}.
\]
Finally, by \eqref{eq:params},
\begin{equation}\label{eq:twopinu}
 2\pi i\nu=2\pi i\cdot\frac{iv}\pi=2i^2v=-2v ,
\end{equation}
so $e^{2\pi i\nu\sigma_3}=e^{-2v\sigma_3}$, which is exactly the jump of
$T$ on $(-1,1)$ recorded in \eqref{eq:Tjumps}.

(iii) $\frac{-\lambda-1}{-\lambda+1}=w^{-1}$, and for
$w\notin(-\infty,0]$ also $w^{-1}\notin(-\infty,0]$ with
$\arg(w^{-1})=-\arg w$, so $(w^{-1})^{\nu}=w^{-\nu}$; conjugation by
$\sigma_1$ exchanges the diagonal entries.

(iv) $w=1-\frac2\lambda+O(\lambda^{-2})$ with $\arg w\to0$, so
$w^{\nu}=1-\frac{2\nu}\lambda+O(\lambda^{-2})$.

(v) $|w^{\pm\nu}|=\exp(\mp\Im\nu\arg w)
=\exp(\pm\tfrac\omega\pi\arg w)\le e^{\omega}$ since $\Re\nu=0$,
$\Im\nu=-\omega/\pi$ and $|\arg w|<\pi$.
\end{proof}

\begin{remark}[Jump exponent]\label{rem:corrB}
The upper and lower boundary arguments of
$w=(\lambda-1)/(\lambda+1)$ differ by $2\pi$, which gives the factor
$e^{+2\pi i\nu\sigma_3}$.  Since $2\pi i\nu=-2v$, this is precisely
$e^{-2v\sigma_3}$, the jump of $T$ on $(-1,1)$.
\end{remark}

\begin{lemma}[the lens jumps are negligible off the endpoint disks]
\label{lem:lensnegligible}
For every fixed $N\ge1$ there are $C_{A,N}$ and $s_{A,N}\ge5$ such that, for
$s\ge s_{A,N}$ and $0\le\omega\le A\ln s$,
\begin{equation}\label{eq:lensbound}
 \sup_{\lambda\in(\Sigma_{\mathrm{up}}\cup\Sigma_{\mathrm{dn}})
 \setminus(D_1\cup D_{-1})}
 \bigl\|P^{(\infty)}(\lambda)\bigl(J_T(\lambda)-I\bigr)
 \bigl(P^{(\infty)}(\lambda)\bigr)^{-1}\bigr\|
 \;\le\;C_{A,N}\,s^{-N},
\end{equation}
and the same bound holds in $L^2$ of that contour.
\end{lemma}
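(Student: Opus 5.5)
We prove \eqref{eq:lensbound} by an explicit entrywise computation. The jump $J_T-I$ is nilpotent and supported in a single off-diagonal entry. Conjugation by the diagonal matrix $P^{(\infty)}$ only multiplies that entry by $w^{\pm2\nu}$, where $w=(\lambda-1)/(\lambda+1)$.

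On $\Sigma_{\mathrm{up}}$,
\[
 P^{(\infty)}(S_U-I)(P^{(\infty)})^{-1}
 =\begin{pmatrix}0&\gamma e^{2v}e^{2is\lambda}w^{2\nu}\\0&0\end{pmatrix}.
\]
On $\Sigma_{\mathrm{dn}}$ the only nonzero entry is $-\gamma e^{2v}e^{-2is\lambda}w^{-2\nu}$, in position $(2,1)$. Here we use $e^{2s\kappa}=e^{2v}$, since $2s\kappa=2v$. Thus the norm in \eqref{eq:lensbound} is bounded by a product of three factors, which we estimate separately.

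\emph{Lens coefficient.} By \eqref{eq:lenscoef}, $|\gamma e^{2v}|\le1$.

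\emph{Outer parametrix.} By the argument of Lemma~\ref{lem:Pinf}(v), $|w^{\pm2\nu}|=\exp(\pm\tfrac{2\omega}{\pi}\arg w)\le e^{2\omega}\le s^{2A}$, using $\omega\le A\ln s$.

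\emph{Oscillatory exponential.} We have $|e^{\pm2is\lambda}|=e^{\mp2s\Im\lambda}$. On $\Sigma_{\mathrm{up}}$ we have $\Im\lambda>0$, and on $\Sigma_{\mathrm{dn}}$ we have $\Im\lambda<0$. By Lemma~\ref{lem:geometry}(v), $|\Im\lambda|\ge r_s=(\ln s)^{-2}$ off the disks. Hence this factor is at most $\exp(-2s/\ln^2 s)$.

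Combining the three factors, the supremum is at most $s^{2A}\exp(-2s(\ln s)^{-2})$. This is $\le C_{A,N}s^{-N}$ for every $N$ once $s\ge s_{A,N}$, because $s/\ln^2s$ eventually exceeds $(N+2A)\ln s$.

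For the $L^2$ bound, the contour has length at most $10$ by Lemma~\ref{lem:geometry}(vi). Therefore the $L^2$ norm is at most $\sqrt{10}$ times the supremum, and we adjust $C_{A,N}$ accordingly.

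The only step requiring care is the sign bookkeeping. We must confirm that the exponential $e^{2is\lambda}$ (the $(1,2)$ entry of $S_U$) is placed on the upper contour, where it decays. Likewise $e^{-2is\lambda}$ (the $(2,1)$ entry of $S_L$) must sit on the lower contour, where it also decays. This holds by \eqref{eq:factorization}.

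We also need the branch of $w^{2\nu}$ to coincide with the square of the branch in \eqref{eq:Pinf}. This is immediate, because both are defined from the principal $\arg w\in(-\pi,\pi)$, which gives the bound $e^{2\omega}$ with no lifted phase. The polynomial loss $s^{2A}$ coming from the growing parameter is harmless against the super-polynomial decay from the exponential factor.
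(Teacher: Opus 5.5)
Your proposal is correct and is essentially the paper's own argument. It bounds the single off-diagonal entry by the lens coefficient $|\gamma e^{2v}|\le1$, the conjugation factor $|w^{\pm2\nu}|\le e^{2\omega}\le s^{2A}$ from Lemma~\ref{lem:Pinf}(v), and the oscillatory factor $e^{-2s|\Im\lambda|}\le e^{-2sr_s}$ from Lemma~\ref{lem:geometry}(v), then passes to $L^2$ via the bounded contour length. Your explicit factor $w^{-2\nu}$ for the $(2,1)$ entry on $\Sigma_{\mathrm{dn}}$ just spells out what the paper covers with ``the same argument applies''.
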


\begin{proof}
On $\Sigma_{\mathrm{up}}$ the only nonzero entry of $J_T-I=S_U-I$ is
$\gamma e^{2v}e^{2is\lambda}$, of modulus
$|\gamma e^{2v}|e^{-2s\Im\lambda}\le e^{-2s\Im\lambda}$ by
\eqref{eq:lenscoef}; by Lemma~\ref{lem:geometry}(v),
$\Im\lambda\ge r_s$ there, so this is at most $e^{-2sr_s}$.
Conjugating by $P^{(\infty)}$ multiplies the $(1,2)$ entry by
$w^{2\nu}$, of modulus at most $e^{2\omega}$ by
Lemma~\ref{lem:Pinf}(v).  Hence the left side of \eqref{eq:lensbound} is
at most $e^{2\omega-2sr_s}\le s^{2A}e^{-2s/(\ln s)^2}$, which is
$O_{A,N}(s^{-N})$ once $s/(\ln s)^2\ge\tfrac12(2A+N)\ln s$.  The $L^2$
bound follows since the contour has length at most $10$.  On
$\Sigma_{\mathrm{dn}}$ the same argument applies to the $(2,1)$ entry
$-\gamma e^{2v}e^{-2is\lambda}$, using $\Im\lambda\le-r_s$.
\end{proof}

\subsection{The confluent-hypergeometric model and its jumps}
\label{sec:model}

\begin{definition}[the bare model]\label{def:bareP}
For $\zeta\in\C\setminus\{0\}$ with $-\pi<\arg\zeta\le\pi$ set
\begin{equation}\label{eq:Fmatrix}
 \mathcal F(\zeta)=
 \begin{pmatrix}
 U(-\nu;e^{-\frac{i\pi}2}\zeta)
 & -ie^{i\pi\nu}\dfrac{\Gamma(1+\nu)}{\Gamma(-\nu)}\,
   U(1+\nu;e^{\frac{i\pi}2}\zeta)\\[2mm]
 ie^{i\pi\nu}\dfrac{\Gamma(1-\nu)}{\Gamma(\nu)}\,
   U(1-\nu;e^{-\frac{i\pi}2}\zeta)
 & e^{2\pi i\nu}U(\nu;e^{\frac{i\pi}2}\zeta)
 \end{pmatrix},
\end{equation}
\begin{equation}\label{eq:D0E0}
 D_0=e^{\frac{i\pi}2(\frac12-\nu)\sigma_3},
 \qquad
 E_0=\operatorname{diag}\bigl(e^{\frac{i\pi\nu}2},
 e^{-\frac{3i\pi\nu}2}\bigr),
 \qquad
 r_\nu=\frac{\Gamma(\nu)}{\Gamma(-\nu)},
\end{equation}
\begin{equation}\label{eq:sectors}
 \mathcal S(\zeta)=
 \begin{cases}
 T_+E_0,\ T_+=\begin{pmatrix}1&\tau_+\\0&1\end{pmatrix},
 &\Sigma_+:=\{\tfrac\pi2<\arg\zeta<\pi\},\\[6pt]
 T_-E_0,\ T_-=\begin{pmatrix}1&0\\\tau_-&1\end{pmatrix},
 &\Sigma_-:=\{-\pi<\arg\zeta<-\tfrac\pi2\},\\[6pt]
 E_0,&\Sigma_0:=\{|\arg\zeta|<\tfrac\pi2\},
 \end{cases}
\end{equation}
\begin{equation}\label{eq:taus}
 \tau_+=\frac{2\pi i\,e^{i\pi\nu}}{\Gamma(\nu)\Gamma(1-\nu)},
 \qquad
 \tau_-=\frac{2\pi i\,e^{-3\pi i\nu}}{\Gamma(\nu)\Gamma(1-\nu)},
\end{equation}
and
\begin{equation}\label{eq:bareP}
 \mathcal P(\zeta)
 =\mathcal F(\zeta)\,e^{\frac i2\zeta\sigma_3}\,D_0\,\mathcal S(\zeta).
\end{equation}
\end{definition}

Definition~\ref{def:bareP} is \cite[eq.~(2.2)]{BDIK2} written out, in
that paper's factor order and with its branch conventions
\eqref{eq:zetabranch}--\eqref{eq:Ubranch}.

\begin{lemma}[elementary identities for the model constants]
\label{lem:gammaratio}
For $\nu=-i\omega/\pi$ with $\omega>0$:
\begin{equation}\label{eq:gammaratio}
 |r_\nu|=1,
 \qquad
 \frac{\Gamma(1+\nu)}{\Gamma(-\nu)}=\nu\,r_\nu,
 \qquad
 \frac{\Gamma(1-\nu)}{\Gamma(\nu)}=-\nu\,r_\nu^{-1},
 \qquad
 \frac{2\pi i}{\Gamma(\nu)\Gamma(1-\nu)}=2i\sin(\pi\nu),
\end{equation}
\begin{equation}\label{eq:gammaprod}
 \Gamma(1+\nu)\Gamma(-\nu)=-\,\Gamma(\nu)\Gamma(1-\nu),
\end{equation}
and, with $\gamma=1-e^{-2v}$ and $2\pi i\nu=-2v$,
\begin{equation}\label{eq:tauvalues}
 \tau_+=e^{2\pi i\nu}-1=e^{-2v}-1=-\gamma,
 \qquad
 \tau_-=e^{-2\pi i\nu}-e^{-4\pi i\nu}=e^{2v}-e^{4v}=-\gamma e^{4v}.
\end{equation}
\end{lemma}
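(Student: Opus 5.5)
The plan is to derive every identity from three standard facts about $\Gamma$: the recurrence $\Gamma(z+1)=z\Gamma(z)$, the reflection formula $\Gamma(z)\Gamma(1-z)=\pi/\sin(\pi z)$, and the conjugation symmetry $\Gamma(\bar z)=\overline{\Gamma(z)}$. Since $\omega>0$, we have $\nu=-i\omega/\pi\ne0$ and $\nu$ is purely imaginary, so $\pm\nu$ and $1\pm\nu$ are not poles of $\Gamma$. Hence all the ratios are finite and nonzero, as noted in the case $\omega=0$ discussion.

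\emph{Step 1: modulus and ratios.} Because $\Re\nu=0$, we have $-\nu=\bar\nu$, so $\Gamma(-\nu)=\overline{\Gamma(\nu)}$ and therefore $|r_\nu|=1$. The recurrence gives $\Gamma(1+\nu)=\nu\Gamma(\nu)$, hence $\Gamma(1+\nu)/\Gamma(-\nu)=\nu r_\nu$. The recurrence at $-\nu$ gives $\Gamma(1-\nu)=-\nu\Gamma(-\nu)$, hence $\Gamma(1-\nu)/\Gamma(\nu)=-\nu r_\nu^{-1}$.

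\emph{Step 2: reflection.} The reflection formula gives $2\pi i/(\Gamma(\nu)\Gamma(1-\nu))=2i\sin(\pi\nu)$. Multiplying the two recurrences from Step 1,
\[
\Gamma(1+\nu)\Gamma(-\nu)=\nu\Gamma(\nu)\cdot\frac{\Gamma(1-\nu)}{-\nu}=-\Gamma(\nu)\Gamma(1-\nu),
\]
which is \eqref{eq:gammaprod}.

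\emph{Step 3: the Stokes constants.} Write $2i\sin(\pi\nu)=e^{i\pi\nu}-e^{-i\pi\nu}$ and insert it into \eqref{eq:taus}:
\[
\tau_+=e^{i\pi\nu}\bigl(e^{i\pi\nu}-e^{-i\pi\nu}\bigr)=e^{2\pi i\nu}-1,
\qquad
\tau_-=e^{-3\pi i\nu}\bigl(e^{i\pi\nu}-e^{-i\pi\nu}\bigr)=e^{-2\pi i\nu}-e^{-4\pi i\nu}.
\]
Now substitute $2\pi i\nu=-2v$ from \eqref{eq:twopinu}. This gives $\tau_+=e^{-2v}-1=-\gamma$, and
\[
\tau_-=e^{2v}-e^{4v}=-e^{4v}\bigl(1-e^{-2v}\bigr)=-\gamma e^{4v}.
\]

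No step is a real obstacle. The only care needed is with signs: the recurrence at $-\nu$ carries the factor $-\nu$, and the substitution $2\pi i\nu=-2v$ must be applied with the correct sign of the exponent in each term.
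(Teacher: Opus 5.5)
Your proof is correct and follows essentially the same route as the paper: conjugation symmetry of $\Gamma$ for $|r_\nu|=1$, the recurrence for the two ratios and for \eqref{eq:gammaprod}, reflection for the sine identity, and then writing $2i\sin(\pi\nu)=e^{i\pi\nu}-e^{-i\pi\nu}$ in \eqref{eq:taus} and substituting $2\pi i\nu=-2v$. The one addition is your explicit remark that $\pm\nu$ and $1\pm\nu$ avoid the poles of $\Gamma$, so every ratio is finite and nonzero; the paper leaves this implicit.
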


\begin{proof}
$\Gamma$ is real on the reals and analytic, so
$\overline{\Gamma(\nu)}=\Gamma(-\nu)$ for purely imaginary $\nu$, giving
$|r_\nu|=1$.  The recurrence $\Gamma(1+z)=z\Gamma(z)$ gives the two
ratios.  Reflection gives
$\Gamma(\nu)\Gamma(1-\nu)=\pi/\sin(\pi\nu)$, whence the fourth identity.
For \eqref{eq:gammaprod}, $\Gamma(1+\nu)=\nu\Gamma(\nu)$ and
$\Gamma(-\nu)=\Gamma(1-\nu)/(-\nu)$, so
$\Gamma(1+\nu)\Gamma(-\nu)=\nu\Gamma(\nu)\Gamma(1-\nu)/(-\nu)
=-\Gamma(\nu)\Gamma(1-\nu)$.
For \eqref{eq:tauvalues},
$\tau_+=2i\sin(\pi\nu)e^{i\pi\nu}
=(e^{i\pi\nu}-e^{-i\pi\nu})e^{i\pi\nu}=e^{2i\pi\nu}-1$, and
$e^{2\pi i\nu}=e^{-2v}=1-\gamma$; similarly
$\tau_-=(e^{i\pi\nu}-e^{-i\pi\nu})e^{-3i\pi\nu}
=e^{-2i\pi\nu}-e^{-4i\pi\nu}$ with $e^{-2\pi i\nu}=e^{2v}$.
\end{proof}

\begin{remark}[which sector matrix is triangular which way]
\label{rem:whichtriangular}
The assignment in \eqref{eq:sectors} --- upper triangular on $\Sigma_+$,
lower triangular on $\Sigma_-$ --- is not a free choice.  It is forced by
Lemma~\ref{lem:modeljumps} below: the opposite assignment would produce
a \emph{lower}-triangular jump on $\arg\zeta=\pi/2$, contradicting
\cite[RHP~2.3(2)]{BDIK2} and, more importantly, contradicting the fact
that $T$'s jump on $\Sigma_{\mathrm{up}}$ is upper triangular
(Lemma~\ref{lem:Tjumps}).
\end{remark}

We now derive the model's jumps on the two Stokes rays.  The derivation
uses only \eqref{eq:Ubranch} and Lemma~\ref{lem:gammaratio}.

\begin{lemma}[model jumps on the two Stokes rays; \textbf{derived here}]
\label{lem:modeljumps}
Orient each of the rays $\arg\zeta=\pm\pi/2$ \emph{towards} $\zeta=0$,
so that the $+$ side of $\arg\zeta=\pi/2$ is $\Sigma_0$ and the $+$ side
of $\arg\zeta=-\pi/2$ is $\Sigma_-$ \textup{(}see
Remark~\textup{\ref{rem:rayorientation}}\textup{)}.  Then
$\mathcal P_+=\mathcal P_-J_{\mathcal P}$ with
\begin{equation}\label{eq:modeljumps}
 J_{\mathcal P}=
 \begin{pmatrix}1&\gamma e^{2\kappa s}\\0&1\end{pmatrix}
 \quad\text{on }\arg\zeta=\tfrac\pi2,
 \qquad
 J_{\mathcal P}=
 \begin{pmatrix}1&0\\-\gamma e^{2\kappa s}&1\end{pmatrix}
 \quad\text{on }\arg\zeta=-\tfrac\pi2 .
\end{equation}
\end{lemma}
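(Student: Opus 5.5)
The plan is to compute the jump directly from the factorization \eqref{eq:bareP}. Across each ray $\arg\zeta=\pm\pi/2$ the factors $\mathcal F(\zeta)$, $e^{\frac i2\zeta\sigma_3}$ and $D_0$ are analytic: the exponential is entire, and by the lifted phase convention \eqref{eq:Ubranch} all four Tricomi compositions continue analytically through the internal rays. Only the piecewise factor $\mathcal S$ changes. Writing $\Phi(\zeta):=\mathcal F(\zeta)e^{\frac i2\zeta\sigma_3}D_0$, we have $\mathcal P_\pm=\Phi\,\mathcal S_\pm$, and therefore
\[
 J_{\mathcal P}=\mathcal S_-^{-1}\mathcal S_+ .
\]
Thus the whole lemma reduces to $2\times2$ algebra with the sector matrices of \eqref{eq:sectors}, combined with the constant identities of Lemma~\ref{lem:gammaratio}.

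\emph{Ray $\arg\zeta=\pi/2$.} This ray is oriented towards $0$, so its $+$ side is $\Sigma_0$ (on the right of the inward direction lies $\arg\zeta<\pi/2$; this should be checked against Remark~\ref{rem:rayorientation}) and its $-$ side is $\Sigma_+$. Hence
\[
 J=(T_+E_0)^{-1}E_0=E_0^{-1}T_+^{-1}E_0 .
\]
Conjugating the upper-triangular matrix $T_+^{-1}$ by the diagonal $E_0$ multiplies the $(1,2)$ entry by $(E_0)_{22}/(E_0)_{11}=e^{-2\pi i\nu}$. This gives the $(1,2)$ entry $-\tau_+e^{-2\pi i\nu}$. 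By \eqref{eq:tauvalues}, $-\tau_+=\gamma$ and $e^{-2\pi i\nu}=e^{2v}=e^{2\kappa s}$, so the entry is $\gamma e^{2\kappa s}$, as claimed.

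\emph{Ray $\arg\zeta=-\pi/2$.} Here the $+$ side is $\Sigma_-$ and the $-$ side is $\Sigma_0$, so
\[
 J=E_0^{-1}T_-E_0 .
\]
The $(2,1)$ entry is multiplied by $(E_0)_{11}/(E_0)_{22}=e^{2\pi i\nu}=e^{-2v}$, giving $\tau_-e^{-2v}=-\gamma e^{4v}e^{-2v}=-\gamma e^{2v}$, again matching \eqref{eq:modeljumps}.

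\emph{Main obstacle.} The main obstacle is justifying the analyticity of $\mathcal F$ across the internal rays, together with the side assignments. The relevant phases are $\arg\zeta\mp\pi/2$. They do reach $\pm\pi$ at $\arg\zeta=\pm\pi/2$, but the rule \eqref{eq:Ubranch} explicitly continues through the cut on the cover rather than reducing modulo $2\pi$, so no jump arises there. That continuity statement has to be checked carefully. By contrast, the orientation signs determine whether one gets $J$ or $J^{-1}$, so they must be matched exactly with Lemma~\ref{lem:geometry}(i),(ii).
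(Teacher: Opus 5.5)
Your proposal is correct and follows the paper's proof essentially step for step. The factor $\mathcal F e^{\frac i2\zeta\sigma_3}D_0$ is analytic across the internal rays by the lifted phases, which reduces the jump to $\mathcal S_-^{-1}\mathcal S_+$; the conjugations $E_0^{-1}T_+^{-1}E_0$ and $E_0^{-1}T_-E_0$, combined with \eqref{eq:tauvalues}, then give exactly the two matrices. One slip in your parenthetical: under the paper's convention the $+$ side is on the \emph{left} of the direction of travel, and moving down the positive imaginary axis puts $\Sigma_0$ on your left, not your right; the side assignment you actually use, which the statement stipulates anyway, is nonetheless the correct one.
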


\begin{proof}
\emph{Step 1: $\mathcal F$ is analytic across both rays.}
Put $\theta=\arg\zeta$.  By \eqref{eq:Ubranch}, the arguments of the
four $U$-functions carry the lifted phases
$\theta-\pi/2$ and $\theta+\pi/2$, without reduction modulo $2\pi$.
These phases vary continuously as $\theta$ crosses either
$\pm\pi/2$; the continuation through a phase $\pm\pi$ is part of the
definition in \eqref{eq:Ubranch}.  Hence $\mathcal F$, and likewise
$e^{\frac i2\zeta\sigma_3}$ and $D_0$, are analytic across both rays.
The interval $(-3\pi/2,3\pi/2)$ is used here to specify the lifted
continuations and their asymptotics, not as an analyticity domain for
the principal $U$-function.  It follows from \eqref{eq:bareP} that the
entire jump of $\mathcal P$ on these two rays comes from $\mathcal S$:
\begin{equation}\label{eq:JfromS}
 J_{\mathcal P}
 =\mathcal P_-^{-1}\mathcal P_+
 =\bigl(\mathcal F e^{\frac i2\zeta\sigma_3}D_0\mathcal S_-\bigr)^{-1}
  \bigl(\mathcal F e^{\frac i2\zeta\sigma_3}D_0\mathcal S_+\bigr)
 =\mathcal S_-^{-1}\mathcal S_+ .
\end{equation}
The common analytic left factor cancels exactly; no conjugation by
$e^{\frac i2\zeta\sigma_3}$ survives, and in particular
$J_{\mathcal P}$ is \emph{constant} in $\zeta$.

\emph{Step 2: the ray $\arg\zeta=\pi/2$.}  With the stated orientation
$\mathcal S_+=\mathcal S|_{\Sigma_0}=E_0$ and
$\mathcal S_-=\mathcal S|_{\Sigma_+}=T_+E_0$, so by \eqref{eq:JfromS}
\[
 J_{\mathcal P}=E_0^{-1}T_+^{-1}E_0
 =\begin{pmatrix}1&-\tau_+\,(E_0)_{22}/(E_0)_{11}\\0&1\end{pmatrix}
 =\begin{pmatrix}1&-\tau_+e^{-2i\pi\nu}\\0&1\end{pmatrix},
\]
using $(E_0)_{22}/(E_0)_{11}
=e^{-\frac{3i\pi\nu}2-\frac{i\pi\nu}2}=e^{-2i\pi\nu}$.  By
\eqref{eq:tauvalues} and $e^{-2i\pi\nu}=e^{2v}=e^{2\kappa s}$,
\[
 -\tau_+e^{-2i\pi\nu}=\gamma e^{2v}=\gamma e^{2\kappa s},
\]
which is the first matrix in \eqref{eq:modeljumps}.

\emph{Step 3: the ray $\arg\zeta=-\pi/2$.}  With the stated orientation
$\mathcal S_+=T_-E_0$ and $\mathcal S_-=E_0$, so
\[
 J_{\mathcal P}=E_0^{-1}T_-E_0
 =\begin{pmatrix}1&0\\\tau_-\,(E_0)_{11}/(E_0)_{22}&1\end{pmatrix}
 =\begin{pmatrix}1&0\\\tau_-e^{2i\pi\nu}&1\end{pmatrix},
\]
and by \eqref{eq:tauvalues} with $e^{2i\pi\nu}=e^{-2v}$,
\[
 \tau_-e^{2i\pi\nu}=-\gamma e^{4v}e^{-2v}=-\gamma e^{2v}
 =-\gamma e^{2\kappa s},
\]
which is the second matrix in \eqref{eq:modeljumps}.
\end{proof}

\begin{remark}[orientation of the model rays]
\label{rem:rayorientation}
\cite[RHP~2.3(1)]{BDIK2} specifies the orientation of the three model
rays by reference to a figure.  The orientations can also be determined
algebraically.  Step 2 of the proof above shows that orienting
$\arg\zeta=\pi/2$ \emph{towards} the origin reproduces
$\bigl(\begin{smallmatrix}1&\gamma e^{2\kappa s}\\0&1
\end{smallmatrix}\bigr)$, whereas orienting it away from the origin
gives the inverse
$\bigl(\begin{smallmatrix}1&-\gamma e^{2\kappa s}\\0&1
\end{smallmatrix}\bigr)$.  The inward orientation agrees both with
\cite[RHP~2.3(2)]{BDIK2} and with the jump $S_U$ of $T$ that the
parametrix must reproduce (Lemma~\ref{lem:localjumps}).  The same
calculation on $\arg\zeta=-\pi/2$ again selects the inward orientation.
Thus all three model rays are oriented towards $\zeta=0$.
\end{remark}

The third ray and the origin behaviour require a separate local
calculation, which we now give.

Before giving the derivation we must introduce a second family of
triangular matrices.  \emph{These are not the sector matrices $T_\pm$ of
\eqref{eq:sectors}}, and the distinction is essential; it is the subject
of Remark~\ref{rem:originvssector}.

\begin{definition}[the origin-sector multipliers]\label{def:originsectors}
Put
\begin{equation}\label{eq:originsectors}
 \mathcal H_+:=\begin{pmatrix}1&-\gamma e^{2\kappa s}\\0&1\end{pmatrix},
 \qquad
 \mathcal H_-:=\begin{pmatrix}1&0\\-\gamma e^{2\kappa s}&1\end{pmatrix},
\end{equation}
where, by \eqref{eq:params}, $\kappa s=v$, so that
$-\gamma e^{2\kappa s}=-\gamma e^{2v}$.
\end{definition}

\begin{lemma}[model jump on the third ray and behaviour at the origin;
\textbf{derived here}]\label{qmi:third}
With $\arg\zeta=\pi$ oriented towards $\zeta=0$ \textup{(}so that its
$+$ side is $\Sigma_+$\textup{)},
\begin{equation}\label{eq:thirdray}
 \mathcal P_+=\mathcal P_-\,e^{-2\kappa s\sigma_3}
 \qquad\text{on }\arg\zeta=\pi,
\end{equation}
and, as $\zeta\to0$ with $-\pi<\arg\zeta\le\pi$,
\begin{equation}\label{eq:modelorigin}
 \mathcal P(\zeta)=\check{\mathcal P}(\zeta)
 \Bigl[I+\frac{\gamma}{2\pi i}
 \begin{pmatrix}-1&1\\-1&1\end{pmatrix}\ln\zeta\Bigr]
 \times\begin{cases}
 \mathcal H_+,&\arg\zeta\in(\tfrac\pi2,\pi),\\
 \mathcal H_-,&\arg\zeta\in(-\pi,-\tfrac\pi2),\\
 I,&|\arg\zeta|<\tfrac\pi2,
 \end{cases}
\end{equation}
with $\check{\mathcal P}$ analytic and invertible at $\zeta=0$.
\end{lemma}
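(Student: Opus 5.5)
The plan is to reduce everything to the behaviour of the bare model $\mathcal P=\mathcal F e^{\frac i2\zeta\sigma_3}D_0\mathcal S$ near the negative real axis and near $\zeta=0$. These are governed by the connection formulae of the confluent hypergeometric functions across $\operatorname{ph}z=\pm\pi$, the lifted phases in \eqref{eq:Ubranch}, and Lemma~\ref{lem:gammaratio}. The third-ray jump and the origin behaviour come out of the same computation: express $\mathcal F$ near $0$ via Kummer's $M$ and the logarithmic solution of $U(a,1;z)$.

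\emph{Third ray.} On $\arg\zeta=\pi$ the $+$ side lies in $\Sigma_+$ with $\arg\zeta\to\pi$, and the $-$ side lies in $\Sigma_-$ with $\arg\zeta\to-\pi$. On the $-$ side the lifted phases of the $U$-arguments are $-3\pi/2$ and $-\pi/2$; on the $+$ side they are $\pi/2$ and $3\pi/2$. Hence $\mathcal F_+$ and $\mathcal F_-$ are evaluated at the same point $z$ on sheets differing by $e^{2\pi i}$. I would use the continuation formula \cite[13.2.12]{NIST} for $U(a,1;ze^{2\pi im})$, which expresses $U(a,1;ze^{2\pi i})$ through $U(a,1;z)$ and $e^{z}U(1-a,1;ze^{i\pi})$ with coefficients $e^{-2\pi ia}$ and $\Gamma$-ratios. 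With $a=-\nu,1-\nu,1+\nu,\nu$, this gives $\mathcal F_+=\mathcal F_-\,M$ for an explicit constant matrix $M$. Conjugating by $e^{\frac i2\zeta\sigma_3}D_0$ (where $e^{\frac i2\zeta\sigma_3}$ is single valued), the claimed identity reduces to the purely algebraic relation
\[
 (T_-E_0)^{-1}\,D_0^{-1}e^{-\frac i2\zeta\sigma_3}Me^{\frac i2\zeta\sigma_3}D_0\,T_+E_0=e^{-2\kappa s\sigma_3}.
\]
The exponential $e^{z}$ in the connection formula is exactly absorbed by the $\zeta$-conjugation, so the left side is constant. I would then verify this $2\times2$ identity entrywise using $|r_\nu|=1$, $\Gamma(1\pm\nu)/\Gamma(\mp\nu)=\pm\nu r_\nu^{\pm1}$, the reflection formula, and \eqref{eq:tauvalues}, $e^{2\pi i\nu}=e^{-2v}$. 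A useful consistency check is the cyclic relation: the product of the three ray jumps around $0$ must equal the monodromy of $[I+\frac{\gamma}{2\pi i}N\ln\zeta]$, namely $I+\gamma N$. This can be checked from \eqref{eq:modeljumps} directly before doing the entrywise computation.

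\emph{Origin.} In $\Sigma_0$ I would write $U(a,1;z)=-\frac{1}{\Gamma(a)}[M(a,1;z)\ln z+\text{entire}]$ \cite[13.2.9]{NIST}. Then $\mathcal F(\zeta)=A(\zeta)+B(\zeta)\ln\zeta$ with $A,B$ entire, the constant phase shifts $\ln(e^{\mp i\pi/2}\zeta)$ being absorbed into $A$. The key claim is $B=\frac{\gamma}{2\pi i}A_0N$ after right multiplication by $e^{\frac i2\zeta\sigma_3}D_0E_0$, modulo commuting these analytic factors past $N$. More precisely, I would set $\check{\mathcal P}:=\mathcal P\,[I-\frac{\gamma}{2\pi i}N\ln\zeta]$ in $\Sigma_0$ and show that the $\ln\zeta$ coefficient vanishes identically. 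This uses $\det$ arguments and the Wronskian of $U(a,1;\cdot)$ and $e^{z}U(1-a,1;-z)$, which make $\mathcal F$ rank-compatible so that the log term factors as $X(\zeta)N$. In $\Sigma_\pm$ one multiplies further by $\mathcal H_\pm^{-1}$. The point is that $\mathcal H_+$ (resp.\ $\mathcal H_-$) compensates exactly for the jump across $\arg\zeta=\pi/2$ (resp.\ $-\pi/2$), since $\mathcal H_+=J_{\mathcal P}^{-1}$ there by \eqref{eq:modeljumps}, and $[I+\frac{\gamma}{2\pi i}N\ln\zeta]$ has no jump on those rays. Thus $\check{\mathcal P}$ has no jump on $\arg\zeta=\pm\pi/2$. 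On $\arg\zeta=\pi$, the third-ray identity combined with $\mathcal H_-^{-1}e^{-2v\sigma_3}\mathcal H_+=I+\gamma N$, which is the same algebra as Lemma~\ref{lem:factorization}, shows that $\check{\mathcal P}$ has no jump either.

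It remains to rule out a singularity. $\check{\mathcal P}=O(\ln^2|\zeta|)$, so the singularity at $0$ is removable, and $\det\check{\mathcal P}=\det\mathcal P=1$ (by the Wronskian, as in Lemma~\ref{lem:RHPsolv}(d)) gives invertibility. I expect the main obstacle to be the entrywise verification of the third-ray connection identity. That is where the sheet bookkeeping of \eqref{eq:Ubranch} and the sign of $\Im\nu$ enter, so I would first sanity-check it at small $\nu$ and through the cyclic relation.
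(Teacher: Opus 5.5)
Your proposal is correct, but it runs the argument in the opposite direction from the paper.

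\textbf{The paper's route.} It never continues the four $U$-functions across $\arg\zeta=\pi$. It works only in $\Sigma_0$: it writes $\mathcal P=A+C\ln\zeta$ from \cite[eq.~13.2.9]{NIST} and identifies $C=\frac{\gamma}{2\pi i}AN$ exactly, using Kummer's transformation and the branch-sensitive connection formula 13.2.41. It then computes $\det A(0)=1$ by hand and transports the factorization to $\Sigma_\pm$ through the Stokes jumps of Lemma~\ref{lem:modeljumps}. Only at the end does it read off the third-ray jump, from $\ln\zeta_+-\ln\zeta_-=2\pi i$ and $\mathcal H_-^{-1}(I+\gamma N)\mathcal H_+=e^{-2v\sigma_3}$.

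\textbf{Your route.} You compute the third-ray jump directly from the $2\pi$-monodromy of $U$. The paper mentions this route in Remark~\ref{rem:obstruction} but does not take it. You then obtain the origin structure softly: $\check{\mathcal P}:=\mathcal P\,\mathcal C^{-1}\bigl[I-\frac{\gamma}{2\pi i}N\ln\zeta\bigr]$, with $\mathcal C\in\{\mathcal H_+,\mathcal H_-,I\}$, is jump-free and $O(\ln^2|\zeta|)$, hence analytic at $0$ by Riemann's theorem.

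I checked that the monodromy computation closes. With $q=e^{2\pi i\nu}$ one gets $\mathcal F_+=\mathcal F_-\,e^{\frac i2\zeta\sigma_3}Ke^{-\frac i2\zeta\sigma_3}$, where $K=\bigl(\begin{smallmatrix}q&2q\sin(\pi\nu)\\ 2\sin(\pi\nu)&2-q\end{smallmatrix}\bigr)$. Then $(T_-E_0)^{-1}D_0^{-1}KD_0T_+E_0=q^{\sigma_3}=e^{-2v\sigma_3}$.

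\textbf{What each route buys.} Your route makes the paragraph about the $\ln\zeta$ coefficient vanishing ``by determinant/Wronskian arguments'' unnecessary; that vanishing follows from removability. Drop that paragraph, since as written it is not an argument. Your route costs two things:
\begin{itemize}
\item You must continue all four functions across the cut. For the second column you need the $e^{-\pi i}$ form of 13.2.41, which gives coefficient $2-e^{2\pi i a}$ rather than $e^{-2\pi i a}$ in front of $U(a,1;y)$; alternatively, use the $e^{+\pi i}$ form and substitute once more.
\item You need a separate proof that $\det\check{\mathcal P}(0)\neq0$. Liouville applied to $\det\mathcal P$, using the quoted expansion \eqref{eq:P25}, works, as does an Abel-type first integral for $\det\mathcal F$. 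The citation ``Wronskian, as in Lemma~\ref{lem:RHPsolv}(d)'' does not apply as stated, because $\det\mathcal F$ is not literally a Wronskian.
\end{itemize}
The paper's route yields $A(0)$ and $\det A(0)=1$ explicitly, without appealing to the large-$\zeta$ expansion.

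\textbf{Two slips to fix.}
\begin{itemize}
\item On $\arg\zeta=\pi$ the identity you need is $\mathcal H_-e^{-2v\sigma_3}\mathcal H_+^{-1}=I+\gamma N$. The product $\mathcal H_-^{-1}e^{-2v\sigma_3}\mathcal H_+$ that you wrote equals $I+\gamma N^{\mathsf T}$ instead.
\item The factor $\mathcal H_\pm^{-1}$ must be inserted between $\mathcal P$ and the logarithmic bracket, not after the bracket. Otherwise the jumps on $\arg\zeta=\pm\pi/2$ do not cancel, because $\mathcal H_\pm$ does not commute with $N$.
\end{itemize}
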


\begin{proof}
The proof starts in the central sector, where all four Kummer functions
are on their principal local sheets, and then uses the two Stokes jumps
already proved in Lemma~\ref{lem:modeljumps}.  This order avoids any
unproved continuation across the negative real axis.

\emph{Step 1: the exact logarithmic decomposition at the origin.}
For $a\notin\{0,-1,-2,\ldots\}$, the case $n=0$ of
\cite[eq.~13.2.9]{NIST} gives, in a neighbourhood of the origin,
\begin{equation}\label{eq:Uoriginexact}
 U(a,1;w)=Q_a(w)-\frac{M(a,1;w)}{\Gamma(a)}\ln w,
\end{equation}
where
\[
 Q_a(w)=-\frac1{\Gamma(a)}
 \sum_{k=0}^{\infty}\frac{(a)_k}{(k!)^2}w^k
 \bigl(\psi(a+k)-2\psi(k+1)\bigr)
\]
is analytic at $w=0$.  All four parameters
$-\nu,1+\nu,1-\nu,\nu$ satisfy the stated restriction because
$\nu\in i\mathbb R\setminus\{0\}$.

Put $\mathbf e=(1,1)^{\mathsf T}$,
$\mathbf r^{\mathsf T}=(-1,1)$, so that
$N:=\mathbf e\mathbf r^{\mathsf T}
=\bigl(\begin{smallmatrix}-1&1\\-1&1\end{smallmatrix}\bigr)$ and
$N^2=0$.  In $\Sigma_0$,
$\ln(-i\zeta)=\ln\zeta-i\pi/2$ and
$\ln(i\zeta)=\ln\zeta+i\pi/2$.  Moreover
\begin{equation}\label{eq:D0E0simple}
 e^{\frac i2\zeta\sigma_3}D_0E_0
 =\operatorname{diag}\bigl(e^{i\pi/4}e^{i\zeta/2},
 e^{-i\pi/4-i\pi\nu}e^{-i\zeta/2}\bigr).
\end{equation}
Substitution of \eqref{eq:Uoriginexact} into
\eqref{eq:Fmatrix}--\eqref{eq:bareP}, followed by Kummer's
transformation \cite[eq.~13.2.39]{NIST}, gives
\begin{equation}\label{eq:PACdecomp}
 \mathcal P(\zeta)=A(\zeta)+C(\zeta)\ln\zeta,
 \qquad C(\zeta)=\mathbf c(\zeta)\mathbf r^{\mathsf T},
\end{equation}
where $A$ is analytic at $0$ and
\begin{equation}\label{eq:cvector}
 \mathbf c(\zeta)=
 \begin{pmatrix}
 \displaystyle
 \frac{e^{i\pi/4}e^{-i\zeta/2}}{\Gamma(-\nu)}
 M(1+\nu,1;i\zeta)\\[3mm]
 \displaystyle
 -\frac{e^{-i\pi/4+i\pi\nu}e^{i\zeta/2}}{\Gamma(\nu)}
 M(1-\nu,1;-i\zeta)
 \end{pmatrix}.
\end{equation}
For completeness, the first row of $C$ is
$c_1(-1,1)$ because
$M(-\nu,1;-i\zeta)=e^{-i\zeta}M(1+\nu,1;i\zeta)$; the second is
$c_2(-1,1)$ because
$M(\nu,1;i\zeta)=e^{i\zeta}M(1-\nu,1;-i\zeta)$.  Thus no entrywise
cancellation is being suppressed in \eqref{eq:PACdecomp}.

\emph{Step 2: identification of the logarithmic matrix.}
Since $\mathbf r^{\mathsf T}\mathbf e=0$,
\eqref{eq:PACdecomp} gives $\mathcal P\mathbf e=A\mathbf e$.
We now compute this analytic column.  The lower-sign instance of
\cite[eq.~13.2.41]{NIST}, with $a=1+\nu$, $b=1$ and $z=i\zeta$,
and the upper-sign instance with $a=1-\nu$, $b=1$ and
$z=-i\zeta$, respectively give
\begin{align}
 U(-\nu,1;-i\zeta)
 &-\frac{\Gamma(1+\nu)}{\Gamma(-\nu)}e^{-i\zeta}
 U(1+\nu,1;i\zeta)\notag\\
 &=e^{-i\pi\nu}\Gamma(1+\nu)e^{-i\zeta}
 M(1+\nu,1;i\zeta),                                      \label{eq:connrow1}\\
 -\frac{\Gamma(1-\nu)}{\Gamma(\nu)}U(1-\nu,1;-i\zeta)
 &+e^{-i\zeta}U(\nu,1;i\zeta)\notag\\
 &=e^{-i\pi\nu}\Gamma(1-\nu)
 M(1-\nu,1;-i\zeta).                                    \label{eq:connrow2}
\end{align}
The sign choices are forced by the branches: in
\eqref{eq:connrow1}, $e^{-\pi i}(i\zeta)=-i\zeta$ reaches the
principal value used in \eqref{eq:Fmatrix}; in
\eqref{eq:connrow2}, $e^{\pi i}(-i\zeta)=i\zeta$ does so.
Multiplying \eqref{eq:connrow1}--\eqref{eq:connrow2} by the diagonal
factors in \eqref{eq:D0E0simple} yields
\begin{equation}\label{eq:Aec}
 A(\zeta)\mathbf e=\frac{2\pi i}{\gamma}\,\mathbf c(\zeta).
\end{equation}
Indeed, the two scalar constants reduce to the same number because
\begin{equation}\label{eq:gammaconstorigin}
 e^{-i\pi\nu}\Gamma(1+\nu)\Gamma(-\nu)
 =-e^{-i\pi\nu}\Gamma(\nu)\Gamma(1-\nu)
 =\frac{2\pi i}{\gamma},
\end{equation}
using reflection and
$\gamma=1-e^{2\pi i\nu}=-2ie^{i\pi\nu}\sin(\pi\nu)$.
Equations \eqref{eq:PACdecomp} and \eqref{eq:Aec} therefore imply
\[
 C=\frac{\gamma}{2\pi i}A\mathbf e\mathbf r^{\mathsf T}
   =\frac{\gamma}{2\pi i}AN,
\]
and hence, throughout $\Sigma_0$,
\begin{equation}\label{eq:origincentralproved}
 \mathcal P(\zeta)=A(\zeta)
 \left[I+\frac{\gamma}{2\pi i}N\ln\zeta\right].
\end{equation}

\emph{Step 3: invertibility of the analytic prefactor.}
It remains to establish that $A(0)$ is invertible.
Write $h_a=\psi(a)-2\psi(1)$.  The $k=0$ term of
\eqref{eq:Uoriginexact} and \eqref{eq:D0E0simple} give
\begin{equation}\label{eq:A0explicit}
 A(0)=
 \begin{pmatrix}
 \dfrac{e^{i\pi/4}}{\Gamma(-\nu)}
       (-h_{-\nu}+i\pi/2)&
 \dfrac{e^{i\pi/4}}{\Gamma(-\nu)}
       (h_{1+\nu}+i\pi/2)\\[3mm]
 \dfrac{e^{-i\pi/4+i\pi\nu}}{\Gamma(\nu)}
       (h_{1-\nu}-i\pi/2)&
 -\dfrac{e^{-i\pi/4+i\pi\nu}}{\Gamma(\nu)}
       (h_{\nu}+i\pi/2)
 \end{pmatrix}.
\end{equation}
Set $X=h_{-\nu}-i\pi/2$ and $Y=h_\nu+i\pi/2$.  The recurrence
$\psi(1+z)=\psi(z)+z^{-1}$ reduces the determinant of the bracketed
part of \eqref{eq:A0explicit} to
\[
 XY-(Y+\nu^{-1})(X-\nu^{-1})
 =\frac{Y-X}{\nu}+\frac1{\nu^2}.
\]
Reflection,
$\psi(1-\nu)-\psi(\nu)=\pi\cot(\pi\nu)$, gives
$Y-X=-\pi\cot(\pi\nu)-\nu^{-1}+i\pi$, while
$\Gamma(\nu)\Gamma(-\nu)=-\pi/(\nu\sin\pi\nu)$.  Consequently
\begin{align*}
 \det A(0)
 &=\frac{e^{i\pi\nu}}{\Gamma(-\nu)\Gamma(\nu)}
   \frac{\pi\bigl(i-\cot(\pi\nu)\bigr)}{\nu}\\
 &=-e^{i\pi\nu}\sin(\pi\nu)
   \bigl(i-\cot(\pi\nu)\bigr)=1.
\end{align*}
Thus $A$ is analytic and invertible in a neighbourhood of the origin.

\emph{Step 4: the two noncentral sectors.}
On $\arg\zeta=\pi/2$, oriented inward, the plus side is $\Sigma_0$.
Lemma~\ref{lem:modeljumps} therefore says
\[
 \mathcal P\big|_{\Sigma_0}
 =\mathcal P\big|_{\Sigma_+}
 \begin{pmatrix}1&\gamma e^{2\kappa s}\\0&1\end{pmatrix},
\]
so $\mathcal P|_{\Sigma_+}=\mathcal P|_{\Sigma_0}\mathcal H_+$.
On $\arg\zeta=-\pi/2$ the plus side is $\Sigma_-$, and the same lemma
gives $\mathcal P|_{\Sigma_-}=\mathcal P|_{\Sigma_0}\mathcal H_-$.
With $\check{\mathcal P}:=A$, equation
\eqref{eq:origincentralproved} therefore becomes exactly
\eqref{eq:modelorigin} in all three sectors.

\emph{Step 5: the third ray.}
Let $\zeta=-r$, $r>0$, and take the principal boundary values
$\ln\zeta_\pm=\ln r\pm i\pi$.  Since $N^2=0$,
\[
 \left[I+\frac\gamma{2\pi i}N(\ln r-i\pi)\right]^{-1}
 \left[I+\frac\gamma{2\pi i}N(\ln r+i\pi)\right]
 =I+\gamma N.
\]
The plus side of the inward-oriented ray is $\Sigma_+$, hence
\begin{align*}
 \mathcal P_-^{-1}\mathcal P_+
 &=\mathcal H_-^{-1}(I+\gamma N)\mathcal H_+\\
 &=\begin{pmatrix}e^{-2v}&0\\0&e^{2v}\end{pmatrix}
 =e^{-2\kappa s\sigma_3},
\end{align*}
where the middle equality follows from
$1-\gamma=e^{-2v}$ and $\gamma e^{2v}=e^{2v}-1$.
This is \eqref{eq:thirdray} and completes the proof.
\end{proof}

\begin{remark}[origin multipliers and bare-model sector matrices]
\label{rem:originvssector}
Two distinct families of unitriangular matrices occur in the
description of $\mathcal P$, and they must be kept separate.

\emph{(i) What each family is.}  The matrices $T_\pm$ of
\eqref{eq:sectors}, with entries $\tau_+=-\gamma$ and
$\tau_-=-\gamma e^{4v}$ from \eqref{eq:tauvalues}, occur \emph{inside
the defining formula} \eqref{eq:bareP} for the bare model, to the right
of the diagonal dressing $e^{\frac i2\zeta\sigma_3}D_0$; they are what
Lemma~\ref{lem:modeljumps} and Lemma~\ref{lem:stokes} use.  The matrices
$\mathcal H_\pm$ of \eqref{eq:originsectors} occur in the \emph{local
expansion at the origin}, to the right of the logarithmic bracket, in
\cite[RHP~2.3(3)]{BDIK2}.  They are different matrices and they are not
interchangeable.

\emph{(ii) How they are related.}  With
\begin{equation}\label{eq:Dv}
 D_v:=e^{v\sigma_3}=\operatorname{diag}(e^{v},e^{-v}),
\end{equation}
one has \emph{exactly}
\begin{equation}\label{eq:HtoT}
 T_+=D_v^{-1}\mathcal H_+D_v,
 \qquad
 T_-=D_v^{-1}\mathcal H_-D_v .
\end{equation}
Indeed, conjugation by a diagonal $D$ multiplies a $(1,2)$ entry by
$D_{22}/D_{11}$ and a $(2,1)$ entry by $D_{11}/D_{22}$; here these are
$e^{-2v}$ and $e^{2v}$, so $(-\gamma e^{2v})e^{-2v}=-\gamma=\tau_+$ and
$(-\gamma e^{2v})e^{2v}=-\gamma e^{4v}=\tau_-$, by
\eqref{eq:tauvalues}.  What separates the two families is therefore
precisely the diagonal factor $D_v$ --- which is exactly the part of the
dressing in \eqref{eq:bareP} that is absent from the origin expansion.

\emph{(iii) Compatibility with the third-ray jump.}
The origin formula and the third-ray jump are not independent.  The
last step of Lemma~\ref{qmi:third} proves their compatibility by the
exact identity below.  
Write $N=\bigl(\begin{smallmatrix}-1&1\\-1&1\end{smallmatrix}\bigr)$, so
that $N^2=0$.  Crossing $\arg\zeta=\pi$ replaces $\ln\zeta$ by
$\ln\zeta-2\pi i$ and changes the sector factor from $\mathcal H_+$ to
$\mathcal H_-$; since $\check{\mathcal P}$ is analytic there,
\eqref{eq:modelorigin} \emph{forces} the jump on that ray to be
\begin{equation}\label{eq:monodromycheck}
 \mathcal H_-^{-1}\bigl(I+\gamma N\bigr)\mathcal H_+
 =\begin{pmatrix}1&0\\ \gamma e^{2v}&1\end{pmatrix}
 \begin{pmatrix}1-\gamma&\gamma\\-\gamma&1+\gamma\end{pmatrix}
 \begin{pmatrix}1&-\gamma e^{2v}\\0&1\end{pmatrix}
 =\begin{pmatrix}e^{-2v}&0\\0&e^{2v}\end{pmatrix}
 =e^{-2\kappa s\sigma_3},
\end{equation}
using $1-\gamma=e^{-2v}$ and $\gamma e^{2v}=e^{2v}-1$.  This is
\eqref{eq:thirdray}.  The same computation with $T_\pm$ in place of
$\mathcal H_\pm$ does not satisfy the required identity, confirming
that the multipliers used in Lemma~\ref{qmi:third} are the unique ones
compatible with \eqref{eq:monodromycheck}.
\end{remark}

\begin{remark}[derivation of the third-ray jump]
\label{rem:obstruction}
A direct comparison of the four continued $U$-functions on
$\arg\zeta=\pi$ is possible through the full monodromy formula
\cite[eq.~13.2.12]{NIST}.  The proof of Lemma~\ref{qmi:third} instead
derives the exact logarithmic factor in
the central sector from \cite[eq.~13.2.9]{NIST}, identifies its matrix
coefficient with the branch-sensitive connection formula
\cite[eq.~13.2.41]{NIST}, and then transports it through the two Stokes
jumps already proved in Lemma~\ref{lem:modeljumps}.  The third jump is
then forced by the two principal boundary values of $\ln\zeta$.

This route proves at the same time the origin-sector multipliers,
analyticity and invertibility of $\check{\mathcal P}$, and the
third-ray jump.  Every identity used is valid for
$\nu\in i\mathbb R\setminus\{0\}$, with no restriction on the sign of
$\Im\nu$; hence the signed case $v=-\omega<0$ requires no analytic
continuation from the head-side range.
\end{remark}

\subsection{Stokes cancellation in both sectors}
\label{sec:stokes}

Write
\begin{equation}\label{eq:Qdef}
 \mathcal Q(\zeta):=\mathcal P(\zeta)\,
 \bigl[\zeta^{\nu\sigma_3}e^{\frac i2\zeta\sigma_3}D_0\bigr]^{-1},
\end{equation}
the object whose large-$\zeta$ expansion is
\eqref{eq:P25} below.  In $\Sigma_0$, where $\mathcal S=E_0$,
\begin{equation}\label{eq:Qzero}
 \mathcal Q\big|_{\Sigma_0}
 =\mathcal F\,e^{\frac i2\zeta\sigma_3}D_0E_0D_0^{-1}
 e^{-\frac i2\zeta\sigma_3}\zeta^{-\nu\sigma_3}
 =\mathcal F\,E_0\,\zeta^{-\nu\sigma_3},
\end{equation}
all four factors being diagonal and hence commuting.  Every entry is a
constant multiple of a single $U$-function, and $\arg(e^{\mp i\pi/2}\zeta)
\in(-\pi,\pi)$ there; no exponential is present.  In $\Sigma_\pm$ this
is no longer so, and the following lemma is the reason the estimates of
\S\ref{sec:gate7} are legitimate.

\begin{lemma}[exact Stokes cancellation]\label{lem:stokes}
\emph{(i) In $\Sigma_+$.}  Moving $T_+$ through the diagonal factor,
\begin{equation}\label{eq:Qplus}
 \mathcal Q\big|_{\Sigma_+}
 =\mathcal F\begin{pmatrix}1&\mathsf T\\0&1\end{pmatrix}E_0
 \zeta^{-\nu\sigma_3},
 \qquad
 \mathsf T:=\tau_+e^{i\pi(\frac12-\nu)}e^{i\zeta},
\end{equation}
so that the second column of $\mathcal Q|_{\Sigma_+}$ involves
$\mathsf T\,\mathcal F_{j1}+\mathcal F_{j2}$, $j=1,2$.  The constant in
$\mathsf T$ is
\begin{equation}\label{eq:Tconst}
 \tau_+e^{i\pi(\frac12-\nu)}
 =-\frac{2\pi}{\Gamma(\nu)\Gamma(1-\nu)}
 =+\frac{2\pi}{\Gamma(1+\nu)\Gamma(-\nu)}
 =-2\sin(\pi\nu).
\end{equation}
Applying \cite[eq.~13.2.44]{NIST} with $b=1$ to the continued values
$U(1+\nu;e^{i\pi/2}\zeta)$ and $U(\nu;e^{i\pi/2}\zeta)$ produces, in each
of $\mathcal F_{12}$ and $\mathcal F_{22}$, exactly one term
proportional to $e^{i\zeta}$ times $\mathcal F_{11}$ resp.\
$\mathcal F_{21}$, with coefficient
\begin{equation}\label{eq:Dconst}
 -\frac{2\pi}{\Gamma(1+\nu)\Gamma(-\nu)}=+2\sin(\pi\nu),
\end{equation}
and therefore
\begin{equation}\label{eq:cancelplus}
 \underbrace{\bigl(-2\sin(\pi\nu)\bigr)}_{\text{from }\mathsf T}
 +\underbrace{\bigl(+2\sin(\pi\nu)\bigr)}_{\text{from 13.2.44}}=0 .
\end{equation}
\emph{(ii) In $\Sigma_-$.}  Likewise
\begin{equation}\label{eq:Qminus}
 \mathcal Q\big|_{\Sigma_-}
 =\mathcal F\begin{pmatrix}1&0\\\mathsf Y&1\end{pmatrix}E_0
 \zeta^{-\nu\sigma_3},
 \qquad
 \mathsf Y:=\tau_-e^{-i\pi(\frac12-\nu)}e^{-i\zeta},
 \qquad
 \tau_-e^{-i\pi(\frac12-\nu)}=2e^{-2\pi i\nu}\sin(\pi\nu),
\end{equation}
the first column involves $\mathcal F_{j1}+\mathsf Y\mathcal F_{j2}$,
and the minus-continuation of \cite[eq.~13.2.44]{NIST} produces in each
of $\mathcal F_{11},\mathcal F_{21}$ exactly one term proportional to
$e^{-i\zeta}$ times $\mathcal F_{12}$ resp.\ $\mathcal F_{22}$, with
coefficient $-2e^{-2\pi i\nu}\sin(\pi\nu)$, so that again the two
cancel exactly.
\emph{Consequently, in each of the three sectors every entry of
$\mathcal Q$ is a constant multiple of a single $U$-function evaluated
at a point of phase at most $\pi$ in modulus, and no exponential factor
survives.}
\end{lemma}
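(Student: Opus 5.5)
The plan is a direct computation in each of the two non-central sectors. First I move the triangular sector matrix through the diagonal dressing. Then I apply the lifted connection formula of \cite[eq.~13.2.44]{NIST} to the $U$-functions whose lifted phase leaves $(-\pi,\pi]$. Finally I check that the resulting exponential terms cancel coefficient by coefficient.

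\emph{Step 1 (moving $T_\pm$).} In $\Sigma_+$, \eqref{eq:bareP} gives $\mathcal P=\mathcal F\,e^{\frac i2\zeta\sigma_3}D_0T_+E_0$. For any diagonal $\Delta$ one has $\Delta\bigl(\begin{smallmatrix}1&t\\0&1\end{smallmatrix}\bigr)=\bigl(\begin{smallmatrix}1&t\Delta_{11}/\Delta_{22}\\0&1\end{smallmatrix}\bigr)\Delta$. I apply this with $\Delta=e^{\frac i2\zeta\sigma_3}D_0$, for which $\Delta_{11}/\Delta_{22}=e^{i\zeta}e^{i\pi(\frac12-\nu)}$. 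Hence $\Delta T_+=\bigl(\begin{smallmatrix}1&\mathsf T\\0&1\end{smallmatrix}\bigr)\Delta$. Since $\Delta$, $E_0$ and $\zeta^{\nu\sigma_3}$ are diagonal and commute, right-multiplying by $[\zeta^{\nu\sigma_3}\Delta]^{-1}$ gives \eqref{eq:Qplus}.

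The constant \eqref{eq:Tconst} follows from Lemma~\ref{lem:gammaratio}: $\tau_+=2i\sin(\pi\nu)e^{i\pi\nu}$, so $\tau_+e^{i\pi/2}e^{-i\pi\nu}=i\cdot 2i\sin(\pi\nu)=-2\sin(\pi\nu)$. The two Gamma forms then come from reflection and \eqref{eq:gammaprod}. The computation in $\Sigma_-$ is identical, with the $(2,1)$ entry multiplied by $\Delta_{22}/\Delta_{11}$. It gives $\mathsf Y$ with constant $\tau_-e^{-i\pi(\frac12-\nu)}=2i\sin(\pi\nu)e^{-3i\pi\nu}\cdot(-i)e^{i\pi\nu}=2e^{-2\pi i\nu}\sin(\pi\nu)$.

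\emph{Step 2 (connection formula).} In $\Sigma_+$ the lifted phase $\arg\zeta+\pi/2$ of $e^{i\pi/2}\zeta$ lies in $(\pi,3\pi/2)$. I rewrite $e^{i\pi/2}\zeta=e^{i\pi}(e^{-i\pi/2}\zeta)$, where $e^{-i\pi/2}\zeta$ has principal phase in $(0,\pi/2)$. I then use the $b=1$ instance of \cite[eq.~13.2.44]{NIST}, which expresses $U(a,1;e^{\pm i\pi}w)$ as a principal $U(a,1;\cdot)$ plus a multiple of $e^{w}U(1-a,1;w)$. The parameter pairing works out: for $a=1+\nu$ one has $1-a=-\nu$, giving $\mathcal F_{11}$; for $a=\nu$ one has $1-a=1-\nu$, giving $\mathcal F_{21}$ up to its prefactor. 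Since $w=-i\zeta$, the extra term carries $e^{-i\zeta}$.

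\emph{Step 3 (cancellation and phases).} In the second column, $\mathsf T\mathcal F_{j1}$ contributes $e^{+i\zeta}\mathcal F_{j1}$ with coefficient $-2\sin(\pi\nu)$. I will verify that, once the prefactors $-ie^{i\pi\nu}\Gamma(1+\nu)/\Gamma(-\nu)$ and $e^{2\pi i\nu}$ of $\mathcal F_{12},\mathcal F_{22}$ and the prefactor of $\mathcal F_{21}$ are included, the term produced by 13.2.44 is exactly $+2\sin(\pi\nu)$ times the matching entry. The Gamma product should reduce to $-2\pi/(\Gamma(1+\nu)\Gamma(-\nu))$ via \eqref{eq:gammaratio}--\eqref{eq:gammaprod}, giving \eqref{eq:cancelplus}. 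The surviving terms are constant multiples of a single $U$ at a point of phase in $(-\pi,\pi]$.

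\emph{Main obstacle.} The whole $e^{\pm i\zeta}$ bookkeeping (whether the exponent is $+i\zeta$ or $-i\zeta$) and the sign choice in 13.2.44 must be checked carefully, for both rows. The signs $\pm$ in 13.2.44 are dictated by the lifted phases in \eqref{eq:Ubranch}. Getting these wrong flips the sign of the Stokes coefficient and destroys the cancellation. I would first redo the $\Sigma_+$ row-one computation explicitly, tracking every phase. I would then confirm that row two yields the same coefficient ratio, which reduces to checking
\[
\frac{e^{2\pi i\nu}}{ie^{i\pi\nu}\Gamma(1-\nu)/\Gamma(\nu)}=\frac{-ie^{i\pi\nu}\Gamma(1+\nu)/\Gamma(-\nu)}{1}
\]
up to the 13.2.44 factors, i.e.\ $\Gamma(1+\nu)\Gamma(1-\nu)/(\Gamma(\nu)\Gamma(-\nu))=-\nu^2$.

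\emph{The sector $\Sigma_-$.} It is handled by the mirror argument, continuing $U(-\nu;e^{-i\pi/2}\zeta)$ and $U(1-\nu;e^{-i\pi/2}\zeta)$ by $e^{-i\pi}$ from $e^{i\pi/2}\zeta$. This yields terms in $e^{-i\zeta}\mathcal F_{j2}$ with coefficient $-2e^{-2\pi i\nu}\sin(\pi\nu)$, which cancel against $\mathsf Y$.
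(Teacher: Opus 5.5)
Your plan is the paper's: move $T_\pm$ through $e^{\frac i2\zeta\sigma_3}D_0$, rewrite the continued $U$-values with the $b=1$ case of \cite[eq.~13.2.44]{NIST}, and match coefficients. Several pieces are correct: Step~1, both constants $-2\sin(\pi\nu)$ and $2e^{-2\pi i\nu}\sin(\pi\nu)$, the parameter pairing, and your identity $\Gamma(1+\nu)\Gamma(1-\nu)=-\nu^2\Gamma(\nu)\Gamma(-\nu)$.

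The gap is in Step~2, which misstates the connection formula at exactly the point the lemma is about. Solved for the upper-side value, the formula reads
\[
 U(a,1;e^{\pi i}w)=e^{-2a\pi i}\,U(a,1;e^{-\pi i}w)
 +\frac{2\pi i\,e^{-a\pi i}}{\Gamma(a)^2}\,e^{-w}\,U(1-a,1;w).
\]
The factor is $e^{-w}$, not $e^{w}$. The difference of two continuations of $U(a,1;\cdot)$ must solve Kummer's equation in $z=e^{\pm\pi i}w$. The function $e^{-w}U(1-a,1;w)=e^{z}U(1-a,1;-z)$ does, whereas $e^{w}U(1-a,1;w)$ does not. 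With $w=-i\zeta$, the extra term therefore carries $e^{+i\zeta}$. As you wrote it, Step~2 produces an $e^{-i\zeta}$ term, which cannot cancel the $e^{+i\zeta}\mathcal F_{j1}$ term coming from $\mathsf T$ in Step~3, so your Steps~2 and~3 contradict each other. Your $\Sigma_-$ claim (terms $e^{-i\zeta}\mathcal F_{j2}$ from $w=i\zeta$) is also correct only with $e^{-w}$.

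With the sign fixed, the coefficient check you defer is short, and it is the whole content of the lemma, so write it out. For the $(1,2)$ entry, $a=1+\nu$ gives
$(-ie^{i\pi\nu}\Gamma(1+\nu)/\Gamma(-\nu))\cdot 2\pi i\,e^{-(1+\nu)\pi i}/\Gamma(1+\nu)^2=-2\pi/(\Gamma(1+\nu)\Gamma(-\nu))=2\sin(\pi\nu)$.
For the $(2,2)$ entry, $a=\nu$ gives
$e^{2\pi i\nu}\cdot 2\pi i\,e^{-i\pi\nu}/\Gamma(\nu)^2\cdot\Gamma(\nu)/(ie^{i\pi\nu}\Gamma(1-\nu))=2\pi/(\Gamma(\nu)\Gamma(1-\nu))=2\sin(\pi\nu)$.
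Both cancel against $\mathsf T=-2\sin(\pi\nu)e^{i\zeta}$.

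In $\Sigma_-$, use the lower-side form
\[
 U(a,1;e^{-\pi i}w)=e^{2a\pi i}\,U(a,1;e^{\pi i}w)
 -\frac{2\pi i\,e^{a\pi i}}{\Gamma(a)^2}\,e^{-w}\,U(1-a,1;w),
 \qquad w=e^{i\pi/2}\zeta,
\]
with $a=1-\nu$ in row two and $a=-\nu$ in row one (use \eqref{eq:gammaprod} in row one). Each gives the $e^{-i\zeta}$-coefficient $-2e^{-2\pi i\nu}\sin(\pi\nu)$, relative to $\mathcal F_{22}$ and $\mathcal F_{12}$ respectively, which cancels $\mathsf Y$.

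The surviving terms are multiples of $U(a,1;e^{-\pi i}w)$ in $\Sigma_+$, at phase $\arg\zeta-\tfrac{3\pi}{2}\in(-\pi,-\tfrac\pi2)$. In $\Sigma_-$ they are multiples of $U(a,1;e^{\pi i}w)$, at phase $\arg\zeta+\tfrac{3\pi}{2}\in(\tfrac\pi2,\pi)$. This gives the final assertion of the lemma.
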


\begin{proof}
\emph{(i).}  For a diagonal $\Theta$ one has
$\Theta\bigl(\begin{smallmatrix}1&\tau\\0&1\end{smallmatrix}\bigr)
\Theta^{-1}
=\bigl(\begin{smallmatrix}1&\tau\Theta_{11}/\Theta_{22}\\0&1
\end{smallmatrix}\bigr)$; with $\Theta=e^{\frac i2\zeta\sigma_3}D_0$,
$\Theta_{11}/\Theta_{22}=e^{i\zeta}e^{i\pi(\frac12-\nu)}$, which gives
\eqref{eq:Qplus}.  For \eqref{eq:Tconst},
\[
 \tau_+e^{i\pi(\frac12-\nu)}
 =\frac{2\pi ie^{i\pi\nu}}{\Gamma(\nu)\Gamma(1-\nu)}
 \cdot e^{\frac{i\pi}2}e^{-i\pi\nu}
 =\frac{2\pi i\cdot i}{\Gamma(\nu)\Gamma(1-\nu)}
 =-\frac{2\pi}{\Gamma(\nu)\Gamma(1-\nu)}
 =-2\sin(\pi\nu),
\]
and the middle equality of \eqref{eq:Tconst} is \eqref{eq:gammaprod}.

Now apply \cite[eq.~13.2.44]{NIST}, which for $b=1$ reads
\begin{equation}\label{eq:acrosscut}
 e^{a\pi i}U(a,1;e^{\pi i}z)-e^{-a\pi i}U(a,1;e^{-\pi i}z)
 =\frac{2\pi i\,e^{-z}}{\Gamma(a)^2}\,U(1-a,1;z),
\end{equation}
solved for the value on the upper side of the cut:
\begin{equation}\label{eq:acrosscut2}
 U(a,1;e^{\pi i}z)
 =e^{-2a\pi i}U(a,1;e^{-\pi i}z)
 +\frac{2\pi i\,e^{-a\pi i}e^{-z}}{\Gamma(a)^2}\,U(1-a,1;z).
\end{equation}
Take $z=e^{-i\pi/2}\zeta$, so that $e^{\pi i}z=e^{i\pi/2}\zeta$,
$e^{-z}=e^{i\zeta}$, and, for $\arg\zeta\in(\tfrac\pi2,\pi)$,
$\arg z\in(0,\tfrac\pi2)$.

\emph{Entry $(1,2)$.}  Here $a=1+\nu$, so $1-a=-\nu$ and
$U(1-a,1;z)=U(-\nu;e^{-i\pi/2}\zeta)=\mathcal F_{11}$.  The
$e^{i\zeta}$-term of $\mathcal F_{12}=-ie^{i\pi\nu}
\frac{\Gamma(1+\nu)}{\Gamma(-\nu)}U(1+\nu;e^{i\pi/2}\zeta)$ is therefore
\begin{align*}
 -ie^{i\pi\nu}\frac{\Gamma(1+\nu)}{\Gamma(-\nu)}
 \cdot\frac{2\pi i\,e^{-(1+\nu)\pi i}}{\Gamma(1+\nu)^2}
 \;e^{i\zeta}\mathcal F_{11}
 &=(-i)(2\pi i)\frac{e^{i\pi\nu}e^{-i\pi}e^{-i\pi\nu}}
 {\Gamma(-\nu)\Gamma(1+\nu)}\;e^{i\zeta}\mathcal F_{11}\\
 &=-\frac{2\pi}{\Gamma(1+\nu)\Gamma(-\nu)}
 \;e^{i\zeta}\mathcal F_{11},
\end{align*}
using $(-i)(2\pi i)=2\pi$ and $e^{-i\pi}=-1$.  This is
\eqref{eq:Dconst}, and adding the $\mathsf T$-term
$-2\sin(\pi\nu)e^{i\zeta}\mathcal F_{11}$ gives \eqref{eq:cancelplus}.

\emph{Entry $(2,2)$.}  Here $a=\nu$, $1-a=1-\nu$, and
$U(1-\nu;e^{-i\pi/2}\zeta)=\mathcal F_{21}/\bigl(ie^{i\pi\nu}
\frac{\Gamma(1-\nu)}{\Gamma(\nu)}\bigr)$.  The $e^{i\zeta}$-term of
$\mathcal F_{22}=e^{2\pi i\nu}U(\nu;e^{i\pi/2}\zeta)$ is
\[
 e^{2\pi i\nu}\cdot\frac{2\pi i\,e^{-\nu\pi i}}{\Gamma(\nu)^2}
 \cdot\frac{\Gamma(\nu)}{i\,e^{i\pi\nu}\Gamma(1-\nu)}
 \;e^{i\zeta}\mathcal F_{21}
 =\frac{2\pi}{\Gamma(\nu)\Gamma(1-\nu)}\;e^{i\zeta}\mathcal F_{21}
 =2\sin(\pi\nu)\,e^{i\zeta}\mathcal F_{21},
\]
and the $\mathsf T$-term is $-2\sin(\pi\nu)e^{i\zeta}\mathcal F_{21}$;
they cancel.

\emph{(ii).}  Now $\Theta_{22}/\Theta_{11}=e^{-i\zeta}
e^{-i\pi(\frac12-\nu)}$ gives \eqref{eq:Qminus}, and
\[
 \tau_-e^{-i\pi(\frac12-\nu)}
 =\frac{2\pi ie^{-3\pi i\nu}}{\Gamma(\nu)\Gamma(1-\nu)}
 e^{-\frac{i\pi}2}e^{i\pi\nu}
 =\frac{2\pi i(-i)e^{-2\pi i\nu}}{\Gamma(\nu)\Gamma(1-\nu)}
 =2e^{-2\pi i\nu}\sin(\pi\nu).
\]
For $\arg\zeta\in(-\pi,-\tfrac\pi2)$ it is the \emph{first} column that
is continued: $\arg(e^{-i\pi/2}\zeta)\in(-\tfrac{3\pi}2,-\pi)$.  Solving
\eqref{eq:acrosscut} for the value on the lower side,
\[
 U(a,1;e^{-\pi i}w)=e^{2a\pi i}U(a,1;e^{\pi i}w)
 -\frac{2\pi i\,e^{a\pi i}e^{-w}}{\Gamma(a)^2}U(1-a,1;w),
\]
and take $w=e^{i\pi/2}\zeta$, so $e^{-\pi i}w=e^{-i\pi/2}\zeta$,
$e^{-w}=e^{-i\zeta}$, $\arg w\in(-\tfrac\pi2,0)$.
\emph{Entry $(2,1)$:} $a=1-\nu$, $1-a=\nu$,
$U(\nu;e^{i\pi/2}\zeta)=e^{-2\pi i\nu}\mathcal F_{22}$; the
$e^{-i\zeta}$-term of $\mathcal F_{21}$ is
\begin{align*}
 ie^{i\pi\nu}\frac{\Gamma(1-\nu)}{\Gamma(\nu)}\cdot
 \Bigl(-\frac{2\pi i\,e^{(1-\nu)\pi i}}{\Gamma(1-\nu)^2}\Bigr)
 e^{-2\pi i\nu}\;e^{-i\zeta}\mathcal F_{22}
 &=-\frac{2\pi e^{-2\pi i\nu}}{\Gamma(\nu)\Gamma(1-\nu)}
 e^{-i\zeta}\mathcal F_{22}\\
 &=-2e^{-2\pi i\nu}\sin(\pi\nu)\,e^{-i\zeta}\mathcal F_{22},
\end{align*}
using $i\cdot(-2\pi i)e^{i\pi\nu}e^{i\pi}e^{-i\pi\nu}=-2\pi$; adding the
$\mathsf Y$-term $+2e^{-2\pi i\nu}\sin(\pi\nu)e^{-i\zeta}
\mathcal F_{22}$ gives zero.
\emph{Entry $(1,1)$:} $a=-\nu$, $1-a=1+\nu$,
$U(1+\nu;e^{i\pi/2}\zeta)=\mathcal F_{12}\big/
\bigl(-ie^{i\pi\nu}\frac{\Gamma(1+\nu)}{\Gamma(-\nu)}\bigr)$; the
$e^{-i\zeta}$-term of $\mathcal F_{11}$ is
\[
 -\frac{2\pi i\,e^{-\nu\pi i}}{\Gamma(-\nu)^2}\cdot
 \frac{\Gamma(-\nu)}{-i\,e^{i\pi\nu}\Gamma(1+\nu)}
 \;e^{-i\zeta}\mathcal F_{12}
 =-\frac{2\pi e^{-2\pi i\nu}}{\Gamma(-\nu)\Gamma(1+\nu)}
 e^{-i\zeta}\mathcal F_{12}
 =-2e^{-2\pi i\nu}\sin(\pi\nu)\,e^{-i\zeta}\mathcal F_{12}
\]
by \eqref{eq:gammaprod}, and again the $\mathsf Y$-term cancels it.
\end{proof}

\begin{remark}[role of the Stokes cancellation]
\label{rem:prohibited}
For $\nu=-i\omega/\pi$ one has $\sin(\pi\nu)=-i\sinh\omega$, so the two
constants that cancel in \eqref{eq:cancelplus} have modulus
$2\sinh\omega\asymp e^\omega$.  Since $\omega\le A\ln s$, either term
considered separately would introduce a positive power of $s$ into the
matching estimate of Lemma~\ref{lem:uniformmatching}.  Polynomial
dependence on $\omega$ is obtained only after the following
combinations are formed.
\emph{(a)} An individual continued-sheet $U$-function in $\Sigma_\pm$
does not have a purely algebraic expansion.  The combination appearing
in $\mathcal Q$ has the form
\emph{algebraic}~$+~e^{\pm i\zeta}\cdot$\emph{algebraic}, and becomes
algebraic only after the cancellation.
\emph{(b)} Similarly, the raw model $\mathcal F$ is combined with
$\mathcal S$ and the diagonal dressing before it is estimated.  Indeed,
the raw entry $\mathcal F_{12}$ carries
$|e^{i\pi\nu}\Gamma(1+\nu)/\Gamma(-\nu)|=|\nu|e^\omega$, whose
coefficients are not polynomial in $\omega$.
\end{remark}

\subsection{The asymptotic expansion and the endpoint
residues}\label{sec:gate7}

\begin{quotedthm}[assembled expansion,
{\cite[eq.~(2.5)]{BDIK2}}]\label{qt:P25}
As $\zeta\to\infty$, uniformly for $\arg\zeta\in(-\pi,\pi]$,
\begin{equation}\label{eq:P25}
 \mathcal P(\zeta)\sim
 \Bigl[I+\sum_{k\ge1}\frac{M_k}{k!}\,\zeta^{-k}\Bigr]\,
 \zeta^{\nu\sigma_3}e^{\frac i2\zeta\sigma_3}D_0 ,
\end{equation}
where
\begin{equation}\label{eq:Mk}
 M_k=\begin{pmatrix}
 \bigl((-\nu)_k\bigr)^2e^{-\frac{i\pi}2k}
 & i\bigl((1+\nu)_{k-1}\bigr)^2k\,
   e^{\frac{i\pi}2k-i\pi\nu}\dfrac{\Gamma(1+\nu)}{\Gamma(-\nu)}\\[2mm]
 -i\bigl((1-\nu)_{k-1}\bigr)^2k\,
   e^{-\frac{i\pi}2k+i\pi\nu}\dfrac{\Gamma(1-\nu)}{\Gamma(\nu)}
 & \bigl((\nu)_k\bigr)^2e^{\frac{i\pi}2k}
 \end{pmatrix}.
\end{equation}
\end{quotedthm}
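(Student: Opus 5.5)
The plan is to verify \eqref{eq:P25} directly, using $\mathcal Q$ of \eqref{eq:Qdef}. By definition,
\[
\mathcal P=\mathcal Q\,\zeta^{\nu\sigma_3}e^{\frac i2\zeta\sigma_3}D_0 ,
\]
so it suffices to show that, uniformly in $-\pi<\arg\zeta\le\pi$,
\[
\mathcal Q(\zeta)\sim I+\sum_{k\ge1}\frac{M_k}{k!}\zeta^{-k}.
\]
The key input is Lemma~\ref{lem:stokes}. It shows that in each of the three sectors every entry of $\mathcal Q$ is a constant multiple of a single Tricomi function, evaluated at a point whose lifted phase has modulus at most $\pi$, with no surviving exponential. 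On such arguments the standard expansion \cite[\S13.7]{NIST} applies uniformly on closed subsectors of $|\operatorname{ph}z|<3\pi/2$. For $b=1$ it reads
\[
U(a,1;z)\sim z^{-a}\sum_{j\ge0}\frac{((a)_j)^2}{j!}(-z)^{-j}.
\]
Since $|\operatorname{ph}|\le\pi$ lies strictly inside $3\pi/2$, uniformity up to and including the ray $\arg\zeta=\pi$ (taken as a one-sided boundary value) is automatic.

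First I will do the central sector $\Sigma_0$, where \eqref{eq:Qzero} gives $\mathcal Q=\mathcal F E_0\zeta^{-\nu\sigma_3}$. I will then compute entry by entry.

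\emph{Entry $(1,1)$.} We have $(e^{-i\pi/2}\zeta)^{\nu}=e^{-i\pi\nu/2}\zeta^{\nu}$, which cancels against $(E_0)_{11}\zeta^{-\nu}$. Moreover $-e^{-i\pi/2}\zeta=e^{i\pi/2}\zeta$, so the $j$-th term is $((-\nu)_j)^2e^{-i\pi j/2}\zeta^{-j}/j!$. This is $(M_k)_{11}/k!$.

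\emph{Entry $(1,2)$.} The power $(e^{i\pi/2}\zeta)^{-1-\nu}$ contributes $e^{-i\pi/2}e^{-i\pi\nu/2}\zeta^{-1-\nu}$. Combined with $-ie^{i\pi\nu}$ and $e^{-3i\pi\nu/2}\zeta^{\nu}$, the constant becomes $-e^{-i\pi\nu}$. After reindexing $k=j+1$ and using $1/(k-1)!=k/k!$, one gets exactly $i k((1+\nu)_{k-1})^2e^{i\pi k/2-i\pi\nu}\,\Gamma(1+\nu)/\Gamma(-\nu)\cdot\zeta^{-k}/k!$.

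\emph{Entries $(2,1)$ and $(2,2)$.} These are the same computation with $\nu\mapsto-\nu$ and the conjugate phases. I will check the factor $e^{2\pi i\nu}$ in $\mathcal F_{22}$ against $(E_0)_{22}=e^{-3i\pi\nu/2}$ and $(e^{i\pi/2}\zeta)^{-\nu}$.

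In $\Sigma_\pm$ I will use \eqref{eq:Qplus} and \eqref{eq:Qminus} together with the cancellations \eqref{eq:cancelplus}. In $\Sigma_+$ the second column of $\mathcal Q$ is $(\mathsf T\mathcal F_{j1}+\mathcal F_{j2})$. After substituting \eqref{eq:acrosscut2}, this becomes the principal-sheet multiple $e^{-2a\pi i}U(a;e^{-i\pi/2}\zeta)$ (suitably multiplied) with $a=1+\nu$ or $a=\nu$. The phase factor $e^{-2a\pi i}$ exactly compensates the change of branch in $z^{-a}$ between $e^{i\pi/2}\zeta$ and $e^{-\pi i}\cdot e^{i\pi/2}\zeta$. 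Hence the same series appears, and the coefficients again equal $M_k/k!$. $\Sigma_-$ is symmetric, acting on the first column.

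The main obstacle is this branch bookkeeping in $\Sigma_\pm$, namely confirming that after the Stokes cancellation the remaining single $U$-term reproduces the central-sector coefficients, rather than a copy differing by $e^{\pm2\pi i\nu}$. I would verify this first for entry $(1,2)$ in $\Sigma_+$ by tracking $(e^{i\pi/2}\zeta)^{-1-\nu}$ with the lifted phase $\arg\zeta+\pi/2$. The phase matching is then immediate from \eqref{eq:acrosscut2}. Finally, since the quoted statement is only an asymptotic expansion, truncated remainders $O_\nu(|\zeta|^{-K-1})$ follow from the corresponding $U$-remainders; no uniformity in $\nu$ is claimed here. That uniformity is the separate issue handled later in the matching lemma.
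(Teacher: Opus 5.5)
Your strategy is the one the paper itself indicates: it quotes the expansion from \cite{BDIK2} and remarks that Lemma~\ref{lem:stokes}, together with DLMF 13.7.3--13.7.4 applied entrywise in $\Sigma_0$, yields it. Your central-sector computation is correct; each entry of $\mathcal F E_0\zeta^{-\nu\sigma_3}$ reproduces $(M_k)_{ij}/k!$ exactly. The error is in the step you flagged as the main obstacle, the branch bookkeeping in $\Sigma_\pm$, where you evaluate the surviving term at the wrong point.

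In \eqref{eq:acrosscut2} take $z=e^{-i\pi/2}\zeta$ and $\arg\zeta\in(\tfrac\pi2,\pi)$. The continued value sits at $e^{\pi i}z$, with lifted phase $\arg\zeta+\tfrac\pi2$. The surviving term sits at $e^{-\pi i}z=e^{-2\pi i}\cdot e^{i\pi/2}\zeta$, with phase $\arg\zeta-\tfrac{3\pi}2\in(-\pi,-\tfrac\pi2)$. That is a full turn, not the half turn $e^{-\pi i}\cdot e^{i\pi/2}\zeta=e^{-i\pi/2}\zeta$ you wrote. So the surviving term is $e^{-2a\pi i}U(a,1;w)$, where $w$ is the same complex number $i\zeta$ taken on the principal sheet; it is not $U(a;-i\zeta)$.

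As you stated it, the step fails. A half turn changes $z^{-a}$ by $e^{\pm a\pi i}$, which the factor $e^{-2a\pi i}$ cannot compensate. It also replaces $(-w)^{-j}=(-i\zeta)^{-j}$ by $(i\zeta)^{-j}$. For the $(1,2)$ entry this gives the leading constant $e^{-2\pi i\nu}\Gamma(1+\nu)/\Gamma(-\nu)$ instead of $-e^{-i\pi\nu}\Gamma(1+\nu)/\Gamma(-\nu)$. The claimed agreement with $M_k$ is therefore false on your version.

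With the correct point the check goes through.
\begin{itemize}
\item \emph{Power factor.} One has $w^{-a}=e^{2\pi i a}\,(e^{i\pi/2}\zeta)^{-a}$, with the lifted phase on the right. So $e^{-2a\pi i}w^{-a}$ is exactly the lifted power.
\item \emph{Series.} Since $-w=-i\zeta$, the series $\sum_j((a)_j)^2(-w)^{-j}/j!$ is unchanged.
\item \emph{Check on $(1,2)$.} One gets
$-ie^{i\pi\nu}\tfrac{\Gamma(1+\nu)}{\Gamma(-\nu)}\,e^{-2\pi i\nu}\cdot(-i)e^{3i\pi\nu/2}\zeta^{-1-\nu}\cdot e^{-3i\pi\nu/2}\zeta^{\nu}
=-e^{-i\pi\nu}\tfrac{\Gamma(1+\nu)}{\Gamma(-\nu)}\zeta^{-1}$,
which is the central-sector constant.
\item \emph{The sector $\Sigma_-$.} Use the opposite full turn. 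The surviving term is $e^{2a\pi i}U(a,1;\cdot)$ at phase $\arg\zeta+\tfrac{3\pi}2\in(\tfrac\pi2,\pi)$, that is, at the complex number $-i\zeta$ on the principal sheet.
\end{itemize}
After these corrections every reduced phase lies in $[-\pi,\pi]$, so your argument is complete. This includes uniformity up to the one-sided ray $\arg\zeta=\pi$.
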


Lemma~\ref{lem:stokes} is precisely what makes \eqref{eq:P25} hold
uniformly in $\arg\zeta$ rather than in $\Sigma_0$ only; the algebraic
form of the coefficients then follows from
\cite[eqs.~13.7.3--13.7.4]{NIST} applied entrywise in $\Sigma_0$, where
by \eqref{eq:Qzero} each entry is a single $U$ on a principal sheet.

\begin{lemma}[global invertibility of the bare model]
\label{lem:Pinvertible}
For $\omega>0$ and every $\zeta\ne0$ in the model domain,
\[
 \det\mathcal P(\zeta)=1.
\]
In particular $\mathcal P(\zeta)$ is invertible away from the origin,
and the analytic factor $\check{\mathcal P}$ in
\eqref{eq:modelorigin} is invertible at the origin.
\end{lemma}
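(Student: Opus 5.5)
The plan is the standard Liouville argument for $\det\mathcal P$, built on the unimodularity of every factor in \eqref{eq:bareP} and of every model jump. Put $d(\zeta):=\det\mathcal P(\zeta)$ on $\C\setminus\{0\}$ minus the three rays.

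\emph{Step 1: $d$ is single-valued.} By \eqref{eq:bareP}, $d=\det\mathcal F\cdot\det e^{\frac i2\zeta\sigma_3}\cdot\det D_0\cdot\det\mathcal S$. The diagonal factors $e^{\frac i2\zeta\sigma_3}$ and $D_0$ have determinant $1$. In each sector, $\det\mathcal S=\det E_0=e^{\frac{i\pi\nu}2-\frac{3i\pi\nu}2}=e^{-i\pi\nu}$, because $T_\pm$ are unitriangular. All three model jumps are unimodular: the Stokes jumps \eqref{eq:modeljumps} are unitriangular, and the third-ray jump $e^{-2\kappa s\sigma_3}$ of Lemma~\ref{qmi:third} has determinant $1$. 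Hence $d_+=d_-$ on every ray, and $d$ extends analytically to $\C\setminus\{0\}$.

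\emph{Step 2: the origin is removable and $d(0)=1$.} By \eqref{eq:modelorigin}, $d=\det\check{\mathcal P}\cdot\det[I+\frac\gamma{2\pi i}N\ln\zeta]\cdot\det\mathcal H_\pm$. The bracket has determinant $1+\frac{\gamma}{2\pi i}\ln\zeta\,\Tr N=1$, since $N^2=0$ forces $\Tr N=0$. The matrices $\mathcal H_\pm$ are unitriangular. So near $0$ we have $d=\det\check{\mathcal P}=\det A$, which is analytic at $0$. Step 3 of Lemma~\ref{qmi:third} computed $\det A(0)=1$; this already shows that $\check{\mathcal P}=A$ is invertible at the origin, which is the second claim of the lemma.

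\emph{Step 3: behaviour at infinity.} Theorem~\ref{qt:P25}, uniform in $\arg\zeta$, gives $\mathcal P=(I+O(\zeta^{-1}))\zeta^{\nu\sigma_3}e^{\frac i2\zeta\sigma_3}D_0$. The three trailing factors each have determinant $1$, so $d(\zeta)=1+O(\zeta^{-1})$. Since $d$ is entire by Steps 1–2 and bounded with limit $1$, Liouville gives $d\equiv1$.

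\emph{Cross-check.} As an independent check, $\det\mathcal F$ can be computed in $\Sigma_0$ as a Wronskian-type combination of Kummer $U$-functions. Its value must equal $e^{i\pi\nu}$, which cancels $\det E_0=e^{-i\pi\nu}$. I would do this only if the uniformity of \eqref{eq:P25} were in doubt.

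The main obstacle is Step 3. One must make sure the asymptotic expansion really holds up to and including the ray $\arg\zeta=\pi$ with one-sided boundary values, so that no sector is left where $d$ could fail to tend to $1$. Lemma~\ref{lem:stokes} provides exactly this: it removes the exponentially large $e^{\pm i\zeta}$ terms. Even without it, $d$ is entire with at most exponential growth and equals $1$ in $\Sigma_0$, so identity-theorem continuation from $\Sigma_0$ would suffice.
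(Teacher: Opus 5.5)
Your argument is correct and is essentially the paper's proof: unimodular jumps on all three model rays, removability at the origin via \eqref{eq:modelorigin} (the nilpotent logarithmic bracket, the unitriangular $\mathcal H_\pm$, and the analytic $\check{\mathcal P}$), $\det\mathcal P=1+O(\zeta^{-1})$ from the uniform expansion \eqref{eq:P25}, and then Liouville. The one soft spot is your optional fallback remark: the $\Sigma_0$ asymptotics alone do not give $d=1$ on $\Sigma_0$, which would need the Wronskian evaluation $\det\mathcal F=e^{i\pi\nu}$; the main proof does not use that remark.
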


\begin{proof}
Every jump matrix in \eqref{eq:modeljumps} and \eqref{eq:thirdray} has
determinant one.  Hence $\det\mathcal P$ has no jump across any of the
three model rays.  At the origin, \eqref{eq:modelorigin} gives
\[
 \det\mathcal P
 =\det\check{\mathcal P}\,
  \det\!\left(I+\frac{\gamma}{2\pi i}N\ln\zeta\right)
  \det\mathcal H_\pm
 =\det\check{\mathcal P},
\]
with the last factor omitted in the central sector.  Here $N^2=0$ and
$\Tr N=0$, so the logarithmic bracket has determinant one, while
$\mathcal H_\pm$ are unitriangular.  Lemma~\ref{qmi:third} proves that
$\check{\mathcal P}$ is analytic and invertible at zero.  Thus
$\det\mathcal P$ has a removable singularity at zero.

After removal of the jumps and the origin, $\det\mathcal P$ is entire.
The leading term in \eqref{eq:P25} has determinant one because
$\det\zeta^{\nu\sigma_3}
=\det e^{\frac i2\zeta\sigma_3}
=\det D_0=1$.  Therefore
$\det\mathcal P(\zeta)=1+O(\zeta^{-1})$ at infinity, uniformly in the
model sectors.  Liouville's theorem yields
$\det\mathcal P\equiv1$.
\end{proof}

\begin{definition}[endpoint parametrices]\label{def:Pendpoints}
For $\lambda\in D_1\setminus\Sigma_T$ put
\begin{equation}\label{eq:P1}
 P^{(1)}(\lambda)
 =\underbrace{[2s(\lambda+1)]^{-\nu\sigma_3}\,D_0^{-1}\,e^{is\sigma_3}}
 _{=:A_{\mathrm{left}}(\lambda)}\;
 \mathcal P\bigl(\zeta(\lambda)\bigr)\;
 \underbrace{e^{-\frac i2(\zeta(\lambda)+2s)\sigma_3}}
 _{=:E_{\mathrm{right}}(\lambda)},
 \qquad \zeta(\lambda)=2s(\lambda-1),
\end{equation}
with $[2s(\lambda+1)]^{-\nu}$ the principal branch, legitimate since
$\Re(\lambda+1)>0$ on $D_1$, and
\begin{equation}\label{eq:Eright}
 E_{\mathrm{right}}(\lambda)=e^{-is\lambda\sigma_3}
 \qquad\text{because }\tfrac12\bigl(\zeta(\lambda)+2s\bigr)=s\lambda .
\end{equation}
For $\lambda\in D_{-1}\setminus\Sigma_T$ put
\begin{equation}\label{eq:Pm1}
 P^{(-1)}(\lambda)=\sigma_1P^{(1)}(-\lambda)\sigma_1 .
\end{equation}
\end{definition}

Equation \eqref{eq:P1} is \cite[eq.~(2.4)]{BDIK2} verbatim, and
\eqref{eq:Pm1} is \cite[eq.~(2.6)]{BDIK2}.  Note that
$A_{\mathrm{left}}$ is analytic and invertible on all of $D_1$, and that
$\sigma_1E_{\mathrm{right}}(-\lambda)\sigma_1
=\sigma_1e^{is\lambda\sigma_3}\sigma_1=e^{-is\lambda\sigma_3}
=E_{\mathrm{right}}(\lambda)$, so the same right factor serves both
endpoints.

\begin{lemma}[the exact conjugation identity]\label{lem:AL}
Let
\begin{equation}\label{eq:beta-a}
 \beta=\beta(\lambda)=[2s(\lambda+1)]^{\nu},
 \qquad
 a=a(\lambda)=\beta^{-1}\,e^{-\frac{i\pi}2(\frac12-\nu)}\,e^{is},
 \qquad
 \mathcal A=\operatorname{diag}(a,a^{-1}),
\end{equation}
so that $A_{\mathrm{left}}=\mathcal A$.  Then, with
$\Lambda:=\zeta^{\nu\sigma_3}e^{\frac i2\zeta\sigma_3}D_0
E_{\mathrm{right}}$,
\begin{equation}\label{eq:AL}
 \mathcal A\,\Lambda=P^{(\infty)}(\lambda)
 \qquad\text{exactly, for every }\lambda\in D_1\setminus[1-r_s,1],
\end{equation}
and consequently, inserting \eqref{eq:P25} into \eqref{eq:P1},
\begin{equation}\label{eq:conjug}
 P^{(1)}(\lambda)\bigl(P^{(\infty)}(\lambda)\bigr)^{-1}
 =I+\sum_{k=1}^{n}\frac{\mathcal AM_k\mathcal A^{-1}}{k!\,\zeta^{k}}
 +\mathcal A\,\mathcal E_n(\zeta)\,\mathcal A^{-1},
\end{equation}
$\mathcal E_n$ being the remainder of \eqref{eq:P25} after $n$ terms.
\end{lemma}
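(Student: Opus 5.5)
The identity \eqref{eq:AL} is purely algebraic, so the plan is to reduce both sides to diagonal matrices and compare them entry by entry. Every factor of $\Lambda=\zeta^{\nu\sigma_3}e^{\frac i2\zeta\sigma_3}D_0E_{\mathrm{right}}$ is diagonal, and so is $\mathcal A$. The product $\mathcal A\Lambda$ is therefore $\operatorname{diag}(X,X^{-1})$, where
\[
X=a\,\zeta^{\nu}e^{\frac i2\zeta}e^{\frac{i\pi}2(\frac12-\nu)}e^{-is\lambda}.
\]
It is enough to show $X=w^{\nu}$ with $w=(\lambda-1)/(\lambda+1)$ and the principal branch of Definition~\ref{def:Pinf}. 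The $(2,2)$ entry then matches automatically, because the exponents are $\pm$ of one another.

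Before that I would verify $A_{\mathrm{left}}=\mathcal A$ directly from \eqref{eq:P1}. The factor $[2s(\lambda+1)]^{-\nu\sigma_3}$ is $\operatorname{diag}(\beta^{-1},\beta)$, the factor $D_0^{-1}$ is $\operatorname{diag}(e^{-\frac{i\pi}2(\frac12-\nu)},e^{\frac{i\pi}2(\frac12-\nu)})$, and the factor $e^{is\sigma_3}$ is $\operatorname{diag}(e^{is},e^{-is})$. Multiplying the $(1,1)$ entries gives exactly the $a$ of \eqref{eq:beta-a}.

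Next I substitute $a$ into $X$. The two factors $e^{\mp\frac{i\pi}2(\frac12-\nu)}$ cancel. By \eqref{eq:Eright}, $e^{\frac i2\zeta}e^{-is\lambda}=e^{-is}$, and this cancels the $e^{is}$ in $a$. What is left is $X=\zeta^{\nu}\beta^{-1}=[2s(\lambda-1)]^{\nu}[2s(\lambda+1)]^{-\nu}$.

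The one genuine step, and the main obstacle, is branch bookkeeping: I need $\zeta^\nu[2s(\lambda+1)]^{-\nu}=w^\nu$ with all three powers principal. Since $2s>0$, $\arg\zeta=\arg(\lambda-1)$ by \eqref{eq:argzeta}, with $\arg\zeta\in(-\pi,\pi)$ once the cut $[1-r_s,1]$ is excluded. Since $\Re(\lambda+1)>0$ on $D_1$, we also have $|\arg(\lambda+1)|<\pi/2$. Writing each power as $e^{\nu\operatorname{Log}}$ reduces the identity to
\[
\operatorname{Log}\zeta-\operatorname{Log}(2s(\lambda+1))=\operatorname{Log}w,
\]
and the moduli already match. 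So the claim is that $\arg(\lambda-1)-\arg(\lambda+1)$ lies in $(-\pi,\pi)$ and equals the principal $\arg w$. On $D_1$, with $r_s$ small, $\lambda+1$ is near $2$, so $|\arg(\lambda+1)|$ is tiny. The difference lies in $(-\pi,\pi)$ whenever $\arg(\lambda-1)$ stays at distance more than that tiny amount from $\pm\pi$.

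If $\lambda$ approaches the cut too closely for that margin, I would argue by continuity instead. Both sides are analytic on $D_1\setminus[1-r_s,1]$, which is connected. Both exponentials of the logarithm difference agree, so the difference lies in $2\pi i\mathbb Z$ and is continuous, hence constant. It vanishes at a real point $\lambda>1$, where all the arguments are zero. This continuity argument gives the identity everywhere and is the cleanest way to settle the branch issue.

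For \eqref{eq:conjug}, I insert \eqref{eq:P25} with $n$ terms into \eqref{eq:P1}, writing $\mathcal P=[I+\sum_{k\le n}M_k\zeta^{-k}/k!+\mathcal E_n]\Lambda E_{\mathrm{right}}^{-1}$. This gives $P^{(1)}=\mathcal A[\cdots]\Lambda$. By \eqref{eq:AL}, $\Lambda=\mathcal A^{-1}P^{(\infty)}$, so right-multiplying by $(P^{(\infty)})^{-1}$ produces $\mathcal A[\cdots]\mathcal A^{-1}$, which is \eqref{eq:conjug}.
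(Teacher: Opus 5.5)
Your proof is correct and follows the paper's argument. You collapse the commuting diagonal factors to get $\mathcal A\Lambda=[2s(\lambda+1)]^{-\nu\sigma_3}\zeta^{\nu\sigma_3}$, check the principal branches, and read off \eqref{eq:conjug} from $\Lambda=\mathcal A^{-1}P^{(\infty)}$.

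Your near-cut worry is unnecessary. Since $\Im(\lambda-1)=\Im(\lambda+1)$, the arguments $\arg(\lambda-1)$ and $\arg(\lambda+1)$ always have the same sign, so their difference lies in $(-\pi,\pi)$ throughout $D_1\setminus[1-r_s,1]$; this is the paper's one-line branch check. Your continuity argument on the connected slit disk is also valid.
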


\begin{proof}
All of $\zeta^{\nu\sigma_3}$, $e^{\frac i2\zeta\sigma_3}$, $D_0$,
$E_{\mathrm{right}}=e^{-\frac i2(\zeta+2s)\sigma_3}$ are diagonal and
commute; the two exponentials combine to
$e^{\frac i2\zeta\sigma_3}e^{-\frac i2\zeta\sigma_3}e^{-is\sigma_3}
=e^{-is\sigma_3}$, so $\Lambda=\zeta^{\nu\sigma_3}D_0e^{-is\sigma_3}$.
Hence
\[
 \mathcal A\Lambda
 =[2s(\lambda+1)]^{-\nu\sigma_3}D_0^{-1}e^{is\sigma_3}
 \zeta^{\nu\sigma_3}D_0e^{-is\sigma_3}
 =[2s(\lambda+1)]^{-\nu\sigma_3}\zeta^{\nu\sigma_3}
 =\Bigl(\frac{\zeta}{2s(\lambda+1)}\Bigr)^{\nu\sigma_3},
\]
and $\frac{\zeta}{2s(\lambda+1)}=\frac{\lambda-1}{\lambda+1}$.  The
branches agree: on $D_1\setminus[1-r_s,1]$ we have
$\arg(2s(\lambda+1))\in(-\tfrac\pi2,\tfrac\pi2)$ and
$\arg\zeta\in(-\pi,\pi)$, and their difference is the principal argument
of the quotient, so the quotient of principal powers is the principal
power of the quotient.  This is \eqref{eq:AL}; then
$\Lambda(P^{(\infty)})^{-1}=\mathcal A^{-1}$ and \eqref{eq:conjug}
follows.
\end{proof}

\begin{remark}[role of \eqref{eq:AL}]\label{rem:ALpoint}
Identity~\eqref{eq:AL} shows that the diagonal dressing applied to the
model coefficients is exactly the inverse of the outer parametrix.
Consequently, every factor $e^{\pm\omega}$ in $\mathcal A$ is paired
with a factor of the opposite sign in $M_k$.  These cancellations are
verified entry by entry in Lemma~\ref{lem:Delta} and summarized in
Appendix~\ref{app:ledger}; the estimates are applied to the combined
products rather than to $\mathcal A$ and $M_k$ separately.  If the right
factor of \eqref{eq:P1} were $e^{-is(\lambda+1)\sigma_3}$ instead of
$e^{-is\lambda\sigma_3}$, identity~\eqref{eq:AL} would acquire the
constant factor $e^{-is\sigma_3}$.  The chosen normalization avoids this
factor.
\end{remark}

\begin{lemma}[first matching coefficient and both endpoint residues]
\label{lem:Delta}
Put $\Delta_+:=\mathcal AM_1\mathcal A^{-1}/\zeta$ and
$\Delta_-(\lambda):=\sigma_1\Delta_+(-\lambda)\sigma_1$.  Then
\begin{equation}\label{eq:M1}
 M_1=\begin{pmatrix}
 -i\nu^2&-e^{-i\pi\nu}\nu r_\nu\\
 e^{i\pi\nu}\nu r_\nu^{-1}&i\nu^2\end{pmatrix},
\end{equation}
\begin{align}\label{eq:Deltaplus}
 \Delta_+(\lambda)&=\frac1{2s(\lambda-1)}
 \begin{pmatrix}
 -i\nu^2 & i\nu\,r_\nu\,\beta^{-2}e^{2is}\\[1mm]
 i\nu\,r_\nu^{-1}\,\beta^{2}e^{-2is} & i\nu^2
 \end{pmatrix},\notag\\
 \Delta_-(\lambda)&=\frac1{2s(\lambda+1)}
 \begin{pmatrix}
 -i\nu^2 & -i\nu\, r_\nu^{-1}\,\tilde\beta^{2}e^{-2is}\\[1mm]
 -i\nu\, r_\nu\,\tilde\beta^{-2}e^{2is} & i\nu^2
 \end{pmatrix},
\end{align}
where $\tilde\beta(\lambda)=\beta(-\lambda)=[2s(1-\lambda)]^{\nu}$, and
\begin{equation}\label{eq:residues}
 \Res_{\lambda=1}(\Delta_+)_{11}
 =\Res_{\lambda=-1}(\Delta_-)_{11}
 =-\frac{i\nu^2}{2s}\;;
 \qquad\text{the two residues are equal and therefore \emph{add}.}
\end{equation}
Moreover, uniformly on $\partial D_{\pm1}$,
\begin{equation}\label{eq:Deltabound}
 |\beta^{\pm2}|=|\tilde\beta^{\pm2}|=e^{O_A(\omega r_s)}=O_A(1),
 \qquad
 \|\Delta_\pm\|\le C_A\frac{(1+\omega)^2}{s\,r_s} .
\end{equation}
\end{lemma}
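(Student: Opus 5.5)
The plan is a direct computation in four steps.

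First, for $M_1$, specialize \eqref{eq:Mk} to $k=1$. The diagonal entries are $(-\nu)^2e^{-i\pi/2}=-i\nu^2$ and $\nu^2e^{i\pi/2}=i\nu^2$. In the $(1,2)$ entry, $(1+\nu)_0=1$ and $ie^{i\pi/2}=-1$, and Lemma~\ref{lem:gammaratio} gives $\Gamma(1+\nu)/\Gamma(-\nu)=\nu r_\nu$; the entry is therefore $-e^{-i\pi\nu}\nu r_\nu$. In the $(2,1)$ entry, $-ie^{-i\pi/2}=-1$ and $\Gamma(1-\nu)/\Gamma(\nu)=-\nu r_\nu^{-1}$, which gives $e^{i\pi\nu}\nu r_\nu^{-1}$. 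This is \eqref{eq:M1}.

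Second, to obtain \eqref{eq:Deltaplus}, conjugate by the diagonal $\mathcal A=\operatorname{diag}(a,a^{-1})$ of Lemma~\ref{lem:AL}. This leaves the diagonal unchanged, multiplies the $(1,2)$ entry by $a^2$ and the $(2,1)$ entry by $a^{-2}$. From \eqref{eq:beta-a}, $a^2=\beta^{-2}e^{-i\pi(\frac12-\nu)}e^{2is}=-i\,e^{i\pi\nu}\beta^{-2}e^{2is}$. Hence the $(1,2)$ entry becomes $(-e^{-i\pi\nu}\nu r_\nu)(-ie^{i\pi\nu})\beta^{-2}e^{2is}=i\nu r_\nu\beta^{-2}e^{2is}$. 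Similarly $a^{-2}=ie^{-i\pi\nu}\beta^2e^{-2is}$, so the $(2,1)$ entry becomes $i\nu r_\nu^{-1}\beta^2e^{-2is}$. Dividing by $\zeta=2s(\lambda-1)$ gives $\Delta_+$.

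Third, for $\Delta_-(\lambda)=\sigma_1\Delta_+(-\lambda)\sigma_1$, use the prefactor identity $1/(2s(-\lambda-1))=-1/(2s(\lambda+1))$, together with the fact that $\sigma_1$-conjugation swaps both diagonal and off-diagonal entries. The diagonal becomes $(i\nu^2,-i\nu^2)$, and the overall minus sign returns it to $(-i\nu^2,i\nu^2)$. The off-diagonals swap with $\beta(-\lambda)=\tilde\beta$ and acquire the minus sign, giving the displayed form. The residues then follow at once: $(\Delta_+)_{11}=-i\nu^2/(2s(\lambda-1))$ has residue $-i\nu^2/(2s)$ at $1$, and $(\Delta_-)_{11}=-i\nu^2/(2s(\lambda+1))$ has the same residue at $-1$.

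Fourth, the bound \eqref{eq:Deltabound} is the only step requiring care. On $\partial D_1$ we have $|\beta^{\pm2}|=\exp(\mp2\Im\nu\,\arg(2s(\lambda+1)))=\exp(\pm\frac{2\omega}{\pi}\arg(\lambda+1))$. Since $|\lambda-1|=r_s$ forces $|\arg(\lambda+1)|\le C r_s$, this is $e^{O(\omega r_s)}=e^{O_A(\ln s/\ln^2 s)}=O_A(1)$. The same argument applies to $\tilde\beta$ on $\partial D_{-1}$, and the bounds transfer between the two disks by the $\lambda\mapsto-\lambda$ symmetry. Moreover $|r_\nu|=1$, $|e^{\pm2is}|=1$, $|\nu|=\omega/\pi$ and $|\zeta|=2sr_s$. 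So every entry is $O_A((\omega+\omega^2)/(sr_s))\le C_A(1+\omega)^2/(sr_s)$, and on $\partial D_{-1}$ the prefactor $|2s(\lambda+1)|=2sr_s$ plays the same role.

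The main obstacle is exactly this last point. The potentially exponential factors $e^{\pm i\pi\nu}$, of size $e^{\pm\omega}$, must cancel exactly between $a^{\pm2}$ and $M_1$, which the computation in the second step shows they do. After that, one only has to check that $\arg(\lambda+1)$ is $O(r_s)$ on the small circle, so that the residual $\beta^{\pm2}$ stays bounded.
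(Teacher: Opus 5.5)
Your proposal is correct and follows the paper's proof essentially step by step. Both specialize \eqref{eq:Mk} at $k=1$ and conjugate by the diagonal $\mathcal A$, where the factors $e^{\pm i\pi\nu}$ cancel exactly. Both then reflect via $\sigma_1$ with the sign flip from $2s(-\lambda-1)$, read off the two equal residues, and bound $|\beta^{\pm2}|$ using $|\arg(\lambda+1)|\le\arcsin(r_s/2)\le r_s$ on $\partial D_1$.
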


\begin{proof}
\emph{Step 1: $M_1$.}  Put $k=1$ in \eqref{eq:Mk}.  Since
$(-\nu)_1=-\nu$, $(\nu)_1=\nu$, $(1\pm\nu)_0=1$ and
$e^{\mp i\pi/2}=\mp i$, Lemma~\ref{lem:gammaratio} gives
$M_1^{11}=\nu^2(-i)=-i\nu^2$,
$M_1^{22}=\nu^2 i=i\nu^2$,
$M_1^{12}=i\cdot1\cdot1\cdot(i)e^{-i\pi\nu}\cdot\nu r_\nu
=-e^{-i\pi\nu}\nu r_\nu$, and
$M_1^{21}=-i\cdot1\cdot1\cdot(-i)e^{i\pi\nu}\cdot(-\nu r_\nu^{-1})
=e^{i\pi\nu}\nu r_\nu^{-1}$, which is \eqref{eq:M1}.

\emph{Step 2: conjugation.}  $\mathcal A$ is diagonal, so it fixes the
diagonal entries and multiplies $M_1^{12}$ by $a^2$ and $M_1^{21}$ by
$a^{-2}$.  From \eqref{eq:beta-a},
$a^2=\beta^{-2}e^{-i\pi(\frac12-\nu)}e^{2is}
=-i\beta^{-2}e^{i\pi\nu}e^{2is}$, whence
\[
 a^2M_1^{12}
 =-i\beta^{-2}e^{i\pi\nu}e^{2is}\bigl(-e^{-i\pi\nu}\nu r_\nu\bigr)
 =i\nu r_\nu\beta^{-2}e^{2is},
\]
the factors $e^{\pm i\pi\nu}$ cancelling exactly, and
$a^{-2}M_1^{21}=i\beta^{2}e^{-i\pi\nu}e^{-2is}\,
e^{i\pi\nu}\nu r_\nu^{-1}=i\nu r_\nu^{-1}\beta^{2}e^{-2is}$.
Dividing by $\zeta=2s(\lambda-1)$ gives $\Delta_+$.

\emph{Step 3: the reflected coefficient.}  Conjugation by $\sigma_1$
exchanges both the rows and the columns, so
$(\sigma_1M\sigma_1)_{11}=M_{22}$, $(\sigma_1M\sigma_1)_{22}=M_{11}$,
$(\sigma_1M\sigma_1)_{12}=M_{21}$, $(\sigma_1M\sigma_1)_{21}=M_{12}$.
Evaluating $\Delta_+$ at $-\lambda$ replaces $2s(\lambda-1)$ by
$-2s(\lambda+1)$ and $\beta(\lambda)$ by $\tilde\beta(\lambda)$.  Hence
\begin{align*}
 \Delta_-(\lambda)
 &=\frac{1}{-2s(\lambda+1)}
 \begin{pmatrix}i\nu^2&i\nu\, r_\nu^{-1}\,\tilde\beta^{2}e^{-2is}\\
 i\nu\, r_\nu\,\tilde\beta^{-2}e^{2is}&-i\nu^2\end{pmatrix}\\
 &=\frac{1}{2s(\lambda+1)}
 \begin{pmatrix}-i\nu^2&-i\nu\, r_\nu^{-1}\,
 \tilde\beta^{2}e^{-2is}\\
 -i\nu\, r_\nu\,\tilde\beta^{-2}e^{2is}&i\nu^2\end{pmatrix},
\end{align*}
which is the second matrix in \eqref{eq:Deltaplus}.

\emph{Step 4: residues.}  $(\Delta_+)_{11}=-i\nu^2/(2s(\lambda-1))$ has
residue $-i\nu^2/(2s)$ at $\lambda=1$; $(\Delta_-)_{11}
=-i\nu^2/(2s(\lambda+1))$ has residue $-i\nu^2/(2s)$ at $\lambda=-1$.
They are equal, hence add.  (The entries containing
$\beta^{\pm2},\tilde\beta^{\pm2}$ are analytic and nonvanishing on the
closed disks and do not enter the $(1,1)$ entry.)

\emph{Step 5: bounds.}  $\Re\nu=0$, $\Im\nu=-\omega/\pi$, so
$|\beta^{\pm2}|=\exp(\pm\tfrac{2\omega}\pi\arg(\lambda+1))$.  On
$\partial D_1$, $|\lambda-1|=r_s\le\tfrac12$ gives
$|\arg(\lambda+1)|\le\arcsin(r_s/2)\le r_s$, so
$|\beta^{\pm2}|\le e^{2\omega r_s/\pi}=e^{O_A(1/\ln s)}=O_A(1)$; the
same holds for $\tilde\beta^{\pm2}$ on $\partial D_{-1}$.  With
$|r_\nu^{\pm1}|=1$, $|\nu|=\omega/\pi$, $|\nu^2|=\omega^2/\pi^2$ and
$|\lambda\mp1|=r_s$, every entry is at most
$C_A(1+\omega)^2/(sr_s)$.
\end{proof}

\begin{remark}[reality and sign check on \eqref{eq:residues}]
\label{rem:signcheck}
With $\nu=-i\omega/\pi$ we get $\nu^2=-\omega^2/\pi^2$, so
$-i\nu^2/(2s)=i\omega^2/(2\pi^2s)$ is purely imaginary with positive
imaginary part.  Downstream this produces
$\partial_s\ln\mathcal D=4\omega/\pi+2\omega^2/(\pi^2s)+\cdots>0$
(Lemma~\ref{lem:derivative}), consistent with the fact that
$s\mapsto\ln\mathcal D(s,\omega)$ is increasing for $\omega>0$: the
eigenvalues of $K_s$ increase with $s$, since after the dilation
$t=s\lambda$ the operator is the compression of a fixed projection to a
growing interval.  A sign error at either endpoint, or a residue
cancellation instead of an addition, would give a negative
$\omega^2$-coefficient and contradict this.
\end{remark}

\subsection{The endpoint parametrices}
\label{sec:localjumps}

\begin{lemma}[jumps of $P^{(\pm1)}$]\label{lem:localjumps}
Then $P^{(1)}$ has exactly the jumps \eqref{eq:Tjumps} of $T$ on
$\Sigma_T\cap D_1$, and $P^{(-1)}$ has exactly the jumps
\eqref{eq:Tjumps} of $T$ on $\Sigma_T\cap D_{-1}$; both have the
endpoint behaviour of $T$.
\end{lemma}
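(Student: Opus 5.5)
The plan is to prove the statement for $P^{(1)}$ directly and then transfer it to $P^{(-1)}$ through the symmetry \eqref{eq:Pm1}. In \eqref{eq:P1} the left factor $A_{\mathrm{left}}$ is analytic and invertible on all of $D_1$, and the right factor $E_{\mathrm{right}}=e^{-is\lambda\sigma_3}$ is entire. Hence every jump of $P^{(1)}$ comes from $\mathcal P(\zeta(\lambda))$, conjugated by $E_{\mathrm{right}}$:
\[
 P^{(1)}_-{}^{-1}P^{(1)}_+=e^{is\lambda\sigma_3}J_{\mathcal P}\,e^{-is\lambda\sigma_3}.
\]
Lemma~\ref{lem:geometry} matches the pieces of $\Sigma_T\cap D_1$ with the model rays, including orientation. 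The segment $\Sigma_{\mathrm{up}}\cap D_1$ maps to $\arg\zeta=\pi/2$ traversed inward, $\Sigma_{\mathrm{dn}}\cap D_1$ maps to $\arg\zeta=-\pi/2$ traversed inward, and $(1-r_s,1)$ maps to $\arg\zeta=\pi$. These are exactly the orientations used in Lemmas~\ref{lem:modeljumps} and~\ref{qmi:third}, so the $\pm$ sides correspond with no inversion of the jump.

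Next I conjugate each model jump. On $\arg\zeta=\pi/2$ the jump is upper triangular with entry $\gamma e^{2\kappa s}=\gamma e^{2v}$. Conjugating by $e^{is\lambda\sigma_3}$ multiplies the $(1,2)$ entry by $e^{2is\lambda}$, which gives $\gamma e^{2v}e^{2is\lambda}=\gamma e^{2s(\kappa+i\lambda)}$. That is $S_U(\lambda)$ from \eqref{eq:factorization}. On $\arg\zeta=-\pi/2$ the $(2,1)$ entry $-\gamma e^{2v}$ is multiplied by $e^{-2is\lambda}$, which gives $-\gamma e^{2s(\kappa-i\lambda)}$, that is, $S_L(\lambda)$. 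On $\arg\zeta=\pi$ the jump $e^{-2\kappa s\sigma_3}$ is diagonal, so conjugation leaves it as $e^{-2v\sigma_3}$. All three agree with \eqref{eq:Tjumps}, and the disk contains no other contour pieces.

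For the endpoint behaviour I insert \eqref{eq:modelorigin} into \eqref{eq:P1}. Since $\ln\zeta=\ln(2s)+\ln(\lambda-1)$, I write
\[
 \ln\tfrac{\lambda-1}{\lambda+1}=\ln\zeta-\ln\bigl(2s(\lambda+1)\bigr)
\]
and absorb the analytic difference into the prefactor; because $N^2=0$, the bracket factors as a product. It remains to commute the sector multipliers $\mathcal H_\pm$ past $E_{\mathrm{right}}$. The products $\mathcal H_+e^{-is\lambda\sigma_3}$ and $\mathcal H_-e^{-is\lambda\sigma_3}$ equal $e^{-is\lambda\sigma_3}S_U^{-1}$ and $e^{-is\lambda\sigma_3}S_L$, respectively. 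For $\mathcal H_+$ this is the conjugation identity $e^{is\lambda\sigma_3}\mathcal H_+e^{-is\lambda\sigma_3}=S_U^{-1}$; the $\mathcal H_-$ case is analogous. This reproduces the description of $T$ in Lemma~\ref{lem:Tjumps}: the behaviour \eqref{eq:Yendpoint} right-multiplied by $S_U^{-1}$ in $\Omega_{\mathrm{up}}$, by $S_L$ in $\Omega_{\mathrm{dn}}$, and by $I$ elsewhere, with the analytic invertible prefactor $A_{\mathrm{left}}\check{\mathcal P}$ times the absorbed factor.

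For $P^{(-1)}$ I use $\lambda\mapsto-\lambda$ together with conjugation by $\sigma_1$. This map sends $D_1$ to $D_{-1}$ and reverses the orientation of each contour piece relative to the rule "from $-1$ to $1$", which swaps the $\pm$ sides. One must therefore check that
\[
 \sigma_1 J(-\lambda)^{-1}\sigma_1
\]
equals the $T$-jump at $\lambda$. For the interval, $\sigma_1e^{2v\sigma_3}\sigma_1=e^{-2v\sigma_3}$. For the lens contours, $\sigma_1S_U(-\lambda)^{-1}\sigma_1$ is lower triangular with entry $-\gamma e^{2v}e^{-2is\lambda}$, which is $S_L(\lambda)$; note that the reflection also exchanges $\Sigma_{\mathrm{up}}$ and $\Sigma_{\mathrm{dn}}$. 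The endpoint logarithm transforms consistently, since $\ln\frac{-\lambda-1}{-\lambda+1}=-\ln\frac{\lambda-1}{\lambda+1}$ up to branch constants and $\sigma_1N\sigma_1=-N$.

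I expect this last bookkeeping to be the main obstacle: the orientation reversal, the branch constants $\pm2\pi i$, and the interchange of the up and down lenses all have to be tracked together. I would settle it by checking each of the three contour pieces explicitly.
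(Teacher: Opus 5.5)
Your proposal is correct and follows the paper's proof essentially step for step. You cancel the analytic left factor, conjugate the three inward-oriented model jumps by $E_{\mathrm{right}}$, transport the origin expansion with $\mathcal H_\pm$ while absorbing $\ln(2s(\lambda+1))$ via $N^2=0$, and reflect with inversion of each contour jump. The one item you leave implicit, the sector factors at $-1$, is handled in the paper by noting that they are attached by region rather than by orientation. They are therefore conjugated by $\sigma_1$ \emph{without} inversion: $\sigma_1S_L(-\lambda)\sigma_1=S_U(\lambda)^{-1}$ on $\Omega_{\mathrm{up}}\cap D_{-1}$ and $\sigma_1S_U(-\lambda)^{-1}\sigma_1=S_L(\lambda)$ on $\Omega_{\mathrm{dn}}\cap D_{-1}$.
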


\begin{proof}
\emph{Step 1: the left factor cancels.}  $A_{\mathrm{left}}$ is analytic
and invertible on $D_1$, so if $\mathcal P_+=\mathcal P_-J_{\mathcal P}$
on a contour, then by \eqref{eq:P1}
\begin{equation}\label{eq:conjjump}
 P^{(1)}_+=A_{\mathrm{left}}\mathcal P_+E_{\mathrm{right}}
 =A_{\mathrm{left}}\mathcal P_-J_{\mathcal P}E_{\mathrm{right}}
 =P^{(1)}_-\;\bigl(E_{\mathrm{right}}^{-1}J_{\mathcal P}
 E_{\mathrm{right}}\bigr),
\end{equation}
because $A_{\mathrm{left}}\mathcal P_-E_{\mathrm{right}}=P^{(1)}_-$.
Only $E_{\mathrm{right}}=e^{-is\lambda\sigma_3}$ conjugates.  For a
diagonal $D$, $D^{-1}\bigl(\begin{smallmatrix}1&x\\0&1
\end{smallmatrix}\bigr)D=\bigl(\begin{smallmatrix}1&xD_{22}/D_{11}\\0&1
\end{smallmatrix}\bigr)$ and $D^{-1}\bigl(\begin{smallmatrix}1&0\\y&1
\end{smallmatrix}\bigr)D=\bigl(\begin{smallmatrix}1&0\\yD_{11}/D_{22}&1
\end{smallmatrix}\bigr)$; here $D_{22}/D_{11}=e^{2is\lambda}$ and
$D_{11}/D_{22}=e^{-2is\lambda}$.

\emph{Step 2: the three contours at $+1$.}  By
Lemma~\ref{lem:geometry}(i)--(iii) the three components of
$\Sigma_T\cap D_1$ are the images of the three model rays, with the
orientations of Lemma~\ref{lem:modeljumps} and
Lemma~\ref{qmi:third} --- all directed towards the
endpoint --- and with the same $+$ sides.  Indeed: on
$\Sigma_{\mathrm{up}}\cap D_1$ the direction is $-i$ and the left is
$\{\Re\lambda>1\}=\Sigma_0$, matching the $+$ side of
$\arg\zeta=\pi/2$; on $\Sigma_{\mathrm{dn}}\cap D_1$ the direction is
$+i$ and the left is $\{\Re\lambda<1\}=\Sigma_-$, matching the $+$ side
of $\arg\zeta=-\pi/2$; on $(1-r_s,1)$ the direction is $+1$ and the left
is the upper side $=\Sigma_+$, matching the $+$ side of
$\arg\zeta=\pi$.  \emph{No orientation is reversed at $+1$ and no jump
needs to be inverted.}  Applying Step~1:
\begin{align*}
 \arg\zeta=\tfrac\pi2:\quad&
 E_{\mathrm{right}}^{-1}
 \begin{pmatrix}1&\gamma e^{2\kappa s}\\0&1\end{pmatrix}
 E_{\mathrm{right}}
 =\begin{pmatrix}1&\gamma e^{2\kappa s}e^{2is\lambda}\\0&1\end{pmatrix}
 =\begin{pmatrix}1&\gamma e^{2s(\kappa+i\lambda)}\\0&1\end{pmatrix}
 =S_U(\lambda),\\
 \arg\zeta=-\tfrac\pi2:\quad&
 E_{\mathrm{right}}^{-1}
 \begin{pmatrix}1&0\\-\gamma e^{2\kappa s}&1\end{pmatrix}
 E_{\mathrm{right}}
 =\begin{pmatrix}1&0\\-\gamma e^{2s(\kappa-i\lambda)}&1\end{pmatrix}
 =S_L(\lambda),\\
 \arg\zeta=\pi:\quad&
 E_{\mathrm{right}}^{-1}e^{-2\kappa s\sigma_3}E_{\mathrm{right}}
 =e^{-2\kappa s\sigma_3}=e^{-2v\sigma_3},
\end{align*}
the last because both matrices are diagonal.  These are exactly the
three jumps of $T$ listed in \eqref{eq:Tjumps}.

\emph{Step 3: the endpoint behaviour at $+1$.}  This is where hypothesis
(b) of the lemma enters, and it is here that the correct multipliers
$\mathcal H_\pm$ --- not $T_\pm$ --- are required.

By \eqref{eq:P1}, $P^{(1)}=A_{\mathrm{left}}\,\mathcal P(\zeta(\lambda))
\,E_{\mathrm{right}}$ with
$E_{\mathrm{right}}=e^{-is\lambda\sigma_3}$.  Substituting
\eqref{eq:modelorigin} and inserting
$E_{\mathrm{right}}E_{\mathrm{right}}^{-1}=I$ to the left of the sector
factor,
\begin{equation}\label{eq:origintransport}
 P^{(1)}(\lambda)
 =\underbrace{A_{\mathrm{left}}\check{\mathcal P}(\zeta(\lambda))}
  _{\text{analytic, invertible near }\lambda=1}
 \Bigl[I+\frac\gamma{2\pi i}
 \begin{pmatrix}-1&1\\-1&1\end{pmatrix}\ln\zeta\Bigr]
 E_{\mathrm{right}}\cdot
 \bigl(E_{\mathrm{right}}^{-1}\,\mathcal C\,E_{\mathrm{right}}\bigr),
\end{equation}
where $\mathcal C\in\{\mathcal H_+,\mathcal H_-,I\}$ is the sector
factor of \eqref{eq:modelorigin}.  The conjugated factor is computed
exactly as in Step~1, with $D_{22}/D_{11}=e^{2is\lambda}$ and
$D_{11}/D_{22}=e^{-2is\lambda}$, and using $\kappa s=v$:
\begin{align}\label{eq:HconjU}
 E_{\mathrm{right}}^{-1}\mathcal H_+E_{\mathrm{right}}
 &=\begin{pmatrix}1&-\gamma e^{2\kappa s}e^{2is\lambda}\\0&1\end{pmatrix}
 =\begin{pmatrix}1&-\gamma e^{2s(\kappa+i\lambda)}\\0&1\end{pmatrix}
 =S_U(\lambda)^{-1},\\
 \label{eq:HconjL}
 E_{\mathrm{right}}^{-1}\mathcal H_-E_{\mathrm{right}}
 &=\begin{pmatrix}1&0\\-\gamma e^{2\kappa s}e^{-2is\lambda}&1\end{pmatrix}
 =\begin{pmatrix}1&0\\-\gamma e^{2s(\kappa-i\lambda)}&1\end{pmatrix}
 =S_L(\lambda),
\end{align}
by \eqref{eq:factorization}; and $E_{\mathrm{right}}^{-1}IE_{\mathrm{right}}=I$.
By Lemma~\ref{lem:geometry}(iv) the three sectors
$\arg\zeta\in(\tfrac\pi2,\pi)$, $\arg\zeta\in(-\pi,-\tfrac\pi2)$,
$|\arg\zeta|<\tfrac\pi2$ are the images of
$\Omega_{\mathrm{up}}\cap D_1$, $\Omega_{\mathrm{dn}}\cap D_1$ and
$D_1$ outside both lenses, so \eqref{eq:HconjU}--\eqref{eq:HconjL}
attach $S_U^{-1}$ on $\Omega_{\mathrm{up}}$, $S_L$ on
$\Omega_{\mathrm{dn}}$ and $I$ elsewhere --- exactly the three factors
by which Lemma~\ref{lem:Tjumps} right-multiplies the endpoint behaviour
of $T$.

\emph{Had the multipliers been $T_\pm$}, the same conjugation would have
given $\bigl(\begin{smallmatrix}1&-\gamma e^{2is\lambda}\\0&1
\end{smallmatrix}\bigr)$, which differs from $S_U(\lambda)^{-1}$ by the
factor $e^{-2v}$ in the off-diagonal entry and is therefore \emph{not}
$T$'s endpoint factor.  The distinction is not cosmetic.

It remains to match the logarithmic bracket itself.  By
\eqref{eq:argzeta}, $\zeta=2s(\lambda-1)$ with $2s>0$ and
$\arg\zeta=\arg(\lambda-1)$, so on $D_1$
\begin{equation}\label{eq:lnzeta}
 \ln\zeta=\ln(2s)+\ln(\lambda-1)
 =\ln\bigl(2s(\lambda+1)\bigr)+\ln\frac{\lambda-1}{\lambda+1},
\end{equation}
the branches agreeing because $\Re(\lambda+1)>0$ on $D_1$.  The first
term of the right-hand side is analytic on $D_1$, so
$I+\frac\gamma{2\pi i}N\ln\zeta$ differs from
$I+\frac\gamma{2\pi i}N\ln\frac{\lambda-1}{\lambda+1}$ by
$\frac\gamma{2\pi i}N\ln(2s(\lambda+1))$, i.e.\ by an analytic additive
term; since $N^2=0$, the two brackets differ by left multiplication by
the analytic invertible factor
$I+\frac\gamma{2\pi i}N\ln(2s(\lambda+1))$, which is absorbed into
$A_{\mathrm{left}}\check{\mathcal P}$.  Hence \eqref{eq:origintransport}
is exactly \eqref{eq:Yendpoint} right-multiplied by $S_U^{-1}$, $S_L$,
$I$ according to the sector --- which is the endpoint behaviour of $T$
recorded in Lemma~\ref{lem:Tjumps}.

\emph{Step 4: the reflected endpoint.}  Here the orientations
\emph{are} reversed, and we carry out all four cases rather than
appealing to symmetry.  The map $\lambda\mapsto\mu=-\lambda$ is
holomorphic, hence preserves left/right; conjugation by $\sigma_1$
exchanges the two diagonal and the two off-diagonal entries.  We use
repeatedly that if $P^{(1)}_+=P^{(1)}_-J$ on a contour with a given
orientation, then on the reflected contour with the \emph{induced}
orientation $P^{(-1)}$ has jump $\sigma_1J\sigma_1$, while if the
induced orientation is opposite to the one for which $J$ was stated then
the jump is $\sigma_1J^{-1}\sigma_1$.

\smallskip
\emph{(a) $\Sigma_{\mathrm{up}}\cap D_{-1}=\{-1+iy\}$, oriented upward
(away from $-1$).}  Its image under $\lambda\mapsto-\lambda$ is
$\{1-iy\}=\Sigma_{\mathrm{dn}}\cap D_1$, traversed away from $1$,
whereas that contour is oriented towards $1$: \textbf{reversed}.  Hence
the jump of $P^{(-1)}$ is $\sigma_1S_L(-\lambda)^{-1}\sigma_1$.  Now
$S_L(\mu)^{-1}=\bigl(\begin{smallmatrix}1&0\\
\gamma e^{2s(\kappa-i\mu)}&1\end{smallmatrix}\bigr)$, so at $\mu=-\lambda$
its $(2,1)$ entry is $\gamma e^{2s(\kappa+i\lambda)}$, and conjugating by
$\sigma_1$ moves it to the $(1,2)$ slot:
$\bigl(\begin{smallmatrix}1&\gamma e^{2s(\kappa+i\lambda)}\\0&1
\end{smallmatrix}\bigr)=S_U(\lambda)$, which is $T$'s jump on
$\Sigma_{\mathrm{up}}$.  \checkmark

\smallskip
\emph{(b) $\Sigma_{\mathrm{dn}}\cap D_{-1}=\{-1-iy\}$, oriented downward
(away from $-1$).}  Its image is $\{1+iy\}=\Sigma_{\mathrm{up}}\cap D_1$,
traversed away from $1$: \textbf{reversed}.  The jump is
$\sigma_1S_U(-\lambda)^{-1}\sigma_1$; since
$S_U(\mu)^{-1}=\bigl(\begin{smallmatrix}1&-\gamma e^{2s(\kappa+i\mu)}\\
0&1\end{smallmatrix}\bigr)$ has $(1,2)$ entry
$-\gamma e^{2s(\kappa-i\lambda)}$ at $\mu=-\lambda$, conjugation gives
$\bigl(\begin{smallmatrix}1&0\\-\gamma e^{2s(\kappa-i\lambda)}&1
\end{smallmatrix}\bigr)=S_L(\lambda)$, which is $T$'s jump on
$\Sigma_{\mathrm{dn}}$.  \checkmark

\smallskip
\emph{(c) $(-1,-1+r_s)$, oriented from $-1$ to $1$.}  Its image is
$(1-r_s,1)$ traversed from $1$ towards $1-r_s$, i.e.\ away from $1$:
\textbf{reversed}.  The jump is
$\sigma_1(e^{-2v\sigma_3})^{-1}\sigma_1=\sigma_1e^{2v\sigma_3}\sigma_1
=e^{-2v\sigma_3}$, since $\sigma_1\operatorname{diag}(x,y)\sigma_1
=\operatorname{diag}(y,x)$.  This is $T$'s jump on $(-1,1)$.
\checkmark

\smallskip
\emph{(d) Sides.}  On (a) the direction is $+i$ and the left is
$\{\Re\lambda<-1\}$, whose image has $\Re\mu>1$, i.e.\ the $\Sigma_0$
side --- which is the $-$ side of $\arg\zeta=-\pi/2$; so the $+$ side in
the $\lambda$-plane corresponds to the $-$ side in the model, consistent
with the reversal used in (a).  The same check on (b) and (c) gives the
same conclusion.

\smallskip
\emph{(e) The endpoint behaviour at $-1$.}  This case is written out
rather than deferred, because it is the one place in Step~4 where the
origin multipliers $\mathcal H_\pm$ of
Definition~\ref{def:originsectors} could enter a second time.  They do
not, and here is why.  The endpoint factors are attached to $T$ by
\emph{region}, not by contour orientation
(Lemma~\ref{lem:Tjumps}), so the reversals catalogued in (a)--(c) play
no role at all; what is needed is only that
$\lambda\mapsto-\lambda$ maps $\Omega_{\mathrm{up}}\cap D_{-1}$ onto
$\Omega_{\mathrm{dn}}\cap D_{1}$ and $\Omega_{\mathrm{dn}}\cap D_{-1}$
onto $\Omega_{\mathrm{up}}\cap D_{1}$, which is immediate.  By
\eqref{eq:Pm1} and Step~3 applied at $-\lambda$, the factor carried by
$P^{(-1)}$ on $\Omega_{\mathrm{up}}\cap D_{-1}$ is therefore
$\sigma_1S_L(-\lambda)\sigma_1$, and on
$\Omega_{\mathrm{dn}}\cap D_{-1}$ it is
$\sigma_1S_U(-\lambda)^{-1}\sigma_1$.  Exactly as in (a) and (b),
\begin{align*}
 \sigma_1S_L(-\lambda)\sigma_1
 &=\begin{pmatrix}1&-\gamma e^{2s(\kappa+i\lambda)}\\0&1\end{pmatrix}
 =S_U(\lambda)^{-1},\\
 \sigma_1S_U(-\lambda)^{-1}\sigma_1
 &=\begin{pmatrix}1&0\\-\gamma e^{2s(\kappa-i\lambda)}&1\end{pmatrix}
 =S_L(\lambda),
\end{align*}
which are $T$'s endpoint factors on $\Omega_{\mathrm{up}}$ and
$\Omega_{\mathrm{dn}}$ respectively.  \checkmark
For the logarithmic bracket, $\sigma_1N\sigma_1=-N$ while the
reflection replaces $\ln\frac{\lambda-1}{\lambda+1}$ by
$-\ln\frac{\lambda-1}{\lambda+1}+2\pi ik$ with $k\in\Z$ constant on each
sector; the two sign changes cancel, and since $N^2=0$ the residual
constant factorises off on the left as $I+\gamma kN$, which is
invertible and analytic and is absorbed into $\check Y$.  Finally
$\sigma_1e^{-is(-\lambda)\sigma_3}\sigma_1=e^{-is\lambda\sigma_3}$, so
the right factor of \eqref{eq:Yendpoint} is reproduced as well.
\end{proof}

\begin{remark}[Endpoint reflection and orientation]
\label{rem:notbysymmetry}
Reflection interchanges the endpoints but reverses the induced
orientation in all three cases.  Each jump must therefore be inverted
before conjugation by $\sigma_1$; this accounts for the sign of the
$(1,2)$ entry in part~(a).
\end{remark}

\subsection{The uniform matching estimate}\label{sec:gate9}

\begin{lemma}[uniform normalized endpoint matching]
\label{lem:uniformmatching}
There are $C_A<\infty$ and $s_A\ge5$ such that for $s\ge s_A$,
$0<\omega\le A\ln s$,
\begin{equation}\label{eq:matching}
 P^{(1)}(\lambda)\bigl(P^{(\infty)}(\lambda)\bigr)^{-1}
 =I+\Delta_+(\lambda)+E_+(\lambda),
 \qquad |\lambda-1|=r_s,
\end{equation}
\begin{equation}\label{eq:matchingminus}
 P^{(-1)}(\lambda)\bigl(P^{(\infty)}(\lambda)\bigr)^{-1}
 =I+\Delta_-(\lambda)+E_-(\lambda),
 \qquad |\lambda+1|=r_s,
\end{equation}
with $\Delta_\pm$ as in \eqref{eq:Deltaplus} and
\begin{equation}\label{eq:matchbounds}
 \|\Delta_\pm\|\le C_A\frac{(1+\omega)^2}{s\,r_s},
 \qquad
 \|E_\pm\|\le C_A\frac{(1+\omega)^4}{s^2r_s^2} .
\end{equation}
The estimate is uniform in $\arg\zeta\in(-\pi,\pi]$, i.e.\ across both
Stokes rays.
\end{lemma}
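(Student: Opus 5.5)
The starting point is the exact conjugation identity of Lemma~\ref{lem:AL}.  On $|\lambda-1|=r_s$ we have $|\zeta|=2sr_s\to\infty$ by \eqref{eq:sigmasmall}.  Insert the expansion \eqref{eq:P25} with $n=1$ into \eqref{eq:conjug}.  The $k=1$ term is exactly $\Delta_+$ by Lemma~\ref{lem:Delta}, so $E_+=\mathcal A\,\mathcal E_1(\zeta)\,\mathcal A^{-1}$.  The bound on $\Delta_\pm$ is already \eqref{eq:Deltabound}, so everything reduces to a \emph{uniform-in-$\nu$} remainder bound.  The target is: each entry of $\mathcal A\mathcal E_1\mathcal A^{-1}$ is at most $C_A(1+\omega)^4/|\zeta|^2$.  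Combined with $|\zeta|=2sr_s$, this gives \eqref{eq:matchbounds}.  The minus endpoint then follows from \eqref{eq:Pm1}, Lemma~\ref{lem:Pinf}(iii), and the definition of $\Delta_-$.  Indeed, $P^{(-1)}(P^{(\infty)})^{-1}(\lambda)=\sigma_1[P^{(1)}(P^{(\infty)})^{-1}](-\lambda)\sigma_1$, and $\sigma_1$-conjugation preserves the max-entry norm.

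For the remainder I will not use the unquantified $\sim$ of Theorem~\ref{qt:P25}.  Instead I will work entrywise with $\mathcal Q$ from \eqref{eq:Qdef}, using Lemma~\ref{lem:stokes}.  In each of the three sectors, every entry of $\mathcal Q$ is a constant times a single $U(a,1;z)$, with $a\in\{\pm\nu,1\pm\nu\}$ and $|\operatorname{ph} z|\le\pi$ on the appropriate (lifted) sheet.  I will then use the explicit error bound for the large-$z$ expansion of $U(a,1;z)$ truncated after one term.  This can be obtained from the integral representation $U(a,1;z)=\Gamma(a)^{-1}\int_0^\infty e^{-zt}t^{a-1}(1+t)^{-a}\,dt$, rotating the ray of integration to cover $|\operatorname{ph}z|\le\pi$.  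Alternatively, one can use the Olver-type bound in \cite[\S13.7(ii)]{NIST}.  The resulting error term is $\le C|(a)_2|^2|z|^{-2}\,\exp(C|a|/|z|)\,|z^{-a}|$.  Here the sector-dependent factor $|z^{-a}|=e^{\Im a\cdot\operatorname{ph}z}$ can be as large as $e^{\omega}$, and $|(a)_2|^2=O((1+\omega)^4)$.  Because $|a|/|z|\lesssim A\ln s/(sr_s)\to0$, the factor $\exp(C|a|/|z|)$ is $O_A(1)$.

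The step I expect to be the main obstacle is bookkeeping the exponentially large constants $e^{\pm\omega}$, $e^{\pm i\pi\nu}$, $\Gamma$-ratios, and the powers $|z^{-a}|$, so that they cancel after conjugation by $\mathcal A$.  A crude estimate would give $s^{O(A)}$ losses, as warned in Remark~\ref{rem:prohibited}.  My approach is to factor each entry as (leading term of \eqref{eq:P25})$\times(1+\text{relative error})$.  By \eqref{eq:AL}, the leading diagonal dressing combined with $\mathcal A$ is exactly $P^{(\infty)}$, with $\|P^{(\infty)\pm1}\|\le e^\omega$.  So I will write $\mathcal A\mathcal E_1\mathcal A^{-1}$ entrywise as $a^{\pm2}$ (or $1$) times $(\mathcal E_1)_{jk}$.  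I will then check, exactly as in Steps 2 and 5 of Lemma~\ref{lem:Delta}, that the phases $e^{\pm i\pi\nu}$ in the off-diagonal model coefficients cancel against those in $a^{\pm2}$.  What remains is $|\beta^{\pm2}|=O_A(1)$, $|r_\nu|=1$, and the relative $U$-error, whose phase factor is now measured against the matched leading power and is therefore $O(1)$.  I will do this case by case in $\Sigma_0,\Sigma_\pm$.  In $\Sigma_\pm$, Lemma~\ref{lem:stokes} guarantees that no residual $e^{\pm i\zeta}$ term with coefficient $2\sinh\omega$ appears.  These entry-by-entry verifications are what the ledger in Appendix~\ref{app:ledger} is meant to record, and they yield \eqref{eq:matchbounds} uniformly across both Stokes rays.
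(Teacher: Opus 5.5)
Your proposal is correct and follows the paper's proof essentially step for step. The shared steps are: Lemma~\ref{lem:AL} with $n=1$; entrywise control of $\mathcal Q$ after the Stokes cancellation of Lemma~\ref{lem:stokes}, so that every argument satisfies $|\operatorname{ph}z|\le\pi$; the DLMF \S13.7(ii) bound used only as a relative error, so that $|z^{-a}|$ divides out; the cancellation of $e^{\pm i\pi\nu}$ between $a^{\pm2}$ and $M_1^{12},M_1^{21}$, which leaves $|\nu|\,|\beta^{\mp2}|=O_A(1+\omega)$; and $\sigma_1$-symmetry for the left endpoint.

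Two small corrections, neither of which affects the argument:
\begin{itemize}
\item The DLMF exponent is $\exp(2\alpha\rho C_1/|z|)$ with $\rho\asymp(1+\omega)^2$, not $\asymp|a|$. The quantity that must be small is therefore $(1+\omega)^2/(s r_s)=O_A(\ln^4 s/s)$ rather than $|a|/|z|$; it still tends to zero.
\item The Laplace-type integral representation you mention requires $\Re a>0$, so it does not cover the diagonal parameters $a=\pm\nu$. Use the \S13.7(ii) bound there, as the paper does.
\end{itemize}
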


\begin{proof}
By Lemma~\ref{lem:AL} with $n=1$ it suffices to bound
$\mathcal A\mathcal E_1\mathcal A^{-1}$, where
$\mathcal E_1=\mathcal Q-I-M_1/\zeta$.

\emph{Step 1: a single $U$ per entry.}  By \eqref{eq:Qzero} and
Lemma~\ref{lem:stokes}, in each of the three sectors every entry of
$\mathcal Q$ is a constant times a single $U(a,1;z)$ times the
normalizing power $\zeta^{\mp\nu}$, with
\begin{equation}\label{eq:az}
 a\in\{-\nu,\ \nu,\ 1-\nu,\ 1+\nu\},
 \qquad |z|=|\zeta|=2sr_s,
 \qquad |\arg z|\le\pi .
\end{equation}
The bound $|\arg z|\le\pi$ is what the Stokes reduction buys: before it,
the phases run up to $3\pi/2$ in modulus.

\emph{Step 2: the DLMF expansion and its remainder.}  For $b=1$,
\cite[eqs.~13.7.4--13.7.5]{NIST} give
\begin{equation}\label{eq:dlmfexp}
 U(a,1;z)=z^{-a}\sum_{t=0}^{n-1}\frac{\bigl((a)_t\bigr)^2}{t!}
 (-z)^{-t}+\varepsilon_n(z),
 \qquad
 |\varepsilon_n(z)|\le 2\alpha C_n
 \Bigl|\frac{\bigl((a)_n\bigr)^2}{n!\,z^{a+n}}\Bigr|
 \exp\Bigl(\frac{2\alpha\rho C_1}{|z|}\Bigr),
\end{equation}
using $a-b+1=a$.  Because the bound carries the \emph{same} factor
$z^{-a}$ as the leading term, the quantity that enters
$\mathcal E_1$ is the \emph{relative} remainder
\begin{equation}\label{eq:relerr}
 \bigl|z^{a}\varepsilon_n(z)\bigr|
 \le 2\alpha C_n\frac{|(a)_n|^2}{n!\,|z|^{n}}
 \exp\Bigl(\frac{2\alpha\rho C_1}{|z|}\Bigr),
\end{equation}
in which the exponentially large or small factor $|z^{-a}|
=e^{(\omega/\pi)\arg z}$ has divided out exactly.  \emph{This is the
mechanism by which no $e^{c\omega}$ survives, and it is why no bound is
ever applied to $\varepsilon_n$ alone.}

\emph{Step 3: the DLMF parameters, region by region.}  In the notation
of \cite[eqs.~13.7.8--13.7.9]{NIST},
\[
 \sigma=\Bigl|\frac{b-2a}{z}\Bigr|\overset{b=1}{=}\frac{|1-2a|}{|z|},
 \qquad
 \varsigma=\Bigl(\tfrac12+\tfrac12\sqrt{1-4\sigma^2}\Bigr)^{-1/2},
 \qquad
 \chi(n)=\sqrt\pi\,\frac{\Gamma(\tfrac n2+1)}{\Gamma(\tfrac n2+\tfrac12)}.
\]
For every $a$ in \eqref{eq:az}, $|1-2a|\le1+2|a|\le3(1+\omega)$, so by
\eqref{eq:sigmasmall}
\begin{equation}\label{eq:sigmao1}
 \sigma\le\frac{3(1+\omega)}{2sr_s}=O_A\Bigl(\frac{\ln^3s}{s}\Bigr)=o(1),
\end{equation}
uniformly.  Fix once and for all $s_A$ so large that
\begin{equation}\label{eq:sigmatenth}
 \frac{3(1+A\ln s)(\ln s)^2}{2s}\le\frac1{10}
 \qquad\text{for all }s\ge s_A,
\end{equation}
which is possible because the left side tends to $0$; then
$\sigma\le\tfrac1{10}$ for every $a$ in \eqref{eq:az} and every
$s\ge s_A$.  Since $\varsigma$ is increasing in $\sigma$ on
$[0,\tfrac12]$,
\begin{equation}\label{eq:varsigma}
 1\le\varsigma
 =\Bigl(\tfrac12+\tfrac12\sqrt{1-4\sigma^2}\Bigr)^{-1/2}
 \le\Bigl(\tfrac12+\tfrac12\sqrt{1-\tfrac4{100}}\Bigr)^{-1/2}
 =1.00509\ldots\le\tfrac{51}{50}.
\end{equation}
We now treat the three regions of
\cite[Fig.~13.7.1]{NIST} separately, as \cite[\S13.7(ii)]{NIST}
requires.  Throughout, $\chi(1)=\pi/2$ and $\chi(2)=2$, computed from
$\Gamma(\tfrac32)=\tfrac12\sqrt\pi$, $\Gamma(1)=\Gamma(2)=1$.
\begin{itemize}[leftmargin=1.6em]
\item \emph{$z\in R_1$.}  $C_n=1$, so $C_1=C_2=1$.  Here
$\alpha=(1-\sigma)^{-1}\le\tfrac{10}9$ and, by
\cite[eq.~13.7.9]{NIST} with $b=1$,
\[
 \rho=\tfrac12\bigl|2a^2-2ab+b\bigr|
 +\frac{\sigma(1+\tfrac14\sigma)}{(1-\sigma)^2}
 \overset{b=1}{=}\tfrac12\bigl|2a^2-2a+1\bigr|
 +\frac{\sigma(1+\tfrac14\sigma)}{(1-\sigma)^2}.
\]
For every $a$ in \eqref{eq:az}, $|a|\le1+|\nu|=1+\omega/\pi$, so
$\tfrac12|2a^2-2a+1|\le|a|^2+|a|+\tfrac12
\le(1+\omega)^2+(1+\omega)+\tfrac12$, while the second term is at most
$\tfrac1{10}\cdot\tfrac{41}{40}\cdot(\tfrac{10}9)^2=0.1266\ldots\le\tfrac12$.
Hence
\begin{equation}\label{eq:rhobound}
 \rho\le(1+\omega)^2+(1+\omega)+1\le3(1+\omega)^2 .
\end{equation}
\item \emph{$z\in R_2\cup\overline R_2$.}  $C_n=\chi(n)$, so
$C_1=\pi/2\le\tfrac94$ and $C_2=2\le\tfrac{27}4$; $\alpha$ and $\rho$
are given by the same formulas as in $R_1$, so \eqref{eq:rhobound}
persists.
\item \emph{$z\in R_3\cup\overline R_3$.}  Here
$C_n=(\chi(n)+\sigma\varsigma^2n)\varsigma^n$, and
\cite[\S13.7(ii)]{NIST} requires
that throughout \cite[eq.~13.7.9]{NIST} one replace
\[
 \sigma\longmapsto\varsigma\sigma,
 \qquad
 |z|^{-1}\longmapsto\varsigma|z|^{-1}.
\]
By \eqref{eq:varsigma}, $\varsigma\sigma\le\tfrac{51}{500}<\tfrac18$,
so the modified quantities obey
$\alpha_{\mathrm{mod}}=(1-\varsigma\sigma)^{-1}\le\tfrac87$ and,
with $\varsigma\sigma$ in place of $\sigma$ in the displayed formula for
$\rho$, the second term is at most
$\tfrac18\cdot\tfrac{33}{32}\cdot(\tfrac87)^2=0.1683\ldots\le\tfrac12$,
so \eqref{eq:rhobound} again holds for $\rho_{\mathrm{mod}}$.  For the
coefficients themselves,
\begin{align*}
 C_1&=(\chi(1)+\sigma\varsigma^2)\varsigma
 \le\Bigl(\frac\pi2+\frac1{10}\Bigl(\frac{51}{50}\Bigr)^{2}\Bigr)
 \frac{51}{50}=1.70834\ldots\le\frac94,\\
 C_2&=(\chi(2)+2\sigma\varsigma^2)\varsigma^2
 \le\Bigl(2+\frac15\Bigl(\frac{51}{50}\Bigr)^{2}\Bigr)
 \Bigl(\frac{51}{50}\Bigr)^{2}=2.29729\ldots\le\frac{27}4 .
\end{align*}
\end{itemize}
In all three cases, for $n\le2$ and all $s\ge s_A$,
\begin{equation}\label{eq:Cnbound}
 \alpha\le2,\qquad \rho\le3(1+\omega)^2,\qquad
 C_1\le\tfrac{9}{4},\qquad C_2\le\tfrac{27}{4},
\end{equation}
and hence, by \eqref{eq:sigmasmall},
\begin{equation}\label{eq:expbound}
 \exp\Bigl(\frac{2\alpha\rho C_1}{|z|}\Bigr)
 \le\exp\Bigl(\frac{27(1+\omega)^2}{2\,s\,r_s}\Bigr)
 =\exp\bigl(O_A(\ln^4s/s)\bigr)=O_A(1).
\end{equation}
Because \eqref{eq:Cnbound} holds in all three regions, the estimate is
independent of the angular boundaries depicted in
\cite[Fig.~13.7.1]{NIST}.
Finally the ambient hypothesis of \cite[eq.~13.7.3]{NIST} is
$|\operatorname{ph}z|\le\tfrac32\pi-\delta$, which by
\eqref{eq:az} holds with $\delta=\pi/2$ for every representative used.

\emph{Step 4: diagonal entries.}  For $(1,1)$ take $a=-\nu$ and $n=2$:
$(a)_2=(-\nu)(1-\nu)$, so $|(a)_2|^2\le C(1+\omega)^4$ and
\eqref{eq:relerr} gives
$|(\mathcal E_1)_{11}|\le C_A(1+\omega)^4/|\zeta|^2$.  The $(2,2)$ entry
is the same with $a=\nu$.

\emph{Step 5: off-diagonal entries --- the extra $z^{-1}$.}  The
off-diagonal entries do not all have the form
$c\,z^{a}U(a,1;z)$ with leading term $1$.
For the $(1,2)$ entry $a=1+\nu$, so the leading behaviour of
$U(a,1;z)$ is $z^{-a}=z^{-1}z^{-\nu}$, i.e.\ the normalized entry is
$O(|\zeta|^{-1})$ and its leading term \emph{is} the $M_1^{12}/\zeta$
term.  Accordingly we factor that leading behaviour out and estimate the
relative remainder: writing
\[
 \mathcal Q_{12}=\frac{M_1^{12}}{\zeta}\bigl(1+\theta_{12}\bigr),
\]
\eqref{eq:relerr} with $n=1$ gives
$|\theta_{12}|\le2\alpha C_1|(a)_1|^2|z|^{-1}O_A(1)
\le C_A(1+\omega)^2/|\zeta|$, whence
\[
 \bigl|(\mathcal E_1)_{12}\bigr|
 =\frac{|M_1^{12}|\,|\theta_{12}|}{|\zeta|}
 \le|M_1^{12}|\,\frac{C_A(1+\omega)^2}{|\zeta|^2}.
\]
The $(2,1)$ entry is identical with $a=1-\nu$.

\emph{Step 6: conjugation, and only then the final bound.}  Conjugating
by $\mathcal A$ leaves the diagonal entries unchanged and multiplies the
$(1,2)$ entry by $a^2$.  By Step~2 of Lemma~\ref{lem:Delta},
$|a^2M_1^{12}|=|\nu||\beta^{-2}|=O_A(1+\omega)$, so
\[
 \bigl|(\mathcal A\mathcal E_1\mathcal A^{-1})_{12}\bigr|
 =\bigl|a^2M_1^{12}\bigr|\cdot\frac{C_A(1+\omega)^2}{|\zeta|^2}
 \le\frac{C_A(1+\omega)^3}{|\zeta|^2},
\]
and symmetrically for $(2,1)$.  Since $|\zeta|=2sr_s$, all four entries
are at most $C_A(1+\omega)^4/(sr_s)^2$, which is \eqref{eq:matchbounds}.
The bound for $\Delta_+$ is \eqref{eq:Deltabound}.  The left endpoint
follows from \eqref{eq:Pm1} and Lemma~\ref{lem:Pinf}(iii), conjugation
by $\sigma_1$ being an isometry for $\|\cdot\|$:
$P^{(-1)}(P^{(\infty)})^{-1}(\lambda)
=\sigma_1\bigl[P^{(1)}(P^{(\infty)})^{-1}\bigr](-\lambda)\sigma_1$.
\end{proof}

\begin{remark}[three ingredients in the uniform estimate]
\label{rem:whyuniform}
The estimate is uniform across the Stokes rays for three reasons, in
this order.  \emph{First}, the sector matrices are multiplied in before
anything is estimated, so the object expanded is $\mathcal Q$, not an
individual $U$.  \emph{Second}, the exact relation
\cite[eq.~13.2.44]{NIST} replaces the continued-sheet $U$ by a
representative with $|\operatorname{ph}|\le\pi$, and the exponentially
large term it produces cancels identically against the Stokes term
(Lemma~\ref{lem:stokes}).  \emph{Third}, the bound is applied to the
relative error $z^{a}\varepsilon_n(z)$, so the factor $|z^{-a}|$ ---
which for $|\operatorname{ph}z|$ up to $3\pi/2$ is as large as
$e^{3\omega/2}$ --- divides out exactly.  All three ingredients are
needed to avoid an uncontrolled factor $e^{c\omega}$, i.e.\ a positive
power of $s$.
\end{remark}

\subsection{The ratio problem}\label{sec:gate10}

\begin{definition}[the ratio]\label{def:R}
Let
\[
 \Sigma_R:=\partial D_1\cup\partial D_{-1}\cup
 \bigl[(\Sigma_{\mathrm{up}}\cup\Sigma_{\mathrm{dn}})\setminus
 (D_1\cup D_{-1})\bigr],
\]
with both circles oriented \emph{clockwise} and the lens pieces retaining
their orientation from $-1$ to $1$.  Put
\begin{equation}\label{eq:Rdef}
 R(\lambda)=\begin{cases}
 T(\lambda)\bigl(P^{(1)}(\lambda)\bigr)^{-1},&\lambda\in D_1,\\
 T(\lambda)\bigl(P^{(-1)}(\lambda)\bigr)^{-1},&\lambda\in D_{-1},\\
 T(\lambda)\bigl(P^{(\infty)}(\lambda)\bigr)^{-1},&\text{elsewhere.}
 \end{cases}
\end{equation}
\end{definition}

\begin{lemma}[Jumps of $R$]
\label{lem:Rjumps}
$R$ is analytic in $\C\setminus\Sigma_R$, has no singularity at $\pm1$,
satisfies $R=I+O(\lambda^{-1})$ at infinity, and $R_+=R_-(I+W)$ on
$\Sigma_R$ with
\begin{equation}\label{eq:W}
 I+W=
 \begin{cases}
 P^{(\infty)}(\lambda)\,J_T(\lambda)\,
 \bigl(P^{(\infty)}(\lambda)\bigr)^{-1},
 &\lambda\in\Sigma_R\cap
 (\Sigma_{\mathrm{up}}\cup\Sigma_{\mathrm{dn}}),\\[2pt]
 P^{(1)}(\lambda)\bigl(P^{(\infty)}(\lambda)\bigr)^{-1}
 =I+\Delta_++E_+,&\lambda\in\partial D_1,\\[2pt]
 P^{(-1)}(\lambda)\bigl(P^{(\infty)}(\lambda)\bigr)^{-1}
 =I+\Delta_-+E_-,&\lambda\in\partial D_{-1}.
 \end{cases}
\end{equation}
In particular $W=\Delta_\pm+E_\pm$ on the circles, \emph{with no sign
change and no inverse}.
\end{lemma}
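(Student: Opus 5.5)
We verify the lemma region by region, directly from the definition \eqref{eq:Rdef}. First, analyticity off $\Sigma_R$. Inside $D_1$ (away from $\Sigma_T\cap D_1$), $R=T(P^{(1)})^{-1}$. By Lemma~\ref{lem:localjumps}, $P^{(1)}$ has exactly the jumps of $T$ on $\Sigma_T\cap D_1$, with the same orientations. Hence on each of the three pieces
\[
R_+=T_+(P^{(1)}_+)^{-1}=T_-J_T J_T^{-1}(P^{(1)}_-)^{-1}=R_- .
\]
Here $P^{(1)}$ is invertible by Lemma~\ref{lem:Pinvertible}, together with the invertibility of $A_{\mathrm{left}}$ and $E_{\mathrm{right}}$. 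The same argument applies in $D_{-1}$, using part (a)--(c) of Step~4 of Lemma~\ref{lem:localjumps}.

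Outside the disks, on $(-1,1)\setminus(D_1\cup D_{-1})$, the jump of $T$ is $e^{-2v\sigma_3}$. By Lemma~\ref{lem:Pinf}(ii) this is exactly the jump of $P^{(\infty)}$, so $R$ has no jump there. On the lens pieces outside the disks,
\[
R_+=T_-J_T(P^{(\infty)})^{-1}=R_-\,P^{(\infty)}J_T(P^{(\infty)})^{-1},
\]
since $P^{(\infty)}$ is analytic off $[-1,1]$. This gives the first line of \eqref{eq:W}.

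Next, the absence of a singularity at $\pm1$, which is the delicate step. Near $+1$, Lemma~\ref{lem:Tjumps} and Step~3 of Lemma~\ref{lem:localjumps} show that $T$ and $P^{(1)}$ admit factorizations
\[
T=\check Y\,L\,e^{-is\lambda\sigma_3}\,\mathcal C',\qquad P^{(1)}=\check P\,L\,e^{-is\lambda\sigma_3}\,\mathcal C' ,
\]
with the same logarithmic bracket $L$ (after absorbing the analytic factor $I+\frac{\gamma}{2\pi i}N\ln(2s(\lambda+1))$ into $\check P$) and the same sector factor $\mathcal C'\in\{S_U^{-1},S_L,I\}$. Therefore $R=\check Y\check P^{-1}$ near $1$. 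This function is analytic and bounded in a punctured neighbourhood of $1$, so the singularity is removable. I expect the main care to go into ensuring that the absorbed analytic factor is placed on the left consistently in each sector. The point $-1$ is handled the same way via Step~4(e).

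Finally, the circles and infinity. On $\partial D_1$, oriented clockwise, the $+$ side is the exterior, where $R=TP^{(\infty)-1}$, and the $-$ side is the interior, where $R=TP^{(1)-1}$. Thus
\[
R_-^{-1}R_+=P^{(1)}(P^{(\infty)})^{-1},
\]
which equals $I+\Delta_++E_+$ by Lemma~\ref{lem:uniformmatching}. There is no inverse because of the clockwise convention, and $\partial D_{-1}$ is handled in the same way. At infinity, $T=I+O(\lambda^{-1})$ by Lemma~\ref{lem:Tjumps} and $P^{(\infty)}=I+O(\lambda^{-1})$ by \eqref{eq:Pinfinfty}, so $R=I+O(\lambda^{-1})$.
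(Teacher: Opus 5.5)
Your proposal is correct and follows the paper's proof essentially step by step. Inside the disks and on $(-1,1)$ the jumps cancel because the two factors share them, on the lens pieces the jump is conjugated by the analytic $P^{(\infty)}$, and the clockwise convention gives $R_-^{-1}R_+=P^{(1)}(P^{(\infty)})^{-1}$ on the circles. Your explicit factorization $R=\check Y\check P^{-1}$ near $\pm1$ just spells out the paper's one-line remark that matched endpoint behaviour leaves at most a removable singularity. Your concern about placing the absorbed factor on the left is moot, since the two $N$-brackets commute ($N^2=0$).
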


\begin{proof}
Inside $D_1$ both $T$ and $P^{(1)}$ have the same jumps
(Lemma~\ref{lem:localjumps}) and the same endpoint behaviour, so
$R=T(P^{(1)})^{-1}$ has no jump inside the disk and at worst a
logarithmic --- hence removable --- singularity at $1$; likewise at
$-1$.  Outside the disks $T$ and $P^{(\infty)}$ have the same jump on
$(-1,1)$ by Lemma~\ref{lem:Pinf}(ii) and \eqref{eq:Tjumps}, so $R$ is
analytic across $(-1,1)\setminus(D_1\cup D_{-1})$.

\emph{Lens components.}  $P^{(\infty)}$ is analytic across
$\Sigma_{\mathrm{up}},\Sigma_{\mathrm{dn}}$, so
$P^{(\infty)}_+=P^{(\infty)}_-=P^{(\infty)}$ there, and
\[
 R_+=T_+\bigl(P^{(\infty)}\bigr)^{-1}
 =T_-J_T\bigl(P^{(\infty)}\bigr)^{-1}
 =R_-\,P^{(\infty)}J_T\bigl(P^{(\infty)}\bigr)^{-1}.
\]
Thus the conjugation is $P^{(\infty)}J_T(P^{(\infty)})^{-1}$, \emph{not}
$(P^{(\infty)})^{-1}J_TP^{(\infty)}$.

\emph{Circles.}  On a \emph{clockwise} circle the left of the direction
of travel is the exterior: travelling clockwise, at the top of the
circle one moves in the $+x$ direction and the left is $+y$, i.e.\
away from the centre.  Hence the $+$ side is the exterior and the $-$
side the interior, so
$R_+=T(P^{(\infty)})^{-1}$ and $R_-=T(P^{(1)})^{-1}$ on $\partial D_1$,
and
\[
 I+W=R_-^{-1}R_+
 =P^{(1)}T^{-1}\,T\bigl(P^{(\infty)}\bigr)^{-1}
 =P^{(1)}\bigl(P^{(\infty)}\bigr)^{-1},
\]
which by \eqref{eq:matching} is $I+\Delta_++E_+$.  The same on
$\partial D_{-1}$ with \eqref{eq:matchingminus}.
\end{proof}

\begin{remark}[the sign is fixed]\label{rem:signfixed}
Once the circles are declared clockwise, $W=\Delta_\pm+E_\pm$ is forced;
the exterior is the $+$ side, and the lens jump is conjugated as
$P^{(\infty)}J_T(P^{(\infty)})^{-1}$.  These conventions determine the
sign of $\Delta_\pm$.
\end{remark}

\begin{lemma}[small norm]\label{lem:smallnorm}
There are $C_A$ and $s_A\ge5$ such that for $s\ge s_A$ and
$0<\omega\le A\ln s$,
\begin{equation}\label{eq:Wbounds}
 \|W\|_{L^\infty(\Sigma_R)}\le C_A\frac{(1+\omega)^2}{s\,r_s}
 =O_A\Bigl(\frac{\ln^4s}{s}\Bigr),
 \qquad
 \|W\|_{L^2(\Sigma_R)}\le C_A\frac{(1+\omega)^2}{s\sqrt{r_s}} .
\end{equation}
\end{lemma}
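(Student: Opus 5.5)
The plan is to bound $W$ separately on the three kinds of components of $\Sigma_R$ listed in Lemma~\ref{lem:Rjumps}, and then to add the contributions. Since $r_s=(\ln s)^{-2}$ and $\omega\le A\ln s$, the target $C_A(1+\omega)^2/(sr_s)$ is $O_A(\ln^4s/s)$; this gives the second form of the $L^\infty$ bound immediately once the first is proved.

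\emph{Lens pieces.} On $\Sigma_R\cap(\Sigma_{\mathrm{up}}\cup\Sigma_{\mathrm{dn}})$ we have $W=P^{(\infty)}(J_T-I)(P^{(\infty)})^{-1}$. Lemma~\ref{lem:lensnegligible} gives $\|W\|\le C_{A,N}s^{-N}$ in both $L^\infty$ and $L^2$. With $N=1$, say, this is dominated by $(1+\omega)^2/(sr_s)$ and by $(1+\omega)^2/(s\sqrt{r_s})$, since $r_s\le1$.

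\emph{Circles.} On $\partial D_{\pm1}$, Lemma~\ref{lem:Rjumps} gives $W=\Delta_\pm+E_\pm$ with no inverse. Lemma~\ref{lem:uniformmatching} supplies $\|\Delta_\pm\|\le C_A(1+\omega)^2/(sr_s)$ and $\|E_\pm\|\le C_A(1+\omega)^4/(sr_s)^2$. The one thing to check is that $E_\pm$ is subordinate to $\Delta_\pm$. The ratio of the two bounds is $(1+\omega)^2/(sr_s)=O_A(\ln^4s/s)$, which is $\le1$ once $s_A$ is enlarged, by \eqref{eq:sigmasmall}-type reasoning. Hence $\|W\|_{L^\infty(\partial D_{\pm1})}\le 2C_A(1+\omega)^2/(sr_s)$. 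This bound is uniform across the Stokes rays by the last sentence of Lemma~\ref{lem:uniformmatching}, which I expect to be the only delicate input; it has already been proved there.

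\emph{$L^2$.} Each circle has length $2\pi r_s$, so $\|W\|_{L^2(\partial D_{\pm1})}\le\sqrt{2\pi r_s}\,\|W\|_{L^\infty}\le C_A(1+\omega)^2/(s\sqrt{r_s})$. Adding this to the negligible lens contribution yields \eqref{eq:Wbounds}, after enlarging $C_A$ and $s_A$.
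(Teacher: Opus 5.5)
Your proposal is correct and follows the paper's proof. You bound the lens pieces by Lemma~\ref{lem:lensnegligible} and the circles by \eqref{eq:matchbounds}, where you spell out the absorption of $E_\pm$ into the $\Delta_\pm$ bound that the paper leaves implicit. The $L^2$ estimate then comes from the total circle length $4\pi r_s$ together with the negligible lens contribution on a contour of bounded length.
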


\begin{proof}
On the circles use \eqref{eq:matchbounds}; on the lens components use
Lemma~\ref{lem:lensnegligible}, which bounds exactly the quantity
$\|P^{(\infty)}(J_T-I)(P^{(\infty)})^{-1}\|=\|W\|$ appearing in
\eqref{eq:W} by $O_{A,N}(s^{-N})$.  For $L^2$: the circles have total
length $4\pi r_s$ and the remaining contour length at most $10$, so
$\|W\|_{L^2}^2\le4\pi r_s\|W\|^2_{L^\infty(\text{circles})}+10\,
O_{A,N}(s^{-2N})\le C_Ar_s(1+\omega)^4/(sr_s)^2$.
\end{proof}

\begin{lemma}[uniform Cauchy constants and Neumann inversion]
\label{lem:cauchy}
Let $C_-^{\Sigma_R}$ be the minus Cauchy projection on
$L^2(\Sigma_R)$.  There is an absolute constant $C_\ast$ such that
$\|C_-^{\Sigma_R}\|_{L^2\to L^2}\le C_\ast$ for all $s\ge5$, uniformly
as $r_s\downarrow0$.  Consequently, for $s\ge s_A$ the operator
$\mathcal C_W\mu:=C_-^{\Sigma_R}(\mu W)$ has
$\|\mathcal C_W\|\le C_\ast\|W\|_{L^\infty}\le\tfrac12$, $I-\mathcal C_W$
is invertible by its Neumann series, the equation
$\mu=I+\mathcal C_W\mu$ has a unique solution
$\mu\in I+L^2(\Sigma_R)$ with
\begin{equation}\label{eq:mubound}
 \|\mu-I\|_{L^2(\Sigma_R)}\le2C_\ast\|W\|_{L^2(\Sigma_R)},
\end{equation}
and $R$ exists, is unique, and
\begin{equation}\label{eq:Rrep}
 R(\lambda)=I+\frac1{2\pi i}\int_{\Sigma_R}
 \frac{\mu(z)W(z)}{z-\lambda}\,dz
 =I+\frac{R_1}\lambda+O(\lambda^{-2}),
 \qquad
 R_1=-\frac1{2\pi i}\int_{\Sigma_R}\mu(z)W(z)\,dz .
\end{equation}
\end{lemma}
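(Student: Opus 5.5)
The plan has three parts: a uniform bound on the Cauchy operator, a Neumann-series inversion, and the usual identification of $R$ with the Cauchy integral. The only non-routine part is the first, because $\Sigma_R$ depends on $s$ through the shrinking radius $r_s$.

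For the Cauchy bound, I would split $\Sigma_R$ into finitely many pieces: the two clockwise circles $\partial D_{\pm1}$, and the lens pieces $(\Sigma_{\mathrm{up}}\cup\Sigma_{\mathrm{dn}})\setminus(D_1\cup D_{-1})$. The lens pieces are a bounded number of straight segments (horizontal and vertical) meeting at right angles. By Lemma~\ref{lem:geometry}(vi), each vertical segment meets a circle orthogonally at $\pm1\pm ir_s$. Every such piece is a Lipschitz graph, or a circle, with Lipschitz constant bounded independently of $s$. By Coifman--McIntosh--Meyer \cite{CMM82}, the Cauchy integral on each piece, and on each union of two adjacent pieces meeting at a right angle, is bounded on $L^2$ with a constant depending only on that Lipschitz character.

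Dilation invariance then handles $r_s$. The Cauchy operator commutes with $z\mapsto 1+r_s(z-1)$, so the local configuration near $+1$ is a fixed picture rescaled: a unit circle with two orthogonal vertical rays attached at $\pm i$. Rescaling changes neither the $L^2\to L^2$ norm nor the Lipschitz character. Away from the endpoints, the contour is a fixed finite union of segments at distance at least $1$ from the other endpoint disk (Lemma~\ref{lem:geometry}(vi)).

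It remains to control the cross terms, meaning the Cauchy integral from one piece evaluated on a non-adjacent piece. There are two cases. If the two pieces are separated by distance bounded below uniformly in $s$, the kernel is bounded and the operator is Hilbert--Schmidt with an $s$-independent bound, since lengths are at most $10$. The delicate case is a circle $\partial D_1$ paired with a vertical lens segment that it does not touch, or with the horizontal segment. Here I would again rescale by $r_s$ and compare with the fixed configuration of a unit circle plus a complete vertical line through $\pm i$. In that configuration the union is a finite union of Lipschitz curves with controlled mutual geometry, so CMM together with a standard localization or partition-of-unity argument gives a fixed bound. Assembling the finitely many blocks gives $\|C_-^{\Sigma_R}\|\le C_\ast$ uniformly for $s\ge5$.

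I expect this uniform-in-$r_s$ step to be the main obstacle. The point is that nothing of the lens pieces should be allowed to approach the circle tangentially. The orthogonal junctions, the right angles, and $|\Im\lambda|\ge r_s$ from Lemma~\ref{lem:geometry}(v) are exactly what rule this out after rescaling.

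Given $C_\ast$, the rest is routine. Lemma~\ref{lem:smallnorm} gives $\|W\|_{L^\infty}=O_A(\ln^4s/s)$, so for $s\ge s_A$ we have $C_\ast\|W\|_\infty\le\tfrac12$. Then $\mathcal C_W$ has norm at most $\tfrac12$ on $L^2(\Sigma_R)$. Writing $\mu=I+\nu$, the equation becomes $\nu=\mathcal C_W I+\mathcal C_W\nu$, with $\|\mathcal C_WI\|_{L^2}\le C_\ast\|W\|_{L^2}$. The Neumann series then yields a unique $\nu$ and the bound \eqref{eq:mubound}.

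Next I would define $R$ by the Cauchy integral \eqref{eq:Rrep}. By Plemelj, $R_+-R_-=\mu W$ and $R_-=I+C_-(\mu W)=\mu$, so $R_+=R_-(I+W)$ and $R\to I$ at infinity. Expanding $1/(z-\lambda)$ on the compact contour gives $R_1$.

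Uniqueness needs a separate argument. Any solution $\tilde R$ with $L^2$ boundary values and $\tilde R\to I$ has $\det\tilde R\equiv1$, since $\det(I+W)=1$. Then $\tilde R R^{-1}$ has no jump. Its only possible singularities are at the finitely many self-intersection points, where the $L^2$ boundary values make them removable. Liouville's theorem then gives $\tilde R=R$. Finally, this $R$ coincides with the ratio of Definition~\ref{def:R} by Lemma~\ref{lem:Rjumps}, which also proves existence of $T$-side consistency.
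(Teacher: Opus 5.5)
Your argument is essentially the paper's: split $\Sigma_R$ into its finitely many pieces, bound the blocks near the endpoints with the dilation $\lambda\mapsto\pm1+(\lambda\mp1)/r_s$ and Coifman--McIntosh--Meyer, bound separated blocks through their bounded kernels, and finish with the Neumann series and Plemelj; your explicit construction of $R$ from $\mu$ and the $\det\equiv1$/Liouville uniqueness argument fill in what the paper delegates to Deift--Zhou. One correction to your case analysis: the pair you call delicate (the circle $\partial D_1$ against the horizontal segment) is at distance $h-r_s\ge\tfrac12-(\ln 5)^{-2}>0$, so it falls under your bounded-kernel case, and the only non-adjacent pair whose distance tends to zero is the two collinear vertical segments $[1-ih,1-ir_s]$ and $[1+ir_s,1+ih]$, at distance $2r_s$; your rescaled circle-plus-full-line comparison does cover that pair.
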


\begin{proof}
\emph{Uniformity of the Cauchy constant.}  By
Lemma~\ref{lem:geometry}(vi), $\Sigma_R$ is a finite union of line
segments and two circles, meeting only at right angles, with total
length at most $10+4\pi$ and with
$\operatorname{dist}(\partial D_1,\partial D_{-1})\ge1$.  Split
$C_-^{\Sigma_R}=\sum_{i,j}\chi_iC_-\chi_j$ over the components.  Each
diagonal term is invariant under the dilation
$\lambda\mapsto\pm1+(\lambda\mp1)/r_s$ about the corresponding endpoint,
because the Cauchy kernel $dz/(z-\lambda)$ is scale invariant; that
dilation carries $\partial D_{\pm1}$ to the unit circle and the four
adjacent segments to segments of length $\asymp1/r_s$ meeting it
orthogonally, a configuration of Lipschitz arcs with $s$-independent
Lipschitz character, so Coifman--McIntosh--Meyer \cite{CMM82} gives an
$s$-independent bound.  The cited theorem supplies the $L^2$ estimate for
each Lipschitz contour; uniformity in this shrinking family comes from
the preceding scale invariance and the $s$-independent Lipschitz and
angle bounds, not from a separate uniform assertion in \cite{CMM82}.
Off-diagonal terms between components meeting at
a right angle are likewise scale invariant about the crossing point with
a fixed angle; the remaining off-diagonal terms have smooth kernels
bounded by $\operatorname{dist}^{-1}\le1$.  Hence
$\|C_-^{\Sigma_R}\|\le C_\ast$ absolute.

\emph{Neumann series.}  $\|\mathcal C_W\mu\|_{L^2}\le C_\ast
\|W\|_{L^\infty}\|\mu\|_{L^2}$ and $\|W\|_{L^\infty}=o(1)$ by
Lemma~\ref{lem:smallnorm}, so $\|\mathcal C_W\|\le\tfrac12$ for
$s\ge s_A$; then $\|(I-\mathcal C_W)^{-1}\|\le2$ and
$\mu-I=(I-\mathcal C_W)^{-1}C_-^{\Sigma_R}W$, giving
\eqref{eq:mubound}.  This is the small-norm theorem in the form of
\cite{DeiftZhou}; see also \cite[Ch.~7]{Deift99}.

\emph{Representation.}  With $R_+=R_-(I+W)$ and
$R=I+C(\mu W)$, Plemelj gives $R_+-R_-=\mu W$, and $R_+-R_-=R_-W$ forces
$\mu=R_-$.  Expanding $\frac1{z-\lambda}=-\frac1\lambda+O(\lambda^{-2})$
gives \eqref{eq:Rrep}; the sign of $R_1$ comes from this expansion, not
from any convention about the circles.
\end{proof}

\subsection{From \texorpdfstring{$R_1$}{R1} to the determinant}
\label{sec:gate11}

\begin{lemma}[the $(1,1)$ entry of $R_1$]\label{lem:R1}
Uniformly for $s\ge s_A$, $0<\omega\le A\ln s$,
\begin{equation}\label{eq:R1answer}
 (R_1)_{11}
 =-\frac{i\nu^2}{s}
 +O_A\!\Bigl(\frac{(1+\omega)^4}{s^2r_s}\Bigr)
 =\frac{i\omega^2}{\pi^2s}
 +O_A\!\Bigl(\frac{(1+\omega)^4\ln^2s}{s^2}\Bigr).
\end{equation}
\end{lemma}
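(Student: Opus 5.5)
The plan is to evaluate $R_1=-\frac1{2\pi i}\int_{\Sigma_R}\mu W\,dz$ from \eqref{eq:Rrep} by writing $\mu=I+(\mu-I)$ and $W=\Delta_\pm+E_\pm$ on the circles. This splits $R_1$ into three pieces:
\[
 -\frac1{2\pi i}\oint_{\partial D_1\cup\partial D_{-1}}\Delta_\pm\,dz,\qquad
 -\frac1{2\pi i}\int_{\Sigma_R}(W-\Delta)\,dz,\qquad
 -\frac1{2\pi i}\int_{\Sigma_R}(\mu-I)W\,dz ,
\]
where $\Delta$ means $\Delta_\pm$ on the circles and $0$ on the lens pieces. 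The first piece gives the main term. The other two go into the error.

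For the main term, $\Delta_+$ is meromorphic in $D_1$. Its only singularity is the simple pole at $\lambda=1$: the off-diagonal entries carry $\beta^{\pm2}$, which is analytic on the closed disk, since $\Re(\lambda+1)>0$ there. Because the circle is oriented clockwise, $\oint_{\partial D_1}=-2\pi i\,\Res$. So $-\frac1{2\pi i}\oint_{\partial D_1}(\Delta_+)_{11}\,dz=\Res_{\lambda=1}(\Delta_+)_{11}=-i\nu^2/(2s)$ by \eqref{eq:residues}. The same computation at $-1$ gives another $-i\nu^2/(2s)$, and the two contributions add to $-i\nu^2/s$. Substituting $\nu^2=-\omega^2/\pi^2$ gives $i\omega^2/(\pi^2 s)$. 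I will double-check the clockwise sign against Remark~\ref{rem:signcheck}, which requires positive imaginary part.

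The error terms are bounded as follows. On the circles, $\|E_\pm\|\le C_A(1+\omega)^4/(sr_s)^2$ by \eqref{eq:matchbounds}, and the circles have length $2\pi r_s$ each. This piece is therefore $O_A((1+\omega)^4/(s^2r_s))$, which is the stated size. On the lens pieces, Lemma~\ref{lem:lensnegligible} gives $O_{A,N}(s^{-N})$. The remaining piece is controlled by Cauchy--Schwarz together with \eqref{eq:mubound}:
\[
 \Bigl|\int(\mu-I)W\Bigr|\le\|\mu-I\|_{L^2}\|W\|_{L^2}\le2C_\ast\|W\|_{L^2}^2\le C_A\frac{(1+\omega)^4}{s^2r_s},
\]
using \eqref{eq:Wbounds}. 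Since $r_s=(\ln s)^{-2}$, all errors are $O_A((1+\omega)^4\ln^2s/s^2)$.

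The step I expect to be the main obstacle is making sure the $L^2$ bound is sharp enough. The crude $L^\infty$ times length estimate would lose a factor. What saves it is that $\|W\|_{L^2}^2$ picks up the circle length $r_s$, which exactly offsets one $r_s^{-1}$ in $\|W\|_{L^\infty}^2$. A second point to check is that no $e^{\omega}$ factors enter. This holds because all bounds used are on the conjugated quantities $W$ and $\Delta_\pm$, whose entries are already shown to be polynomial in $\omega$ (Lemma~\ref{lem:Delta}, Appendix~\ref{app:ledger}).
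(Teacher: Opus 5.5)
Your proposal is correct and follows the paper's proof essentially step for step: the same split $-2\pi i R_1=\int_{\Sigma_R}W+\int_{\Sigma_R}(\mu-I)W$, Cauchy--Schwarz with \eqref{eq:mubound} and \eqref{eq:Wbounds} for the cross term, Lemma~\ref{lem:lensnegligible} for the lens pieces and \eqref{eq:matchbounds} times the total circle length $4\pi r_s$ for $E_\pm$, and the clockwise residue computation at $\pm1$, where the two residues $-i\nu^2/(2s)$ add. Your planned sign check against Remark~\ref{rem:signcheck} also comes out consistently, since $-i\nu^2/s=i\omega^2/(\pi^2 s)$ has positive imaginary part.
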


\begin{proof}
Split $-2\pi iR_1=\int_{\Sigma_R}W+\int_{\Sigma_R}(\mu-I)W$.
By Cauchy--Schwarz and \eqref{eq:mubound},
$|\int(\mu-I)W|\le2C_\ast\|W\|_{L^2}^2\le C_A(1+\omega)^4/(s^2r_s)$.
In $\int W$, the lens components contribute $O_{A,N}(s^{-N})$
(Lemma~\ref{lem:lensnegligible}); on the circles $W=\Delta_\pm+E_\pm$
by Lemma~\ref{lem:Rjumps}, and the $E_\pm$ contribute at most
$4\pi r_s\cdot C_A(1+\omega)^4/(sr_s)^2=C_A(1+\omega)^4/(s^2r_s)$.
There remains
$\int_{\partial D_1}\Delta_++\int_{\partial D_{-1}}\Delta_-$.  Both
integrands are meromorphic in the respective closed disks with a single
simple pole at the centre --- the factors $\beta^{\pm2}$,
$\tilde\beta^{\pm2}$ are analytic and nonvanishing there --- so, the
circles being \emph{clockwise},
\[
 \int_{\partial D_1}(\Delta_+)_{11}\,d\lambda
 =-2\pi i\Res_{\lambda=1}(\Delta_+)_{11},
 \qquad
 \int_{\partial D_{-1}}(\Delta_-)_{11}\,d\lambda
 =-2\pi i\Res_{\lambda=-1}(\Delta_-)_{11},
\]
and by \eqref{eq:residues} both residues equal $-i\nu^2/(2s)$.  Hence
\[
 (R_1)_{11}=-\frac1{2\pi i}\Bigl[-2\pi i\cdot2\cdot
 \Bigl(-\frac{i\nu^2}{2s}\Bigr)\Bigr]
 +O_A\Bigl(\frac{(1+\omega)^4}{s^2r_s}\Bigr)
 =-\frac{i\nu^2}{s}+O_A\Bigl(\frac{(1+\omega)^4}{s^2r_s}\Bigr),
\]
and $\nu^2=-\omega^2/\pi^2$, $1/r_s=\ln^2s$.
\end{proof}

\begin{lemma}[transfer to $Y_1$]\label{lem:Y1}
Uniformly for $s\ge s_A$, $0<\omega\le A\ln s$,
\begin{equation}\label{eq:Y1answer}
 (Y_1)_{11}=-\frac{2iv}{\pi}+\frac{iv^2}{\pi^2s}
 +O_A\!\Bigl(\frac{(1+\omega)^4\ln^2s}{s^2}\Bigr).
\end{equation}
\end{lemma}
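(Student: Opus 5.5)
The plan is to unwind the chain of transformations $Y\mapsto T\mapsto R$ at infinity. For $|\lambda|$ large, outside the lens and both disks, Definition~\ref{def:S} gives $T=Y$, and Definition~\ref{def:R} gives $R=TP^{(\infty)}{}^{-1}$. Hence
\[
 Y(\lambda)=R(\lambda)P^{(\infty)}(\lambda)\qquad(|\lambda|\text{ large}).
\]
Expanding both factors at infinity, \eqref{eq:Rrep} supplies $R=I+R_1/\lambda+O(\lambda^{-2})$, and \eqref{eq:Pinfinfty} supplies $P^{(\infty)}=I-2\nu\sigma_3/\lambda+O(\lambda^{-2})$. Comparing the $\lambda^{-1}$ coefficients gives the exact identity
\[
 Y_1=R_1-2\nu\sigma_3 .
\]
Taking the $(1,1)$ entry yields $(Y_1)_{11}=(R_1)_{11}-2\nu$.

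Next I substitute the constants. With $\nu=iv/\pi$ from \eqref{eq:params}, the term $-2\nu$ equals $-2iv/\pi$. Lemma~\ref{lem:R1} gives $(R_1)_{11}=-i\nu^2/s+O_A((1+\omega)^4\ln^2s/s^2)$. Since $\nu^2=-v^2/\pi^2$, we have $-i\nu^2/s=iv^2/(\pi^2s)$, which matches the second term of \eqref{eq:Y1answer}; the error term carries over unchanged. Uniformity in $s\ge s_A$ and $0<\omega\le A\ln s$ is inherited directly from Lemma~\ref{lem:R1}, because the identity $Y_1=R_1-2\nu\sigma_3$ holds exactly, with no approximation.

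The one point that needs care is whether the $O(\lambda^{-2})$ remainders multiply cleanly. For $P^{(\infty)}$ this is immediate, since it is explicit and analytic near $\infty$. For $R$, the expansion follows from \eqref{eq:Rrep}, because $\Sigma_R$ is compact and $\mu W\in L^1(\Sigma_R)$ (by $L^2$ bounds on a contour of finite length). One must also check the branch and sign conventions: $P^{(\infty)}$ uses the principal branch of $w^\nu$ with $w\to1$, so \eqref{eq:Pinfinfty} holds with the stated sign, and the factor order $R\,P^{(\infty)}$, not $P^{(\infty)}R$, is the one dictated by \eqref{eq:Rdef}.

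There is no genuine obstacle. As a consistency check, setting $s\to\infty$ with $v$ small recovers $(Y_1)_{11}\approx-2iv/\pi\approx-i\gamma/\pi$, in agreement with Lemma~\ref{lem:smallgamma} and Remark~\ref{rem:calibration}.
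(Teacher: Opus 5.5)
Your proposal is correct and follows the paper's own proof: you write $Y=T=RP^{(\infty)}$ outside the lenses and disks, read off the exact identity $Y_1=R_1-2\nu\sigma_3$ from \eqref{eq:Rrep} and \eqref{eq:Pinfinfty}, and insert Lemma~\ref{lem:R1} using $\nu=iv/\pi$ and $\nu^2=-v^2/\pi^2$. Your remarks on the factor order $R\,P^{(\infty)}$ and on uniformity being inherited from Lemma~\ref{lem:R1} are accurate and are also implicit in the paper's argument.
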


\begin{proof}
For $|\lambda|>2$ we are outside every lens and disk, so by
\eqref{eq:Sdef} and \eqref{eq:Rdef}, $Y=T=RP^{(\infty)}$ there.
Multiplying \eqref{eq:Rrep} and \eqref{eq:Pinfinfty},
\[
 Y=\Bigl(I+\frac{R_1}\lambda+O(\lambda^{-2})\Bigr)
 \Bigl(I-\frac{2\nu\sigma_3}\lambda+O(\lambda^{-2})\Bigr)
 =I+\frac{R_1-2\nu\sigma_3}\lambda+O(\lambda^{-2}),
\]
so $Y_1=R_1-2\nu\sigma_3$ and $(Y_1)_{11}=(R_1)_{11}-2\nu
=(R_1)_{11}-2iv/\pi$.  Since $\nu^2=-v^2/\pi^2$,
$(R_1)_{11}=-i\nu^2/s+\cdots=iv^2/(\pi^2s)+\cdots$.
\end{proof}

\begin{lemma}[differential identity]\label{lem:diffid}
For every $s>0$ and $\gamma\le0$,
\begin{equation}\label{eq:diffid}
 \frac{\partial}{\partial s}\ln\det(I-\gamma K_s)
 =-i\bigl((Y_1)_{11}-(Y_1)_{22}\bigr)
 =-2i\,(Y_1)_{11},
\end{equation}
the derivative being taken at fixed $\gamma$.
\end{lemma}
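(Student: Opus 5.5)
The plan is to combine Jacobi's formula from Lemma~\ref{lem:tracenorm}(ii) with the IIKS representation of Lemma~\ref{lem:RHPsolv}. By \eqref{eq:jacobi},
\[
\partial_s\ln\det(I-\gamma K_s)=-\gamma\Tr\bigl((I-\gamma K_s)^{-1}\partial_sK_s\bigr),
\]
where $\partial_sK_s$ has kernel $\pi^{-1}\cos(s(\lambda-\mu))=\pi^{-1}\bigl(\cos s\lambda\cos s\mu+\sin s\lambda\sin s\mu\bigr)$. This is the rank-two operator $Q_s=\pi^{-1}(c_s\otimes c_s+q_s\otimes q_s)$ of the proof of Lemma~\ref{lem:tracenorm}, with the inner product linear in its second argument. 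The trace therefore reduces to the finite sum
\[
\pi^{-1}\sum_{h\in\{c_s,q_s\}}\int_{-1}^1 h(\lambda)\bigl[(I-\gamma K_s)^{-1}h\bigr](\lambda)\,d\lambda .
\]
Since $K_s$ is real symmetric, no conjugation issues arise. It is convenient to pass to the exponentials $e^{\pm is\lambda}$, which are exactly the components of $f$ and $g$ in \eqref{eq:fg}. Writing $\cos\cdot\cos+\sin\cdot\sin=\tfrac12\bigl(e^{is\lambda}e^{-is\mu}+e^{-is\lambda}e^{is\mu}\bigr)$, the trace becomes
\[
\frac{1}{2\pi}\Bigl[\int e^{-is\lambda}\,\bigl((I-\gamma K_s)^{-1}e^{is\cdot}\bigr)(\lambda)\,d\lambda+\int e^{is\lambda}\,\bigl((I-\gamma K_s)^{-1}e^{-is\cdot}\bigr)(\lambda)\,d\lambda\Bigr].
\]

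Next, express these integrals through $F=(I-\gamma K_s)^{-1}f$ and $Y_1$. By \eqref{eq:fg}, $F_1=\frac{\gamma}{2\pi i}(I-\gamma K_s)^{-1}e^{is\cdot}$ and $F_2=\frac{\gamma}{2\pi i}(I-\gamma K_s)^{-1}e^{-is\cdot}$, while $g_1=-e^{-is\lambda}$ and $g_2=e^{is\lambda}$. Formula \eqref{eq:Y1formula} then gives
\[
(Y_1)_{11}=-\int F_1g_1=\frac{\gamma}{2\pi i}\int e^{-is\lambda}(I-\gamma K_s)^{-1}e^{is\cdot},\qquad
(Y_1)_{22}=-\int F_2g_2=-\frac{\gamma}{2\pi i}\int e^{is\lambda}(I-\gamma K_s)^{-1}e^{-is\cdot}.
\]
Substituting these into the trace gives
\[
-\gamma\Tr(\cdot)=-\frac{\gamma}{2\pi}\cdot\frac{2\pi i}{\gamma}\bigl((Y_1)_{11}-(Y_1)_{22}\bigr)=-i\bigl((Y_1)_{11}-(Y_1)_{22}\bigr),
\]
which is the first equality in \eqref{eq:diffid}. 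The case $\gamma=0$ is trivial, since both sides vanish ($F=0$). The second equality follows from $\Tr Y_1=0$, Lemma~\ref{lem:RHPsolv}(d).

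As a cross-check, I would compare with Lemma~\ref{lem:smallgamma}: at first order $(Y_1)_{11}=-i\gamma/\pi$, so $-2i(Y_1)_{11}=-2\gamma/\pi$, which matches \eqref{eq:smallgammadet}.

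The main obstacle is purely bookkeeping of signs and pairings: which component of $f$ pairs with which component of $g$, and the factor $\tfrac12$ from the exponential form of the cosine. These are exactly what the calibration of Lemma~\ref{lem:smallgamma} pins down. No analytic issue arises, because for $\gamma\le0$ the resolvent is bounded by Lemma~\ref{lem:solv}, all functions involved are in $L^2(-1,1)$, and the trace of the rank-two operator is a finite sum of inner products.
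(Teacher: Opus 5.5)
Your proof is correct and follows the paper's route: Jacobi's formula \eqref{eq:jacobi}, the exponential form of the rank-two operator $\partial_sK_s$, and identification of the two resulting integrals with $(Y_1)_{11}$ and $(Y_1)_{22}$ via \eqref{eq:Y1formula}. The only difference is in the second equality. The paper uses the reflection $x\mapsto-x$, which commutes with $(I-\gamma K_s)^{-1}$ and swaps $e^{\pm isx}$, to show that the two integrals coincide, so both equalities come out at once. You instead sum the two terms directly to get the first equality and invoke $\Tr Y_1=0$ from Lemma~\ref{lem:RHPsolv}(d) for the second. That is equally valid, but it rests on the Liouville argument for $\det Y\equiv1$ rather than on an elementary symmetry of the kernel.
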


\begin{proof}
Put
\[
 R_s:=(I-\gamma K_s)^{-1},
 \qquad
 h_\pm(x):=e^{\pm isx}.
\]
Lemma~\ref{lem:tracenorm} and \eqref{eq:jacobi} give
\[
 \frac{\partial}{\partial s}\ln\det(I-\gamma K_s)
 =-\gamma\Tr(R_s\partial_sK_s).
\]
In the rank-one notation of Lemma~\ref{lem:tracenorm},
\[
 \partial_sK_s
 =\frac1{2\pi}\bigl(h_+\otimes h_+ + h_-\otimes h_-\bigr).
\]
The reflection $\mathcal Jq(x)=q(-x)$ commutes with $K_s$ and hence
with $R_s$, while $\mathcal Jh_+=h_-$.  Consequently
\[
 Q:=\langle h_+,R_sh_+\rangle
   =\langle h_-,R_sh_-\rangle .
\]
The trace of $R_s(h_\pm\otimes h_\pm)$ is
$\langle h_\pm,R_sh_\pm\rangle$, and therefore
\begin{equation}\label{eq:diffidQ}
 \frac{\partial}{\partial s}\ln\det(I-\gamma K_s)
 =-\frac{\gamma}{\pi}Q .
\end{equation}

From \eqref{eq:fg},
\[
 F_1=\frac{\gamma}{2\pi i}R_sh_+,
 \qquad
 F_2=\frac{\gamma}{2\pi i}R_sh_-,
 \qquad
 g^T=(-h_-,h_+).
\]
Using \eqref{eq:Y1formula} and
$\overline{h_\pm}=h_\mp$ gives
\[
 (Y_1)_{11}
 =\frac{\gamma}{2\pi i}
   \int_{-1}^1(R_sh_+)(x)h_-(x)\,dx
 =\frac{\gamma}{2\pi i}Q,
\]
and
\[
 (Y_1)_{22}
 =-\frac{\gamma}{2\pi i}
   \int_{-1}^1(R_sh_-)(x)h_+(x)\,dx
 =-\frac{\gamma}{2\pi i}Q.
\]
It follows directly that
\[
 -i\bigl((Y_1)_{11}-(Y_1)_{22}\bigr)
 =-2i(Y_1)_{11}
 =-\frac{\gamma}{\pi}Q,
\]
which agrees with \eqref{eq:diffidQ}.  No continuation in $\gamma$ or
external differential-identity formula is used.
\end{proof}

\begin{lemma}[the signed differential asymptotic]\label{lem:derivative}
Uniformly for $s\ge s_A$, $0<\omega\le A\ln s$,
\begin{equation}\label{eq:derivative}
 \frac{\partial}{\partial s}\ln\mathcal D(s,\omega)
 =\frac{4\omega}{\pi}+\frac{2\omega^2}{\pi^2s}
 +O_A\!\Bigl(\frac{(1+\omega)^4\ln^2s}{s^2}\Bigr).
\end{equation}
\end{lemma}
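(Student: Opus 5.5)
The plan is to combine the sign-independent differential identity of Lemma~\ref{lem:diffid} with the transfer formula of Lemma~\ref{lem:Y1}.

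First, for $\omega>0$ we have $\gamma=1-e^{2\omega}<0$. Lemma~\ref{lem:diffid} therefore applies at this fixed $\gamma$ and gives
\[
\partial_s\ln\mathcal D(s,\omega)=-2i\,(Y_1)_{11}.
\]

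Second, we substitute \eqref{eq:Y1answer}, recalling $v=-\omega$. The leading term becomes
\[
-2i\cdot\Bigl(-\frac{2iv}{\pi}\Bigr)=-\frac{4v}{\pi}=\frac{4\omega}{\pi}.
\]
The second term becomes
\[
-2i\cdot\frac{iv^2}{\pi^2 s}=\frac{2v^2}{\pi^2s}=\frac{2\omega^2}{\pi^2s}.
\]
The remainder keeps its size, $O_A\bigl((1+\omega)^4\ln^2s/s^2\bigr)$, since multiplying by $-2i$ only changes the constant.

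The uniformity in $s\ge s_A$ and $0<\omega\le A\ln s$ is inherited directly from Lemma~\ref{lem:Y1}. That lemma in turn rests on Lemmas~\ref{lem:R1} and \ref{lem:cauchy}, which are valid precisely on this range.

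The only point needing care is the bookkeeping of signs and branches. I would check that the identity $(Y_1)_{11}=(R_1)_{11}-2\nu$ uses $\nu=iv/\pi$ with $v=-\omega$. I would also check that $(R_1)_{11}=-i\nu^2/s=iv^2/(\pi^2 s)$ is even in $v$, so the sign flip affects only the linear term.

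As a consistency check, the result agrees with Remark~\ref{rem:signcheck}: both terms are positive, as they must be because $\ln\mathcal D$ is increasing in $s$. It also agrees at first order in $\gamma$ with Lemma~\ref{lem:smallgamma}.

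There is no real obstacle here: all the analytic work was done in the preceding lemmas, and this step is a direct substitution.
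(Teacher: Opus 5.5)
Your proposal is correct and is exactly the paper's argument: since $\gamma=1-e^{2\omega}<0$, the differential identity \eqref{eq:diffid} applies, and substituting \eqref{eq:Y1answer} with $v=-\omega$ gives $-2i(-2iv/\pi)=4\omega/\pi$ and $-2i\cdot iv^2/(\pi^2 s)=2\omega^2/(\pi^2 s)$, while the remainder is only multiplied by $2$. The sign and positivity checks you list are not needed for the proof itself, but they agree with Remarks~\ref{rem:signcheck} and~\ref{rem:finalchecks}.
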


\begin{proof}
By \eqref{eq:diffid} and \eqref{eq:Y1answer}, with $v=-\omega$,
\[
 -2i(Y_1)_{11}
 =-2i\Bigl(-\frac{2iv}\pi+\frac{iv^2}{\pi^2s}\Bigr)+\cdots
 =-\frac{4v}\pi+\frac{2v^2}{\pi^2s}+\cdots
 =\frac{4\omega}\pi+\frac{2\omega^2}{\pi^2s}+\cdots. \qedhere
\]
\end{proof}

\begin{proof}[Proof of Theorem~\ref{thm:signedmain}]
The case $\omega=0$ was disposed of in \S\ref{sec:tail-statement}, so let
$0<\omega\le A\ln s$ with $s\ge s_A$.  Define, for $t\ge s$,
\begin{equation}\label{eq:Phidef}
 \Phi(t):=\ln\mathcal D(t,\omega)
 -\frac{4\omega t}{\pi}-\frac{2\omega^2}{\pi^2}\ln(4t)
 -2\ln\bigl[\BG(1+i\omega/\pi)\BG(1-i\omega/\pi)\bigr].
\end{equation}

\emph{Step 1: the hypothesis propagates upward.}  Since
$\omega\le A\ln s$ and $t\mapsto A\ln t$ is increasing,
$\omega\le A\ln t$ for every $t\ge s\ge s_A$.  Hence
Lemma~\ref{lem:derivative} applies at every $t\ge s$, with the same
$C_A$ and the same fixed $\omega$.

\emph{Step 2: the derivative of $\Phi$.}  Differentiating and using
\eqref{eq:derivative},
$\Phi'(t)=O_A\bigl((1+\omega)^4\ln^2t/t^2\bigr)$ for $t\ge s$.

\emph{Step 3: the limit at infinity vanishes.}  For our \emph{fixed}
$\omega$, Lemma~\ref{lem:charlierconvert} gives $\Phi(t)\to0$ as
$t\to\infty$.  Only the existence of the limit and its value are used;
the rate $O_\omega(t^{-1})$, whose constant is not uniform in $\omega$,
plays no role.  This is why the fixed-parameter equation
\cite[eq.~(1.4)]{Charlier21} may be used even though it supplies no
growing-$\omega$ uniformity.

\emph{Step 4: the elementary integral.}  For $s\ge3$,
\begin{equation}\label{eq:elemintegral}
 \int_s^\infty\frac{\ln^2t}{t^2}\,dt
 =\Bigl[-\frac{\ln^2t+2\ln t+2}{t}\Bigr]_s^\infty
 =\frac{\ln^2s+2\ln s+2}{s}\;\le\;\frac{5\ln^2s}{s}.
\end{equation}
Indeed, with $x=\ln s\ge\ln3>1$,
$4x^2-2x-2=2(2x+1)(x-1)\ge0$, and hence
$2\ln s+2\le4\ln^2s$.  A coefficient $3$ in place of $5$ would require
$s\ge e^{(1+\sqrt5)/2}$ and is therefore unavailable on the full range
$s\ge3$.  The coefficient $5$ changes only the implicit constant below
and leaves the threshold $s_A\ge5$ unchanged.

\emph{Step 5: conclusion.}  By Steps 2 and 3,
$\Phi(s)=-\int_s^\infty\Phi'(t)\,dt$, so by Step 4
$|\Phi(s)|\le5C_A(1+\omega)^4\ln^2s/s$, which is
\eqref{eq:RHmain}--\eqref{eq:RHerr} after renaming $5C_A$ as $C_A$.
The right side of \eqref{eq:RHmain} is real, as is the left,
consistently with $\mathcal D\ge1$.

No claim is made here that the linear term $4\omega s/\pi$ dominates the
displayed error uniformly down to $\omega=0$, and no such claim is used
anywhere below.  Indeed, the ratio of the linear term to the bound
\eqref{eq:RHerr} is $4\omega s^2/\bigl(\pi C_A(1+\omega)^4\ln^2s\bigr)$,
which tends to $0$ as $\omega\downarrow0$ at fixed $s$; the linear term
dominates the displayed error bound only on ranges bounded away from
$\omega=0$, where for each fixed $\omega_0>0$ and $\omega\ge\omega_0$ the
ratio is $\gtrsim_{A,\omega_0}s^2/\ln^{6}s$.  At $\omega=0$ the
statement \eqref{eq:RHmain} is exact with $\mathcal R_A\equiv0$, as
recorded in \S\ref{sec:tail-statement}.
\end{proof}

\begin{remark}[why holding $\omega$ fixed gives a uniform result]
\label{rem:uniformity}
The constant $C_A$ in Lemma~\ref{lem:derivative} does not depend on
$\omega$ within $\omega\le A\ln t$, and Step 1 shows that range is
preserved as $t$ grows.  The only $\omega$-dependent input is the
\emph{value} of the limit in Step 3, which is exact.  No
$\omega$-uniform fixed-parameter theorem is needed or claimed.
\end{remark}

\begin{remark}[three consistency checks on the final chain]
\label{rem:finalchecks}
\emph{(i) Small $\gamma$.}  Lemma~\ref{lem:smallgamma} gives
$-2i(Y_1)_{11}=-2\gamma/\pi+O(\gamma^2)$, matching
$\partial_s\ln\det(I-\gamma K_s)=-2\gamma s/\pi\cdot\partial_s
=-2\gamma/\pi$.  Since $v=\gamma/2+O(\gamma^2)$, the steepest-descent
value $-4v/\pi$ equals $-2\gamma/\pi$ to first order: the two routes
agree.
\emph{(ii) Fixed parameter.}  At fixed $\omega$, \eqref{eq:RHmain}
reduces to Lemma~\ref{lem:charlierconvert}, which is how the
integration constant was fixed; this is a consistency requirement, not
an independent check.
\emph{(iii) The head-side formula.}  Formally replacing $\omega$ by
$-v$ in the three explicit main terms of \eqref{eq:RHmain} reproduces
\cite[Thm.~1.2]{BDIK2}.  This is only an algebraic coefficient check:
the present signed proof assumes $\gamma\le0$ and does not prove the
head-side estimate.
\end{remark}

\subsection{Transfer to \texorpdfstring{$S_c$}{S-c}}\label{sec:gate16}

\begin{proposition}[exact transfer and linear cancellation]
\label{prop:transfer}
Let $s=\pi c/2$ and $u=2\omega$.  Then
\begin{equation}\label{eq:traceidentity}
 \Tr\varphi_u(S_c)=F_c(-u/2)-uc
 =\ln\mathcal D\Bigl(\frac{\pi c}2,\frac u2\Bigr)-uc
\end{equation}
exactly, and
\begin{equation}\label{eq:linearcancel}
 \frac{4\omega s}{\pi}
 =\frac{4\cdot\frac u2\cdot\frac{\pi c}2}{\pi}=uc .
\end{equation}
Consequently, \emph{assuming Theorem~\ref{thm:signedmain}}, for every
$A>0$, uniformly for $0\le u\le2A\ln(\pi c/2)$ as $c\to\infty$,
\begin{equation}\label{eq:traceasym}
 \Tr\varphi_u(S_c)
 =\frac{u^2}{2\pi^2}\ln(2\pi c)
 +2\ln\Bigl[\BG\Bigl(1+\frac{iu}{2\pi}\Bigr)
 \BG\Bigl(1-\frac{iu}{2\pi}\Bigr)\Bigr]
 +O_A\!\Bigl(\frac{(1+u)^4\ln^2c}{c}\Bigr).
\end{equation}
\end{proposition}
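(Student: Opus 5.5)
The plan is to reduce everything to bookkeeping: the dictionary Lemma~\ref{lem:dict}, the definitions \eqref{eq:GF} and \eqref{eq:phitrace}, and then a direct substitution into Theorem~\ref{thm:signedmain}. No new analysis should be needed.

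\emph{Step 1: the exact identity \eqref{eq:traceidentity}.} The first equality is \eqref{eq:phitrace}. It follows from the spectral theorem: $\varphi_u$ is continuous on $[0,1]$ and vanishes at $0$, and $|\varphi_u(x)|\le 2ux$, so $\varphi_u(S_c)$ is trace class. Its trace is $\sum_n[\ln(1+(e^u-1)\lambda_n)-u\lambda_n]=G_c(u)-u\Tr S_c$, and $\Tr S_c=c$. For the second equality, I conjugate by the unitary $U_c$ of Lemma~\ref{lem:dict}. This gives $\det(I+(e^u-1)S_c)=\det(I+(e^u-1)K_{\pi c/2})$, since Fredholm determinants are invariant under unitary conjugation. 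With $u=2\omega$ we have $e^u-1=e^{2\omega}-1$, so the right side is $\mathcal D(\pi c/2,u/2)$ by \eqref{eq:Ddef}. Identity \eqref{eq:linearcancel} is arithmetic.

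\emph{Step 2: substituting the asymptotic.} Put $s=\pi c/2$ and $\omega=u/2$. The hypothesis $0\le u\le 2A\ln(\pi c/2)$ is exactly $0\le\omega\le A\ln s$. Requiring $c\ge 2s_A/\pi$ gives $s\ge s_A$, so Theorem~\ref{thm:signedmain} applies uniformly. Subtracting $uc$ cancels the linear term by \eqref{eq:linearcancel}.

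The remaining terms transform as follows.
\begin{itemize}
\item The logarithmic term becomes $\frac{2\omega^2}{\pi^2}\ln(4s)=\frac{u^2}{2\pi^2}\ln(2\pi c)$.
\item The Barnes argument becomes $i\omega/\pi=iu/(2\pi)$.
\item The error $C_A(1+\omega)^4\ln^2 s/s$ is at most $C_A(1+u)^4\ln^2(\pi c/2)\cdot 2/(\pi c)$, which is $O_A((1+u)^4\ln^2c/c)$ once $c\ge 2/\pi\cdot e$, say.
\end{itemize}

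The only point requiring care, and hence the nominal obstacle, is making sure the uniformity range and constants transfer exactly. This rests on two facts. First, the map $u\mapsto\omega$ sends the stated $u$-window onto the theorem's $\omega$-window with the same $A$. Second, the $\omega=0$ case is trivially included. No other difficulty is expected.
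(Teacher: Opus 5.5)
Your proposal is correct and is essentially the paper's proof. As in the paper, you identify the determinant through the unitary $U_c$ of Lemma~\ref{lem:dict}, invoke \eqref{eq:phitrace}, and substitute $s=\pi c/2$, $\omega=u/2$ into Theorem~\ref{thm:signedmain}, with the linear terms cancelling by \eqref{eq:linearcancel}.

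One side remark is false as written: $|\varphi_u(x)|\le 2ux$ fails for $u\ge2$ and small $x$, since $\varphi_u'(0)=e^u-1-u$. The correct bound $0\le\varphi_u(x)\le(e^u-1)x$, which follows from concavity and $\ln(1+y)\le y$, gives the trace-class property equally well.
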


\begin{proof}
By Lemma~\ref{lem:dict}, $S_c\simeq K_{\pi c/2}$, so
$\det(I+(e^u-1)S_c)=\mathcal D(\pi c/2,u/2)$ with $u=2\omega$; and
$\Tr\varphi_u(S_c)=\ln\det(I+(e^u-1)S_c)-u\Tr S_c$ by
\eqref{eq:phitrace} with $\Tr S_c=c$.  This is
\eqref{eq:traceidentity}, and \eqref{eq:linearcancel} is arithmetic.
For \eqref{eq:traceasym} substitute $s=\pi c/2$, $\omega=u/2$ into
\eqref{eq:RHmain} and subtract $uc$: the linear terms cancel by
\eqref{eq:linearcancel}; $4s=2\pi c$ gives
$\frac{2(u/2)^2}{\pi^2}\ln(2\pi c)=\frac{u^2}{2\pi^2}\ln(2\pi c)$;
$\frac{i\omega}\pi=\frac{iu}{2\pi}$ gives the Barnes term; and the error
is $C_A(1+u/2)^4\ln^2(\pi c/2)/(\pi c/2)\le C_A'(1+u)^4\ln^2c/c$ for
$c\ge3$.
\end{proof}

\begin{proof}[Proof of Corollary~\ref{cor:Tproved}]
Fix $\alpha>0$, take $A=\alpha/2$.  Since $\pi/2>1$,
$\ln(\pi c/2)>\ln c$ for $c>1$, so $2\le u\le\alpha\ln c$ lies inside
$0\le u\le\alpha\ln(\pi c/2)$.  With $C_A$ the constant of
\eqref{eq:traceasym} and Lemma~\ref{lem:Barneslower},
\begin{equation}\label{eq:Tchain}
 \Tr\varphi_u(S_c)
 \;\ge\;\frac{u^2\ln c}{\pi^2}
 \Bigl[\frac12-\frac{\ln(2+u)}{\ln c}
 -\frac{\pi^2C_A(1+u)^4\ln c}{c\,u^2}\Bigr].
\end{equation}
The Barnes ratio obeys
$\ln(2+u)/\ln c\le\ln(2+\alpha\ln c)/\ln c\to0$, a bound independent of
$u$, so it is $\le\tfrac18$ for $c\ge c_1(\alpha)$ and all $u$ in range.
For $u\ge2$, $(1+u)^4/u^2\le\tfrac{81}{16}u^2
\le\tfrac{81}{16}\alpha^2\ln^2c$, so the remainder ratio is at most
$\tfrac{81\pi^2C_A\alpha^2}{16}\ln^3c/c\to0$, again independent of $u$,
hence $\le\tfrac18$ for $c\ge c_2(\alpha)$.  With
$c_T(\alpha)=\max\{c_1(\alpha),c_2(\alpha),(2/\pi)s_{\alpha/2},3\}$,
\eqref{eq:Tchain} gives
$\Tr\varphi_u(S_c)\ge(\tfrac12-\tfrac18-\tfrac18)\frac{u^2\ln c}{\pi^2}
=\frac{u^2}{4\pi^2}\ln c$, i.e.\ \eqref{eq:Tproved} with
\begin{equation}\label{eq:constants}
 \kappa_T=\frac1{4\pi^2},\qquad L_T=2,
\end{equation}
for every fixed $\alpha>0$; only $c_T$ depends on $\alpha$.
\end{proof}

\begin{remark}[the threshold]\label{rem:threshold}
$c_1(\alpha)$ is the least $c\ge3$ with $8\ln(2+\alpha\ln c)\le\ln c$;
$c_2(\alpha)$ the least $c\ge3$ with
$81\pi^2C_A\alpha^2\ln^3c\le2c$; $s_{\alpha/2}$ the threshold of
Theorem~\ref{thm:signedmain}.  Each is a well-defined finite number.
No numerical value is attempted: $C_A$ inherits the unquantified
constants $C_\ast$ of Lemma~\ref{lem:cauchy} and the DLMF constants of
Lemma~\ref{lem:uniformmatching}.
\end{remark}

\section{Tail-side quantiles and the one-tail tensor
consequence}\label{sec:tailquant}

This section proves Theorem~\ref{thm:tail}, Corollary~\ref{cor:plunge},
Theorems~\ref{thm:tensorblock} and~\ref{thm:surface}, and
Lemma~\ref{lem:lambert}.  The plan mirrors
\S\S\ref{sec:smoothed}--\ref{sec:thmA} on the negative half of the
determinant parameter, with Theorem~\ref{thm:signedmain} in the role
played there by Theorem~\ref{qt:BDIK}.  Only the four ingredients that
are genuinely sign-sensitive are redone: the derivative identity
(\S\ref{sec:tq-transfer}), the unit-average formula and the local
density (\S\ref{sec:tq-averages}), the exponential tails
(\S\ref{sec:tq-tails}), and the resulting preliminary count estimate
(\S\ref{sec:tq-preform}).  Everything else --- the scaling dictionary
of Lemma~\ref{lem:dict}, the Barnes product and its derivative bounds in
\S\ref{sec:barnes}, the sigmoid inequality \eqref{eq:sigdiff}, the
monotonicity bracket \eqref{eq:avg-bracket}, the conversion
Lemma~\ref{lem:conversion}, the head-side Theorem~\ref{thm:A}, the
explicit count of Theorem~\ref{qt:KRD} and the tensor identity of
Lemma~\ref{lem:tensor} --- is used by internal reference and is not
restated or reproved.

\paragraph{Standing conventions for \S\ref{sec:tailquant}.}
$A>0$ (and, where they occur, the integer $d\ge1$ and $q\in(\frac12,1)$)
are fixed \emph{before} any constant is chosen; $C_A,c_A,C_{A,d,q},\dots$
denote positive finite constants depending only on the indicated fixed
parameters, whose value may change from occurrence to occurrence, and
$C$ denotes an absolute constant.  No statement below is uniform in $A$,
$d$ or $q$.  We use throughout the tail depth
\begin{equation}\label{eq:tq-depth}
 \omega=-v\ge0,\qquad \Lb=2\omega,\qquad
 \delta=\theta(-\omega)=\frac1{1+e^{2\omega}} ,
\end{equation}
so that, by \eqref{eq:logodds}, $N_\delta(c)=\Ntil(-\omega)$ exactly,
with the paper's strict-$>$ convention, and $\Lb=\ln\frac{1-\delta}\delta$.

\subsection{The determinant identity and the signed expansion at
negative depth}\label{sec:tq-transfer}

Lemma~\ref{lem:Fprime} states the derivative identity for $v\ge0$.  The
extension to the whole line is immediate and is the only place where the
positivity input has to be re-examined.

\begin{lemma}[derivative identity on the whole line]\label{lem:tq-fprime}
For every $v\in\R$ the series \eqref{eq:Mdef} converges, $M_c$ is
continuous and strictly decreasing on $\R$, $F_c$ of \eqref{eq:Fdef} is
differentiable at $v$, and
\begin{equation}\label{eq:tq-fprime}
 F_c'(v)=-2M_c(v).
\end{equation}
Consequently, for all real $v_1\le v_2$,
\begin{equation}\label{eq:tq-fint}
 \int_{v_1}^{v_2}M_c(t)\dd t=\frac{F_c(v_1)-F_c(v_2)}{2}.
\end{equation}
\end{lemma}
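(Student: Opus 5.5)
The plan is to repeat the proof of Lemma~\ref{lem:Fprime}, with the positivity estimate redone so that it is valid for every $v\in\R$.

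\emph{Convergence and monotonicity of $M_c$.} Fix $v\in\R$. Each term of \eqref{eq:Mdef} satisfies $\sigma_{\mathrm s}(2(w_n-v))\le e^{2(w_n-v)}=\frac{\lambda_n}{1-\lambda_n}e^{-2v}$. The sum $\sum_n\lambda_n/(1-\lambda_n)$ is finite, as in \S\ref{sec:M}: only finitely many $\lambda_n$ exceed $\tfrac12$, and for the remaining ones the summand is at most $2\lambda_n$, with $\sum_n\lambda_n=c$. On any compact interval $v\in[a,b]$ the terms are therefore dominated by the summable sequence $e^{-2a}\lambda_n/(1-\lambda_n)$. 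The Weierstrass M-test then gives locally uniform convergence, so $M_c$ is continuous. Every term is strictly decreasing in $v$ and there is at least one term, so $M_c$ is strictly decreasing.

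\emph{The derivative of $F_c$.} Write $\gamma=\gamma(v)=1-e^{-2v}<1$. For each $n$,
\[
1-\gamma\lambda_n=(1-\lambda_n)+\lambda_ne^{-2v}\ \ge\ \min(1,e^{-2v})>0 .
\]
This lower bound is the only sign-sensitive point. On the head side one uses $1-\gamma\lambda_n\ge e^{-2v}$. On the tail side ($v<0$, $\gamma<0$) one has $1-\gamma\lambda_n\ge1$, which agrees with Lemma~\ref{lem:solv}. Consequently $I-\gamma S_c$ is invertible, with inverse bounded locally uniformly in $v$. Since $\gamma\mapsto\gamma S_c$ is affine with values in the trace class, Jacobi's formula (as used in Lemma~\ref{lem:tracenorm}(ii)) gives
\[
\frac{d}{d\gamma}\ln\det(I-\gamma S_c)=-\Tr\bigl[S_c(I-\gamma S_c)^{-1}\bigr]=-\sum_n\frac{\lambda_n}{1-\gamma\lambda_n}.
\]
The last expression is the spectral evaluation; the series converges absolutely by the lower bound above. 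Alternatively, one can differentiate the convergent product $\prod_n(1-\gamma\lambda_n)$ termwise, which is legitimate by the same local domination. By the chain rule with $\gamma'(v)=2e^{-2v}$,
\[
F_c'(v)=-2\sum_n\frac{\lambda_ne^{-2v}}{(1-\lambda_n)+\lambda_ne^{-2v}}=-2M_c(v).
\]

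\emph{The integral identity.} Since $M_c$ is continuous, $F_c$ is $C^1$ on $\R$. The fundamental theorem of calculus then gives \eqref{eq:tq-fint} for all real $v_1\le v_2$.

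\emph{Main obstacle.} The only real care needed is the uniform lower bound on $1-\gamma\lambda_n$ across the sign change at $v=0$, and the justification of termwise differentiation there. Both follow from the bound $\min(1,e^{-2v})$ together with the dominating sequence $\lambda_n/(1-\lambda_n)$.
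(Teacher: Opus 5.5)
Your proof is correct and matches the paper's argument: the lower bound $1-\gamma\lambda_n\ge\min(1,e^{-2v})$, the trace computation of Lemma~\ref{lem:Fprime} with the chain rule, a locally uniform majorant giving continuity and strict monotonicity of $M_c$, and then the fundamental theorem of calculus. The only difference is cosmetic. For convergence of $M_c$ you use the majorant $e^{-2a}\lambda_n/(1-\lambda_n)$ from \S\ref{sec:M}, whereas the paper uses $\max(1,e^{2v})\lambda_n$, which comes from the same lower bound; both work.
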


\begin{proof}
Let $\gamma=\gamma(v)=1-e^{-2v}<1$.  The function
$\lambda\mapsto1-\gamma\lambda$ is affine on $[0,1]$ with values $1$ at
$\lambda=0$ and $e^{-2v}$ at $\lambda=1$, so
\begin{equation}\label{eq:tq-posit}
 1-\gamma\lambda_n\;\ge\;\min\bigl(1,e^{-2v}\bigr)\;>\;0
 \qquad\text{for every }n ,
\end{equation}
and $I-\gamma S_c$ is positive definite and boundedly invertible for
every real $v$ --- for $v\ge0$ because $1-\gamma\lambda\ge e^{-2v}$, and
for $v\le0$ because $\gamma\le0$ gives $1-\gamma\lambda\ge1$.  Hence
\[
 \sum_n\frac{\lambda_n}{1-\gamma\lambda_n}
 \;\le\;\max\bigl(1,e^{2v}\bigr)\sum_n\lambda_n
 \;=\;c\,\max\bigl(1,e^{2v}\bigr)\;<\;\infty ,
\]
locally uniformly in $v$, and the computation in the proof of
Lemma~\ref{lem:Fprime} applies verbatim with \eqref{eq:tq-posit} in place
of the bound $1-\gamma\lambda_n\ge e^{-2v}$ used there; this gives
\eqref{eq:tq-fprime}.  Each summand of \eqref{eq:Mdef} is continuous and
strictly decreasing in $v$, and the series converges locally uniformly by
the same majorant, so $M_c$ is continuous and strictly decreasing.
Then $F_c\in C^1(\R)$ and \eqref{eq:tq-fint} is the fundamental theorem
of calculus.
\end{proof}

\begin{lemma}[the signed expansion in the $c$ variable]
\label{lem:tq-Fneg}
Fix $B>0$.  There are $c_B,C_B<\infty$ such that for all $c\ge c_B$ and
all $v$ with $-B\ln c\le v\le0$,
\begin{equation}\label{eq:tq-Fneg}
 F_c(v)=-2vc+\frac{2v^2}{\pi^2}\ln(2\pi c)+A(v)+r_c(v),
 \qquad
 |r_c(v)|\;\le\;C_B\,\frac{(1+|v|)^4\ln^2c}{c}.
\end{equation}
That is, the head-side expansion \eqref{eq:F-asym} continues to hold
verbatim at negative depth, with the head-side remainder bound
\eqref{eq:F-head} replaced by the displayed one.
\end{lemma}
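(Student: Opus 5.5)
The plan is a direct substitution of Theorem~\ref{thm:signedmain} through the dictionary of Lemma~\ref{lem:dict}, in the same way that \eqref{eq:F-asym} was obtained from Theorem~\ref{qt:BDIK}. Put $\omega=-v\in[0,B\ln c]$ and $s=\pi c/2$. By \eqref{eq:gammav}, $\gamma(v)=1-e^{2\omega}$, so $I-\gamma(v)S_c=I+(e^{2\omega}-1)S_c$. The unitary $U_c$ of \eqref{eq:Uc} then gives $F_c(v)=\ln\mathcal D(\pi c/2,\omega)$ exactly, with both logarithms real by Lemma~\ref{lem:solv}. The case $\omega=0$ is trivial, since both sides vanish and $A(0)=0$.

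Next, fix the parameter of Theorem~\ref{thm:signedmain}. Take $A:=B$. Since $\ln s=\ln c+\ln(\pi/2)>\ln c$, the range $\omega\le B\ln c$ lies inside $\omega\le B\ln s$. Choose $c_B$ so that $\pi c_B/2\ge s_B$ and $c_B\ge3$. Theorem~\ref{thm:signedmain} then applies, and each main term can be converted as follows.
\begin{itemize}
\item The linear term is $4\omega s/\pi=2\omega c=-2vc$.
\item The logarithmic term is $\frac{2\omega^2}{\pi^2}\ln(4s)=\frac{2v^2}{\pi^2}\ln(2\pi c)$.
\item The Barnes term is $2\ln[\BG(1+i\omega/\pi)\BG(1-i\omega/\pi)]=A(\omega)$. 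Since $A$ is even (Lemma~\ref{lem:barnes1}), this equals $A(v)$. Equivalently, the expression is symmetric under $\omega\mapsto-\omega$ by Lemma~\ref{lem:barnesproduct}, because only $\omega^2$ enters.
\end{itemize}

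Finally, convert the remainder. Set $r_c(v):=\mathcal R_B(\pi c/2,\omega)$. Then
\[
|r_c(v)|\le C_B(1+|v|)^4\ln^2(\pi c/2)\big/(\pi c/2)\le C_B'(1+|v|)^4\ln^2c/c,
\]
using $\ln(\pi c/2)\le2\ln c$ for $c\ge3$. The only point needing care is this quantifier bookkeeping: the constant $A=B$ is fixed before $s_A$ and $C_A$, and the window in $s$ is enlarged rather than shrunk when passing to $c$. There is no analytic obstacle, because all the analytic work sits in Theorem~\ref{thm:signedmain}.
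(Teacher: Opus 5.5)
Your proposal is correct and follows the paper's proof essentially line for line. Like the paper, you identify $F_c(v)=\ln\mathcal D(\pi c/2,\omega)$ through Lemma~\ref{lem:dict} and apply Theorem~\ref{thm:signedmain} with its parameter equal to $B$, using $c\ge 2s_B/\pi$ and $\ln(\pi c/2)>\ln c$. You then convert the linear, logarithmic, Barnes (by evenness of $A$) and remainder terms exactly as the paper does.
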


\begin{proof}
Put $\omega=-v\in[0,B\ln c]$.  By \eqref{eq:gammav},
$\gamma(v)=1-e^{2\omega}$ and
$F_c(v)=\ln\det\bigl(I+(e^{2\omega}-1)S_c\bigr)$, which by
Lemma~\ref{lem:dict} equals $\ln\mathcal D(\pi c/2,\omega)$ with
$\mathcal D$ as in \eqref{eq:Ddef}; this is the identity already recorded
in \eqref{eq:traceidentity}.  Apply Theorem~\ref{thm:signedmain} with
its fixed parameter equal to $B$ and $s=\pi c/2$.  Its hypotheses hold:
$s\ge s_B$ once $c\ge2s_B/\pi$, and since $\pi/2>1$ we have
$\ln s=\ln(\pi c/2)>\ln c$ for $c>1$, so $\omega\le B\ln c$ implies
$\omega\le B\ln s$.  Therefore
\[
 F_c(v)=\frac{4\omega s}{\pi}+\frac{2\omega^2}{\pi^2}\ln(4s)
 +2\ln\bigl[\BG(1+i\omega/\pi)\BG(1-i\omega/\pi)\bigr]
 +\mathcal R_B(s,\omega).
\]
Now $\frac{4\omega s}\pi=2\omega c=-2vc$ by \eqref{eq:linearcancel},
$4s=2\pi c$, $\omega^2=v^2$, and the Barnes term is $A(\omega)=A(-v)=A(v)$
by \eqref{eq:Adef} and the evenness of $A$ (Lemma~\ref{lem:barnes1}).
Finally, by \eqref{eq:RHerr},
\[
 |\mathcal R_B(s,\omega)|
 \le C_B\frac{(1+\omega)^4\ln^2(\pi c/2)}{\pi c/2}
 \le C_B'\,\frac{(1+|v|)^4\ln^2c}{c}
 \qquad(c\ge3),
\]
exactly as in the proof of Proposition~\ref{prop:transfer}.
\end{proof}

\begin{remark}[one formula, two halves]\label{rem:tq-onefla}
Lemma~\ref{lem:tq-Fneg} and \eqref{eq:F-asym} are the same display on the
two halves of the domain of $F_c$; only the remainder bookkeeping
differs, being $O((v+v^3)/c)$ on the head side and
$O_B((1+|v|)^4\ln^2c/c)$ here.  This is the analytic content of the
two-sided reading of the generating parameter announced in
\S\ref{sec:dictionary}, and it is why the negative-side counting
argument below is a mirror of \S\ref{sec:thmA} rather than a new
argument.  It is \emph{not} obtained by continuing \eqref{eq:F-asym} in
$v$: the negative-depth statement is Theorem~\ref{thm:signedmain},
proved from scratch in \S\ref{sec:tail}; see
Remark~\ref{rem:bdik-scope}(i).
\end{remark}

Two elementary consequences of Lemma~\ref{lem:barnes1} are used
repeatedly and are recorded once.  Since $A$ is even and
$C^\infty(\R)$, $A'$ is odd and $A''$ is even; hence \eqref{eq:A2}
extends from $v\ge0$ to the whole line:
\begin{equation}\label{eq:tq-A2all}
 A'(-x)=-A'(x)\quad(x\in\R),
 \qquad
 |A''(v)|\;\le\;\ln(2+|v|)+4\quad(v\in\R).
\end{equation}

\subsection{Negative-side unit averages and local
density}\label{sec:tq-averages}

\begin{lemma}[negative-side average formula]\label{lem:tq-avgneg}
Fix $B>0$.  There are $c_B,C_B<\infty$ such that for $c\ge c_B$ and
$-B\ln c\le v_1<v_2\le0$,
\begin{equation}\label{eq:tq-avgneg}
 \int_{v_1}^{v_2}M_c
 = c\,(v_2-v_1)-\frac{v_2^2-v_1^2}{\pi^2}\ln(2\pi c)
 -\frac{A(v_2)-A(v_1)}{2}+E,
 \qquad
 |E|\le C_B\,\frac{(1+|v_1|)^4\ln^2c}{c}.
\end{equation}
\end{lemma}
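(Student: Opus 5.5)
The plan is to copy the proof of Lemma~\ref{lem:avg} on the negative half-line: integrate the derivative identity, then insert the signed expansion at both endpoints.

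By Lemma~\ref{lem:tq-fprime}, \eqref{eq:tq-fint} gives
\[
\int_{v_1}^{v_2}M_c=\tfrac12\bigl(F_c(v_1)-F_c(v_2)\bigr)
\]
exactly, for all real $v_1<v_2$. No positivity restriction is needed here, since \eqref{eq:tq-posit} covers $v\le0$.

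Next we apply Lemma~\ref{lem:tq-Fneg}, with the same fixed $B$, at $v=v_1$ and at $v=v_2$. Both points lie in $[-B\ln c,0]$, so for $c\ge c_B$ each satisfies
\[
F_c(v_j)=-2v_jc+\frac{2v_j^2}{\pi^2}\ln(2\pi c)+A(v_j)+r_c(v_j).
\]
Halving the difference of the two expansions gives the main terms:
\begin{itemize}
\item $\tfrac12(-2v_1c+2v_2c)=c(v_2-v_1)$;
\item $\tfrac12\cdot\frac{2}{\pi^2}(v_1^2-v_2^2)\ln(2\pi c)=-\frac{v_2^2-v_1^2}{\pi^2}\ln(2\pi c)$;
\item $\tfrac12(A(v_1)-A(v_2))=-\tfrac12(A(v_2)-A(v_1))$.
\end{itemize}
This is exactly the main part of \eqref{eq:tq-avgneg}, with $E=\tfrac12(r_c(v_1)-r_c(v_2))$.

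For the error, we use $|r_c(v_j)|\le C_B(1+|v_j|)^4\ln^2c/c$. Since $v_1<v_2\le0$, we have $|v_2|\le|v_1|$, so both remainders are bounded by the $v_1$ bound. Therefore $|E|\le C_B(1+|v_1|)^4\ln^2c/c$, with the same $C_B$ after the factor $\tfrac12\cdot2$.

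There is no real obstacle, since all the analytic work sits in Theorem~\ref{thm:signedmain}. The one point to check is that the hypotheses of Lemma~\ref{lem:tq-Fneg} hold uniformly at both endpoints with a single pair of constants $(c_B,C_B)$. This is immediate because that lemma is uniform on the whole interval $[-B\ln c,0]$.
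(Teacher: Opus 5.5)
Your proposal is correct and follows the paper's proof exactly: integrate the whole-line derivative identity \eqref{eq:tq-fint}, insert the signed expansion of Lemma~\ref{lem:tq-Fneg} at both endpoints, and bound both remainders by the $v_1$ bound using $|v_2|\le|v_1|$.
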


\begin{proof}
Insert \eqref{eq:tq-Fneg} at $v_1$ and at $v_2$ into \eqref{eq:tq-fint};
the two remainders are each at most
$C_B(1+|v_1|)^4\ln^2c/c$ because $|v_2|\le|v_1|$.
\end{proof}

The bracket \eqref{eq:avg-bracket} is pure monotonicity of $M_c$ and
therefore holds for \emph{every} real $x$, with
$\overline M_c(x)=\int_x^{x+1}M_c$.  Taking $v_1=x$, $v_2=x+1$ in
\eqref{eq:tq-avgneg} gives, for a fixed $B>0$, all $c\ge c_B$ and all
$x$ with $-B\ln c\le x\le-1$,
\begin{equation}\label{eq:tq-Mbarneg}
 \overline M_c(x)
 = c-\frac{2x+1}{\pi^2}\ln(2\pi c)-\frac{A(x+1)-A(x)}{2}+E_x,
 \qquad
 |E_x|\le C_B\,\frac{(1+|x|)^4\ln^2c}{c},
\end{equation}
which is the display used in the proofs of Lemma~\ref{lem:density}(ii)
and Lemma~\ref{lem:tails}, now on the negative axis.  Note that the
right-hand side of \eqref{eq:tq-Mbarneg} is the same expression as on the
head side; only the admissible range of $x$ and the size of $E_x$ differ.
Note also that on a logarithmic window the remainder is harmless:
if $|x|\le B\ln c$ then
\begin{equation}\label{eq:tq-Eharmless}
 |E_x|\;\le\;C_B\frac{(1+B\ln c)^4\ln^2c}{c}
 \;=\;O_B\!\Bigl(\frac{\ln^6c}{c}\Bigr)\;\le\;1
\end{equation}
for all $c$ large enough in terms of $B$.

\begin{proposition}[negative-side local density]\label{prop:tq-density}
Fix $B>0$.  There are $c_B,C_B<\infty$ such that, with
$D_{\mathrm{loc}}(a)=\#\{n:a<w_n\le a+1\}$ as in
Lemma~\ref{lem:density},
\begin{equation}\label{eq:tq-density}
 D_{\mathrm{loc}}(a)\;\le\;C_B\ln c
 \qquad\text{for } -B\ln c\le a\le0 \text{ and } c\ge c_B .
\end{equation}
\end{proposition}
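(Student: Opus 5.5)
We mirror the proof of Lemma~\ref{lem:density}(ii) on the negative axis, replacing the head-side average formula by \eqref{eq:tq-Mbarneg}. Each $n$ with $w_n\in(a,a+1]$ contributes at least $\sigma_{\mathrm s}(2)-\sigma_{\mathrm s}(0)>\tfrac13$ to $M_c(a)-M_c(a+1)$, and every other $n$ contributes a nonnegative amount. The bracket \eqref{eq:avg-bracket}, which holds for every real $x$, then gives
\[
 \tfrac13D_{\mathrm{loc}}(a)\le M_c(a)-M_c(a+1)\le\overline M_c(a-1)-\overline M_c(a+1).
\]

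We first handle the small-$|a|$ region directly. For $-2\le a\le0$, Lemma~\ref{lem:density}(i) with $|a|\le2$ already gives $D_{\mathrm{loc}}(a)\le C\ln c$ with an absolute $C$. For $-B\ln c\le a<-2$, both $x=a-1$ and $x=a+1$ satisfy $-(B+1)\ln c\le x\le-1$ once $c$ is large. So we apply \eqref{eq:tq-Mbarneg} with the parameter $B+1$ at these two points.

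Subtracting the two instances of \eqref{eq:tq-Mbarneg}, the $c$ terms cancel. The logarithmic coefficients differ by $\bigl((2(a+1)+1)-(2(a-1)+1)\bigr)/\pi^2=4/\pi^2$, contributing $\frac4{\pi^2}\ln(2\pi c)$. The Barnes part is $\Theta_A=\tfrac12\int_0^1[A'(a+1+t)-A'(a-1+t)]\,dt$. By \eqref{eq:tq-A2all} it is bounded by $\sup_{[a-1,a+2]}|A''|\le\ln(4+|a|)+4\le\ln c+4$ for large $c$, since $|a|\le B\ln c$. By \eqref{eq:tq-Eharmless} with $B+1$, the remainders $E_{a-1},E_{a+1}$ are each at most $1$ for $c\ge c_B$. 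Hence $D_{\mathrm{loc}}(a)\le 3\bigl(\tfrac4{\pi^2}\ln(2\pi c)+\ln c+6\bigr)\le C_B\ln c$.

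The only delicate point is range bookkeeping. The shifted points $a\pm1$ must stay inside the window $[-(B+1)\ln c,-1]$ where \eqref{eq:tq-Mbarneg} is valid, which is why the region $a\ge-2$ is split off and handled by Theorem~\ref{qt:KRD}. Beyond that, \eqref{eq:tq-Eharmless} supplies the needed uniform control of $E_x$ across the logarithmic window, so no step should be a real obstacle.
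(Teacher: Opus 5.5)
Your proposal is correct and follows essentially the same route as the paper's proof. Both split off $-2\le a\le0$ via Lemma~\ref{lem:density}(i), bound $\tfrac13 D_{\mathrm{loc}}(a)$ by $\overline M_c(a-1)-\overline M_c(a+1)$, and evaluate this with \eqref{eq:tq-Mbarneg} at window $B+1$, \eqref{eq:tq-A2all} and \eqref{eq:tq-Eharmless}, which yields the coefficient $4/\pi^2$, a Barnes term at most $\ln c+4$, and remainders at most $2$.
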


\begin{proof}
For $-2\le a\le0$ this is Lemma~\ref{lem:density}(i), whose bound is
absolute there.  Let $-B\ln c\le a\le-2$ and fix the window $B+1$ in
Lemma~\ref{lem:tq-avgneg} before letting $c$ and $a$ vary.  As in the
proof of Lemma~\ref{lem:density}(ii), every $n$ with $w_n\in(a,a+1]$
contributes at least
$\sigma_{\mathrm s}(2)-\sigma_{\mathrm s}(0)>\tfrac13$ to
$M_c(a)-M_c(a+1)$, and every other $n$ contributes a nonnegative amount,
so by \eqref{eq:avg-bracket}
\[
 \tfrac13D_{\mathrm{loc}}(a)\;\le\;M_c(a)-M_c(a+1)
 \;\le\;\overline M_c(a-1)-\overline M_c(a+1).
\]
Both $a-1$ and $a+1$ lie in $[-(B+1)\ln c,-1]$ for $c\ge e$, so
\eqref{eq:tq-Mbarneg} applies at both.  The terms $c$ cancel and the
coefficient of $\ln(2\pi c)$ is
$\pi^{-2}\bigl[(2(a+1)+1)-(2(a-1)+1)\bigr]=4\pi^{-2}$:
\[
 \overline M_c(a-1)-\overline M_c(a+1)
 =\frac4{\pi^2}\ln(2\pi c)+\Theta_A+\Theta_r ,
\]
where
$\Theta_A=\tfrac12\int_0^1\bigl[A'(a+1+t)-A'(a-1+t)\bigr]\dd t$
satisfies $|\Theta_A|\le\sup_{[a-1,a+2]}|A''|\le\ln(4+|a|)+4$ by
\eqref{eq:tq-A2all}, and $|\Theta_r|\le2$ by
\eqref{eq:tq-Eharmless}.  Since $|a|\le B\ln c$ we have
$\ln(4+|a|)\le\ln c$ for $c$ large, and \eqref{eq:tq-density} follows
with any $C_B>3\bigl(\tfrac4{\pi^2}+1\bigr)$ after enlarging $c_B$.
\end{proof}

Proposition~\ref{prop:tq-density} is recorded for completeness and as an
independent check; it is not used in the proof of
Theorem~\ref{thm:tail}, which controls the negative-side tails directly
through Lemma~\ref{lem:tq-tails} below.

\subsection{Negative-side exponential tails}\label{sec:tq-tails}

Recall $T_+(v)=\sum_{w_n>v}e^{-2(w_n-v)}$ and
$T_-(v)=\sum_{w_n\le v}e^{2(w_n-v)}$ from Lemma~\ref{lem:tails}.
The sum $T_+(v)$ is finite because $w_n>v$ is equivalent to
$\lambda_n>\theta(v)>0$, and $\sum_n\lambda_n=c$; moreover
$T_-(v)\le e^{-2v}\sum_n\frac{\lambda_n}{1-\lambda_n}<\infty$.
The threshold $w_n=v$ belongs
to $T_-$, matching the strict convention in
$\Ntil(v)=\#\{n:w_n>v\}$.

\begin{lemma}[negative-side tail bound]\label{lem:tq-tails}
Fix $B>0$.  There are $c_B,C_B<\infty$ such that
\begin{equation}\label{eq:tq-tailbound}
 T_+(v)+T_-(v)\;\le\;C_B\ln c
 \qquad\text{for }-B\ln c\le v\le-2\text{ and }c\ge c_B .
\end{equation}
\end{lemma}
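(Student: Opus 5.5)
The plan is to mirror Steps 1--2 of the proof of Lemma~\ref{lem:tails}, replacing the head-side average formula by its negative-side version \eqref{eq:tq-Mbarneg}. First, the sigmoid inequality \eqref{eq:sigdiff} with $h=1$ and $t=2(w_n-v)$ does not depend on the sign of $v$. We sum it over all $n$ and justify the interchange by monotone convergence, since both series defining $M_c(v\mp1)$ converge by Lemma~\ref{lem:tq-fprime}. This gives, exactly as in \eqref{eq:tailsdagger},
\[
 T_+(v)+T_-(v)\;\le\;\tfrac43\bigl[M_c(v-1)-M_c(v+1)\bigr]
\]
for every real $v$.

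Next we bound the second difference by unit averages. The bracket \eqref{eq:avg-bracket} is pure monotonicity and holds for all real $x$, so
\[
 M_c(v-1)-M_c(v+1)\le\overline M_c(v-2)-\overline M_c(v+1).
\]
We apply \eqref{eq:tq-Mbarneg} with the enlarged window $B+2$ in place of $B$, fixed before $c$ varies. Since $-B\ln c\le v\le-2$, both points $x=v-2$ and $x=v+1$ lie in $[-(B+2)\ln c,-1]$ once $\ln c\ge1$. This is why the hypothesis stops at $v\le-2$: it ensures $x+1\le0$ at $x=v+1$.

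Subtracting the two instances of \eqref{eq:tq-Mbarneg}, the terms $c$ cancel and the logarithmic coefficient is $6/\pi^2$, independent of $v$. The Barnes contribution is
\[
 -\tfrac12\bigl[(A(v-1)-A(v-2))-(A(v+2)-A(v+1))\bigr],
\]
which equals $-\tfrac12\bigl(A'(\xi_1)-A'(\xi_2)\bigr)$ by the mean value theorem, with $|\xi_1-\xi_2|\le4$. By the whole-line bound \eqref{eq:tq-A2all}, it is at most $2(\ln(4+|v|)+4)\le2\ln c+8$ for $c$ large, because $|v|\le B\ln c$. The remainders $E_{v-2}$ and $E_{v+1}$ are each at most $1$ by \eqref{eq:tq-Eharmless} applied with $B+2$.

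Collecting terms,
\[
 T_++T_-\le\tfrac43\Bigl(\frac6{\pi^2}\ln(2\pi c)+2\ln c+10\Bigr)\le C_B\ln c.
\]
The only delicate point is bookkeeping: every point where \eqref{eq:tq-Mbarneg} is invoked must lie in its admissible range $[-(B+2)\ln c,-1]$, and the constants $c_B$ and $C_B$ must depend only on $B$. I expect no genuine obstacle beyond that, since the sign-sensitive analytic input is entirely contained in Lemma~\ref{lem:tq-Fneg}.
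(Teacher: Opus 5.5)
Your proposal is correct and follows essentially the same route as the paper's proof: the sign-free sigmoid bound \eqref{eq:tailsdagger}, the monotonicity bracket at $x=v-2$ and $x=v+1$, and the negative-side unit-average formula \eqref{eq:tq-Mbarneg} with a $v$-independent logarithmic coefficient $6/\pi^2$ and the whole-line bound \eqref{eq:tq-A2all}. The only difference is bookkeeping: you enlarge the window to $B+2$ and need only $\ln c\ge1$, whereas the paper uses $B+1$ with $c\ge e^2$.
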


\begin{proof}
Step~1 of the proof of Lemma~\ref{lem:tails} is free of any restriction
on $v$: applying \eqref{eq:sigdiff} with $h=1$ and $t=2(w_n-v)$ and
summing over all $n$ gives \eqref{eq:tailsdagger}, that is,
\[
 T_+(v)+T_-(v)\;\le\;\tfrac43\bigl[M_c(v-1)-M_c(v+1)\bigr]
 \qquad(v\in\R),
\]
and, by \eqref{eq:avg-bracket} applied at $x=v-2$ and at $x=v+1$,
\[
 M_c(v-1)-M_c(v+1)\;\le\;\overline M_c(v-2)-\overline M_c(v+1).
\]
Fix the window $B+1$ in \eqref{eq:tq-Mbarneg} before letting $c$ and
$v$ vary.  For $-B\ln c\le v\le-2$ and $c\ge e^2$, both $v-2$ and $v+1$
lie in $[-(B+1)\ln c,-1]$, so \eqref{eq:tq-Mbarneg} applies at both, the
terms $c$ cancel, and the coefficient of $\ln(2\pi c)$ is
$\pi^{-2}\bigl[(2(v+1)+1)-(2(v-2)+1)\bigr]=6\pi^{-2}$, independent of
$v$:
\[
 \overline M_c(v-2)-\overline M_c(v+1)
 =\frac6{\pi^2}\ln(2\pi c)
 -\tfrac12\bigl[\bigl(A(v-1)-A(v-2)\bigr)
 -\bigl(A(v+2)-A(v+1)\bigr)\bigr]+E_{v-2}-E_{v+1}.
\]
By the mean value theorem the bracket equals $A'(\xi_1)-A'(\xi_2)$ with
$\xi_1\in(v-2,v-1)$ and $\xi_2\in(v+1,v+2)$, so $|\xi_1-\xi_2|\le4$ and,
by \eqref{eq:tq-A2all},
$\tfrac12|A'(\xi_1)-A'(\xi_2)|\le2\sup_{[v-2,v+2]}|A''|
\le2\bigl(\ln(4+|v|)+4\bigr)$.
Since $|v|\le B\ln c$, $\ln(4+|v|)\le\ln c$ for $c$ large, and
$|E_{v-2}|+|E_{v+1}|\le2$ by \eqref{eq:tq-Eharmless}.  Hence
\[
 T_+(v)+T_-(v)\;\le\;\tfrac43\Bigl[\frac6{\pi^2}\ln(2\pi c)
 +2\ln c+8+2\Bigr]\;\le\;C_B\ln c
\]
for $c\ge c_B$, both summands being nonnegative.
\end{proof}

\subsection{A preliminary negative-side count estimate}
\label{sec:tq-preform}

\begin{proposition}[preliminary negative-side count estimate]
\label{prop:tq-preform}
Fix $A>0$.  There are $c_A,C_A<\infty$ such that, uniformly for
$c\ge c_A$ and $3\le\omega\le A\ln c$,
\begin{equation}\label{eq:tq-preform}
 \Ntil(-\omega)
 = c+\frac{2\omega}{\pi^2}\ln(2\pi c)+\frac{A'(\omega)}{2}
 +O_A(\ln c)
\end{equation}
and therefore
\begin{equation}\label{eq:tq-preform2}
 \Ntil(-\omega)
 = c+\frac{2\omega}{\pi^2}\ln(2\pi c)
 -\frac{\omega}{\pi^2}\ln\Bigl(1+\frac{\omega^2}{\pi^2}\Bigr)
 +O_A(\ln c+\omega).
\end{equation}
\end{proposition}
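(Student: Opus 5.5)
The proof mirrors Steps 1--3 of \S\ref{sec:thmA} at $v=-\omega$, with Lemma~\ref{lem:tq-avgneg}, \eqref{eq:tq-Mbarneg} and Lemma~\ref{lem:tq-tails} in place of the head-side inputs. We fix $B:=A+4$ once, so that every point used below lies in the window $[-B\ln c,-1]$ for $c\ge c_A$. Then we apply Lemma~\ref{lem:conversion} at $v=-\omega$ with $h=1$ and bracket $M_c$ by unit averages through \eqref{eq:avg-bracket}, which is pure monotonicity and valid on all of $\R$. This gives
\[
 \overline M_c(-\omega+1)-e^{-2}T_-(-\omega)\;\le\;\Ntil(-\omega)\;\le\;\overline M_c(-\omega-2)+e^{-2}T_+(-\omega).
\]
Since $3\le\omega\le A\ln c$, the point $v=-\omega$ satisfies $-B\ln c\le v\le-2$. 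Lemma~\ref{lem:tq-tails} therefore bounds both tail corrections by $C_B\ln c$.

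Next we evaluate the two averages with \eqref{eq:tq-Mbarneg} at $x\in\{-\omega-2,\,-\omega+1\}$. Both points lie in $[-(A+2)\ln c,-1]$ because $\omega\ge3$. The remainders $E_x$ are $\le1$ by \eqref{eq:tq-Eharmless}. The term $\frac{2x+1}{\pi^2}\ln(2\pi c)$ equals $-\frac{2\omega}{\pi^2}\ln(2\pi c)+O(\ln c)$, because $|x+\omega|\le2$. For the Barnes difference, the mean value theorem together with \eqref{eq:tq-A2all} gives $\tfrac12(A(x+1)-A(x))=\tfrac12A'(-\omega)+O(\ln(2+\omega))$. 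Oddness of $A'$ turns $-\tfrac12A'(-\omega)$ into $+\tfrac12A'(\omega)$. Since $\ln(2+\omega)\le\ln c$ on the window, both averages equal $c+\frac{2\omega}{\pi^2}\ln(2\pi c)+\frac{A'(\omega)}2+O_A(\ln c)$. Sandwiching then yields \eqref{eq:tq-preform}.

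For \eqref{eq:tq-preform2} we use Lemmas~\ref{lem:barnes1}--\ref{lem:barnes2} with $x=\omega/\pi$:
\[
 \tfrac12A'(\omega)=\tfrac{2\omega}{\pi^2}\bigl[1+\gamma_E-S(x)\bigr],
\]
and $S(x)=\ell(x)+O(1)$. This gives $\tfrac12A'(\omega)=-\frac{\omega}{\pi^2}\ln(1+\omega^2/\pi^2)+O(\omega)$, which is exactly Step~3 of \S\ref{sec:thmA} with the sign reversed.

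The only step requiring care is the uniformity of constants. The window parameter $B$ must be fixed in terms of $A$ before $c$ varies, so that Lemma~\ref{lem:tq-tails} and \eqref{eq:tq-Eharmless} deliver constants depending only on $A$. One must also check that the shifted points $-\omega-2$ and $-\omega+1$ stay inside the admissible negative range, which forces the lower bound $\omega\ge3$. We also need that $\ln(4+\omega)\le\ln c$ holds uniformly. Beyond this bookkeeping, the argument is routine.
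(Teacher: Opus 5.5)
Your proposal is correct and follows the paper's proof essentially step for step: the same conversion at $v=-\omega$ with $h=1$, the same unit-average bracket at $x\in\{-\omega+1,-\omega-2\}$ evaluated by \eqref{eq:tq-Mbarneg}, the same tail bound from Lemma~\ref{lem:tq-tails}, and the same mean-value/oddness treatment of the Barnes increment, followed by Lemmas~\ref{lem:barnes1}--\ref{lem:barnes2} for the second display. The only difference is cosmetic: you fix the window $B=A+4$ where the paper uses $B=A+2$, and this changes nothing.
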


\begin{proof}
Fix the window $B=A+2$ in \eqref{eq:tq-Mbarneg} and in
Lemma~\ref{lem:tq-tails} before letting $c$ and $\omega$ vary.

\emph{Step 1: two-sided smoothed bounds.}  Lemma~\ref{lem:conversion}
holds for every real $v$ and every $h>0$; take $v=-\omega$ and $h=1$:
\[
 M_c(-\omega+1)-e^{-2}T_-(-\omega)\;\le\;\Ntil(-\omega)\;\le\;
 M_c(-\omega-1)+e^{-2}T_+(-\omega).
\]
By \eqref{eq:avg-bracket}, $\overline M_c(-\omega+1)\le M_c(-\omega+1)$
and $M_c(-\omega-1)\le\overline M_c(-\omega-2)$, so
\[
 \overline M_c(-\omega+1)-e^{-2}T_-(-\omega)\;\le\;\Ntil(-\omega)\;\le\;
 \overline M_c(-\omega-2)+e^{-2}T_+(-\omega).
\]
Since $3\le\omega\le A\ln c$, the depth $v=-\omega$ satisfies
$-A\ln c\le v\le-3\le-2$, so Lemma~\ref{lem:tq-tails} bounds both tail
corrections by $C_A\ln c$.

\emph{Step 2: evaluating the two averages.}  Both $x=-\omega+1$ and
$x=-\omega-2$ satisfy $-(A+2)\ln c\le x\le-2$, so
\eqref{eq:tq-Mbarneg} applies to each, with $|E_x|\le1$ by
\eqref{eq:tq-Eharmless}.  The values of $-(2x+1)$ at the two points are
$2\omega-3$ and $2\omega+3$, so in both cases
\[
 -\frac{2x+1}{\pi^2}\ln(2\pi c)
 =\frac{2\omega}{\pi^2}\ln(2\pi c)+O(\ln c).
\]
For the Barnes increment, the mean value theorem gives
$-\tfrac12\bigl(A(x+1)-A(x)\bigr)=-\tfrac12A'(\xi)$ with
$\xi\in(x,x+1)\subset[-\omega-2,-\omega+2]$.  By \eqref{eq:tq-A2all},
$-A'(\xi)=A'(-\xi)$ with $-\xi\in[\omega-2,\omega+2]$, and
\[
 |A'(-\xi)-A'(\omega)|\;\le\;2\sup_{[\omega-2,\omega+2]}|A''|
 \;\le\;2\bigl(\ln(4+\omega)+4\bigr),
\]
so $-\tfrac12\bigl(A(x+1)-A(x)\bigr)
=\tfrac12A'(\omega)+O\bigl(\ln(4+\omega)\bigr)$.  Since
$\omega\le A\ln c$, $\ln(4+\omega)=O_A(\ln\ln c)=O(\ln c)$.  Hence both
averages equal
$c+\frac{2\omega}{\pi^2}\ln(2\pi c)+\frac{A'(\omega)}2+O_A(\ln c)$,
which with Step~1 proves \eqref{eq:tq-preform}.

\emph{Step 3: the Barnes term.}  By Lemma~\ref{lem:barnes1} and
Lemma~\ref{lem:barnes2}(i), with $x=\omega/\pi$ and
$\ell(x)=\tfrac12\ln(1+x^2)$,
\[
 \frac{A'(\omega)}2=\frac{2\omega}{\pi^2}\bigl[1+\gamma_E-S(x)\bigr]
 =-\frac{2\omega}{\pi^2}\,\ell(x)+O(\omega)
 =-\frac{\omega}{\pi^2}\ln\Bigl(1+\frac{\omega^2}{\pi^2}\Bigr)+O(\omega),
\]
since $0\le S(x)-\ell(x)\le1$ and $1+\gamma_E\le2$.  Substituting into
\eqref{eq:tq-preform} gives \eqref{eq:tq-preform2}.
\end{proof}

\begin{remark}[the exact mirror of \eqref{eq:NvModel}]\label{rem:tq-mirror}
Formula \eqref{eq:tq-preform2} is \eqref{eq:NvModel} evaluated at
$v=-\omega$: the head-side display
$\Ntil(v)=c-\frac{2v}{\pi^2}\ln(2\pi c)
+\frac v{\pi^2}\ln(1+\frac{v^2}{\pi^2})+O(\ln c+v)$ becomes
\eqref{eq:tq-preform2} under $v\mapsto-\omega$.  The two are separate
theorems, resting on Theorem~\ref{qt:BDIK} and
Theorem~\ref{thm:signedmain} respectively; the coincidence of the
displays is a consequence of Remark~\ref{rem:tq-onefla} and is not used
as a substitute for either proof.
\end{remark}

\subsection{Proof of Theorem~\ref{thm:tail} and
Corollary~\ref{cor:plunge}}\label{sec:tq-quantile}

\begin{proof}[Proof of Theorem~\ref{thm:tail}]
Fix $A>0$ and let $c$ and $\delta$ satisfy \eqref{eq:tailrange}.  Put
$\omega=\Lb/2$, so that $3\le\omega\le\tfrac A2\ln c$; the range
\eqref{eq:tailrange} is exactly this range of $\omega$.  By
\eqref{eq:tq-depth}, $\theta(-\omega)=(1+e^{2\omega})^{-1}
=\bigl(1+\tfrac{1-\delta}\delta\bigr)^{-1}=\delta$, so by
\eqref{eq:logodds} $N_\delta(c)=\Ntil(-\omega)$ exactly.

Apply Proposition~\ref{prop:tq-preform} with its fixed parameter equal
to $A/2$:
\[
 N_\delta(c)=c+\frac{2\omega}{\pi^2}\ln(2\pi c)
 -\frac{\omega}{\pi^2}\ln\Bigl(1+\frac{\omega^2}{\pi^2}\Bigr)
 +O_A(\ln c+\omega).
\]
With $\Lb=2\omega$, Step~4 of \S\ref{sec:thmA} is an identity in $\Lb$
and applies unchanged:
\[
 \frac{2\omega}{\pi^2}\ln(2\pi c)
 -\frac{\omega}{\pi^2}\ln\Bigl(1+\frac{\omega^2}{\pi^2}\Bigr)
 =\frac{\Lb}{2\pi^2}\ln\frac{16\pi^4c^2}{4\pi^2+\Lb^2}
 =\frac{\Lb}{\pi^2}\ln\frac{4\pi^2c}{\sqrt{4\pi^2+\Lb^2}} ,
\]
and for $\Lb\ge6$,
$0\le\ln\sqrt{4\pi^2+\Lb^2}-\ln\Lb\le\tfrac12\ln(1+4\pi^2/36)\le1$,
so the last display differs from $\Phi_c(\Lb)$ by at most
$\pi^{-2}\Lb$.  Since $\omega\le\Lb$, this is \eqref{eq:TQ}.
\end{proof}

\begin{proof}[Proof of Corollary~\ref{cor:plunge}]
By \eqref{eq:identities}, exactly and with no symmetry assumption,
\begin{equation}\label{eq:tq-countid}
 D(\delta,c)=N_\delta(c)-N_{1/2}(c),
 \qquad
 \Lambda_\delta(c)=\Lambda^+_\delta(c)+D(\delta,c)
 =N_\delta(c)-N_{1-\delta}(c).
\end{equation}
Theorem~\ref{thm:tail} and the second assertion of
Theorem~\ref{thm:A}, $|N_{1/2}(c)-c|\le C_{\mathrm h}\ln c$, give
\eqref{eq:Dformula}.  For \eqref{eq:Pformula}, note that for a fixed $A$
one has $A\ln c\le c^{1/3}$ for all $c$ large, so \eqref{eq:tailrange}
is contained in the range $6\le\Lb\le c^{1/3}$ of
Theorem~\ref{thm:A}; that theorem gives
$N_{1-\delta}(c)=c-\Phi_c(\Lb)+O(\ln c+\Lb)$, and subtracting from
\eqref{eq:TQ} yields \eqref{eq:Pformula}.  No relation between the
spectrum near $0$ and near $1$ is used: the two counts are established
independently, one from Theorem~\ref{thm:signedmain} and one from
Theorem~\ref{qt:BDIK}.  The relative statements are exactly those of
Remark~\ref{rem:relativelimit}.
\end{proof}

\subsection{Tensor selection and the one-sided surface
bound}\label{sec:tq-tensor}

\begin{lemma}[fixed-threshold cluster count]\label{lem:tq-fixedq}
For each fixed $q\in(\tfrac12,1)$ there are $c_q,C_q<\infty$ with
\begin{equation}\label{eq:tq-fixedq}
 |N_q(c)-c|\;\le\;C_q\ln c\qquad(c\ge c_q).
\end{equation}
\end{lemma}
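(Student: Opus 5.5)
The plan is to read off \eqref{eq:tq-fixedq} from the head-side preliminary formula \eqref{eq:Nv}, which holds at fixed depth, together with the local density bound of Lemma~\ref{lem:density}. Put $v_q:=\tfrac12\ln\frac{q}{1-q}$. Since $q\in(\tfrac12,1)$, $v_q>0$ is a fixed number, and by \eqref{eq:logodds} we have $N_q(c)=\Ntil(v_q)$ exactly. The sandwich will be
\[
\Ntil(3)\le\Ntil(v_q)\le\Ntil(0)=N_{1/2}(c)\qquad(v_q\le3),
\]
with the case $v_q>3$ handled separately below. The upper bound then follows from the second assertion of Theorem~\ref{thm:A}: $N_q(c)\le N_{1/2}(c)\le c+C_{\mathrm h}\ln c$.

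For the lower bound with $v_q\le3$, I would use Step~5 of \S\ref{sec:thmA}: \eqref{eq:NvModel} at $v=3$ gives $\Ntil(3)=c+O(\ln c)$, with an absolute constant, once $c\ge c_{\mathrm h}$. Monotonicity of $\Ntil$ then gives $N_q(c)\ge c-C\ln c$. If instead $v_q>3$, I would apply \eqref{eq:Nv} directly at $v=v_q$. This is admissible once $(\pi c/2)^{1/3}-4\ge v_q$, which is where $c_q$ enters. Since $v_q$ is fixed, the terms $\frac{2v_q}{\pi^2}\ln(2\pi c)$ and $A'(v_q)/2$ are $O_q(\ln c)$ and $O_q(1)$ respectively, so $|\Ntil(v_q)-c|\le C_q\ln c$.

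An alternative for the lower bound in the case $v_q\le3$ avoids \eqref{eq:NvModel}: write $\Ntil(0)-\Ntil(v_q)\le D_{\mathrm{loc}}(0)+D_{\mathrm{loc}}(1)+D_{\mathrm{loc}}(2)\le C\ln c$ by Lemma~\ref{lem:density}(i), and combine this with the bound on $N_{1/2}$.

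I do not expect a real obstacle. The only point needing care is that every constant is absolute apart from the admissibility threshold $c_q$, which is needed only when $v_q$ is large, that is, when $q$ is close to $1$. The strict-$>$ convention is respected throughout because $\Ntil(v_q)=N_{\theta(v_q)}$ and $\theta(v_q)=q$ exactly.
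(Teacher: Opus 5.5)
Your proof is correct, but it takes a somewhat different route from the paper's. The paper does not touch the determinant formula at all for the lower bound. It writes $N_{1/2}(c)-N_q(c)=\#\{n:\tfrac12<\lambda_n\le q\}$ and bounds this in one stroke by Theorem~\ref{qt:KRD} with $\eps_q=\tfrac{1-q}2$. That gives $O_q(\ln c)$ for every $q\in(\tfrac12,1)$ simultaneously, and it then combines this with $|N_{1/2}(c)-c|\le C_{\mathrm h}\ln c$ from Theorem~\ref{thm:A}.

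Your main route instead gets the lower bound from the smoothed-count formula, via \eqref{eq:NvModel} at $v=3$ plus monotonicity, or \eqref{eq:Nv} at $v=v_q$. This forces a case split at $v_q=3$ and a $q$-dependent admissibility threshold. In the case $v_q>3$ you obtain more than the lemma needs, namely $N_q(c)=c-\frac{2v_q}{\pi^2}\ln(2\pi c)+O_q(1)+O(\ln c)$. It does not save any input: the bound on $N_{1/2}$ you use comes from Step~5 of \S\ref{sec:thmA}, which rests on Lemma~\ref{lem:density}(i), and that is Theorem~\ref{qt:KRD} again.

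Your ``alternative'' for $v_q\le3$, summing $D_{\mathrm{loc}}$ over unit windows, is essentially the paper's argument. Since Lemma~\ref{lem:density}(i) holds for every $a$, summing over $\lceil v_q\rceil$ windows would cover all $q$ and remove the case split.

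One small inaccuracy in your commentary: when $v_q>3$, your $C_q$ grows like $v_q$, so it is not absolute. The lemma permits this, so it is not a gap.
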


\begin{proof}
Since $q>\tfrac12$, $N_{1/2}(c)-N_q(c)=\#\{n:\tfrac12<\lambda_n\le q\}$.
Put $\eps_q=\tfrac{1-q}2\in(0,\tfrac12)$.  Then $\eps_q<\tfrac12$ and
$q<1-\eps_q$, so
$\{n:\tfrac12<\lambda_n\le q\}\subseteq\{n:\eps_q<\lambda_n<1-\eps_q\}$,
and Theorem~\ref{qt:KRD} bounds the latter by
$\frac2{\pi^2}\ln(50c+25)\ln\frac5{\eps_q(1-\eps_q)}+7=O_q(\ln c)$.
Combining with $|N_{1/2}(c)-c|\le C_{\mathrm h}\ln c$ from
Theorem~\ref{thm:A} gives \eqref{eq:tq-fixedq}.
\end{proof}

\begin{proof}[Proof of Theorem~\ref{thm:tensorblock}]
Write
\[
 \mathcal D'=\bigl\{n:\delta'<\lambda_n(c)\le\tfrac12\bigr\},
 \qquad
 \mathcal Q=\bigl\{n:\lambda_n(c)>q\bigr\},
\]
so that $\#\mathcal D'=D(\delta',c)$ by \eqref{eq:counts} and
$\#\mathcal Q=N_q(c)$.  Since $q>\tfrac12$ these two index sets are
disjoint.  For $1\le j\le d$ let $\mathcal F_j$ be the set of tensor
slots $\mathbf n=(n_1,\dots,n_d)$ with $n_j\in\mathcal D'$ and
$n_r\in\mathcal Q$ for every $r\ne j$.

\emph{Membership.}  For $\mathbf n\in\mathcal F_j$,
$\mu_{\mathbf n}=\lambda_{n_j}\prod_{r\ne j}\lambda_{n_r}
>\delta'q^{\,d-1}=\delta$, and $\mu_{\mathbf n}\le\lambda_{n_j}\le\tfrac12
<1-\delta$ because $\lambda_n<1$ for every $n$ and $\delta<\tfrac12$.
Hence $\mathcal F_j\subseteq\{\mathbf n:\delta<\mu_{\mathbf n}\le1-\delta\}$
for every $j$.

\emph{Disjointness.}  For $d=1$ there is only the family
$\mathcal F_1$.  For $d\ge2$ and $j\ne j'$, a slot in $\mathcal F_j$ has
$n_{j'}\in\mathcal Q$ while a slot in $\mathcal F_{j'}$ has
$n_{j'}\in\mathcal D'$; since $\mathcal D'\cap\mathcal Q=\varnothing$,
$\mathcal F_j\cap\mathcal F_{j'}=\varnothing$.

\emph{Counting.}  By Lemma~\ref{lem:tensor} the slots are indexed by
multi-indices with multiplicity, so
$\#\mathcal F_j=\#\mathcal D'\cdot(\#\mathcal Q)^{d-1}
=D(\delta',c)N_q(c)^{d-1}$ for each $j$.  Summing the $d$ disjoint
families gives \eqref{eq:tensorblock}.  Nothing asymptotic was used, so
the inequality is exact for every $c>0$.
\end{proof}

\begin{proof}[Proof of Theorem~\ref{thm:surface}]
Fix $A>0$, $d\ge1$ and $q\in(\tfrac12,1)$, and let $c$ and $\delta$
satisfy \eqref{eq:surfrange}.  Since $\Lb=\ln\frac{1-\delta}\delta$ we
have exactly $\delta=(1+e^{\Lb})^{-1}$, so with $\beta=q^{-(d-1)}\ge1$
and $\delta'=\beta\delta$,
\begin{equation}\label{eq:tq-Lprime}
 \frac{1-\delta'}{\delta'}=\frac{1+e^{\Lb}-\beta}{\beta},
 \qquad
 \Lb'\;:=\;\ln\frac{1-\delta'}{\delta'}
 \;=\;\Lb-\ln\beta+\ln\bigl(1-(\beta-1)e^{-\Lb}\bigr).
\end{equation}

\emph{Step 1: the lower threshold is exactly the admissibility of
$\delta'$.}  From the first identity in \eqref{eq:tq-Lprime},
\[
 \Lb'\ge6
 \iff 1+e^{\Lb}-\beta\ge\beta e^6
 \iff e^{\Lb}\ge\beta(e^6+1)-1
 \iff \Lb\ge L_{d,q},
\]
with $L_{d,q}$ as in \eqref{eq:Ldq}; the quantity
$\beta(e^6+1)-1\ge e^6>0$, so $L_{d,q}$ is well defined.  Thus the lower
restriction in \eqref{eq:surfrange} is precisely the statement
$\Lb'\ge6$, and in particular $\delta'=(1+e^{\Lb'})^{-1}<\tfrac12$, so
Theorem~\ref{thm:tensorblock} applies.  Moreover $\Lb'\le\Lb$ because
$\beta\ge1$; and since
$e^{\Lb}\ge\beta(e^6+1)-1\ge\beta e^6$ we get
$(\beta-1)e^{-\Lb}\le\beta^{-1}(\beta-1)e^{-6}\le e^{-6}$, whence
$\ln\bigl(1-(\beta-1)e^{-\Lb}\bigr)\ge\ln(1-e^{-6})\ge-2e^{-6}$ and
\begin{equation}\label{eq:tq-Lprimeclose}
 \Lb-\ln\beta-2e^{-6}\;\le\;\Lb'\;\le\;\Lb,
 \qquad\text{so}\qquad
 |\Lb-\Lb'|\le\ln\beta+1=O_{d,q}(1).
\end{equation}

\emph{Step 2: transporting the lower-half formula.}  Both $\Lb'$ and
$\Lb$ lie in $[6,A\ln c]$ by Step~1 and \eqref{eq:surfrange}.  On that
interval $\Phi_c'(x)=\pi^{-2}\bigl(\ln\frac{4\pi^2c}x-1\bigr)$ satisfies
$|\Phi_c'(x)|\le\pi^{-2}\bigl(\ln\frac{4\pi^2c}6+1\bigr)\le C\ln c$ for
$c\ge3$, so by \eqref{eq:tq-Lprimeclose} and the mean value theorem
\begin{equation}\label{eq:tq-phiprime}
 \Phi_c(\Lb')=\Phi_c(\Lb)+O_{A,d,q}(\ln c).
\end{equation}
Applying \eqref{eq:Dformula} to $\delta'$, whose depth is $\Lb'$, and
using \eqref{eq:tq-phiprime} together with $\Lb'\le\Lb$,
\begin{equation}\label{eq:tq-Dprime}
 D(\delta',c)=\Phi_c(\Lb)+O_{A,d,q}(\ln c+\Lb).
\end{equation}
In particular $0\le D(\delta',c)\le C_{A,d,q}\ln^2c$ on the range, since
$\Phi_c(\Lb)\le\pi^{-2}A\ln c\cdot\ln(4\pi^2c)$.

\emph{Step 3: the cluster factor.}  By Lemma~\ref{lem:tq-fixedq},
$N_q(c)\ge c-C_q\ln c\ge0$ for $c$ large, so for $d\ge2$, by Bernoulli's
inequality,
\[
 N_q(c)^{d-1}\;\ge\;c^{d-1}\Bigl(1-\frac{C_q\ln c}{c}\Bigr)^{d-1}
 \;\ge\;c^{d-1}-(d-1)C_qc^{d-2}\ln c ,
\]
and for $d=1$ both sides equal $1$.  Combining with Step~2 and
$D(\delta',c)\ge0$,
\[
 d\,N_q(c)^{d-1}D(\delta',c)
 \;\ge\;d\,c^{d-1}D(\delta',c)
 -d(d-1)C_qc^{d-2}\ln c\cdot C_{A,d,q}\ln^2c ,
\]
and $c^{d-2}\ln^3c\le c^{d-1}$ for $c$ large.

\emph{Step 4: conclusion.}  By Theorem~\ref{thm:tensorblock},
\eqref{eq:tq-Dprime} and Step~3,
\[
 \Lambda_\delta(c;d)\;\ge\;d\,c^{d-1}\Phi_c(\Lb)
 -C_{A,d,q}\,c^{d-1}(\ln c+\Lb),
\]
which is \eqref{eq:surface} since
$d\,\Phi_c(\Lb)=\frac d{\pi^2}\Lb\ln\frac{4\pi^2c}\Lb$.  Finally,
dividing \eqref{eq:surface} by its main term and using the estimate of
Remark~\ref{rem:relativelimit},
\[
 \frac{c^{d-1}(\ln c+\Lb)}
 {c^{d-1}\Lb\ln(4\pi^2c/\Lb)}
 =O_A\Bigl(\frac1\Lb+\frac1{\ln c}\Bigr)
 \longrightarrow0
\]
uniformly on any subrange \eqref{eq:relrange} with $L_0(c)\ge L_{d,q}$
and $L_0(c)\to\infty$, which is \eqref{eq:surfacerel}.
\end{proof}

\subsection{Scalar inversion of the main
term}\label{sec:tq-lambert}

\begin{proof}[Proof of Lemma~\ref{lem:lambert}]
Put $y=x/(4\pi^2c)$, so that by \eqref{eq:Phiintro}
\[
 \Phi_c(x)=\frac{4\pi^2cy}{\pi^2}\ln\frac1y=4c\,y\ln\frac1y ,
 \qquad
 x\in(0,4\pi^2c/e)\iff y\in(0,e^{-1}).
\]
The function $y\mapsto y\ln(1/y)$ is strictly increasing on
$(0,e^{-1})$, with range $(0,e^{-1})$; hence $m=\Phi_c(x)$ has exactly
one solution in $(0,4\pi^2c/e)$ precisely when $0<m/(4c)<e^{-1}$, i.e.\
when $0<m<4c/e$, and that solution is determined by
\[
 y\ln\frac1y=\frac m{4c}.
\]
Write $w=\ln y\in(-\infty,-1)$.  Then $y\ln(1/y)=-we^{w}$, so the
equation reads $we^{w}=-m/(4c)$ with $-m/(4c)\in(-e^{-1},0)$ and
$w\le-1$; by definition of the branches of the Lambert $W$ function this
is $w=W_{-1}\bigl(-m/(4c)\bigr)$.  Finally $e^{w}=-m/(4cw)$, so
\[
 x=4\pi^2c\,y=4\pi^2c\,e^{w}=-\frac{\pi^2m}{w}
 =-\frac{\pi^2m}{W_{-1}\bigl(-m/(4c)\bigr)} ,
\]
which is \eqref{eq:lambert}.  Substituting back verifies the identity.
\end{proof}

\begin{remark}[no individual-eigenvalue statement is claimed]
\label{rem:nolambertthm}
Lemma~\ref{lem:lambert} is an exact identity about the continuous
function $\Phi_c$ and about nothing else.  It is deliberately
\emph{not} converted into a statement about $\lambda_n(c)$ for an
individual index $n$.  Such a conversion would require: a generalized
inverse of the integer-valued, piecewise constant map
$\delta\mapsto N_\delta(c)$; an explicit admissible range of indices $n$
in which that inverse is meaningful; a convention resolving the unit
jumps of $N_\delta(c)$; and the propagation of the additive counting
error $O_A(\ln c+\Lb)$ of \eqref{eq:TQ} through the inverse, which on the
present range is of the same order as the displacement being measured
unless $\Lb\to\infty$.  None of these is carried out here, and no
individual-eigenvalue theorem is asserted.
\end{remark}

\section{Summary of parameter ranges}\label{sec:map}

\begin{center}
\begin{tabularx}{\linewidth}{@{}>{\raggedright\arraybackslash}p{.22\linewidth}>{\raggedright\arraybackslash}p{.31\linewidth}>{\raggedright\arraybackslash}X@{}}
\toprule
Range & Result/count & Source or conclusion\\
\midrule
fixed $\delta$ & $\pi^{-2}\ln((1-\delta)/\delta)\ln c+o(\ln c)$ & Landau--Widom \cite{LandauWidom}\\
$\alpha_1^{-c}<\delta<c^{-\alpha_1}$ & $\asymp L\ln(\alpha_1c/L)$ & Kulikov--Dam Larsen \cite{KDL}\\
$c^{-\alpha_1}\le\delta\le\delta_0$ & lower bound \eqref{eq:bridgeclosed}; explicit upper bound in \eqref{eq:upperD-a} & both bounds proved\\
$6\le\Lb\le A\ln c$, $A$ fixed & $\Phi_c(\Lb)+O_A(\ln c+\Lb)$, an additive two-sided formula; the displayed error is uniformly relative on subranges with $L_0(c)\to\infty$, while fixed $\delta$ is covered separately by Landau--Widom & Corollary~\ref{cor:plunge}, from Theorem~\ref{thm:tail}\\
upper half, $L_0\le L\le c^{1/3}$ & relative bounds with coefficient $\pi^{-2}$; an asymptotic if $L\to\infty$ & proved in Theorem~\ref{thm:B}\\
full plunge, $6\le\Lb\le A\ln c$ & $\Lambda_\delta(c)=2\Phi_c(\Lb)+O_A(\ln c+\Lb)$ & Corollary~\ref{cor:plunge}\\
$d\ge2$, cube pair & Corollary~\ref{cor:tensor} gives a tensor-product block; fixed-window asymptotics plus monotonicity give $\gtrsim c^{d-1}\ln c$ uniformly for $\delta\le\delta_\ast$ & no priority or sharpness claim\\
$d\ge1$ (nontrivial tensor case $d\ge2$), cube pair, one tail coordinate & $\Lambda_\delta(c;d)\ge\frac d{\pi^2}c^{d-1}\Lb\ln\frac{4\pi^2c}{\Lb}-C_{A,d,q}c^{d-1}(\ln c+\Lb)$ on $L_{d,q}\le\Lb\le A\ln c$ & Theorem~\ref{thm:surface}; one-sided only, no matching upper coefficient\\
\bottomrule
\end{tabularx}
\end{center}

\section{Conclusion}\label{sec:verdict}

The head-side determinant asymptotic gives the uniform upper-half
quantile formula and its tensor-product and joint-range consequences.
The signed Riemann--Hilbert analysis of Section~\ref{sec:tail} provides
the corresponding logarithmic window on the side $\gamma<0$.  A key
local step is Lemma~\ref{qmi:third}, which derives the third-ray jump,
the origin-sector multipliers, and the analytic prefactor from the
defining Kummer functions.  Combined with the two-way reduction, the
signed determinant theorem yields the lower-half estimate with constant
$1/(32\pi^2)$.  The deep-range theorem of \cite{KDL} then completes the
range in Problem~\ref{op:bridge}.

Section~\ref{sec:tailquant} runs the counting argument of
Sections~\ref{sec:smoothed}--\ref{sec:thmA} on the negative half of the
determinant parameter, with Theorem~\ref{thm:signedmain} replacing
Theorem~\ref{qt:BDIK}.  This gives the tail-side quantile formula
$N_\delta(c)=c+\Phi_c(\Lb)+O_A(\ln c+\Lb)$ on $6\le\Lb\le A\ln c$, hence
two-sided additive formulas for the lower half and for the full plunge
on the same range, and --- through an exact one-tail-coordinate tensor
selection --- a one-sided surface-order lower bound
$\Lambda_\delta(c;d)\ge\frac d{\pi^2}c^{d-1}\Lb\ln\frac{4\pi^2c}\Lb
-C_{A,d,q}c^{d-1}(\ln c+\Lb)$ for fixed $A$, $d$ and $q$.

The error
$O_A((1+\omega)^4\ln^2s/s)$ of Theorem~\ref{thm:signedmain} is weaker
than the head-side $O((v+v^3)/s)$ of \cite[Thm.~1.2]{BDIK2}, and the
admissible range $\omega\le A\ln s$ is far shorter than $v<s^{1/3}$;
both are consequences of the shrinking endpoint disks
$r_s=(\ln s)^{-2}$, and neither affects the bridge application.  Also,
the constants $C_A$, and hence the threshold $c_T(\alpha)$, are not made
numerically explicit (Remark~\ref{rem:threshold}).  Finally, the
$\Theta(L)$ term in the joint-range upper bound
\eqref{eq:upperD-a} is not removable by the present method.  The
joint-range estimate therefore gives no uniform bounded-depth relative
conclusion.  Likewise, the tail-side error bound yields a uniform
relative form on subranges with $\inf\Lb\to\infty$; it does not rule out
the fixed-threshold relative asymptotic supplied separately by
Landau--Widom (Remark~\ref{rem:relativelimit}).  The
tensor bound \eqref{eq:surface} is one-sided; its coefficient $d/\pi^2$
is not matched by any upper bound proved or cited here.

\appendix

\section{Uniform control of exponential factors}
\label{app:ledger}

Every factor of the form $e^{\pm\omega}$, $e^{\pm2\omega}$ or
$e^{C\omega\arg z}$ used in Section~\ref{sec:tail} is estimated below.
Throughout,
$\nu=-i\omega/\pi$, so $|e^{i\pi\nu}|=e^{\omega}$,
$|e^{-i\pi\nu}|=e^{-\omega}$, $|e^{2\pi i\nu}|=e^{2\omega}$ and
$|e^{-3i\pi\nu}|=e^{-3\omega}$.

\begin{center}
\small
\begin{tabularx}{\linewidth}{@{}>{\raggedright\arraybackslash}p{.10\linewidth}
  >{\raggedright\arraybackslash}p{.26\linewidth}
  >{\raggedright\arraybackslash}p{.17\linewidth}
  >{\raggedright\arraybackslash}X@{}}
\toprule
Term & Where it occurs & Modulus & Estimate or cancellation\\
\midrule
E1 & $\gamma e^{2v}$, lens coefficient in
\eqref{eq:factorization} & $\le1$ & Bounded outright by
\eqref{eq:lenscoef}; the sign works in our favour.  No cancellation
needed.\\
E2 & $\bigl(\frac{\lambda-1}{\lambda+1}\bigr)^{\pm2\nu}$, conjugation by
$P^{(\infty)}$ on the lens & $\le e^{2\omega}$ & Dominated by
$e^{-sr_s}$; net $O_{A,N}(s^{-N})$
(Lemma~\ref{lem:lensnegligible}).\\
E3 & $\beta^{\pm2}=[2s(\lambda+1)]^{\pm2\nu}$ on
$\partial D_1$ & $e^{\pm\frac{2\omega}\pi\arg(\lambda+1)}$ &
Reduces to $e^{O(\omega r_s)}=e^{O_A(1/\ln s)}=O_A(1)$, because
$|\arg(\lambda+1)|\le r_s$ there (Lemma~\ref{lem:Delta}, Step 4).\\
E4 & $e^{i\pi\nu}\Gamma(1+\nu)/\Gamma(-\nu)$ in $\mathcal F_{12}$ &
$|\nu|e^{\omega}$ & Cancels exactly against $a^{2}$ of the diagonal
dressing (Lemma~\ref{lem:Delta}, Step 2): $a^2M_1^{12}
=i\nu r_\nu\beta^{-2}e^{2is}$ has modulus $|\nu|\,|\beta^{-2}|$.\\
E5 & $e^{i\pi\nu}\Gamma(1-\nu)/\Gamma(\nu)$ in $\mathcal F_{21}$ &
$|\nu|e^{\omega}$ & Same, with $a^{-2}$; see E4 mirrored.\\
E6 & $a^{\pm2}$, where
$a=\beta^{-1}e^{-\frac{i\pi}2(\frac12-\nu)}e^{is}$ &
$e^{\pm\omega}|\beta^{\mp2}|$ & Never estimated alone; always paired
with $M_k^{12}$ or $M_k^{21}$ as in E4/E5.  This pairing is forced by
the identity $\mathcal A\Lambda=P^{(\infty)}$ of
Lemma~\ref{lem:AL}.\\
E7 & $\tau_\pm$, the Stokes multipliers of \eqref{eq:sectors} &
$\le2e^{2\omega}$ & Cancels identically against the exponential term of
the across-the-cut formula \cite[eq.~13.2.44]{NIST}
(Lemma~\ref{lem:stokes}).  Never estimated.\\
E8 & $|z^{-a}|=e^{(\omega/\pi)\arg z}$, up to $e^{3\omega/2}$, in
DLMF \eqref{eq:dlmfexp} & $e^{O(\omega)}$ & Divides out exactly:
the DLMF error bound in \eqref{eq:dlmfexp} carries the same $z^{-a}$, so
only the relative error $z^{a}\varepsilon_n(z)$ of
\eqref{eq:relerr} is used.\\
E9 & $r_\nu^{\pm1}=(\Gamma(\nu)/\Gamma(-\nu))^{\pm1}$ &
$=1$ exactly & Modulus one for purely imaginary $\nu$
(Lemma~\ref{lem:gammaratio}).\\
E10 & $e^{\pm2is}$, $e^{\pm is}$, $e^{\pm i\zeta/2}$ on real contours &
$=1$ & Unimodular; carried through symbolically.\\
\bottomrule
\end{tabularx}
\end{center}

\noindent
\textbf{Propagation of the error estimates.}  The chain of error sizes is
\begin{align*}
 \|E_\pm\|=O_A\Bigl(\frac{(1+\omega)^4}{s^2r_s^2}\Bigr)
 &\ \Longrightarrow\
 R_1\text{ error}=O_A\Bigl(\frac{(1+\omega)^4}{s^2r_s}\Bigr)\\
 &\ \Longrightarrow\
 \partial_s\ln\mathcal D\text{ error}
 =O_A\Bigl(\frac{(1+\omega)^4\ln^2s}{s^2}\Bigr)\\
 &\ \Longrightarrow\
 \ln\mathcal D\text{ error}
 =O_A\Bigl(\frac{(1+\omega)^4\ln^2s}{s}\Bigr),
\end{align*}
using $r_s=(\ln s)^{-2}$, the circle length $4\pi r_s$ at the first
implication (Lemma~\ref{lem:R1}), the factor $2$ from
\eqref{eq:diffid} at the second, and Step 4 of the proof of
Theorem~\ref{thm:signedmain} at the third.  The $L^2$ contribution
$\|\mu-I\|_{L^2}\|W\|_{L^2}\le2C_\ast\|W\|_{L^2}^2
=O_A((1+\omega)^4/(s^2r_s))$ enters at the same order as the $E_\pm$
contribution and no worse.

\section*{Acknowledgments}

AI-assisted tools were used during manuscript preparation for language
editing, bibliographic cross-checking, and limited symbolic and numerical
consistency checks.  Such checks were used only as supporting verification
and not as substitutes for mathematical proof.  The author reviewed the
resulting manuscript and accepts full responsibility for all statements,
derivations, citations, and conclusions.

\bibliographystyle{plain}
\bibliography{references}

\par\bigskip
\noindent\textsc{Ahmadreza Azimifard}\\
Harmonic Research \& Technologies, LLC\\
New York, NY, USA\\
\textit{Email:} \href{mailto:afard@harmonicrt.com}{afard@harmonicrt.com}

\end{document}